\newcommand{\cyclic}{\mathop{\kern0.9ex{{+}\kern-2.10ex\raise-0.20
      ex\hbox{\Large\hbox{$\circlearrowright$}}}}\limits}
\newtheoremstyle{daniel}{3.0mm}{2mm}{\itshape}{}{\bfseries}{.}{1.5mm}{}
\theoremstyle{daniel}
\newtheorem{thm}{Theorem}[section]
\newtheorem{prop}[thm]{Proposition}
\newtheorem{Defi}[thm]{Definition}
\newtheorem{lemma}[thm]{Lemma}
\newtheorem{cor}[thm]{Corollary}
\newtheorem{Exs}[thm]{Examples}
\newtheorem{Rems}[thm]{Remarks}
\newtheorem*{thm*}{Theorem}
\newtheorem*{cor*}{Corollary}
\newtheorem*{thm3.6}{Theorem 3.6}
\newtheorem*{thm4.3}{Theorem 4.3}
\newtheorem*{prop*}{Proposition}
\newtheorem*{Notation}{Notation}
\newtheorem{Cor}[thm]{Corollary}
\newtheorem{Prop}[thm]{Proposition}
\newtheorem{claim}[thm]{Claim}
\newtheorem{Rem}[thm]{Remark}
\newtheorem{Ex}[thm]{Example}
\newtheorem*{Setup}{Setup}
\newenvironment{rem}   {\begin{Rem}\em}{\end{Rem}}
\newenvironment{defi}  {\begin{Defi}\em}{\end{Defi}}
\newenvironment{setup}   {\begin{Setup}\em}{\end{Setup}}
\newcommand{\cA}{\Acal }
\def\cC{\mathcal{C}}
\def\cE{\mathcal E}
\def\cF{\mathcal F}
\def\cO{\mathcal O}
\def\cS{\mathcal S }
\def\cT{\mathcal T}
\newcommand{\C}{\mathbb{C}}
\newcommand{\N}{\mathbb{N}}
\renewcommand{\P}{\mathbb{P}}
\newcommand{\Z}{\mathbb{Z}}
\DeclareMathOperator\im{\mbox{Im}}
\DeclareMathOperator\NS{\mbox{NS}}
\DeclareMathOperator\Tors{\mbox{Tors}}
\newcommand{\Ibold}{{\bf I}}
\newcommand{\ebold}{{\bf e}}
\newcommand{\CBbb}{\mathbb C}
\newcommand{\PBbb}{\mathbb P}
\newcommand{\QBbb}{\mathbb Q}
\newcommand{\RBbb}{\mathbb R}
\newcommand{\ZBbb}{\mathbb Z}
\newcommand{\Acal}{\mathcal A}
\newcommand{\Ccal}{\mathcal C}
\newcommand{\Ecal}{\mathcal E}
\newcommand{\Fcal}{\mathcal F}
\newcommand{\Gcal}{\mathcal G}
\newcommand{\Hcal}{\mathcal H}
\newcommand{\Ical}{\mathcal I}
\newcommand{\Jcal}{\mathcal J}
\newcommand{\Kcal}{\mathcal K}
\newcommand{\Ncal}{\mathcal N}
\newcommand{\Ocal}{\mathcal O}
\newcommand{\Qcal}{\mathcal Q}
\newcommand{\Rcal}{\mathcal R}
\newcommand{\Scal}{\mathcal S}
\newcommand{\Tcal}{\mathcal T}
\newcommand{\Ucal}{\mathcal U}
\newcommand{\Zcal}{\mathcal Z}
\newcommand{\gfrak}{\mathfrak g}
\newcommand{\Cscr}{\mathscr C}
\newcommand{\Escr}{\mathscr E}
\newcommand{\Hscr}{\mathscr H}
\newcommand{\Uscr}{\mathscr U}
\newcommand{\SL}{\mathsf{SL}}
\newcommand{\GL}{\mathsf{GL}}
\newcommand{\SU}{\mathsf{SU}}
\newcommand{\SO}{\mathsf{SO}}
\DeclareMathOperator{\End}{End}
\DeclareMathOperator{\imag}{im}
\DeclareMathOperator{\rank}{rank}
\DeclareMathOperator{\vol}{vol}
\DeclareMathOperator{\coker}{coker}
\DeclareMathOperator{\codim}{codim}
\DeclareMathOperator{\tr}{tr}
\DeclareMathOperator{\sing}{sing}
\DeclareMathOperator{\gr}{gr}
\DeclareMathOperator{\Gr}{Gr}
\DeclareMathOperator{\CS}{CS}
\DeclareMathOperator{\supp}{supp}
\DeclareMathOperator{\Quot}{Quot}
\newcommand{\dbar}{\bar\partial}
\newcommand{\lra}{\longrightarrow}
\newcommand{\Pic}{{\rm Pic}}
\newcommand{\ch}{{\rm ch}}
\newcommand{\HYM}{\text{\rm\tiny HYM}}
\newcommand{\GMC}{\text{\rm\tiny GM}}
\newcommand{\Gss}{\text{\rm\tiny Gss}}
\newcommand{\muss}{\text{\rm\tiny $\mu$ss}}
\newcommand{\doublequotient}{\bigr/ \negthinspace\negthinspace \bigr/}
\def\ddual#1{#1^{\vee\vee}}
\newcommand{\isorightarrow}{\xrightarrow{
   \,\smash{\raisebox{-0.5ex}{\ensuremath{\sim}}}\,}}
\numberwithin{equation}{section}
\begin{document}
\title[Compactifications of the moduli space of HYM connections]{Complex algebraic compactifications of the moduli space of Hermitian-Yang-Mills connections on a projective manifold}

\author[Greb]{Daniel Greb}
\address{Daniel Greb\\Essener Seminar f\"ur Algebraische Geometrie und Arithmetik\\Fakult\"at f\"ur Ma\-the\-matik\\Universit\"at Duisburg--Essen\\
45112 Essen\\ Germany}
\email{daniel.greb@uni-due.de}
\urladdr{\href{https://www.esaga.uni-due.de/daniel.greb/}
{https://www.esaga.uni-due.de/daniel.greb/}}

\author[Sibley]{Benjamin Sibley}
\address{Benjamin Sibley, Universit\'e Libre de Bruxelles \\
Campus de la Plaine \\
Boulevard du Triomphe, ACC.2\\
1050 Bruxelles\\
Belgique}
\email{bsibley1060@gmail.com or 
benjamin.sibley@ulb.ac.be}

\author[Toma]{Matei Toma}
\address{Matei Toma, 
Universit\'e de Lorraine, CNRS, IECL, F-54000 Nancy, France}

\email{Matei.Toma@univ-lorraine.fr}
\urladdr{\href{http://www.iecl.univ-lorraine.fr/~Matei.Toma/}{http://www.iecl.univ-lorraine.fr/~Matei.Toma/}}

\author[Wentworth]{Richard Wentworth}
\address{Richard Wentworth, Department of Mathematics \\ University of Maryland \\ College Park, MD
20742, USA}
\email{raw@umd.edu}
\urladdr{\href{http://www.math.umd.edu/~raw/}{http://www.math.umd.edu/~raw/}}

\date{\today}

\keywords{Hermitian-Yang--Mills connections, stability, moduli of coherent sheaves, Kobayashi--Hitchin correspondence, Donaldson--Uhlenbeck compactification}
\subjclass[2010]{14D20, 14J60, 32G13, 53C07}

\begin{abstract}
In this paper we study the relationship between three compactifications of the moduli space of gauge equivalence classes of Hermitian-Yang-Mills connections on a fixed Hermitian vector bundle over a projective algebraic manifold of arbitrary dimension. Via the Donaldson-Uhlenbeck-Yau theorem, this space is analytically isomorphic to  the moduli space of stable holomorphic vector bundles, and as such it admits an algebraic compactification by 
Gieseker-Maruyama semistable torsion free sheaves. 
A  recent construction due to the first and third authors gives another  compactification as a moduli space of slope semistable sheaves. In the present article, following fundamental work of Tian generalising the analysis of Uhlenbeck and Donaldson in complex dimension two, we define a gauge theoretic compactification by adding certain gauge equivalence classes of ideal connections at the boundary. Extending work of Jun Li in the case of bundles on algebraic surfaces, we exhibit comparison maps from the sheaf theoretic compactifications and prove their continuity. The continuity, together with a delicate analysis of the fibres of the map from the moduli space of slope semistable sheaves allows us to endow the gauge theoretic compactification with the structure of a complex analytic space. 
\end{abstract}
\maketitle

\noindent

\section{Introduction}
The study of the Hermitian-Yang-Mills (HYM) equations on a K\"ahler manifold 
joins two rather different areas of research in differential geometry. On the one hand, the Donaldson-Uhlenbeck-Yau theorem \cite{Donaldson:85,Donaldson:87,UhlenbeckYau:86}, which relates   irreducible solutions of these equations to Mumford-Takemoto stable holomorphic vector bundles, 
 is an important example of a more general relationship between nonlinear geometric PDEs in complex geometry and algebraic geometric notions of stability coming from Geometric Invariant Theory (GIT). On the other hand, the HYM equations are an archetypal instance of equations appearing in mathematical gauge theory. Indeed, in the case of K\"ahler surfaces they correspond to the anti-self-duality equations that were so profitably exploited by Donaldson and others in the 1980s and 1990s. The purpose of this paper is to develop further one aspect of the link between these two points of view in the case of higher dimensional projective algebraic manifolds: namely, the precise relationship between
 degenerations of locally free sheaves and singular limits of HYM connections. 

Concretely, we are concerned with the moduli space $M_\HYM^\ast(E,h,a_J )$ of gauge equivalence classes of  smooth irreducible $h$-unitary connections that are
solutions to the HYM equations on a fixed hermitian vector bundle $(E,h)$ of rank $r$ on a  projective algebraic manifold $X\subset \PBbb^N$ of dimension $n$ that induce a fixed connection $a_J$ on $J=\det E$. 
If $M_\HYM^\ast(E,h,a_J)$ is nonempty it will generally fail to be compact. Both in differential and algebraic geometry, when one studies such noncompact moduli spaces it is often useful to construct a compactification by adding singular objects of one kind or another at the boundary. 

\subsection{Algebro-geometric modular compactifications} The first point of view mentioned at the beginning of the introduction suggests adjoining "singular" holomorphic vector bundles, or torsion free coherent sheaves, in order to compactify the moduli space $M^{s}(E, \Jcal)$ of slope stable holomorphic structures on $E$ with fixed determinant $\Jcal$, corresponding to $a_J$. In fact, it is  possible to implement this idea in  two  different ways, corresponding to two different natural restrictions imposed on the sheaves allowed. The most well-studied solution, going back to Gieseker and Maruyama \cite{Gieseker:77,Maruyama:76},  uses Gieseker-Maruyama (GM) semistable sheaves $\Ecal$ with the same topological invariants as $E$. This is a natural choice because all such sheaves are quotients  $\mathcal{H}\rightarrow \Ecal\rightarrow 0$ of a certain fixed vector bundle $\Hcal\to X$ (see \eqref{eqn:H}), and GM-semistability turns out to be the same as GIT stability for a particular linearised line bundle on the Quot scheme $\Quot(\Hcal, c(E))$ parametrising such quotients. A projective scheme $M^{\Gss}(E, \Jcal),$ containing  $M^{s}(E, \Jcal)$ as a Zariski open subset may then be constructed  as a GIT quotient, and taking the closure of $M^{s}(E, \Jcal)$ provides a compactification $\overline M^\GMC(E, \Jcal)$. 

It is also reasonable to ask if there is a moduli space which allows degeneration to torsion free sheaves which are merely slope semistable. This has been  analysed in the case of surfaces in  \cite{LePotier:92, Li:93} (see also \cite{HuybrechtsLehn:10}), and very recently in higher dimensions by the first and third authors in \cite{GrebToma:17}. All relevant slope semistable  sheaves are again quotients of  $\Hcal$, and there is furthermore a certain natural linearised line bundle  over the Quot scheme whose restriction to the open subset $R^\muss$ of slope semistable sheaves is equivariantly semiample for the case where $X$ is a surface. In dimensions greater than two, semiampleness of the analogous line bundle $\mathscr L_{n-1}$
 can be established on the weak normalisation
$(R^\muss)^{wn}$.\footnote{Recall that the \emph{weak
normalisation} of a  complex space $Z$ is a reduced complex
space $Z^{wn}$ on which the first Riemann Extension Theorem
holds, together with a homeomorphic, holomorphic map $Z^{wn}
\to Z$. Weak normalisation is a functor on the category of complex spaces, which by definition factors through the reduction functor.
}
We refer the reader to the introduction of \cite{GrebToma:17} for an explanation of why the passage to the weak normalisation is necessary for technical, and very likely also philosophical, reasons. Given this, one then defines the moduli space $M^{\muss}(E,\Jcal)$ of slope semistable sheaves with fixed determinant $\Jcal$, in imitation of the GIT construction, to be the projective spectrum of the ring of invariant sections of powers of $\mathscr L_{n-1}$. An application of Langton's Theorem implies that this defines a complete, hence projective, variety. Considering the separation properties of sections, one then shows that $M^{s}(E,\Jcal)^{wn}$ embeds as a Zariski open subset of $M^{\muss}(E, \Jcal)$. By taking its closure inside $M^\muss(E,\Jcal)$ we thus obtain a second compactification $\overline M^\mu(E,\Jcal)$. 

Points of $\overline M^{\GMC}(E,\Jcal)$ and $\overline M^{\mu}(E,\Jcal)$ may be represented by torsion free GM- or slope semistable sheaves, respectively, but it is important to understand their isomorphism classes; that is, when do two sheaves represent the same point in the moduli space? 
For $\overline M^{\GMC}(E, \Jcal)$, this is given by the notion of $s$-equivalence. For $\overline M^{\mu}(E, \Jcal)$, in the case of algebraic surfaces the correct identifications were found by Jun Li in \cite{Li:93}.
If $\dim{X}\geq 3$, the issue is more subtle and was analysed in \cite{GrebToma:17}. The characterisation of  points in these spaces, together with the fact that GM-semistability implies slope semistability, leads to the definition 
of  a birational morphism  $\Xi: \overline{M}^\GMC(E, \Jcal)^{wn}\to \overline M^{\mu}(E, \Jcal)$ that extends the identity on $M^s (E, \Jcal)^{wn}$, but usually collapses points at the boundary. A more detailed summary of the theory described in the preceding paragraphs is given in Section \ref{sec:moduli} below.

\subsection{Gauge theoretic compactification in higher dimensions}
From the gauge theoretic point view, a natural candidate for a compactification would be to add  "singular" HYM connections. These may be understood from the foundational results of Uhlenbeck \cite{Uhlenbeck:82b,Uhlenbeck:82a,UhlenbeckPreprint} which provide weak convergence along a subsequence assuming  appropriate bounds on curvature (see also \cite{Nakajima:88,Sedlacek:82}).  Essential to the present paper is the work of Tian \cite{Tian:00}, 
who proves
that a sequence of smooth HYM connections $\{A_i\}$ converges (subsequentially)  to a limiting HYM connection $A_\infty$ smoothly  away from a  holomorphic subvariety $S\subset X$ of codimension at least two. Moreover, using the weak limit of the Yang-Mills energy density, 
one can assign a positive integer multiplicity to the irreducible components of $S$ of pure codimension two. 
In this way the limit  produces an effective holomorphic cycle $\Ccal$. The complement in $S$ of the support of $\Ccal$ consists of nonremovable singularities of the limiting connection $A_\infty$.
Hence,  there is a decomposition $S=S_{b}\cup S(A_{\infty })$, where $S_{b}$ is a union of pure codimension two subvarieties, and $A_\infty$ extends as a smooth connection locally on the complement of $S(A_\infty)$ (\cite{Tian:00, TaoTian:04}; Theorems \ref{thm:tian-singset} and \ref{thm:tao-tian} below). Furthermore, by results of Bando and Siu (\cite{BandoSiu:94}; Theorem \ref{thm:bando-siu} below), the holomorphic bundle defined by $A_\infty$ away from $S$ actually extends to all of $X$ as a slope polystable reflexive sheaf $\Ecal_\infty$, and $S(A_{\infty })=\sing(\mathcal{E}_{\infty })$. Note that we may have $S_{b}\cap S(A_{\infty })\neq \emptyset$. 

This suggests that ideal boundary points should be represented by triples of the form $(A,\Ccal,S(A))$, or equivalently pairs $(\Ecal,\Ccal)$, where $A$ is a finite energy HYM connection defined on the complement of $|\Ccal|\cup S(A)$, $\Ccal$ is an effective codimension two holomorphic cycle, $S(A)$ is a holomorphic subvariety of codimension at least 3 which is exactly the set of nonremovable singularities of $A$, and $\Ecal$ is a slope polystable reflexive sheaf with $S(A)=\sing(\Ecal)$, subject to the cohomological 
condition $\ch_2(E)=\ch_2(\Ecal)+[\Ccal]$. 

 This description of the "ideal connections" appearing at infinity should be compared to the definition of the  Donaldson-Uhlenbeck compactification (see \cite{Donaldson:86}, \cite{DonaldsonKronheimer:90}, and the summary provided in \cite[Sect.\ 4]{Li:93}) used in the theory of instantons on smooth four dimensional real manifolds. In this latter situation, $S$ is a finite set of points, and the connection $A$ extends smoothly to the entire manifold ($S(A)=\emptyset$), albeit on a bundle with modified second Chern class.
  Originally, the theory is developed for either $\SU(2)$ or $\SO(3)$ bundles, and the second Chern number is then essentially the Yang-Mills energy; in particular, it is positive. The cohomological relation stated above then gives  a uniform bound on the total length of the cycles (the sum of the number of points with multiplicity) and this is enough to obtain convergence of the singular sets to a limiting cycle. One may then apply Uhlenbeck compactness on the complement of (the support of) this limiting cycle to obtain a limiting ideal connection, and the set of all ideal connections is thereby (sequentially) compact. Taking the closure inside this compact space yields a gauge theoretic compactification.

In order to obtain an analogue of the Donaldson-Uhlenbeck compactification in higher dimensions, we mimic the procedure described in the previous paragraph using the approach of \cite{Tian:00}. The appearance of the nonremovable singularity sets $S(A)$ is a key new feature in higher dimensions. For a sequence $(A_i,\Ccal_i,S(A_i))$, the Bogomolov inequality and the relations $\ch_2(E)=\ch_2(A_i)+[\Ccal_i]$, imply a uniform bound on the degrees of the $\Ccal_i$, and therefore also subsequential convergence of the $\Ccal_i$ as currents to some codimension two cycle $\Ccal^{\prime}_{\infty}$. Given a general sequence of ideal connections, however, it would  seem there is no a priori control on the singular sets $S(A_i)$ beyond the fact that they converge in the Hausdorff topology to some compact set, the Hausdorff dimension of which is in principle only bounded in real codimension four (cf.\ \cite[Prop.\ 3.3]{TianYang:02}).
Although Uhlenbeck compactness applies on the complement of this closed set  to obtain a limiting connection $A_\infty$, this will not result in an ideal connection in our sense unless we can pin down a more precise structure for the singular set. One problem is the extension of the codimension two subvarieties of the bubbling locus to all of $X$.
A second crucial issue  is the behaviour of the sets 
 $S(A_i)$. In particular, since the lower dimensional strata $S(A_\infty)$ associated to an ideal connection should be precisely the codimension at least three subvariety given by the singularities of a reflexive sheaf, we wish first of all to rule out the possibility that the $S(A_i)$ accumulate along a set of strictly larger Hausdorff dimension. 

Subsequential convergence 
of the $S(A_i)$ in the cycle space is 
 implied by a uniform bound on the degrees (or volumes) of the
holomorphic subvarieties given by the irreducible components
of the $S(A_i)$ in each dimension. The problem that appears at
this point is that it is by no means obvious that the set
$\widehat M_{\HYM}(E,h,a_J)$ of ideal connections, or indeed
even the set of limits of smooth, irreducible, HYM connections
satisfies this property. This is where we are forced to use
the projectivity assumption on $X$. Thinking of points of
$\widehat M_{\HYM}(E,h, a_J)$ as a set of pairs
$(\Ecal,\Ccal)$ the above property for  $S(A)=\sing(\Ecal)$
would follow from the statement that the family of all
polystable reflexive sheaves $\Ecal$ arising as the sheaf
component of an ideal connection is  bounded. Here we may rely
on fundamental results of \cite{Grothendieck:61} controlling
the Hilbert polynomials of such a family of sheaves; namely,
only finitely many Hilbert polynomials appear as we range over
a bounded family. This fact, together with the sheaf theoretic
description of the singular sets, is enough to give a uniform
degree bound for sequences in $\widehat M_{\HYM}(E,h, a_J)$.
Therefore the question is reduced to whether or not the
sheaves $\Ecal$ form a bounded family. This is answered affirmatively, using \cite[Main Theorem]{Maruyama:81} (see Lemma \ref{lem:bounded-admissible}).

Applying the compactness results of Uhlenbeck and Tian on the
complement of $|\Ccal^{\prime}_{\infty}|\cup
S^{\prime}_{\infty}$, we obtain sequential compactness for the
space $\widehat M_{\HYM}(E,h, a_J)$ (see Theorem
\ref{thm:ideal-convergence}). A crucial aspect of this result,
also essential for the algebraic arguments of Section
\ref{sect:equivalence_relation},
 is the fact that the limiting set $S^{\prime}_{\infty}$ is actually contained in  $|\Ccal_{\infty}|\cup S(A_\infty)$, where $\Ccal_{\infty}$ is a cycle obtained from $\Ccal^{\prime}_{\infty}$ and the cycle produced by bubbling of the connections. 
  Interestingly, the proof of this fact relies on $\dbar$-methods and relatively recent 
  estimates of  Chakrabarti-Shaw \cite{ChakrabartiShaw:11}.

With these results we may proceed to define the compactification \break $\overline M_{\HYM}(E,h, a_J)$ to be the set of Uhlenbeck limits in  $\widehat M_{\HYM}(E,h, a_J)$
of smooth, irreducible HYM connections.
 A diagonalisation argument shows that this space is in fact closed, or in other words any limit of a sequence of ideal connections in $\overline M_{\HYM}(E,h, a_J)$ again arises as an Uhlenbeck limit of connections in $M_{\HYM}^\ast(E,h, a_J)$. From the definition of convergence emerges  a natural topology on $\widehat M_{\HYM}(E,h, a_J)$, and hence also for $\overline M_{\HYM}(E,h, a_J)$, resulting in the following theorem.

\begin{thm}[{\sc Gauge theoretic compactification}] \label{thm:mainI}
$\overline M_{\HYM}(E,h, a_J)$ is a sequentially  compact Hausdorff space containing $ M_{\HYM}^\ast(E,h, a_J)$ as a dense open subspace. 
\end{thm}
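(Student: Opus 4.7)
My plan is to extract the theorem from the sequential compactness of the ambient space $\widehat M_{\HYM}(E,h,a_J)$ provided by Theorem \ref{thm:ideal-convergence}, combined with a diagonalisation argument. A basic neighbourhood of an ideal point $[A_\infty,\Ccal_\infty,S(A_\infty)]$ consists of those $[A,\Ccal,S(A)]$ such that $\Ccal$ and $S(A)$ are $\varepsilon$-close to $\Ccal_\infty$ and $S(A_\infty)$ in the cycle space, and some gauge representative of $A$ is $C^k$-close to $A_\infty$ on the complement of an $\varepsilon$-tubular neighbourhood of $|\Ccal_\infty|\cup S(A_\infty)$. Each ingredient (smooth convergence modulo gauge on a compact set, convergence in cycle space) defines a first countable topology, so the resulting topology on $\widehat M_{\HYM}(E,h,a_J)$ is first countable, and sequential compactness is equivalent to compactness for closed subspaces.

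The central step is a diagonalisation that simultaneously gives sequential compactness and closedness of $\overline M_{\HYM}(E,h,a_J)$ inside $\widehat M_{\HYM}(E,h,a_J)$. Let $\{[A_i]\}$ be a sequence in $\overline M_{\HYM}(E,h,a_J)$. By definition each $[A_i]$ is an Uhlenbeck limit of some sequence $\{[B_{i,k}]\}_{k\in\N}\subset M_{\HYM}^\ast(E,h,a_J)$; using first countability at $[A_i]$, one may choose $k(i)$ so that $[B_{i,k(i)}]$ lies in the $1/i$-neighbourhood of $[A_i]$. Applying Theorem \ref{thm:ideal-convergence} to the diagonal sequence $\{[B_{i,k(i)}]\}$ extracts a subsequence converging to some $[A_\infty]\in\widehat M_{\HYM}(E,h,a_J)$; by construction $[A_\infty]$ is an Uhlenbeck limit of smooth irreducible HYM connections and hence lies in $\overline M_{\HYM}(E,h,a_J)$, and a triangle-inequality estimate shows that $[A_i]$ converges to $[A_\infty]$ along the same subsequence.

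Density of $M_{\HYM}^\ast(E,h,a_J)$ in $\overline M_{\HYM}(E,h,a_J)$ is automatic from the definition of the closure. Openness holds because the smooth locus corresponds to the condition $\Ccal=0$ and $S(A)=\emptyset$, and the boundedness of the family of reflexive sheaves arising as sheaf components (Lemma \ref{lem:bounded-admissible}, using \cite[Main Theorem]{Maruyama:81}) gives a positive lower bound on the mass of any nontrivial cycle and any nonempty singularity set that appears in $\widehat M_{\HYM}(E,h,a_J)$, so $\Ccal_i=0$ and $S(A_i)=\emptyset$ eventually for any sequence approaching the smooth locus. The Hausdorff property is, as I expect, the main obstacle. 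Here I would invoke the Bando--Siu theorem (Theorem \ref{thm:bando-siu}) to identify each boundary point with a pair $(\Ecal,\Ccal)$ consisting of a polystable reflexive sheaf and a codimension-two cycle. Two distinct ideal points then differ either in the cycle or in the isomorphism class of $\Ecal$: cycle data is separated by the Hausdorff property of cycle space, while non-isomorphic polystable reflexive sheaves produce non gauge-equivalent smooth Hermite--Einstein connections on the common smooth locus $X\setminus(|\Ccal|\cup\sing\Ecal)$, which can in turn be separated by Uhlenbeck-gauge fixing on small balls combined with a unique continuation argument for the HYM equations.
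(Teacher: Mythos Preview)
Your overall architecture matches the paper's: establish sequential compactness of the ambient space $\widehat M_{\HYM}$ via Theorem \ref{thm:ideal-convergence}, then argue that $\overline M_{\HYM}$ is sequentially closed by a diagonalisation. However, you have the relative difficulties inverted, and the diagonalisation step as you state it has a genuine gap.

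\textbf{Hausdorffness is not the obstacle.} In the paper this is dispatched in one line: it follows from the definition of the basic open sets $\Ucal_{\vec\varepsilon}$. Two distinct ideal connections differ either in the cycle or in the gauge class of the connection on the common smooth locus. In the first case condition (4) separates them, since the currents $\ch_2(A_i)-\Ccal_i$ differ in mass norm; in the second case condition (3) separates them. No unique continuation, no Uhlenbeck gauge fixing on balls, no appeal to Bando--Siu is needed here. Your proposed argument is correct in spirit but unnecessarily elaborate.

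\textbf{The diagonalisation is where the work lies.} Your ``triangle-inequality estimate'' does not go through as stated: the basic open sets are \emph{asymmetric} in the cycle component (condition (1) involves subcycles of the centre), so knowing $[B_{i,k(i)}]\in\Ucal_{1/i}([A_i])$ and $[B_{i,k(i)}]\to[A_\infty]$ does not formally yield $[A_i]\to[A_\infty]$. The paper (Proposition \ref{prop:diagonal}) instead first passes to a subsequential limit $(A_\infty,\Ccal_\infty,S(A_\infty))$ of the ideal connections $(A_i,\Ccal_i,S(A_i))$ in $\widehat M_{\HYM}$ using Theorem \ref{thm:ideal-convergence}, and then proves that this limit coincides with the Uhlenbeck limit of the diagonal sequence $A_{i,k_i}$ of smooth connections. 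The connection components agree by reflexivity and uniqueness of the Bando--Siu metric, but matching the \emph{cycle} components is the substantive step: one must show that for each irreducible component $Z$ of the blow-up locus, the multiplicity assigned by the diagonal sequence equals $m_Z = m'_Z + m''_Z$ (the limit-of-cycles contribution plus the excess bubbling). This is done by computing degrees of gauge transformations via the Chern--Simons integrals of Section \ref{sec:analytic-multiplicities}; it is not a soft argument.

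\textbf{Openness is simpler than you indicate.} For $[A]\in M^\ast_{\HYM}$ one has $\Ccal=0$ and $S(A)=\emptyset$; condition (2) then forces $S(A_1)=\emptyset$ directly, and condition (1) forces $\Ccal_1$ within $\varepsilon_1$ of the zero cycle in mass norm. Since $X$ is projective, the degree of any nonzero effective codimension-2 cycle is bounded below by a positive constant, so $\Ccal_1=0$ for small $\varepsilon_1$. No appeal to boundedness of the family of reflexive sheaves is required.
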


We should point out here that after the proof of Theorem \ref{thm:mainII} below we will   conclude that $\overline M_{\HYM}(E,h, a_J)$ is actually compact, and not just sequentially so.

\subsection{A complex structure on the HYM compactification}
It is natural to ask how much structure the topological space $\overline M_{\HYM}(E,h, a_J)$  inherits from the projective  manifold $X$. As mentioned in the first paragraph, the Donaldson-Uhlenbeck-Yau Theorem 
 gives a bijection $$\Phi :M^{s}(E, \Jcal)\longrightarrow M_{\HYM}^\ast(E,h, a_J)\ .$$
  In fact, much more  is true: the space $M^s(E, \Jcal)$ has the structure of a possibly nonreduced complex analytic space, obtained for example by analytification from the scheme structure of the Gieseker-Maruyama moduli space,\footnote{This complex structure coincides with the one induced by the moduli space of simple holomorpic bundles and also with the one given by the moduli space of simple holomorphic structures (\cite[Chapter VII]{Kobayashi:87} and \cite{Miyajima:89}).} while $M_{\HYM}^\ast(E,h, a_J)$ has a natural  real-analytic space structure. With respect to these, the map $\Phi$ is real-analytic, and may hence be used to transfer the complex structure from $M^{s}(E,\Jcal)$ to $M_{\HYM}^\ast(E,h, a_J)$ (see \cite{FujikiSchumacher:87}, \cite{LubkeOkonek:87}, \cite{Miyajima:89}, and especially \cite{LubkeTeleman:95} for this point of view).
The question then arises as to what relationship the algebraic compactifications $\overline M^\GMC(E,\Jcal)$ and $\overline M^\mu(E,\Jcal)$ might have to  $\overline M_{\HYM}(E,h, a_J)$, and whether or not the gauge theoretic compactification $\overline M_{\HYM}(E,h, a_J)$ also has the structure of a complex or even (projective) algebraic space.

In the case of projective surfaces, these two questions have been answered by Jun Li in \cite{Li:93} (see also \cite{Donaldson:90}). In this case, a complex space closely connected with $\overline M^\mu(E, \Jcal)$ turns out to be  homeomorphic to the Uhlenbeck-Donaldson compactification (see Remark \ref{rem:identify_in_surface_case} for the details), and 
Li realizes this homeomorphism  via an algebraic map
$\overline M^\GMC(E, \Jcal)\to \overline M^\mu(E,\Jcal)$. We also note  that in this context Morgan  \cite{Morgan:93}  constructed a continuous map from $M^{\Gss}(E, \Jcal)$ to $ \overline M_{\HYM}(E,h, a_J)$ (see also \cite{BuchdahlTelemanToma:17} for a generalisation to non-K\"ahlerian surfaces). 

This brings us to the second main result of the paper, which provides a generalisation of Jun Li's theorem to higher dimensions.
\begin{thm}[{\sc Complex structure}]\label{thm:mainII} 
The space $\overline M_{\HYM}(E,h, a_J)$ can be endowed with the the structure of a reduced complex analytic space such that the natural map $M_{\HYM}^\ast(E,h, a_J)^{wn} \hookrightarrow \overline M_{\HYM}(E,h, a_J)$ is holomorphic and embeds $M_{\HYM}^\ast(E,h, a_J)^{wn}$ as a Zariski open and dense subset. There are moreover natural surjective holomorphic maps 
$$\overline M^{\GMC}(E,\Jcal)^{wn}\lra\overline M^\mu(E, \Jcal)
\stackrel{\overline{\Phi}}{\xrightarrow{\hspace*{.75cm}}} 
\overline M_{\HYM}(E,h, a_J)\ ,$$ where the first map is birational, and the second map is finite and extends the  map 
$$\Phi^{wn}: M^s(E, \Jcal)^{wn} \stackrel{\cong}{\xrightarrow{\hspace*{.75cm}}} M_{\HYM}^\ast(E,h, a_J)^{wn}$$
 to the respective compactifications. 
\end{thm}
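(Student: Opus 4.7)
My plan is to construct the comparison map $\overline{\Phi}\colon \overline M^\mu(E,\Jcal)\to \overline M_{\HYM}(E,h, a_J)$ first at the level of points, establish its continuity using Theorem~\ref{thm:mainI}, analyse its fibres, and finally descend the complex structure from $\overline M^\mu(E,\Jcal)$ to $\overline M_{\HYM}(E,h, a_J)$ via an analytic quotient. To define $\overline{\Phi}$ on points, I assign to the class of a slope semistable torsion-free sheaf $\Ecal$ the pair $(\Fcal,\Ccal)$, where $\Fcal := (\gr^\mu\Ecal)^{\vee\vee}$ is the reflexive hull of the graded sheaf of a slope Jordan--H\"older filtration of $\Ecal$, and $\Ccal$ is the effective codimension-two cycle defined by the Chern-character identity $\ch_2(E) = \ch_2(\Fcal) + [\Ccal]$. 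The sheaf $\Fcal$ is slope polystable and reflexive with $\sing(\Fcal)$ of codimension at least three, so by Donaldson--Uhlenbeck--Yau combined with the Bando--Siu extension (Theorem~\ref{thm:bando-siu}) it carries an essentially unique finite-energy HYM connection $A$, smooth on $X\setminus\sing(\Fcal)$. The triple $(A,\Ccal,\sing(\Fcal))$ is then an ideal connection in the sense made precise earlier in the paper, and its gauge equivalence class is $\overline{\Phi}([\Ecal])$. The map $\overline M^\GMC(E,\Jcal)^{wn} \to \overline M^\mu(E,\Jcal)$ is the already constructed birational morphism $\Xi^{wn}$ recalled in the introduction.

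For continuity I argue by compactness: given $[\Ecal_i]\to [\Ecal_\infty]$ in $\overline M^\mu(E,\Jcal)$, sequential compactness from Theorem~\ref{thm:mainI} produces a subsequential limit $(A'_\infty,\Ccal'_\infty,S(A'_\infty))$ in $\overline M_{\HYM}(E,h,a_J)$. Convergence in $\overline M^\mu$ translates, by Donaldson--Uhlenbeck--Yau for reflexive sheaves together with the weak convergence of the cycle parts, into control on both the polystable reflexive limit and the bubbling cycle; combining Theorems~\ref{thm:tian-singset} and~\ref{thm:tao-tian} with Bando--Siu then identifies $(A'_\infty,\Ccal'_\infty)$ with the ideal connection data attached to $(\gr^\mu\Ecal_\infty)^{\vee\vee}$ and the bubbling cycle of $\Ecal_\infty$. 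Uniqueness of the limit upgrades subsequential convergence to full convergence, and so $\overline{\Phi}$ is continuous. Surjectivity follows because the image contains the dense subset $M^s(E,\Jcal)^{wn}\cong M_{\HYM}^\ast(E,h,a_J)^{wn}$ and is closed as the continuous image of the compact space $\overline M^\mu(E,\Jcal)$; the same observation delivers compactness of $\overline M_{\HYM}(E,h,a_J)$, not merely sequential compactness.

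The main obstacle is the analysis of the fibres of $\overline{\Phi}$, to be carried out in Section~\ref{sect:equivalence_relation}. Two slope semistable sheaves yield the same ideal HYM connection exactly when their slope-polystable reflexive hulls are isomorphic and their associated codimension-two cycles coincide, and the task is to show that this condition cuts out a proper analytic equivalence relation on $\overline M^\mu(E,\Jcal)$ with finite fibres. This is substantially more delicate than Jun Li's argument in complex dimension two, both because the reflexive sheaves $\Fcal$ now carry genuine singularities of codimension at least three and because the bubbling cycle need not be supported on $\sing(\Fcal)$; it is precisely here that the sheaf-theoretic description of the convergence and the identification of the singular sets (culminating in the fact that $S'_\infty \subset |\Ccal_\infty|\cup S(A_\infty)$) pay off. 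Granting the fibre analysis, the continuous, surjective, finite map $\overline{\Phi}$ from the reduced compact complex analytic space $\overline M^\mu(E,\Jcal)$ has analytic fibres, so a Grauert-style quotient theorem for proper analytic equivalence relations equips $\overline M_{\HYM}(E,h,a_J)$ with the structure of a reduced complex analytic space for which $\overline{\Phi}$ is holomorphic and finite. Finally, the embedding of $M_{\HYM}^\ast(E,h, a_J)^{wn}$ as a Zariski-open dense subspace follows because the corresponding open subset $M^s(E,\Jcal)^{wn}\subset \overline M^\mu(E,\Jcal)$ is Zariski-open and dense and maps isomorphically to its image via $\Phi^{wn}$.
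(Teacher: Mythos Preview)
Your overall architecture is right: define $\overline{\Phi}$ on points via $(\Gr\Ecal)^{\vee\vee}$ and the support cycle, prove continuity, analyse the fibres, then descend the complex structure. But there are two genuine gaps.

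\textbf{Continuity.} Your sketch ``convergence in $\overline M^\mu$ translates \ldots\ into control on both the polystable reflexive limit and the bubbling cycle'' hides the entire content of Section~\ref{sec:comparison}. Given $[\Fcal_i]\to[\Fcal_\infty]$ in $\overline M^\mu$ and an Uhlenbeck limit $(A_\infty,\Ccal^{an})$ of the associated HYM connections, you must prove two separate identifications: (i) the reflexive extension $\Ecal_\infty$ of $A_\infty$ is isomorphic to $(\Gr\Fcal_\infty)^{\vee\vee}$, and (ii) the analytic bubbling cycle $\Ccal^{an}$ equals the algebraic support cycle $\Ccal_{\Fcal_\infty}$. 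Neither follows from soft compactness arguments. The paper obtains (i) by realising $\Ecal_\infty$ as the double dual of a limit in the Quot scheme (Proposition~\ref{prop:quot-limit}, via $L^2$-orthonormal bases of sections), and (ii) by a singular Bott--Chern formula and a slicing argument comparing multiplicities (Propositions~\ref{prop:bc-formula} and~\ref{prop:multiplicity}). Your invocation of Theorems~\ref{thm:tian-singset}, \ref{thm:tao-tian}, and Bando--Siu gives the structure of an Uhlenbeck limit but does not pin it down as the specific pair coming from $\Fcal_\infty$.

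\textbf{Descent of the complex structure.} This is the more serious gap. You write that once $\overline\Phi$ is continuous, surjective, finite, and has ``analytic fibres'', a ``Grauert-style quotient theorem for proper analytic equivalence relations'' equips the target with a complex structure. No such theorem exists in this generality. The paper explicitly notes (and cites Koll\'ar) that quotients by finite equivalence relations need not be schemes or complex spaces. What the paper actually uses is Cartan's criterion (Theorem~\ref{thm:Cartan_criterion}): one must produce, locally over $\overline M_\HYM$, $R$-invariant holomorphic maps that \emph{separate} the equivalence classes. Constructing these maps is the substance of Section~\ref{sect:equivalence_relation}: one lifts $\theta$-functions from carefully chosen complete intersection curves in admissible surfaces $S\subset X$ to sections of $\mathscr L_{n-1}$ over $\Zcal$, shows via Proposition~\ref{prop:nonseparation} that the resulting rational maps $\nu:U_S\to\PBbb(W_S^*)$ factor through restriction to $S$ and hence are $R$-invariant, and then uses Corollary~\ref{cor:admissible_plus_lifting} and Lemma~\ref{lemma: cycle separation} to see that finitely many such surfaces suffice to separate distinct ideal connections. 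Knowing the fibres are finite (Proposition~\ref{prop:fibres}) is only the starting point; the separation by invariant holomorphic maps is where the work lies, and your proposal assumes it away.
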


Once the complex structure on $\overline M_{\HYM}(E,h, a_J)$ has been constructed, the fact that $\overline{M}^\mu(E, \Jcal)$ is projective, and hence algebraic, implies by standard arguments that $\overline M_{\HYM}(E,h, a_J)$ is actually likewise algebraic (see the end of Section\ \ref{sect:equivalence_relation}).

\begin{cor}[{\sc Algebraicity of complex structure}]\label{cor:algebraic}
 The complex space $\overline M_{\HYM}(E,h, a_J)$ constructed in Theorem\ \ref{thm:mainII} is the analytification of an algebraic space, which we also denote by $\overline M_{\HYM}(E,h, a_J)$, such that
  $$\overline{M}^\mu(E,\Jcal) \stackrel{\overline{\Phi}}{\xrightarrow{\hspace*{.75cm}}} \overline M_{\HYM}(E,h, a_J)$$ is a morphism of algebraic spaces which embeds $M^s(E, \Jcal)^{wn} \subset \overline{M}^\mu(E, \Jcal)$ as a Zariski open subset into $\overline M_\HYM(E,h,a_J)$.
\end{cor}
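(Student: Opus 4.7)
The plan is to realise $\overline M_\HYM(E,h,a_J)$ as the analytic quotient of the projective variety $\overline M^\mu(E,\Jcal)$ by an algebraic finite equivalence relation, and then to invoke the existence theorem for such quotients in the category of algebraic spaces. By \cite{GrebToma:17} the source $\overline M^\mu(E,\Jcal)$ is a projective variety, and by Theorem \ref{thm:mainII} the map $\overline\Phi$ is finite, surjective, and holomorphic onto the reduced Hausdorff complex space $\overline M_\HYM(E,h,a_J)$. The first step is to form the reduced analytic fibre product
$$R\;\definiere\; \overline M^\mu(E,\Jcal)\times_{\overline M_\HYM(E,h,a_J)}\overline M^\mu(E,\Jcal)\ \subset\ \overline M^\mu(E,\Jcal)\times \overline M^\mu(E,\Jcal),$$
a closed reduced analytic subspace whose two projections $p_1,p_2\colon R\to \overline M^\mu(E,\Jcal)$ are finite and surjective, and which manifestly defines an equivalence relation on the underlying set.

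The second step is algebraisation of $R$. Since the product $\overline M^\mu(E,\Jcal)\times \overline M^\mu(E,\Jcal)$ is a projective variety, Chow's theorem (equivalently Serre's GAGA for reduced spaces) identifies $R$ with the analytification of a uniquely determined reduced closed subscheme of $\overline M^\mu(E,\Jcal)\times \overline M^\mu(E,\Jcal)$; the projections $p_i$ are thereby promoted to finite morphisms of schemes. Thus $(R,p_1,p_2)$ is a finite algebraic equivalence relation on the projective scheme $\overline M^\mu(E,\Jcal)$.

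The third step is to appeal to the quotient theorem of Artin (with later refinements due to Koll\'ar and Rydh) for finite equivalence relations: it produces a proper algebraic space $Y\definiere \overline M^\mu(E,\Jcal)/R$ over $\CBbb$ together with a finite surjective algebraic morphism $\pi\colon \overline M^\mu(E,\Jcal)\to Y$ whose geometric fibres are precisely the $R$-equivalence classes. It remains to identify $Y^{\an}$ with $\overline M_\HYM(E,h,a_J)$. For this, observe that analytification is compatible with the formation of the quotient: the analytification $R^{\an}$ is by construction the analytic equivalence relation determined by $\overline\Phi$, and the universal property of the holomorphic quotient (already fulfilled by $\overline M_\HYM(E,h,a_J)$ thanks to Theorem \ref{thm:mainII}) yields a canonical biholomorphism $Y^{\an}\cong \overline M_\HYM(E,h,a_J)$ under which $\pi^{\an}$ corresponds to $\overline\Phi$. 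In particular, $\overline\Phi$ becomes a morphism of algebraic spaces. As a byproduct, $\overline M_\HYM(E,h,a_J)$ is now the analytification of a proper algebraic space, hence compact, confirming the sharpening of Theorem \ref{thm:mainI} announced after its statement.

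The asserted Zariski openness of $M^s(E,\Jcal)^{wn}$ inside $\overline M_\HYM(E,h,a_J)$ follows from the corresponding statement in Theorem \ref{thm:mainII}: on the source, $M^s(E,\Jcal)^{wn}$ is Zariski open in $\overline M^\mu(E,\Jcal)$ and saturated with respect to $R$ because $\overline\Phi$ restricts there to the bijection $\Phi^{wn}$, so its image is Zariski open in $Y$. The main obstacle in this plan is conceptual rather than computational: one must verify that the analytically defined equivalence relation $R$ is algebraic --- which is exactly where projectivity of $\overline M^\mu(E,\Jcal)$ enters through Chow's theorem --- and that the resulting algebraic quotient analytifies to give back the complex space constructed in Theorem \ref{thm:mainII}. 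Once this is in place, the rest is a direct application of the existence theorem for quotients by finite algebraic equivalence relations.
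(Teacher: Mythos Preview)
Your approach is correct but takes a genuinely different route from the paper's. The paper argues as follows: since $\overline M^\mu(E,\Jcal)$ is projective it is Moishezon, and because $\overline\Phi$ is finite and surjective, the target $\overline M_\HYM(E,h,a_J)$ is Moishezon as well (citing \cite[Chap.\ V, Cor.\ 11]{AnconaTomassini:82}); Artin's theorem \cite[Thm.\ 7.3]{Artin:70} then identifies any Moishezon space with the analytification of a proper algebraic space, and GAGA for proper algebraic spaces makes $\overline\Phi$ algebraic. The Zariski openness statement is then read off directly from Theorem~\ref{thm:mainII} together with the compatibility of algebraic and analytic weak normalisation.

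Your argument instead algebraises the equivalence relation $R$ via Chow's theorem and then invokes an existence theorem for quotients by finite algebraic equivalence relations. This is more hands-on and makes the algebraic structure of the quotient explicit, but it carries two costs. First, the existence of the algebraic-space quotient for a finite but possibly \emph{non-flat} equivalence relation is exactly the delicate point flagged in the paper's discussion of open questions (see \cite{KollarFiniteEquiv}); while results of Koll\'ar and Rydh do cover the present situation, one must check the hypotheses carefully. Second, you need the compatibility $(\overline M^\mu/R)^{\an}\cong (\overline M^\mu)^{\an}/R^{\an}$, which is true for finite quotients but deserves a sentence of justification. The paper's Moishezon route sidesteps both issues by treating $\overline M_\HYM$ intrinsically rather than as a quotient, at the price of invoking the somewhat heavier Artin algebraisation theorem as a black box.
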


\subsection{The sheaf-theory-to-gauge-theory comparison map}
Let us give a rough idea of the
 proof of Theorem\ \ref{thm:mainII}, which can be found in Section\ \ref{sect:equivalence_relation}.
 Broadly speaking, 
 we follow the strategy of Li in the case of surfaces.
However,  as we shall see,  the relationship of $\overline M_\HYM(E,h,a_J)$ to $\overline M^{\mu}(E, \Jcal)$ is less straightforward if $\dim X\geq 3$. We begin with a comparison map 
\begin{equation*}
\overline{\Phi }:\overline{M}^\mu(E,\Jcal)\lra \widehat M_{\HYM}(E,h, a_J)\ ,
\end{equation*}
defined by sending $[\Ecal]\longmapsto (\ddual{(\Gr\Ecal)},\mathcal{C}_{\Ecal})$, where $\Ecal$ is  a slope semistable sheaf $\Ecal$, $\Gr\Ecal$ is
a torsion free sheaf associated to a Jordan-H\"older filtration of $\Ecal$, $\ddual{(\Gr\Ecal)}$ is its sheaf theoretic double dual,  and $\Ccal_{\Ecal}$ is the support cycle of the torsion sheaf  $\ddual{(\Gr\Ecal)}/\Gr\Ecal$. That this map is well defined is a consequence of the geometry of $M^{\muss}(E, \Jcal)$ (see Section \ref{sec:moduli}). In fact, as a direct result of Propositions  \ref{prop:cont1} and  \ref{prop:singset-limit} in Section \ref{sec:comparison}, we will see that the image of this map actually lies in $\overline M_{\HYM}(E,h, a_J)$ (see Lemma \ref{lem:images}).

The first point is that $\overline\Phi$ is continuous. We prove this by showing that it respects limits in the respective spaces.
The existence of subsequential limits in $\overline M_{\HYM}(E,h, a_J)$ is guaranteed by Theorem \ref{thm:mainI}.
 We first prove the continuity for degenerations of the form $\Ecal_{i}\rightarrow\Ecal_\infty$, where the $\Ecal_i$ are slope stable and locally free and $\Ecal_\infty$ is slope semistable. The result follows from continuity on sequences of this type, together with a diagonalisation argument. Since a point in $\overline M_{\HYM}(E,h, a_J)$ is a pair $(\Ecal,\Ccal)$, the proof reduces to two natural continuity-type statements; one for the limiting reflexive sheaf, and one for the holomorphic cycle.

In the sheaf component, the key point is that in the projective setting any Uhlenbeck limit $\Ecal_\infty$ of a sequence of smooth, irreducible, HYM connections can be identified with the double dual of a polystable quotient $\mathcal{H}\rightarrow \widehat{\mathcal{E}}_{\infty }\rightarrow 0$ in $\Quot(\mathcal{H},c(E))$
  (see Section~\ref{sec:limits1}). The continuity essentially follows by applying this result to the HYM connections in the complex gauge orbit of the $\Ecal_i$. Here we follow  the argument of \cite{Li:93}, with some important modifications. 
The idea, which is reminiscent of  methods used in \cite{Donaldson:85, Morgan:93, DaskalWentworth:04, Sibley:15, BuchdahlTelemanToma:17}, is to find a subsequential limit of an $L^2$-orthonormal basis of sections of $\Ecal_i$ twisted by a high power of the polarisation.  
The associated maps $\ebold _{i}^{(k)}:\mathcal{H}(k)\rightarrow \mathcal{E}_{i}(k)$
give rise to a subsheaf $\widetilde \Ecal_\infty\subset \Ecal_\infty$ of full rank. On the other hand, one can take a limit of the sequence in $\Quot(\Hcal, c(E))$, and we show that $\widetilde \Ecal_\infty$ coincides with this limit.

For continuity in the cycle component, our proof is rather different to that of \cite{Li:93}, and  we adapt instead an argument of \cite{SibleyWentworth:15}. The main part of the proof is to show the equality of the multiplicity of an irreducible component of the support cycle of the quotient $\ddual{(\Gr\Ecal_\infty)}/\Gr\Ecal_\infty$ with the multiplicity defined by the limit of the Yang-Mills energy densities for $A_{i}$ 
(see Proposition \ref{prop:multiplicity}). We accomplish this by means of a slicing argument from \cite{SibleyWentworth:15} together with a slight modification of the Bott-Chern formula proven in that reference. The delicate point here is that in order to isolate the irreducible component in question, we must work on a modification of $X$ by a sequence of blowups that achieves certain useful properties. The argument in \cite{SibleyWentworth:15} is applicable to the support cycle of the quotient of the associated graded of a holomorphic vector bundle and its double dual (in that case this double dual arises as an Uhlenbeck limit). The main trick is then to notice that by virtue of the preceding paragraph, the Uhlenbeck limit $\Ecal_\infty$ is the double dual of a quotient of $\Hcal$, and so the cycle in question is the one associated to a kind of graded object  to which the singular Bott-Chern argument  applies. With this in hand, the proof proceeds in a manner similar to that of \cite{SibleyWentworth:15} (see Section \ref{sec:agree-cycle-component}).

\subsection{The induced equivalence relation} The geometric description of the points of the space $M^{\muss}(E)$ given in \cite{GrebToma:17} (see also Section \ref{sec:moduli}) implies that the map $\overline{\Phi}$ is in fact finite-to-one. A major difference to the case of surfaces is that in general the fibres may potentially consist of more than one point. We define a finite equivalence relation on $\overline M^\mu(E,\Jcal)$ by considering two points to be equivalent precisely when they lie in the same fibre. Once the continuity of  $\overline{\Phi}$ has been established, the quotient of $\overline M^\mu(E, \Jcal)$ by this equivalence relation is then homeomorphic to $\overline M_{\HYM}(E,h, a_J)$. In order to endow the quotient with a complex structure, we need to know that our equivalence relation has sufficiently nice properties so that the complex structure on $\overline M^\mu(E, \Jcal)$ descends to the quotient.

Fortunately, there is a  criterion due to H. Cartan for when the quotient of a complex space is itself complex; namely, that points can be separated by local holomorphic functions that collapse the relation. We show that this indeed holds for the relation on $\overline M^\mu(E,\Jcal)$ defined above by a more detailed study of sections of the line bundle $\mathscr L_{n-1}$ used to define $M^{\muss}(E)$. The sheaf theoretic moduli space was defined using all equivariant sections of $\mathscr L_{n-1}$, which give a globally defined map to projective space, and whose image is by definition $M^{\muss}(E, \Jcal)$ . It is possible that we thereby separate too many sheaves and so obtain a space that is larger than $\overline M_{\HYM}(E,h, a_J)$. We rectify this by considering instead only those sections which are "lifted" (in a certain precise sense) from certain well chosen complete intersection curves inside of a fixed complete intersection surface. The associated linear systems $W$ give rise to locally defined morphisms $\nu :U_{S}\longrightarrow \mathbb{P}(W^{\ast })$ with $U_{S}\subset \overline{M}^{\mu }(E, \Jcal)$ open. Taking products 
of these maps for finitely many choices of surfaces, we obtain maps defined on certain open neighbourhoods of a given point of $\overline M^\mu(E, \Jcal)$ that identify precisely the fibres of $\overline{\Phi}$. 

The chief difficulty in implementing this strategy is verifying first of all that the morphisms so obtained are actually constant on the fibres of $\overline{\Phi}$ and secondly that by restricting to sections arising from these fixed linear subsystems, we continue to separate sheaves not lying in the same fibre of $\overline{\Phi}$. The first point is established by showing that the image of a sheaf under the map $\nu$ is determined by the image under the corresponding (global) map on the corresponding moduli space on the surface $S$ of the restriction of its associated ideal connection to $S$ (see Proposition \ref{prop:nonseparation}). Here we appeal to Jun Li's result mentioned above, that on a surface the algebraic moduli space is the same as the space of ideal connections. The collapsing then follows, since if two sheaves give the same ideal connection in $\overline M_{\HYM}(E,h, a_J)$, their restrictions to the surface are still equal in the moduli space on $S$. The second point is verified by showing that any ideal connection is completely determined by its restriction to finitely many such complete intersection surfaces $S$ (see Lemma \ref{lemma: cycle separation}).  

\subsection{Open questions and further directions}
The proof of Theorem \ref{thm:mainII} outlined above leaves unanswered several important questions. 
First, one would like to know whether or not the strong form of Li's theorem holds in higher dimensions; that is, whether or not $\overline M_{\HYM}(E,h, a_J)$ is a scheme or even projective. In contrast to the surface case this does not follow immediately from the method of proof, since  $\overline{\Phi}$ is not bijective. There is no good general theory for deciding whether the quotient of a scheme by a finite equivalence relation is again a scheme, even when the quotient is an algebraic space (see \cite{KollarFiniteEquiv} for a thorough discussion).

Second, it would be interesting to find examples where the
finite-to-one property described here actually occurs. A
related question  is whether the gauge theoretic
compactification can be realized as the  completion of \break
$ M_{\HYM}^\ast(E,h, a_J)$ for an appropriate metric. In
\cite{Donaldson:89}, Donaldson proves this for the moduli
space of anti-self-dual equations on a compact four manifold,
but  generalising the result to higher dimensions  requires
more information about the structure of a neighbourhood of an
ideal HYM connection. Note that there has been recent progress
in this direction (cf.\
\cite{Yang:03,Naber:16,ChenSun:18a,ChenSun:18b, ChenSun:19}).

Finally, we remark that, via the use of boundedness results, projectivity of $X$ is essential to the construction of the compactification  $\overline M_{\HYM}(E,h, a_J)$ and its complex structure that is presented here. In principle, one would like to define a gauge theoretic compactification in various other scenarios along the lines of the programme in \cite{DonaldsonSegal:11}, including HYM connections on nonprojective K\"ahler manifolds, where the tools used in this paper are
  not available. This seems to require at the very least 
  some kind of analytic control on the higher codimension
   singular sets $S(A_i)$. 

\subsection{Organisation of the paper and conventions}
In Section \ref{sec:background}
 we give basic background about the moduli spaces, coherent
sheaves, the Quot scheme, and the compactifications using
algebraic geometry. The whole of Section
\ref{sec:gauge-theory}
 is devoted to the proof of Theorem \ref{thm:mainI}; namely,
the construction of \break $\overline M_{\HYM}(E,h, a_J)$ and
the proof of compactness. In Section \ref{sec:comparison} we
prove continuity of $\overline{\Phi}$. In Section
\ref{sect:equivalence_relation}
 we analyse the fibres of $\overline{\Phi}$ in detail and prove Theorem \ref{thm:mainII}. For more details, see the table of contents below.

 If not explicitly stated otherwise, $X$ will always denote a smooth projective variety over the complex numbers $\mathbb{C}$. For the purposes of a clear notation, $C^\infty$ bundles will be denoted with 
straight characters $E$, etc., whereas
coherent sheaves will be denoted in script, e.g.\ $\Ecal\to X$. We shall confuse the notation of a holomorphic vector bundle with its sheaf of germs of holomorphic sections.
  When the distinction becomes necessary, we will generally
use $\Fcal$ for elements of Quot schemes, and $\Ecal$ for a
holomorphic bundle associated to a $\dbar$-operator on $E$
giving an HYM connection, or to an Uhlenbeck limit of these,
or more generally to the reflexive sheaf arising from an ideal
connection (see Sections \ref{sec:background} and
\ref{sec:gauge-theory}). 
We use mathscript letters, e.g.\ $\mathscr F$, to denote families of sheaves over a parameter space. If $\mathcal{L} \to X$ is a holomorphic line bundle, we will denote by $|\mathcal{L}|$ the associated linear system. If $\mathcal{O}_X(1)$ is the very ample line bundle on $X\subset\PBbb^N$, and $S$ is a subvariety of $X$, we will denote the restriction of $\mathcal{O}_X(1)$ to $S$ by $\mathcal{O}_S(1)$. Analytic subsets are also called holomorphic subvarieties; in particular, the latter are not assumed to be irreducible. Dimensions and codimensions $\dim$ and $\codim$ are always  over the complex numbers $\CBbb$ unless otherwise indicated.

\bigskip

\par\noindent{\bf Acknowledgments.} 
D.G.'s research has been supported by the  DFG-Colla\-bo\-ra\-tive Research Center SFB/TRR 45 "Periods, moduli spaces and arithmetic of algebraic varieties" (subproject M08-10 "Moduli of vector bundles on higher-dimensional varieties"). 
B.S. and M.T. are grateful for the hospitality of the Max Planck Institut f\"ur Mathematik in Bonn and the Department of Mathematics at the University of Maryland, where a portion of this work was completed. 
B.S. also received support from the Simons Center for Geometry
and Physics when he was a postdoc there, and
 from the PAI grant "DyGest" when he was a postdoc at the Universit\'e Libre de Bruxelles. R.W.'s research was supported by grants from the National Science Foundation. 

Finally, the authors warmly thank Jun Li, Mei-Chi Shaw, Carlos Simpson, and Katrin Wehrheim for helpful comments and suggestions concerning this research. 
We are grateful to Xuemiao Chen and Song Sun for pointing out an error in the first version of this paper. We also thank the referee for detailed comments, which have helped improve the exposition.

\vfill
\newpage 

\setcounter{tocdepth}{2}

{\small\tableofcontents}

\newpage


\section{HYM connections, reflexive sheaves, and their moduli}
\label{sec:background}
In this preliminary section we fix basic notation and provide background material.
We give precise definitions of the moduli spaces discussed in the Introduction.
We collect the necessary lemmas on compact analytic cycles on (projective) K\"ahler manifolds. Finally, we formulate a general construction used to produce line bundles on base spaces of flat families, and we go on to define the two algebraic compactifications referred to previously.

\subsection{Moduli spaces} \label{sec:prelim}
Let $X\subset \PBbb^N$ be as in the Introduction. 
Throughout this section and for the rest of the paper, 
$(E,h)\rightarrow X$ will be a $C^\infty$ complex vector bundle of rank $r$ with hermitian metric $h$.
Recall that a $\dbar$-operator $\dbar_E: \Omega^0(X,E)\to \Omega^{0,1}(X, E)$
is \emph{integrable} if   $\dbar_E\wedge \dbar_E=0$.
In this case, the Newlander-Nirenberg theorem guarantees the existence of local frames annihilated by $\dbar_E$, and $\dbar_E$ therefore defines a holomorphic structure on $E$.  Conversely, every holomorphic structure on $E$ defines a unique $\dbar$-operator via the Leibniz formula:
$
\dbar_E(f\cdot s)=\dbar f\otimes s
$,
for holomorphic sections $s$ and functions $f$.
We sometimes specify this relationship by $\Ecal=(E,\dbar_E)$. 

In the presence of the metric $h$, a $\dbar$-operator defines a unique \emph{Chern connection} $A=(\dbar_E, h)$ that is compatible with $\dbar_E$ in the sense that $\dbar_A := (d_A)^{0,1}=\dbar_E$, and $A$ is \emph{unitary}, that is,
$$
d\langle s_1,s_2\rangle_h= \langle d_As_1,s_2\rangle_h+\langle s_1,d_A s_2\rangle_h\ ,
$$ 
for any local smooth sections $s_1, s_2$ of $E$.  
We shall say that a unitary connection $A$ is integrable if $\dbar_A$ is an integrable $\dbar$-operator.  

Let $\det E=J$ and $\det h=h_J$ for a hermitian line bundle
 $J\to X$. We suppose $J$ carries an integrable $\dbar$-operator $\dbar_J$
with Chern connection $a_J=(\dbar_J, h_J)$, and  we denote the associated holomorphic bundle by $\Jcal:=(J,\dbar_J)$. 
 As stated in the Introduction, we will always work with the space
 $\Acal_{hol}(E, \dbar_J)$  of integrable $\dbar$-operators on $E$ which induce the fixed operator $\dbar_J$ on $J$.

For the purposes of this paper, it will be convenient to renormalise the restriction of the Fubini-Study K\"ahler form on $\PBbb^N$ to give a K\"ahler metric $\omega$ on $X$ with $\vol(X,\omega)=2\pi$. 
Let $\Ocal_X(1)$ be the restriction of the hyperplane line on $\PBbb^N$ to $X$.
We will denote the underlying $C^\infty$ line bundle of $\Ocal_X(1)$ by $L$, it's $\dbar$-operator by $\dbar_L$, and by $h_L$ the hermitian metric on $L$ whose
 Chern connection $a_L=(\dbar_L, h_L)$ has curvature equal to $-2\pi i \lambda\cdot\omega$.
Here, $c_1(\Ocal_X(1))=\lambda \cdot\omega$, for some constant $\lambda>0$. We will mostly  omit $\omega$ from the notation, but we note here that all of the constructions depend on the K\"ahler class in an interesting and important manner.

 Let $\Acal(E,h, a_J)$ denote the infinite dimensional space of $h$-unitary  connections on $E$ inducing the connection $a_J$ on $J$.  This is an affine space modelled on 
 $\Omega^1(X, \gfrak_E)$, where $\gfrak_E$ denotes the bundle of traceless skew-hermitian endomorphisms of $E$. 
For $A\in \Acal(E,h, a_J)$, the curvature $F_A=d_A\wedge d_A\in \Omega^2(X,\gfrak_E)$.  
The space of integrable connections will be denoted $\Acal^{1,1}(E,h,a_J)\subset \Acal(E,h,a_J)$. Then $A\in \Acal^{1,1}(E,h,a_J)$ if and only if 
$F_A$ is of type $(1,1)$, and the Chern connection construction yields the identification  $\Acal^{1,1}(E,h, a_J)\simeq\Acal_{hol}(E, \dbar_J)$.

Choose local holomorphic coordinates $\{z^i\}$ on $X$ and write $$\omega=\sqrt{-1}\sum g_{i\bar j}dz^i\wedge d\bar z^j\ .$$
Given $A\in \Acal^{1,1}(E,h)$, we have an expression
 $F_A=\sum_{i,j} F_{i\bar{j}}dz^{i}\wedge d\bar{z}^{j}$.  If $\Lambda$ is the adjoint of
the Lefschetz operator defined by wedging with $\omega$, then the hermitian endomorphism
$$
\sqrt{-1}\Lambda F_A =\sum_{i,j} g^{i\bar j}F_{i\bar j}\in \Omega^0(X,\sqrt{-1}\gfrak_E)\ ,
$$
is
called the \emph{Hermitian-Einstein tensor} (abbreviated \emph{HE tensor}).
 Note that $\Lambda$  is characterized
by the property that
\begin{equation} \label{eqn:lefschetz}
\left( \Lambda \Omega \right) \omega ^{n}=n(\Omega\wedge \omega ^{n-1})\ .
\end{equation}
for any $(1,1)$ form $\Omega$.

The integrable connections important for this paper are those for which the HE tensor is a constant multiple of the identity.

\begin{defi}
A \emph{Hermitian-Yang-Mills $($or HYM$)$ connection}  is an integrable, unitary connection  $A\in \Acal^{1,1}(E,h, a_J)$ satisfying
\begin{equation} \label{eqn:hym}
\sqrt{-1}\Lambda F_A=\mu\cdot {\bf I}_{E}\ ,
\end{equation}
for some $\mu\in \RBbb$.  The subspace of HYM-connections will be denoted: 
$$\Acal_{\HYM}(E,h, a_J)\subset \Acal^{1,1}(E,h, a_J)\ .$$
\end{defi}

The group of unitary gauge transformations is defined by 
$$
\Gcal(E,h)=\left\{ g\in \Omega^0(X,\End E) \mid gg^\ast={\bf I}_E\ ,\ (\det g)(x)=1\ ,\ \forall x\in X\right\} \ .
$$
Then  $\Gcal(E,h)$ acts on $ \Acal^{1,1}(E,h, a_J)$ (on the right) by conjugation: 
$$d_{g(A)}=g^{-1}\circ d_A\circ g\ ,$$
 and this
induces an action on the curvature given by $ F_{g(A)}=g^{-1}\circ
F_{A}\circ g$. In particular, the subspaces 
$$\Acal_{\HYM}^{\ast}(E,h, a_J)\subset \Acal_{\HYM}(E,h, a_J)\subset \Acal^{1,1}(E,h,a_J)$$
 of
(irreducible) Hermitian-Yang-Mills connections are preserved by $\Gcal(E,h)$.

\begin{defi}
The \emph{moduli spaces} 
of HYM connections and irreducible 
HYM connections connections on $E$ are the sets
\begin{align*}
M_{\HYM}(E,h,a_J)&=\Acal_{\HYM}(E,h,a_J)/\Gcal(E,h) \ ,\\
M_{\HYM}^\ast(E,h,a_J)&=\Acal_{\HYM}^\ast(E,h,a_J)/\Gcal(E,h)\ , \text{ respectively.}
\end{align*}
\end{defi}

Note that by taking the trace of both sides of \eqref{eqn:hym}, integrating over $X$, and using \eqref{eqn:lefschetz} and the normalisation of the volume, one
sees that the constant $\mu$ must be given explicitly by
\begin{equation*} 
\mu =\frac{2\pi }{\vol(X)}\mu(E)= \mu(E)\ ,
\end{equation*}
where the \emph{degree}, $\deg(E)$, and  \emph{slope}, $\mu(E)$, are defined by
\begin{equation} \label{eqn:slope}
 \deg(E) := {\int_{X}c_{1}(E)\wedge \frac{\omega^{n-1}}{(n-1)!}}\quad ,\quad \mu(E):=\frac{\deg(E)}{\rank E}\ ,
\end{equation}
respectively.
For a torsion free coherent sheaf $\Ecal\to X$ of rank $r$, we have a line bundle
 $\det\Ecal=\ddual{(\Lambda^r\Ecal)}$. Setting $c_1(\Ecal)=c_1(\det\Ecal)$,   the definitions of the degree and slope thus make sense for any torsion free sheaf.

\begin{defi} \label{def:slope-stability}
A torsion free coherent sheaf $\Ecal\to X$ is \emph{$\omega $-slope stable} (resp.\ \emph{semistable})
if any coherent  subsheaf $\Scal\subset\Ecal$ with $0<\rank
\mathcal{S}<\rank\Ecal$ satisfies $\mu(S)<\mu(\mathcal{E)}$ (resp.\ $\leq$). 
A torsion free sheaf is called \emph{polystable} if
it is a direct sum of stable sheaves of the same slope.
\end{defi}

As indicated, the notion of $\omega$-slope stability in Definition \ref{def:slope-stability} depends on the cohomology class $[\omega]$ of the polarization as soon as $\dim X\geq 2$.  In this paper, however, the class $[\omega]$ will be fixed throughout, and we shall therefore refer to $\omega$-(semi)slope stability simply as slope (semi)stability, or more often, as \emph{$\mu$-(semi)stability}. 

The  group $\Gcal(E,h)$ has a natural complexification:
$$
\Gcal^\CBbb(E)=\left\{ g\in \End E \mid (\det g)(x)=1\ ,\ \forall x\in X\right\}\ .
$$
Then $\Gcal^\CBbb(E)$ acts on the
space of holomorphic structures $\Acal_{hol}(E,\dbar_J)$ on $E$ by $
g(\bar{\partial}_{E})=g^{-1}\circ \bar{\partial}_{E}\circ g$. Two
holomorphic structures give rise to isomorphic holomorphic vector bundles if
and only if they are related by a complex gauge transformation. Therefore,
the subspaces $$\Acal^{s}_{hol}(E,\dbar_J)\subset \Acal^{ps}_{hol}(E,\dbar_J)\subset \Acal^{ss}_{hol}(E,\dbar_J)\subset \Acal_{hol}(E,\dbar_J)$$ 
of stable, 
polystable, and semistable holomorphic structures on $E$ are preserved by the action of $\Gcal^\CBbb(E)$.

\begin{defi}
The moduli space $M^{s}(E,\Jcal)$ of  stable holomorphic structures
on $E$ is defined as: 
$$M^{ s}(E,\Jcal)= \Acal^{s}_{hol}(E,\dbar_J)/\Gcal^\CBbb(E)\ .$$
\end{defi}

\begin{rem} \label{rem:notation}
In order to lighten the notation if no confusion is likely to arise, for the rest of the paper we shall most often drop $(E,h,a_J)$, $\dbar_J$ and $\Jcal$ from the notation. So for example:
$$M_\HYM:= M_\HYM(E,h,a_J)\ ,\ M_\HYM^\ast:= M_\HYM^\ast(E,h,a_J)\ ,\ M^s:=M^s(E,\Jcal)\ .
$$
\end{rem}

\subsection{The Donaldson-Uhlenbeck-Yau theorem}

Viewed as an equation for the hermitian metric $h$ on a fixed holomorphic bundle $\Ecal$, eq.\
 \eqref{eqn:hym} is a quasi-linear second order elliptic  PDE. It is therefore 
nontrivial  to determine when solutions exist. Famously, the obstruction is related to stability.

\begin{thm}[\sc Donaldson \cite{Donaldson:87}, Uhlenbeck-Yau \cite{UhlenbeckYau:86}] \label{thm:DUY}
Fix $A\in \Acal^{1,1}$. 
There exists a
HYM connection $($resp.\ irreducible HYM connection$)$ in the $\Gcal^\CBbb$ orbit of $A$ if and only if $\Ecal=(E, \dbar_A)\in \Acal_{hol}$ is polystable $($resp.\ stable$)$.
\end{thm}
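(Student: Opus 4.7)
The plan is to prove the two directions separately, following the classical strategy of Donaldson and Uhlenbeck--Yau. For the (easy) necessity direction, assume $A \in \mathcal{A}_{\HYM}^{1,1}$ and take any coherent subsheaf $\mathcal{S} \subset \mathcal{E}$ with $0 < \operatorname{rk}\mathcal{S} < \operatorname{rk}\mathcal{E}$. I would first reduce to the case where $\mathcal{S}$ is saturated (so $\mathcal{E}/\mathcal{S}$ is torsion free), so that $\mathcal{S}$ is reflexive and hence locally free outside a subset of codimension $\geq 2$. Using the $h$-orthogonal projection $\pi$ onto $\mathcal{S}$ where it is smooth, the standard Chern--Weil / second fundamental form computation gives
\begin{equation*}
\deg(\mathcal{S}) = \frac{1}{(n-1)!}\int_X \operatorname{tr}(\pi \cdot \sqrt{-1}\Lambda F_A)\,\omega^n - \frac{1}{(n-1)!}\int_X |\bar\partial_A \pi|^2 \,\omega^n,
\end{equation*}
and the HYM condition turns the first term into $\mu(\mathcal{E})\operatorname{rk}(\mathcal{S})$. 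This yields $\mu(\mathcal{S}) \leq \mu(\mathcal{E})$, so $\mathcal{E}$ is semistable; and in the case of equality $\bar\partial_A \pi = 0$, which extends $\pi$ to a global holomorphic projection and produces a direct summand. Iterating this argument on the summands gives polystability; if moreover $A$ is irreducible, no such nontrivial $\pi$ exists and $\mathcal{E}$ is stable.

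For the sufficiency direction I would use Donaldson's nonlinear heat flow: fix a background metric $h_0$ and evolve $h(t)$ by $h^{-1}\dot h = -(\sqrt{-1}\Lambda F_h - \mu\,\mathbf{I}_E)$ with $\det h(t) = h_J$ preserved. Short-time existence is a standard parabolic result, and long-time existence is obtained from the standard maximum-principle estimate on $|\sqrt{-1}\Lambda F_h - \mu \mathbf{I}|$, together with a $C^0$ bound on $h_0^{-1}h$ derived from monotonicity of the Donaldson functional $\mathcal{M}(h_0, h)$ (whose first variation is the HE tensor, and whose convexity along geodesics in the space of metrics is the key analytic input).

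The heart of the matter is convergence. The Donaldson functional is nonincreasing along the flow, and convergence of $h(t)$ to a HYM metric will follow once one rules out $\sup_t \|\log(h_0^{-1}h(t))\|_{C^0} = +\infty$. This is where I expect the main obstacle: if blow-up occurs, one must extract from the divergent endomorphisms an $L^2$-weak limit of spectral projections and identify it with a coherent, saturated, destabilizing subsheaf $\mathcal{S} \subset \mathcal{E}$ with $\mu(\mathcal{S}) \geq \mu(\mathcal{E})$. The difficulty is regularity: the projection exists only as an $L^2_1$ endomorphism satisfying $\pi = \pi^* = \pi^2$ and $(\mathbf{I}-\pi)\bar\partial_A \pi = 0$ weakly, and one must invoke the Uhlenbeck--Yau regularity theorem to conclude that such a weakly holomorphic subbundle is in fact a coherent subsheaf. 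This contradicts stability (or, in the polystable case, forces a splitting that lets one run the argument on each stable summand separately).

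Finally, to match the irreducibility dichotomy, I would record that the HYM connection obtained is irreducible iff the holomorphic bundle is stable (as opposed to merely polystable): reducibility of $A$ corresponds via $\bar\partial_A$ to a $\mathcal{G}^{\mathbb{C}}$-invariant holomorphic splitting of $\mathcal{E}$ into HYM-pieces of the same slope. Uniqueness of the HYM metric up to automorphisms of the polystable decomposition then shows that the HYM representative of the $\mathcal{G}^{\mathbb{C}}$-orbit (if it exists) is unique up to $\mathcal{G}(E,h)$, closing the correspondence.
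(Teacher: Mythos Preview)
The paper does not prove this theorem at all: it is stated with attribution to Donaldson \cite{Donaldson:87} and Uhlenbeck--Yau \cite{UhlenbeckYau:86} and then immediately used as a black box (``This key result implies a corresponding statement at the level of moduli spaces''). So there is no proof in the paper to compare your proposal against.

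That said, your sketch is a faithful outline of the classical argument. The necessity direction via the second fundamental form is exactly the standard computation (originally due to Kobayashi and L\"ubke). For sufficiency you have chosen Donaldson's heat-flow approach rather than the Uhlenbeck--Yau continuity method; both are valid routes, and you have correctly identified the decisive technical step in either case as the regularity theorem asserting that an $L^2_1$ weakly holomorphic subbundle is a coherent subsheaf. One minor comment: long-time existence of the flow does not require a $C^0$ bound on $h_0^{-1}h$ (which is precisely what may fail in the unstable case); it follows from the parabolic maximum principle on $|\Lambda F_h|$ together with Hamilton-type estimates, while the $C^0$ bound is what distinguishes convergence from divergence and is where stability enters.
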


This key result implies a corresponding statement at the level of moduli spaces.  Namely,
there exists a natural bijection
\begin{equation} \label{eqn:phi}
\Phi: M^{s}\stackrel{\cong}{\xrightarrow{\hspace*{.75cm}}} M^\ast_\HYM\ . 
\end{equation}
Furthermore, $M^{s}$ is a Hausdorff complex analytic space (possibly nonreduced),
and $M_{\HYM}^\ast$ has the structure of a real analytic space, such that this map restricts to a real analytic isomorphism (\cite[Chapter VII]{Kobayashi:87}, \cite[Prop.\ 4.2]{Miyajima:89}, \cite{FujikiSchumacher:87}, and \cite[Thm. 4.1.1]{LubkeTeleman:95}). 

\subsection{The Hilbert polynomial and GM-semistability}
 Recall that $L\to X$ denotes the underlying complex line bundle of $\Ocal_X(1)$ and that $c_1(L)=\lambda\cdot\omega$. For a complex vector bundle $E\to X$ we set 
\begin{equation} \label{eqn:hilbert-poly}
\tau_E(m):=\int_X \ch(E\otimes L^{m}){\rm td}(X) \ .
\end{equation}
Then $\tau_E(m)$ is a polynomial of degree $n$ in $m$.
The first two terms will be  important:
\begin{align}
\begin{split} \label{eqn:expansion}
\tau_E(m)&=m^n(2\pi\lambda^n\rank(E))+ m^{n-1}\lambda^{n-1}\bigl(\deg E+\frac{\rank E}{2}\deg TX\bigr)\\
&\qquad\qquad+O(m^{n-2})
\end{split}
\end{align}
The definition \eqref{eqn:hilbert-poly} and expansion \eqref{eqn:expansion} extend to coherent sheaves $\Ecal$, and by the Hirzebruch-Riemann-Roch Theorem and Kodaira Vanishing, for all sufficiently large natural numbers $m$ the following holds:
$$
h^0(X,\Ecal(m))=\chi_\Ecal(m) :=\chi(\Ecal(m))= \int_X \ch(\Ecal(m)){\rm td}(X) \ .
$$
 We refer to $\chi_\Ecal(m)$ as the \emph{Hilbert polynomial} of $\Ecal$. The Hilbert polynomial is clearly topological: for any holomorphic structure $\Ecal=(E,\dbar_E)$ on $E$ we have  $\chi_\Ecal(m)=\tau_E(m)$. 

An alternative notion of stability that will play an important  role in the subsequent discussion is due to Gieseker and Maruyama. Let the \emph{reduced Hilbert polynomial} be defined by
$$p_{\Ecal}(m):=\frac{\chi (\Ecal(m))}{\rank \mathcal{
E}}\ .$$
 Then we say $\Ecal$ is
\emph{GM-stable} (resp. \emph{GM-semistable}) if for any subsheaf $\mathcal{S}\subset \Ecal$
with $0<\rank\mathcal{S<}\rank\Ecal$ we have $p_{\mathcal{S}}(m)$<$p_{\Ecal}(m)$ (resp. $(\leq
)$) for $m\gg0$.  

The following  relationship between GM- and slope stability follows immediately from \eqref{eqn:expansion}:
\vspace{0.1cm}
\begin{equation*}
\mu \text{-stable}\Longrightarrow \text{GM-stable}\Longrightarrow 
\text{GM-semistable}\Longrightarrow \mu \text{-semistable}\ .
\end{equation*}

\subsection{Jordan-H\"{o}lder filtrations}
If $\Ecal$ is torsion free and
$\mu $-semistable, then it has a \emph{Seshadri filtration} (also called a \emph{Jordan-H\"{o}lder filtration}). This is a
filtration by coherent subsheaves
\begin{equation} \label{eqn:filtration}
0=\Ecal_{0}\subset \Ecal_{1}\subset \cdots \subset \Ecal
_{\ell-1}\subset \Ecal_{\ell}=\Ecal\ ,
\end{equation}
so that the successive quotients $Q_{i}=\Ecal_{i}/\Ecal_{i-1}$
are torsion free and slope stable. Moreover,  $\mu(Q_{i})=\mu(\Ecal)$, $i=1,\ldots, \ell$. We will write
$
\Gr \Ecal= \bigoplus_{i} Q_{i}
$
for the graded object associated to the above filtration.

Such a filtration (and even its associated graded $
\Gr \Ecal$)  is not uniquely determined by $\Ecal$ and $
\omega $ (see \cite {BuchdahlTelemanToma:17} for examples). On the other hand, we will see below that one can extract from it certain natural algebraic-geometric data that is unique.

By analogy with the discussion for $\mu $-semistability, a GM-semistable
sheaf $\Ecal$ has a Jordan-H\"{o}lder filtration by subsheaves whose
successive quotients are torsion free GM-stable with reduced Hilbert polynomial equal to
that of $\Ecal$. Such a filtration is not unique, but the associated
graded $\gr\Ecal$ is unique. Given two GM-semistable sheaves $
\Ecal_{1}$ and $\Ecal_{2}$, we say they are \emph{s-equivalent} if $\gr\Ecal_{1}=\gr\Ecal_{2}$. 

Notice that since a GM-semistable sheaf is also $\mu$-semistable, such a sheaf admits two types of Jordan-H\"{o}lder filtrations and graded objects, which in general will differ.  

\subsection{Cycle spaces, singular sets, and support cycles}

\subsubsection{The cycle space}

Write $\mathscr C_{p}(X)$ for the set of all analytic $p$-cycles 
$
\Ccal=\sum_{i}n_{i}Z_{i}$ on $X$, where the irreducible subvarieties $Z_{i}\subset X
$ all have  dimension $p$ and  $n_{i}\in \mathbb{N}$. Let $\mathscr C
(X)=\cup^n_{p=0}\mathscr C_{p}(X)$. This is a complex space called the \emph{cycle
space}. We define the \emph{degree} of a cycle $\Ccal\in \mathscr C_{p}(X)$ by
\begin{equation*}
\deg(\Ccal):=\sum_{i}n_{i}\int_{Z_{i}}\frac{\omega ^{p}}{p!}\ ,
\end{equation*}
where the integral is performed over the nonsingular locus of $Z_i$. 
In other words, the degree is  the weighted sum of the volumes of the $Z_{i}$ with respect to the K\"ahler metric $\omega$. We will
write $|\Ccal|=\cup _{i}Z_{i}$ for the support of $\Ccal$, and $[\Ccal]=\sum_{i}n_{i}[Z_{i}]$ for the cohomology class in $H^{2(n-p)}(X,\QBbb)$ defined by $\Ccal$. When we  occasionally allow negative integers $n_i$ in the definition such objects will be called \emph{generalised cycles}.
 The following fact will be important later.

\begin{thm}[cf.\ \cite{Bishop:64, Barlet:78}] \label{thm:cycle space}
A subset $\mathscr S\subset \mathscr C(X)$ is relatively compact if and
only if there is $K$ such that $\deg(\Ccal)\leq K$ for all  $\Ccal\in \mathscr S$.
\end{thm}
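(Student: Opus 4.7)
The plan is to establish the two implications separately, relying on two classical ingredients: the structure of $\mathscr{C}(X)$ as a countable disjoint union of connected (indeed irreducible) analytic components, indexed by homology classes, and Bishop's extension theorem for analytic sets of locally bounded volume.

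For the easy direction, I would argue that the cohomology class $[\mathcal{C}] \in H^{2(n-p)}(X, \mathbb{Q})$ depends only on the connected component of $\mathscr{C}_p(X)$ containing $\mathcal{C}$, since cycles in a continuous family are homologous. Consequently the degree $\deg(\mathcal{C}) = \langle [\mathcal{C}], [\omega^p/p!] \rangle$ is constant on each connected component. Since $\mathscr{C}(X)$ has at most countably many connected components and any compact subset of a complex analytic space meets only finitely many of them, the closure of a relatively compact $\mathscr{S}$ is covered by finitely many components, yielding a uniform degree bound.

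For the converse, I would realise each cycle $\mathcal{C} = \sum_i n_i Z_i \in \mathscr{C}_p(X)$ as a positive closed current $[\mathcal{C}] = \sum_i n_i [Z_i]$ of bidimension $(p,p)$, where $[Z_i]$ is integration over the smooth locus, the singular locus being removable in view of the locally finite Hausdorff $2p$-measure (Lelong). Under this identification the mass of $[\mathcal{C}]$ with respect to $\omega^p/p!$ agrees with $\deg(\mathcal{C})$ up to a normalising constant, so a uniform degree bound translates into a uniform mass bound. Applying Banach--Alaoglu to the dual of continuous $(p,p)$-forms, any sequence $\{\mathcal{C}_k\} \subset \mathscr{S}$ admits a subsequence whose associated currents converge weakly to some positive closed current $T$ of bidimension $(p,p)$, whose support is contained in the Hausdorff limit of the $|\mathcal{C}_k|$.

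The crux of the argument, and the main obstacle, is to promote $T$ from a measure-theoretic weak limit to an honest analytic $p$-cycle. Here I would invoke Bishop's extension theorem, in the refined form due to King and Harvey--Shiffman: a positive closed current of rational bidimension $(p,p)$ with uniformly bounded mass and support contained in a Hausdorff limit of pure $p$-dimensional analytic sets is itself of the form $T = \sum_j m_j [V_j]$ for analytic subvarieties $V_j$ of dimension $p$ and integer multiplicities $m_j \geq 0$. The final step is then to match weak convergence of the associated currents with convergence in Barlet's topology on $\mathscr{C}(X)$; this identification uses that Barlet's topology is characterised by integration of cycles against smooth test forms, together with a slicing argument showing that the multiplicities $m_j$ arise as limits of the integers $n_{k,i}$ attached to the irreducible components of $\mathcal{C}_k$ converging to $V_j$.
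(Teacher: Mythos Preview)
The paper does not give its own proof of this statement; it is quoted as a classical result with references to Bishop and Barlet, and then used as a black box. Your outline is the standard route to this result and is essentially correct.

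Two small points of precision. For the easy direction you can argue more directly: the degree function $\mathscr{C}(X)\to\RBbb$ is continuous (in fact locally constant), hence bounded on any compact set; counting components is unnecessary. For the hard direction, the way you package ``Bishop's theorem in the refined form due to King and Harvey--Shiffman'' conflates three distinct steps: Bishop's theorem proper says only that the Hausdorff limit of pure $p$-dimensional analytic sets with locally uniformly bounded $2p$-volume is again analytic of dimension at most $p$; the support/structure theorem (King, Siu, Harvey--Shiffman) then gives $T=\sum_j\lambda_j[V_j]$ with $\lambda_j\geq 0$ real; and integrality of the $\lambda_j$ is a separate slicing argument using that $T$ is a weak limit of integral currents. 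Your final step, matching weak current convergence with convergence in Barlet's cycle-space topology on bounded-degree sets, is itself a theorem of Barlet (this is precisely what the second cited reference provides), so you are right to flag it rather than treat it as automatic.
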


\subsubsection{Singular sets} \label{subsubsect:singular_sets}

For any coherent analytic sheaf $\Ecal\to X$,
define the set of singular points to be
\begin{equation*}
\sing(\Ecal)=\{x\in X\mid\Ecal\text{ is not locally free at }
x\}.
\end{equation*}
In general, $\sing(\Ecal)$ is an analytic subvariety of codimension at least 1. If $\Ecal$ is torsion free it is of codimension at least 2, and if  $\Ecal$ is reflexive it is of codimenison at least 3. 
Another description of this set is given by
\begin{equation*}
\sing
(\Ecal)=\bigcup\limits_{i>0}\supp\left( \mathcal{E}
xt^{i}(\Ecal,\mathcal{O}_{X})\right) ,
\end{equation*}
(see \cite[Ch.\ 2, Sect.\ 1]{OSS:80}). There is a unique \emph{torsion filtration} (see \cite[Def.\ 1.1.4]{HuybrechtsLehn:10}):
\[
\mathcal{T}_{0}(\Ecal)\subset \mathcal{T}_{1}(\Ecal)\subset
\cdots \subset \mathcal{T}_{d}(\Ecal)=\Ecal\ ,
\]
where $d=\dim(\supp(\Ecal))$, and $\mathcal{T}_{i}(\Ecal)$ is the maximal
subsheaf of $\Ecal$ fulfilling $\dim\supp(\mathcal{T}_{i}(\mathcal{E}))\leq i$. By construction, the support of $\mathcal{Q}_{i}(\mathcal{F)}=\mathcal{T}_{i}(\Ecal)/
\mathcal{T}_{i-1}(\Ecal)$ is  a pure codimension $i$
subvariety if it is nonzero. Notice that if $\Ecal$ is not a torsion sheaf,
then $d=n$, and $\mathcal{T}_{n-1}(\Ecal)\subset \Ecal$ is the
torsion subsheaf. 

Now suppose $\Ecal$ is torsion free, so that we have an injection $\Ecal\hookrightarrow \ddual\Ecal$. Since $
\mathcal{E}^{\vee \vee }$ is reflexive, it has homological dimension strictly less
than $n-1$, and therefore $\mathcal{E}xt^{i}(\mathcal{E}^{\vee \vee },
\mathcal{O}_{X})=0$ for $i\geq n-1$ (see \cite[Prop.\ V.4.14 (b)]{Kobayashi:87} and \cite[Ch.\ 2, Lemma 1.1.1]{OSS:80}). Hence, 
\begin{equation}\label{eq:sings_of_reflexive}
\sing\left( \mathcal{E}^{\vee \vee }\right)
=\bigcup _{i=1}^{n-2}\supp\left( \mathcal{E}xt^{i}(\mathcal{E}^{\vee \vee
},\mathcal{O}_{X})\right) \ .\end{equation}
 We define the \emph{codimension $k$ singular set of $
\mathcal{E}$} to be
\begin{equation*}
\sing_{n-k}(\mathcal{E}):=\supp\left( \mathcal{Q}_{n-k}(\mathcal{E
}^{\vee \vee }/\mathcal{E})\right) \cup \bigcup\limits_{i=1}^{n-2}\supp
\left( \mathcal{Q}_{n-k}(\mathcal{E}xt^{i}(\mathcal{E}^{\vee \vee },
\mathcal{O}_{X}))\right) .
\end{equation*}
This set is exactly the union of all the irreducible components of $\sing(\mathcal{E)}$ of codimension $k$.
Notice also that  
\begin{equation}\label{eq:sings_be_codim}
\sing_{n-k}(\ddual\Ecal\mathcal{)=}\bigcup\limits_{i=1}^{n-2}\supp( 
\mathcal{Q}_{n-k}(\mathcal{E}xt^{i}(\ddual\Ecal,\mathcal{O}_{X})))\ .
\end{equation}
 From \eqref{eq:sings_of_reflexive} and \eqref{eq:sings_be_codim} we therefore obtain the following  third description of the set $\sing (\Ecal)$ when $\Ecal$ is torsion free:
\begin{eqnarray*}
\sing(\Ecal) =\bigcup_{k=2}^{n}\bigcup_{i=1}^{n-2}\supp(\mathcal{Q}_{n-k}(\ddual\Ecal /\mathcal{E)})\cup 
\supp(\mathcal{Q}_{n-k}(\mathcal{E}xt^{i}(\ddual\Ecal,
\mathcal{O}_{X})) \ ,
\end{eqnarray*}
which by rearranging the terms can be seen to be equal to
$
\supp(\ddual\Ecal/\Ecal)\cup \sing(
\ddual\Ecal)$.

\subsubsection{Support cycles} \label{subsubsect:support_cycles}
Consider a general torsion sheaf $\mathcal{T}\to X$ such that $\supp(\Tcal)$ has codimension $p$, and write $Z_{j}$ for the irreducible components of codimension $p$. We will write $\mathcal{I}$ for the ideal sheaf of $Z=\supp(\mathcal{T})$
with the induced reduced structure. Then, there is some power $\mathcal{I}^{N}$ so that 
$\mathcal{I}^{N}\mathcal{T}=0$; this leads to the following filtration of $\mathcal{T}$:
\begin{equation*}
0=\mathcal{I}^{N}\mathcal{T\subset }\cdots \subset \mathcal{I}^{k+1}\mathcal{T}
\subset \mathcal{I}^{k}\mathcal{T}\subset \cdots \subset \mathcal{IT}\subset 
\mathcal{T}\text{.}
\end{equation*}
We will write 
\begin{equation*}
\gr_{\mathcal{I}}(\mathcal{T)} :=\bigoplus _{k=0}^{N-1}\mathcal{I}^{k}\mathcal{T}/\mathcal{I}^{k+1}\mathcal{T}
\end{equation*}
for the associated graded of this filtration. Notice that $\Ical$ annihilates $\gr_{\mathcal{I}}(\mathcal{T)}$, so the latter is an $\Ocal_{Z}$-module, whose first summand $\mathcal{T}/\mathcal{IT}$ is precisely the
restriction $\mathcal{T}|_{Z}$. Regarding the
restriction $\gr_{\mathcal{I}}(\mathcal{T})|_{Z_{j}}$ as a sheaf of $\Ocal_{Z_{j}}$-modules, we note that the fibre
dimension 
\begin{equation}
\dim(\gr_{\mathcal{I}}(\mathcal{T})|_{Z_{j}}(z))=\dim\bigl((\gr_{\mathcal{I}}(\mathcal{T})|_{Z_{j}})_{z}/\mathfrak{m}_{z}((\gr_{\mathcal{I}}(\mathcal{T})|_{Z_{j}})_{z})\bigr)
\end{equation}
is constant on a dense open subset of $Z_{j}$. We call this natural number the \emph{rank}. We may now associate to each $Z_{j}$ the multiplicity 
\begin{equation*}
m_{j}^{\mathcal{T}}=\rank(\gr_{\mathcal{I}}(\mathcal{T})|_{Z_{j}})\ .
\end{equation*}
Thus, associated to the sheaf $\mathcal{T}$ is the effective analytic (algebraic) cycle  
\begin{equation} \label{eqn:cycle-def}
\Ccal_{\mathcal{T}}=\sum_{j}m_{j}^{\mathcal{T}}\cdot Z_{j}\in\mathscr C_{n-p}(X)\ ,
\end{equation}
which we call the \emph{support cycle} of $\Tcal.$

To a semistable sheaf $\Ecal$, we may associate a canonical element $\Ccal_{\Ecal}\in \mathscr C_{n-2}(X)$ as follows. Consider the torsion sheaf 
\begin{equation*}
\Tcal_{\Ecal}:=\ddual{(\Gr\Ecal)}/\Gr\Ecal\ . 
\end{equation*}
Since $\Gr \Ecal$ is torsion free, sing$(\Gr\left( \Ecal\right) )$ has codimension at least 2, and admits the decomposition 
\begin{equation*}
\text{sing}(\Gr\Ecal)=\text{supp}(\ddual{(\Gr\Ecal)}/\Gr
\Ecal)\cup \text{sing}(\ddual{(\Gr\Ecal)})\ .
\end{equation*} In particular, $\supp\left(\ddual{(\Gr \Ecal)}/\Gr \Ecal\right)$
has codimension at least 2 as well. We may therefore define  the \emph{support cycle of $\Ecal$} by
\begin{equation}\label{eq:def_supp_cycle}
\Ccal_{\Ecal}:=\Ccal_{\mathcal{T}_{\Ecal}}\in \mathscr C_{n-2}(X)\, .
\end{equation}
\begin{rem}\label{rem:pair_is_unique}
The pair $(\ddual{(\Gr\Ecal)},\Ccal_{\Ecal})$ is uniquely determined by $\Ecal$ and the polarisation defining semistability (see \cite[Appendix]{BuchdahlTelemanToma:17}).
\end{rem}

As in \cite{SibleyWentworth:15} (see also the latest preprint version), we define the \emph{multiplicities} of $\Ecal$ to be the positive integers $m^{\Ecal}_j=\rank(\gr_{\mathcal{I}}(\mathcal{T}_{\mathcal{E}}))\bigr|_{Z^{\Ecal}_j})$ that appear as the coefficients of $\Ccal_{\Ecal}$.
 In \cite[Rem.\ 5.3]{GrebToma:17} the multiplicities are defined differently. Namely, let $S$ be a general
  complete intersection surface intersecting ${Z^{\Ecal}_j}$ transversally in a finite number of smooth points $\{z_{1},\cdots ,z_{k}\}$, and set
\begin{equation*}
n_{j}^{\Ecal}(S,z_{i})=\ell_{\mathcal{O}_{S,z_{i}}}(\mathcal{T}_{\Ecal}|_{S})_{z_{i}}\ ,
\end{equation*} 
where $\ell$ denotes the length of a module.
Since we will rely on results from both \cite{SibleyWentworth:15} and \cite{GrebToma:17} that were proven using these two definitions, we point out  the following simple lemma.

\begin{lemma} For general $S$ as above,
$
m_{j}^{\Ecal}=n_{j}^{\Ecal}(S,z_{i})$.
 In particular, $n_{j}^{\Ecal}(S,z_{i})$ is independent of $z_{i}$ and S. 
\end{lemma}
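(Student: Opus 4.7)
The plan is to reduce the equality to a purely local algebraic computation at a sufficiently general intersection point $z_i \in S \cap Z_j^{\Ecal}$ and to relate the two multiplicities via the $\Ical$-adic filtration of $\Tcal_{\Ecal}$. First I would use the freedom to choose $S$ general in order to arrange: (a) the finite set $S \cap Z_j^{\Ecal}$ consists of smooth points of $Z_j^{\Ecal}$ lying in the dense open subset $U \subset Z_j^{\Ecal}$ on which the fibre dimension of $\gr_{\Ical}(\Tcal_{\Ecal})|_{Z_j^{\Ecal}}$ is constant and equal to $m_j^{\Ecal}$, so that each graded piece $\Ical^k \Tcal_{\Ecal}/\Ical^{k+1}\Tcal_{\Ecal}$ is actually locally free over $\Ocal_{Z_j^{\Ecal}}$ near $z_i$; (b) $z_i$ avoids all other irreducible components of $\supp(\Tcal_{\Ecal})$ and all higher-codimension components of $\sing(\Ecal)$; and (c) $S$ meets $Z_j^{\Ecal}$ transversally at $z_i$. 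Under (a)--(c) the local ideal $\Ical_{z_i}$ of $\supp(\Tcal_{\Ecal})$ coincides with the ideal of $Z_j^{\Ecal}$ at $z_i$.

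Working in the regular local ring $R = \Ocal_{X, z_i}$ with $M = (\Tcal_{\Ecal})_{z_i}$, transversality furnishes a regular system of parameters $x_1, x_2, y_1, \ldots, y_{n-2}$ of $R$ with $\Ical = (x_1, x_2)$ defining $Z_j^{\Ecal}$ and $(y) = (y_1, \ldots, y_{n-2})$ defining $S$ locally. The key algebraic input is that $x_1, x_2$ form a regular sequence in the two-dimensional regular local ring $\Ocal_{S, z_i} = R/(y)$, and a Koszul-complex computation then yields $\Tor^{R}_{p}(R/\Ical, R/(y)) = 0$ for all $p \geq 1$. Since each $\Ical^k M / \Ical^{k+1} M$ is free over $R/\Ical$ at $z_i$ by (a), this vanishing propagates to $\Tor^{R}_{p}(\Ical^k M / \Ical^{k+1} M, R/(y)) = 0$ for $p \geq 1$.

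Consequently, tensoring the finite filtration $M \supset \Ical M \supset \cdots \supset \Ical^{N} M = 0$ with $R/(y)$ produces short exact sequences whose lengths over $\Ocal_{S, z_i}$ are additive, so that
\begin{equation*}
n_j^{\Ecal}(S, z_i) \;=\; \ell_{\Ocal_{S, z_i}}(\Tcal_{\Ecal}|_S)_{z_i} \;=\; \sum_{k \geq 0} \ell_{\Ocal_{S, z_i}}\bigl((\Ical^k M / \Ical^{k+1} M) \otimes_{R} R/(y)\bigr).
\end{equation*}
Each graded piece is free over $R/\Ical$ of some rank, call it $m_j^{(k)}$; since $(R/\Ical) \otimes_{R} R/(y) = R/\mfrak = \CBbb$, the $k$-th summand has length exactly $m_j^{(k)}$, and summing over $k$ recovers the generic fibre dimension $\sum_{k} m_j^{(k)} = m_j^{\Ecal}$ by definition of $\rank(\gr_{\Ical}(\Tcal_{\Ecal})|_{Z_j^{\Ecal}})$. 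The asserted independence of $S$ and $z_i$ is then automatic, since $m_j^{\Ecal}$ depends only on $\Tcal_{\Ecal}$. The only subtlety I anticipate is verifying that a generic complete-intersection surface simultaneously satisfies (a)--(c); this is a Bertini-style genericity argument on the projective variety $X$, since the bad loci in question form a proper closed subset.
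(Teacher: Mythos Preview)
Your proposal is correct and follows essentially the same approach as the paper's proof: both arguments hinge on the local freeness of the graded pieces $\Ical^k\Tcal_\Ecal/\Ical^{k+1}\Tcal_\Ecal$ over $\Ocal_{Z_j}$ near a generic point, the Tor-vanishing $\Tor_1^{\Ocal_X}(\Ocal_{Z_j},\Ocal_S)=0$ coming from transversality, and then additivity of length along the restricted $\Ical$-adic filtration. Your presentation is somewhat more explicit about the commutative algebra (regular local rings, regular sequences, Koszul complexes) than the paper's more sheaf-theoretic phrasing, but the content is the same.
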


\begin{proof}
We begin with the observation that, since the ideal $\Ical$ that cuts out the support of $\Tcal_{\Ecal}$ annihilates $\gr_{\mathcal{I}}(\mathcal{T_{\Ecal})}$, then if $\imath$ is the natural inclusion of the support in $X$, we have $\gr_{\mathcal{I}}(\mathcal{T_{\Ecal})=}$ $\imath _{\ast }(\imath ^{\ast }\gr_{
\mathcal{I}}(\mathcal{T_{\Ecal}))}$. In other words, for a sufficiently small open set $U$ containing $z_i\in Z_j$, $\gr_{\mathcal{I}}(\mathcal{T)}$ may be expressed as an $\Ocal_X$-module as $T\otimes_\CBbb \Ocal_{Z_{j}}$, for a $\CBbb$-vector space $T$ of dimension $m_j^\Ecal$. Furthermore, $T=\bigoplus T_{k}$, where $T_{k}$ is the vector space corresponding to each summand of $\gr_{\mathcal{I}}(\Tcal_{\Ecal})$.  We consider the exact sequence
$$
0\lra T\otimes_\CBbb\Ical_{U\cap Z_j}\lra T\otimes_\CBbb \Ocal_U\lra \gr_{\mathcal{I}}(\Tcal_{\Ecal})\bigr|_{U}\lra 0\ .
$$
By our assumptions, tensoring with $\Ocal_S$ we get an exact sequence of $\Ocal_{S}$-modules:
\[0\lra T\otimes_\CBbb\Ical_{z_i}\lra T\otimes_\CBbb \Ocal_{U\cap S}\lra \gr_{\mathcal{I}}(\mathcal{T_{\Ecal})}\otimes_{\Ocal_U} \Ocal_{U\cap S}\lra 0\ .
\]
The length of the torsion quotient on the right hand side (the dimension of its fibre over $z_{i}$) is by definition $\ell_{\mathcal{O}_{S,z_{i}}}(\gr_{\mathcal{I}}(\mathcal{T_{\Ecal})}|_{S})_{z_{i}}$. The relevant fibre over $z_{i}$ is clearly isomorphic so $T$, so this number is equal to $\dim T$. 

On the other hand, considering the exact sequences
\begin{equation*}
0\rightarrow \mathcal{I}^{k+1}\mathcal{T}|_{U}\mathcal{\rightarrow I}^{k}
\mathcal{T}|_{U}\rightarrow T_{k}\otimes \mathcal{O}_{Z_{j}\cap U}\rightarrow 0\ ,
\end{equation*}
and again using the assumptions on $Z_{j}$ and $S$, we have
 $\mathcal{T}or^{\mathcal{O}_X}_{1}(\mathcal{O}_{Z_{j}},\mathcal{O}_{S})=0,$ and so tensoring by $
\mathcal{O}_{S}$ we obtain
\begin{equation*}
0\rightarrow \mathcal{I}^{k+1}\mathcal{T}|_{U\cap S}
\mathcal{\rightarrow I}^{k}
\mathcal{T}|_{U\cap S}\rightarrow T_{k}\otimes \mathcal{O}_{z_{i}}\rightarrow 0\ .
\end{equation*}
Using additivity of the length in exact sequences, we obtain \[n_j(S, z_i) = \ell_{\mathcal{O}_{S,z_{i}}}(
\mathcal{T}_{\mathcal{E}}|_{S})_{z_{i}}=\sum_{k=0}^{N-1} \dim T_{k}=\dim T=m_{j}^{\mathcal{E}}\ . \qedhere\] \end{proof}

We will repeatedly use the following result from \cite{SibleyWentworth:15} (see also the newest preprint version) which is certainly also well-known in the algebraic geometry literature. It relates the support cycle defined
above to the first nonzero part of the Chern character of $\mathcal{T}$.  

\begin{lemma}\label{lem:cherncharactertorsionsheaf}
Let $\mathcal{T}$ be a torsion sheaf with $\codim(\supp(\mathcal{
T}))=p$. Then $ch_{k}(\mathcal{T})=0$ for $k<p$, and
$\ch_{p}(\mathcal{T})$ coincides with the Poincar\'e dual in $H^{2p}(X, \mathbb{Q})$ of the homology class of the cycle $\Ccal_\Tcal$ defined in \eqref{eqn:cycle-def}.
\end{lemma}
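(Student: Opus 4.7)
The strategy is to split the two assertions and derive each from standard tools in K-theory and intersection theory, with the $\Ical$-adic filtration introduced before the lemma as the bridge between the algebraic definition of $\Ccal_\Tcal$ and the topological invariant $\ch(\Tcal)$. Since $X$ is smooth and projective, $\Tcal$ admits a finite locally free resolution $\Fcal_\bullet\to\Tcal$, and $\ch(\Tcal)=\sum_i(-1)^i\ch(\Fcal_i)$. The vanishing $\ch_k(\Tcal)=0$ for $k<p$ is then a manifestation of the well-known compatibility between the Chern character and the codimension filtration on $K_0(X)$: the class $[\Tcal]\in K_0(X)$ is supported in codimension $\geq p$, so its image under $\ch$ lies in $\bigoplus_{k\geq p}H^{2k}(X,\Q)$. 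This is a consequence of the singular Riemann--Roch theorem (see, e.g., Fulton's \emph{Intersection Theory}, Chapter~18), and can also be checked locally: at the generic point $\eta_j$ of any codimension-$p$ component $Z_j\subset\supp(\Tcal)$, the Auslander--Buchsbaum formula guarantees that $\Tcal_{\eta_j}$ has projective dimension exactly $p$ over the regular local ring $\Ocal_{X,\eta_j}$, so a minimal free resolution has length $p$.

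For the identification of $\ch_p(\Tcal)$, I would first apply additivity of $\ch$ on short exact sequences to the $\Ical$-adic filtration $0=\Ical^N\Tcal\subset\cdots\subset\Ical\Tcal\subset\Tcal$ and obtain
\[
\ch_p(\Tcal)=\sum_{k=0}^{N-1}\ch_p(\Ical^k\Tcal/\Ical^{k+1}\Tcal).
\]
Each graded summand is a coherent $\Ocal_Z$-module, where $Z=\supp(\Tcal)$. Decomposing further along the irreducible components of $Z$ and using the first part to discard those of codimension $>p$ (whose contribution to $\ch$ sits in degrees $>p$), the problem reduces to computing $\ch_p$ of the pushforward to $X$ of a coherent sheaf of generic rank $r_j^{(k)}$ on a codimension-$p$ irreducible subvariety $Z_j$. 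Applying Grothendieck--Riemann--Roch to (a resolution of) the closed immersion $\iota_j\colon Z_j\hookrightarrow X$, the leading term yields
\[
\ch_p\bigl(\iota_{j\ast}(\gr_\Ical(\Tcal)|_{Z_j})\bigr)=r_j^{(k)}\cdot[Z_j]\in H^{2p}(X,\Q),
\]
where $[Z_j]$ denotes the Poincar\'e dual of the fundamental class. By the definition of the multiplicities, $\sum_k r_j^{(k)}=\rank(\gr_\Ical(\Tcal)|_{Z_j})=m_j^\Tcal$, so summing first over $k$ and then over $j$ recovers the Poincar\'e dual of the cycle $\Ccal_\Tcal$.

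The main obstacle I anticipate is justifying cleanly that higher-codimension components of $\supp(\Tcal)$ and the singular locus $\Sing(Z_j)\subset Z_j$ (which has codimension $\geq p+1$ in $X$) do not contaminate the degree-$p$ piece of $\ch$. This is, however, exactly what the vanishing half of the lemma handles for us: any sheaf supported in codimension $>p$ has $\ch_p=0$, so one can work modulo such contributions, either by restricting the graded pieces to the union of top-dimensional components and pushing forward from a resolution of $Z_j$, or by computing purely at the generic points $\eta_j$, where a Koszul-type minimal free resolution of $\Tcal_{\eta_j}$ over $\Ocal_{X,\eta_j}$ realizes $\ch_p$ as the length $\ell_{\Ocal_{X,\eta_j}}(\Tcal_{\eta_j})=m_j^\Tcal$ of a finite-length module over a regular local ring of dimension $p$.
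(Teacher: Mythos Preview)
Your argument is correct and follows the standard route through the codimension filtration on $K_0(X)$ and Grothendieck--Riemann--Roch for closed immersions; the handling of higher-codimension contributions via the vanishing half of the lemma is the right way to make the computation rigorous. Note, however, that the paper does not supply its own proof of this lemma: it simply quotes the result from \cite{SibleyWentworth:15} and remarks that it is well known in the algebraic-geometry literature, so there is no in-paper argument to compare against. Your sketch is exactly the kind of proof one would expect to find behind such a citation.
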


\subsubsection{Generalised cycles}\label{subsubsect:generalised_cycles}

For inductive arguments, the following more general notion of associated cycle is often useful.
\begin{defi} \label{defi:cycle,couple} 
Let $\Ecal\to X$ be  coherent  such that its torsion part $\Tors(\Ecal)$ has support in codimension $\geq 2$. 
  We define the {\em $($generalised$)$ codimension $2$ cycle of} $\Ecal$ as
\begin{equation*}
 \Ccal(\Ecal):= \Ccal_{\ker(\Ecal\to \ddual\Ecal)}-\Ccal_{\coker(\Ecal\to \ddual\Ecal)}\ . 
\end{equation*}
We further set, $\gamma(\Ecal):=(\ddual\Ecal, -\Ccal(\Ecal))$. If $\Ecal$ torsion free, we set $\Qcal_\Ecal:=\ddual\Ecal/\Ecal$ and we put $\widehat \Qcal_\Ecal:= \Qcal_\Ecal/\cT(\Qcal_\Ecal)$, where $\cT(\Qcal_\Ecal)$ , where $\cT (\Qcal_{\cF}) = \cT_{n-3} (\Qcal_{\cF})$ is the maximal subsheaf of $\Qcal_{\cF}$ of dimension less than or equal to $n-3$; cf.\ \cite[Def.\ 1.1.4]{HuybrechtsLehn:10}.
Notice that since $\Qcal_\Ecal$ has support in  codimension at least $2$, $\widehat \Qcal_\Ecal$ is pure of codimension $2$ or vanishes. In this case $\mathcal{C}(\Ecal)=-\mathcal{C}_{\mathcal{Q}_{\Ecal}}=-\mathcal{C}
_{\widehat{\mathcal{Q}}_{\Ecal}}$.  Moreover, we will write $[\gamma(\Ecal) ]$ for the class of $\gamma(\Ecal)$ under the natural equivalence relation taking isomorphism classes in the first component and equality of cycles in the second component.
 \end{defi}

\begin{rem}\label{rem:gamma_campatible_with_earlier_defi}
Note that if $\Ecal$ is polystable, then $\Ecal=\Gr\Ecal$, $\Qcal_\Ecal$ is equal to the torsion sheaf $\mathcal{T}_{\Ecal}$ previously defined, and $\mathcal{C}(\Ecal)=-\mathcal{C}_{\Ecal}$ so that $\gamma(\Ecal)=(\ddual\Ecal, \Ccal_{\widehat{\mathcal{Q}}_{\Ecal}})=(\ddual\Ecal, \mathcal{C}_{\mathcal{Q}_{\Ecal}})=(\ddual\Ecal,\mathcal{C}_{\Ecal})$. 
\end{rem}

The following three results gather properties of codimension $2$ cycles.  

\begin{lemma}\label{lemma:cycles1}
If $0\to \Ecal_1\to  \cdots\to \Ecal_m\to 0$ is an exact sequence of coherent sheaves on $X$ with supports of codimension $\geq 2$, then $\sum_j(-1)^j\Ccal(\Ecal_j)=0$.
\end{lemma}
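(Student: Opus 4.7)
\emph{Proof plan.} My plan is to reduce the claimed identity of codimension-two cycles to the additivity of length on exact sequences, tested at the generic point of each irreducible codimension-two subvariety of $X$.

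First I would observe that, since each $\Ecal_j$ is a coherent sheaf on the integral variety $X$ with $\codim\supp(\Ecal_j)\geq 2$, it is a torsion sheaf. Thus $\Hom(\Ecal_j,\Ocal_X) = 0$, whence $\ddual{\Ecal_j} = 0$. According to Definition \ref{defi:cycle,couple} this gives $\Ccal(\Ecal_j) = \Ccal_{\Ecal_j}$ in the sense of the support cycle \eqref{eqn:cycle-def}, so the lemma reduces to proving
\[
\sum_j (-1)^j \Ccal_{\Ecal_j} \;=\; 0
\]
as codimension-two cycles on $X$.

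Next I would test the identity one irreducible component at a time. Fix an arbitrary irreducible codimension-two subvariety $Z\subset X$ with generic point $z$, and set $R := \Ocal_{X,z}$, a two-dimensional regular local ring with maximal ideal $\mfrak$ and residue field $K(Z)$. For each $j$, the stalk $(\Ecal_j)_z$ is a finite-length $R$-module, since $\supp(\Ecal_j)$ meets $\Spec R$ at most at the closed point and hence $(\Ecal_j)_z$ is annihilated by a power of $\mfrak$. The key step is to identify the coefficient $m_Z^{\Ecal_j}$ of $Z$ in $\Ccal_{\Ecal_j}$ with $\ell_R((\Ecal_j)_z)$: near $z$ the ideal sheaf of $\supp(\Ecal_j)$ localizes to $\mfrak$, because any irreducible component of $\supp(\Ecal_j)$ other than $Z$ either has strictly smaller dimension than $Z$ or is a distinct codimension-two component, and in both cases cannot pass through the generic point $z$. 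Consequently the stalk $(\gr_\Ical(\Ecal_j))_z$ coincides with $\gr_\mfrak((\Ecal_j)_z)$, which is a $K(Z)$-vector space of dimension $\ell_R((\Ecal_j)_z)$; by construction this dimension is exactly the generic rank of $\gr_\Ical(\Ecal_j)|_Z$ as a sheaf of $\Ocal_Z$-modules, as in subsection \ref{subsubsect:support_cycles}.

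With this identification in hand, I would simply localize the given exact sequence at $z$: since localization is exact, we obtain an exact sequence
\[
0 \to (\Ecal_1)_z \to \cdots \to (\Ecal_m)_z \to 0
\]
of finite-length $R$-modules, and additivity of length yields $\sum_j (-1)^j \ell_R((\Ecal_j)_z) = 0$. As $Z$ was an arbitrary irreducible codimension-two subvariety, every coefficient of $\sum_j (-1)^j \Ccal_{\Ecal_j}$ vanishes, which establishes the desired cycle identity. The only mild obstacle I anticipate is the clean identification of $m_Z^{\Ecal_j}$ with the generic-point length, since subsection \ref{subsubsect:support_cycles} packages the multiplicity through the $\Ical$-adic associated graded rather than as a length directly. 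I would also note that a purely cohomological approach via Lemma \ref{lem:cherncharactertorsionsheaf} and additivity of Chern characters would yield only $\sum_j (-1)^j [\Ccal_{\Ecal_j}] = 0$ in $H^{4}(X,\QBbb)$, which is strictly weaker than the cycle-level equality demanded here; hence the stalk-level argument is essential.
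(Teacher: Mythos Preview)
Your proof is correct and follows essentially the same idea as the paper's: reduce to additivity of length. The paper phrases this as cutting with a general complete intersection surface $S\subset X$ so that the multiplicities become lengths of the skyscraper sheaves $\Ecal_j|_S$ at points of $S\cap Z$, whereas you localize directly at the generic point of each codimension-two component $Z$ and use that the multiplicity $m_Z^{\Ecal_j}$ equals $\ell_{\Ocal_{X,z}}((\Ecal_j)_z)$. Your route is marginally cleaner, since localization is automatically exact and avoids the need to check that restriction to $S$ preserves exactness (which the paper's approach implicitly requires via general position), but the two arguments are otherwise the same.
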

\begin{proof}
This is easily checked by cutting with general complete intersection surfaces $S\subset X$ and using the fact  that the  multiplicities of $\Ccal(\Ecal_j)$ are equal to the lengths of the skyscraper sheaves $\Ecal_j|_S$.
\end{proof}

\begin{lemma}\label{lemma:cycles2}
Let $\alpha:\Ecal_1\to \Ecal_2$ be a morphism between coherent sheaves on $X$ inducing an isomorphism $\ddual\alpha:\ddual{\Ecal_1}\isorightarrow \ddual{\Ecal_2}$ between the double duals. Then
$$\Ccal(\Ecal_1)=\Ccal(\Ecal_2)+\Ccal(\ker \alpha)-\Ccal(\coker\alpha)\ .$$
\end{lemma}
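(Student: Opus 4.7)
The plan is to split $\alpha$ into two short exact sequences, apply the snake lemma to each in order to compare the kernels and cokernels of the natural maps $F \to \ddual{F}$, and then combine the resulting exact sequences of torsion sheaves via Lemma~\ref{lemma:cycles1}. First, set $N := \ker\alpha$, $I := \imag\alpha$, and $C := \coker\alpha$, so that
\begin{equation*}
0 \to N \to \Ecal_1 \to I \to 0 \quad\text{and}\quad 0 \to I \to \Ecal_2 \to C \to 0
\end{equation*}
are exact. Since $\ddual{(-)}$ is left exact on coherent sheaves over $X$, the assumed isomorphism $\ddual{\alpha}$ factors as $\ddual{\Ecal_1} \to \ddual{I} \hookrightarrow \ddual{\Ecal_2}$, forcing both factors to be isomorphisms. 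Applying left-exactness once more to the first sequence yields $0 \to \ddual{N} \to \ddual{\Ecal_1} \isorightarrow \ddual{I}$, and hence $\ddual{N}=0$, so $N$ is torsion. Moreover, since $\ddual{\alpha}$ is an iso and $\Ecal_i \to \ddual{\Ecal_i}$ is an iso on the codimension-${\geq}2$ open complement of the non-reflexive loci, $\alpha$ is an iso there too, so $C$ is torsion with support of codimension $\geq 2$ and $\ddual{C}=0$. In particular $\Ccal(N)$ and $\Ccal(C)$ coincide with the ordinary support cycles $\Ccal_N$ and $\Ccal_C$.

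Writing $K_F := \ker(F \to \ddual{F})$ and $Q_F := \coker(F \to \ddual{F})$ for $F \in \{\Ecal_1, \Ecal_2, I\}$, I would next apply the snake lemma to each of the two short exact sequences equipped with its natural vertical maps to double duals. The bottom row of the first diagram reduces to $0 \to 0 \to \ddual{\Ecal_1} \isorightarrow \ddual{I} \to 0$ (using $\ddual{N}=0$ and the surjection established above), producing
\begin{equation*}
0 \to N \to K_{\Ecal_1} \to K_I \to 0 \quad\text{and}\quad 0 \to Q_{\Ecal_1} \isorightarrow Q_I \to 0.
\end{equation*}
The bottom row of the second diagram reduces to $0 \to \ddual{I} \isorightarrow \ddual{\Ecal_2} \to 0 \to 0$ (using $\ddual{C}=0$), producing the six-term exact sequence
\begin{equation*}
0 \to K_I \to K_{\Ecal_2} \to C \to Q_I \to Q_{\Ecal_2} \to 0.
\end{equation*}

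Every sheaf appearing in these three exact sequences is torsion with support of codimension $\geq 2$, so Lemma~\ref{lemma:cycles1} applies to each and gives
\begin{equation*}
\Ccal_{K_I} = \Ccal_{K_{\Ecal_1}} - \Ccal_N,\qquad \Ccal_{Q_I} = \Ccal_{Q_{\Ecal_1}},\qquad \Ccal_{K_I} - \Ccal_{K_{\Ecal_2}} + \Ccal_C - \Ccal_{Q_I} + \Ccal_{Q_{\Ecal_2}} = 0.
\end{equation*}
Substituting the first two identities into the third and rearranging yields
\begin{equation*}
(\Ccal_{K_{\Ecal_1}} - \Ccal_{Q_{\Ecal_1}}) - (\Ccal_{K_{\Ecal_2}} - \Ccal_{Q_{\Ecal_2}}) = \Ccal_N - \Ccal_C,
\end{equation*}
which is precisely $\Ccal(\Ecal_1) - \Ccal(\Ecal_2) = \Ccal(\ker\alpha) - \Ccal(\coker\alpha)$. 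The main obstacle will be verifying that $C = \coker\alpha$ has support in codimension $\geq 2$ (so that $\ddual{C}=0$) and that all the kernels and cokernels entering the snake sequences are torsion with codimension-${\geq}2$ support; once those support statements are in place the rest is a formal diagram chase combined with a single invocation of Lemma~\ref{lemma:cycles1}.
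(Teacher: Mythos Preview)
Your proposal is correct and follows essentially the same approach as the paper: factor $\alpha$ through its image, apply the snake lemma to compare with the natural maps to double duals, and invoke Lemma~\ref{lemma:cycles1} on the resulting exact sequences of torsion sheaves. The paper's proof is a two-sentence sketch of precisely this strategy; your version supplies the details the paper omits (one minor remark: what you use about $\ddual{(-)}$ is only that it preserves monomorphisms, which on a smooth $X$ follows since kernels of maps between reflexive sheaves are torsion-free, not full left exactness).
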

\begin{proof}
The idea is to reduce the situation to an application of Lemma \ref{lemma:cycles1} on several exact sequences of coherent sheaves of codimension at least 2.
By decomposing the sequence 
$$0\to\ker \alpha\to \Ecal_1\to \Ecal_2\to\coker\alpha\to0$$
into short exact sequences one
 first reduces the question to the cases where $\ker\alpha=0$ or $\coker\alpha=0$. One then compares these short exact sequences with the morphisms $\Ecal_1\to \ddual{\Ecal_1}$ and $\Ecal_2\to \ddual{\Ecal_2}$.  
\end{proof}
 
The next basic result will be needed in Sections \ref{sec:limits1} and \ref{sec:agree-cycle-component}; see ~\cite[Appendix]{BuchdahlTelemanToma:17} and \cite[Cor.~2.23]{ChenSun:19} for very similar results.  
\begin{prop}\label{prop:polystablereflexive}
Let $\mathcal{E}$ be a torsion free sheaf such that $\mathcal{E}^{\vee \vee }$
is polystable, and let  $\mathcal{T}=\mathcal{E}^{\vee \vee }/\mathcal{E}$. Then $\Ecal$ is $\mu$-semistable. Moreover, 
if $\Gr\Ecal$ is the associated graded  to any
Jordan-H\"{o}lder filtration of $\Ecal$, then  $(\Gr\mathcal{E})^{\vee \vee }\cong \mathcal{E}^{\vee \vee }$ and
 $\Ccal_{\mathcal{E}}=\Ccal_{\mathcal{T}}$.
\end{prop}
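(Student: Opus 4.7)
The plan is to treat the three assertions in sequence. For $\mu$-semistability of $\Ecal$, I would argue directly: any coherent subsheaf $\Scal \subset \Ecal$ embeds into $\Ecal^{\vee\vee}$ through the canonical map $\Ecal \hookrightarrow \Ecal^{\vee\vee}$, and polystability (in particular semistability) of $\Ecal^{\vee\vee}$ then forces $\mu(\Scal) \leq \mu(\Ecal^{\vee\vee}) = \mu(\Ecal)$, the last equality because $\mathcal{T}$ is supported in codimension $\geq 2$ and therefore $\Ecal$ and $\Ecal^{\vee\vee}$ share rank and first Chern class.

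For the isomorphism $(\Gr\Ecal)^{\vee\vee} \cong \Ecal^{\vee\vee}$, I would fix a Jordan--H\"older filtration $0 = \Ecal_0 \subset \Ecal_1 \subset \cdots \subset \Ecal_\ell = \Ecal$ with stable quotients $Q_i = \Ecal_i/\Ecal_{i-1}$ of slope $\mu(\Ecal)$, and pass to the saturations $\Fcal_i$ of $\Ecal_i$ inside $\Ecal^{\vee\vee}$. These form an increasing chain $0 = \Fcal_0 \subset \Fcal_1 \subset \cdots \subset \Fcal_\ell = \Ecal^{\vee\vee}$ of saturated subsheaves of $\Ecal^{\vee\vee}$, each of the same slope $\mu(\Ecal)$. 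The key structural input is that a saturated subsheaf of a polystable reflexive sheaf sharing the same slope splits off as a polystable direct summand; this inductively yields $\Fcal_i = \Fcal_{i-1} \oplus \Hcal_i$. The induced nonzero map $Q_i \to \Fcal_i/\Fcal_{i-1} \cong \Hcal_i$ is, by stability of $Q_i$ and equality of slopes, injective with torsion cokernel, so reflexivity of $\Hcal_i$ forces $\Hcal_i \cong Q_i^{\vee\vee}$. Iterating produces $\Ecal^{\vee\vee} \cong \bigoplus_i Q_i^{\vee\vee} = (\Gr\Ecal)^{\vee\vee}$.

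For the cycle equality, by definition $\Ccal_\Ecal = \Ccal_{\Tcal_\Ecal}$ with $\Tcal_\Ecal = (\Gr\Ecal)^{\vee\vee}/\Gr\Ecal \cong \bigoplus_i Q_i^{\vee\vee}/Q_i$. I would apply the snake lemma to the commutative diagram
$$\begin{array}{ccccccccc} 0 & \to & \Ecal_{i-1} & \to & \Ecal_i & \to & Q_i & \to & 0 \\ & & \downarrow & & \downarrow & & \downarrow & & \\ 0 & \to & \Fcal_{i-1} & \to & \Fcal_i & \to & Q_i^{\vee\vee} & \to & 0 \end{array}$$
whose vertical maps are all injective (using the containment $\Fcal_{i-1} \subset \Fcal_i$ proved from the definition of saturation). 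This produces short exact sequences $0 \to T_{i-1} \to T_i \to Q_i^{\vee\vee}/Q_i \to 0$ with $T_i = \Fcal_i/\Ecal_i$, and inductively all $T_i$ are torsion supported in codimension $\geq 2$, with $T_\ell = \Ecal^{\vee\vee}/\Ecal = \Tcal$. Telescoping Lemma \ref{lemma:cycles1} along this filtration yields $\Ccal(\Tcal) = \sum_i \Ccal(Q_i^{\vee\vee}/Q_i) = \Ccal(\Tcal_\Ecal)$, and since the generalised cycle $\Ccal(-)$ on a torsion sheaf of codimension $\geq 2$ agrees with its support cycle (the kernel is the whole sheaf and the cokernel is zero), we conclude $\Ccal_\Tcal = \Ccal_{\Tcal_\Ecal} = \Ccal_\Ecal$.

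The main obstacle is the structural claim invoked in the second step: that saturated subsheaves of the polystable reflexive sheaf $\Ecal^{\vee\vee}$ sharing its slope split off as direct summands, and that the successive complements may be identified with the reflexive hulls $Q_i^{\vee\vee}$. Once this is secured via the standard Hom-decomposition of polystable reflexive sheaves, the cycle computation in the third step is a purely formal application of the additivity of $\Ccal(-)$ on short exact sequences of codimension $\geq 2$ torsion sheaves.
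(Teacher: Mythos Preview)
Your proof is correct and closely parallels the paper's, with two organisational differences worth noting.

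For the isomorphism $(\Gr\Ecal)^{\vee\vee}\cong\Ecal^{\vee\vee}$, the paper also passes to the saturations $\widehat\Ecal_i$ of $\Ecal_i$ inside $\Ecal^{\vee\vee}$, but instead of invoking the splitting of saturated equal-slope subsheaves of a polystable sheaf, it simply checks that $(\widehat\Ecal_i)$ is again a Jordan--H\"older filtration of $\Ecal^{\vee\vee}$ (the quotients are torsion free by saturation, and stable because their double duals agree with the $Q_i^{\vee\vee}$), and then appeals to the uniqueness of the double dual of the graded (Remark~\ref{rem:pair_is_unique}) applied to the polystable sheaf $\Ecal^{\vee\vee}$ itself. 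This bypasses the splitting lemma you flag as the main obstacle, at the cost of invoking an external uniqueness result.

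For the cycle equality, the paper runs an induction rather than a telescoping sum: it applies the snake lemma once to the pair of rows $0\to\Ecal_1\to\Ecal\to\Ecal/\Ecal_1\to 0$ and $0\to\Ecal_1^{\vee\vee}\to\Ecal^{\vee\vee}\to(\Ecal/\Ecal_1)^{\vee\vee}\to 0$, obtains $\Ccal(\Ecal)=\Ccal(\Ecal_1)+\Ccal(\Ecal/\Ecal_1)$ from Lemma~\ref{lemma:cycles1}, and then observes that $\Ecal/\Ecal_1$ is again torsion free with polystable double dual, so the statement applies inductively. Your telescoping along the full chain $T_0\subset\cdots\subset T_\ell=\Tcal$ is equivalent and arguably more transparent; note however that both arguments ultimately rely on the same splitting fact (you use it to identify $\Fcal_i/\Fcal_{i-1}\cong Q_i^{\vee\vee}$, the paper uses it to see that $\Ecal^{\vee\vee}/\Ecal_1^{\vee\vee}$ is reflexive), so neither approach actually avoids the obstacle you identify.
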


\begin{proof}
 Since $\mathcal{E}^{\vee\vee}$ is polystable it is in particular 
 $\mu$-semistable.  Suppose $\mathcal{F}\subset \Ecal$ is a proper subsheaf with $\mu(\mathcal{F})>\mu(\Ecal)$. Then $\Fcal$ is also a subsheaf of $\Ecal^{\vee\vee}$ with $\mu(\mathcal{F})>\mu(\Ecal^{\vee\vee} )$, since $\Ecal$ and $\Ecal^{\vee\vee} $ coincide in codimension one and hence have the same slope. This  violates semistability of $\Ecal^{\vee\vee}$, and so $\Ecal$ is $\mu $-semistable, proving the first claim.

Recall the definition of the Jordan-H\"older filtration \eqref{eqn:filtration}.
We claim that taking the saturations $\widehat{\mathcal{E}}_{i}$ of the 
$\mathcal{E}_{i}$ inside $\mathcal{E}^{\vee \vee }$ gives a 
Jordan-H\"{o}lder filtration for $\mathcal{E}^{\vee \vee }$. First of all, 
$
\widehat{Q}_{i}=\widehat{\mathcal{E}}_{i}/\widehat{\mathcal{E}}_{i-1}
$
is torsion free by construction. Moreover, there is an isomorphism 
\begin{equation}\label{eq:dd_coincide}
 (\widehat{Q}_{i})^{\vee \vee }\cong (Q_{i})^{\vee \vee }=(\mathcal{E}_{i}/
\mathcal{E}_{i-1})^{\vee \vee },
\end{equation}
 since these sheaves are isomorphic away from 
$\supp(\mathcal{T})$,
which has codimension greater than or equal to two. In fact, notice that since a saturated subsheaf of
a reflexive sheaf is reflexive, we actually have 
$\widehat{\mathcal{E}}_{i}\cong \mathcal{E}_{i}^{\vee \vee }$,
 by normality. The stability of $
\widehat{Q}_{i}$ then follows since $Q_{i}$ is stable by definition, and a
sheaf is stable if and only if its double dual is, see \cite[Chap.~II, Lemma 1.2.4]{OSS:80}. Therefore $\widehat{\mathcal{E}}_\bullet$ defines a Jordan-H\"{o}lder filtration.

It follows from \eqref{eq:dd_coincide} that 
$\displaystyle
(\Gr\mathcal{E)}^{\vee \vee }\cong \oplus_{i}(\widehat{Q}_{i})^{\vee \vee }
$.
The right hand side of this isomorphism is the double dual of the associated
graded object for the Jordan-H\"{o}lder filtration for $\mathcal{E}^{\vee
\vee }$ by $\widehat{\mathcal{E}}_{i}\cong \mathcal{E}_{i}^{\vee \vee }$.
Since this sheaf is uniquely associated to $\mathcal{E}^{\vee \vee }$ by Remark~\ref{rem:pair_is_unique}, and $
\mathcal{E}^{\vee \vee }$ is polystable with possibly
different Jordan-H\"{o}lder filtration whose successive quotients are the
direct summands in the decomposition of $\mathcal{E}^{\vee \vee }$, we obtain the isomorphism
$
(\Gr\mathcal{E)}^{\vee \vee }\cong \mathcal{E}^{\vee \vee }$.

In order to prove the claim regarding cycles, we start by observing that $(\mathcal{E} / \mathcal{E}_1)^{\vee \vee} \cong \mathcal{E}^{\vee\vee} / \mathcal{E}_1^{\vee \vee}$,
as $\mathcal{E}^{\vee \vee} / \mathcal{E}_1^{\vee \vee}$ is a direct summand of $\Gr(\mathcal{E})^{\vee \vee}$ and hence reflexive. Therefore, we obtain the following diagram with exact rows:
\[\begin{xymatrix}{
0 \ar[r]& \mathcal{E}_1 \ar[d] \ar[r]& \mathcal{E} \ar[r]\ar[d]& \mathcal{E}/\mathcal{E}_1\ar[r]\ar[d]^\alpha & 0\\
0 \ar[r]& \mathcal{E}_1^{\vee\vee} \ar[r]& \mathcal{E}^{\vee\vee} \ar[r]& (\mathcal{E}/\mathcal{E}_1)^{\vee\vee}\ar[r] & 0
}
  \end{xymatrix}
\]
Notice that $\alpha$ is injective, as $\mathcal{E}/\mathcal{E}_1$ is torsion free. An application of the snake lemma  yields an exact sequence
\[0 \to \mathcal{E}_1^{\vee\vee}/\mathcal{E}_1 \to \mathcal{E}^{\vee\vee}/\mathcal{E}  \to (\mathcal{E}/\mathcal{E}_1)^{\vee\vee} / (\mathcal{E}/\mathcal{E}_1) \to 0.\]
Lemma~\ref{lemma:cycles1} therefore implies that $\mathcal{C}(\mathcal{E}) = \mathcal{C}(\mathcal{E}_1) + \mathcal{C}(\mathcal{E}/\mathcal{E}_1)$. Observe that $\mathcal{E}/\mathcal{E}_1$ is again torsion free with polystable double dual, so that Remark~\ref{rem:gamma_campatible_with_earlier_defi} allows us to conclude by induction.
\end{proof}

\subsubsection{Some boundedness results}\label{sec:boundedness}
The following will be used later on.
\begin{prop}\label{prop:Gr-boundedness} 
Let $\mathcal{S}$ be a set of semistable sheaves $\Ecal$ that is bounded in the sense of \cite{Grothendieck:61}. Then the set of all possible Seshadri graduations $\Gr\Ecal$ for $\Ecal\in\Scal$ 
is also bounded. 
\end{prop}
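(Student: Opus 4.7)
The plan is to invoke Grothendieck's boundedness theorem in order to control the filtration subsheaves appearing in any Jordan-H\"older filtration of a sheaf in $\mathcal{S}$, and then to deduce boundedness of the successive quotients $Q_i$ and finally of their direct sum $\Gr\Ecal$. First, since $\mathcal{S}$ is bounded, the Hilbert polynomials of its members form a finite set, so the ranks are bounded by some integer $r$ and the slopes $\mu(\Ecal)$ are uniformly bounded above. In particular, the length $\ell$ of any Jordan-H\"older filtration of some $\Ecal\in\mathcal{S}$ is bounded by $r$.

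For any such filtration $0=\Ecal_0\subset\Ecal_1\subset\cdots\subset\Ecal_\ell=\Ecal$, each $\Ecal_i$ is saturated in $\Ecal$, since the quotient $\Ecal/\Ecal_i$ is a successive extension of the torsion-free stable sheaves $Q_{i+1},\ldots,Q_\ell$ and hence torsion-free. Moreover $\Ecal_i$ is itself $\mu$-semistable with $\mu_{\max}(\Ecal_i)=\mu(\Ecal_i)=\mu(\Ecal)$, which is uniformly bounded above by the first step. Grothendieck's boundedness theorem (in the form stated for instance in \cite[Lemma 1.7.9]{HuybrechtsLehn:10}) then implies that the family of all such subsheaves $\{\Ecal_i\}$, as $\Ecal$ ranges over $\mathcal{S}$ and as one ranges over all Jordan-H\"older filtrations, is bounded.

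Consequently, their Hilbert polynomials lie in a finite set, and the relation $\chi_{Q_i}=\chi_{\Ecal_i}-\chi_{\Ecal_{i-1}}$ shows that only finitely many Hilbert polynomials $P$ appear among the $Q_i$. For each such $P$, the sheaves $Q_i$ of Hilbert polynomial $P$ arise as quotients of sheaves $\Ecal_i$ in a bounded family and are therefore parameterized by a relative Quot scheme of finite type, hence form a bounded family; a finite union over $P$ yields boundedness of the set of all possible $Q_i$. Finally, each $\Gr\Ecal$ is a direct sum of at most $r$ such quotients, and finite direct sums drawn from a bounded family with a uniform bound on the number of summands form a bounded family, which gives the claim.

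The main obstacle will be the application of Grothendieck's boundedness theorem in the second paragraph; once the saturation of $\Ecal_i$ in $\Ecal$ and the uniform bound on $\mu_{\max}(\Ecal_i)$ have been verified (both of which follow directly from the defining properties of a Jordan-H\"older filtration together with the semistability of $\Ecal$), the subsequent steps reduce to standard Quot-scheme considerations and the elementary fact that direct sums with uniformly bounded numbers of summands preserve boundedness.
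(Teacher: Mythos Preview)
Your argument is correct. Both you and the paper invoke Grothendieck's lemma \cite[Lemma 1.7.9]{HuybrechtsLehn:10} as the key boundedness input, but the logical organisation differs. You bound \emph{all} the filtration subsheaves $\Ecal_i\subset\Ecal$ in one stroke (as saturated subsheaves of members of a bounded family, with $\mu(\Ecal_i)=\mu(\Ecal)$ taking only finitely many values), then recover the successive quotients $Q_i$ via relative Quot schemes and assemble $\Gr\Ecal$ as a direct sum with a uniformly bounded number of summands. The paper instead runs a descending induction on rank: it applies Grothendieck's lemma directly to the \emph{top} quotient $Q_\ell=\Ecal/\Ecal_{\ell-1}$, observes that the kernels $\Ecal_{\ell-1}$ then form a bounded set $\Scal'$ of semistable sheaves of strictly smaller rank, and repeats the procedure on $\Scal'$. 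The inductive route is slightly more economical, as it avoids the Quot-scheme step and the direct-sum bookkeeping; your route makes the family being bounded at each stage more explicit. One small wording point: the subsheaf form of Grothendieck's lemma requires $\mu(\Ecal_i)$ to be bounded \emph{below}, not $\mu_{\max}(\Ecal_i)$ to be bounded above; this is harmless here since $\mu(\Ecal_i)=\mu(\Ecal)$ lies in a finite set and is therefore bounded in both directions.
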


\begin{proof}
Let $r$ be the maximal rank of the sheaves appearing in $\Scal$. Let $\Ecal$ any sheaf in $\Scal$. Then the length $\ell$ of any Seshadri filtration 
\eqref{eqn:filtration} of $\Ecal$ is bounded by $r$. 
We have 
$\Gr\Ecal=\oplus_{i=1}^\ell \Ecal_i/\Ecal_{i-1}$, and it is enough to show that the isomorphism classes of all quotients $ \Ecal_i/\Ecal_{i-1}$ arising from such filtrations form a bounded set. 
If $\Ecal$ is such that $\ell=1$, we have $\Gr\Ecal=\Ecal_1/\Ecal_0=\Ecal$, and the statement is clear.

Consider now the subset of $\Scal$ consisting of those $\Ecal$ with $\ell>1$. Then for such sheaves $\Ecal$, the isomorphism classes of quotients $ \Ecal_\ell/\Ecal_{\ell-1}=\Ecal/\Ecal_{\ell-1}$ belong to a bounded set  by Grothendieck's lemma, \cite[Lemma 1.7.9]{HuybrechtsLehn:10}, since their slope equals to $\mu(\Ecal)$ and is therefore bounded. The sheaves $\Ecal_{\ell-1}$ may now be viewed as kernels of the projections $\Ecal_\ell\to\Ecal_\ell/\Ecal_{\ell-1}$ so their isomorphism classes form a bounded set $\Scal'$. 
We continue by applying the above procedure to the set $\Scal'$ and conclude by descending induction on $r$.
\end{proof}

In the next section we will also require a boundedness result for the codimension $k$ singular sets of a bounded set of polystable reflexive sheaves. We first need the following elementary proposition.  

\begin{prop}\label{prop:boundedness} 
Let $\mathcal{S}$ be a set of polystable reflexive sheaves $\Ecal$ that is bounded in the sense of \cite{Grothendieck:61}. Then for each $k$ and each $i$, the set
\begin{equation*}
\{\mathcal{Q}_{n-k}(\mathcal{E}xt^{i}(\mathcal{E},\mathcal{O}_{X}))\}_{
\mathcal{E}\in \mathcal{S}}
\end{equation*}
is also bounded. 
\end{prop}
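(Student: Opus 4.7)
Plan: The approach is to use the characterisation of boundedness as parametrisability by a scheme of finite type. By boundedness of $\Scal$ in Grothendieck's sense, there exists a scheme $T$ of finite type and a coherent sheaf $\mathscr{E}$ on $T \times X$, flat over $T$, such that every $\Ecal \in \Scal$ appears as a fibre $\mathscr{E}_t$ for some $t \in T$. The overall strategy has two steps: first show that the Ext-sheaves fit into a bounded family, and then cut out the codimension $k$ part of the torsion filtration while preserving boundedness.

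For the first step, one forms the relative Ext sheaf $\mathscr{E}xt^i_{T \times X / T}(\mathscr{E}, \pi_X^* \Ocal_X)$ on $T \times X$. This sheaf need not be flat over $T$, as the dimensions of the fibres $\mathcal{E}xt^i(\mathscr{E}_t, \Ocal_X)$ can jump, but applying generic flatness iteratively together with Grothendieck's theorem on cohomology and base change allows one to stratify $T$ into finitely many locally closed subschemes $T_\alpha$, on each of which the relative Ext sheaf is $T_\alpha$-flat and whose formation commutes with base change to closed points $t \in T_\alpha$. Since each $T_\alpha$ is of finite type, the collection $\{\mathcal{E}xt^i(\mathscr{E}_t, \Ocal_X)\}_{t \in T_\alpha}$ is a bounded family on $X$; as a finite union of bounded families is bounded, the set $\{\mathcal{E}xt^i(\Ecal, \Ocal_X) : \Ecal \in \Scal\}$ is bounded.

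For the second step, I would pass from $\mathcal{E}xt^i(\Ecal, \Ocal_X)$ to the codimension $k$ part $\mathcal{Q}_{n-k}$ of its torsion filtration. Since the ambient family is bounded, its elements have only finitely many Hilbert polynomials, hence uniformly bounded Hilbert polynomials. The subsheaves $\mathcal{T}_{n-k}(\mathcal{E}xt^i(\Ecal, \Ocal_X)) \subset \mathcal{E}xt^i(\Ecal, \Ocal_X)$ then have Hilbert polynomials bounded in terms of the ambient family, and the set of coherent subsheaves with a given Hilbert polynomial of members of a bounded family is bounded by Quot scheme theory. Taking a finite union over the finitely many possible Hilbert polynomials, one obtains boundedness of $\{\mathcal{T}_{n-k}(\mathcal{E}xt^i(\Ecal, \Ocal_X)) : \Ecal \in \Scal\}$, and similarly for $\mathcal{T}_{n-k-1}$. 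The family of quotients $\mathcal{Q}_{n-k} = \mathcal{T}_{n-k}/\mathcal{T}_{n-k-1}$ is then bounded as well.

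The main technical obstacle lies in the first step, namely in ensuring the good behaviour of the $\mathcal{E}xt$-functor in families: the failure of $\mathcal{E}xt$-sheaves to form a flat family over $T$ forces one to stratify the parameter space and appeal to base-change results. Once this stratification is in place, the remaining arguments reduce to standard applications of Grothendieck's finiteness results for subsheaves and quotients in bounded families, as recorded for instance in \cite[\S 1.7]{HuybrechtsLehn:10}.
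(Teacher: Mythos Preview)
Your proposal is correct and follows essentially the same two-step approach as the paper: first establish boundedness of the $\mathcal{E}xt^i$-sheaves, then pass to the torsion filtration pieces $\mathcal{T}_{j}$ and their quotients $\mathcal{Q}_{n-k}$. The only difference is in presentation: the paper dispatches the first step in one line by citing \cite[p.~251]{Grothendieck:61} directly for the boundedness of $\{\mathcal{E}xt^{i}(\Ecal,\mathcal{O}_{X})\}_{\Ecal\in\Scal}$, whereas you unpack the stratification and base-change argument that underlies that result; similarly, the paper asserts the second step without spelling out the Quot-scheme argument you sketch.
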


\begin{proof}
 Since $\Scal$ is bounded, by \cite[p.~251]{Grothendieck:61}, we know that the sets of sheaves  $
\{\mathcal{E}xt^{i}(\Ecal,\mathcal{O}_{X})\}_{\mathcal{E}\in \Scal}$, are bounded. This means that for any $i,j,k$, the associated sets 
$\{\mathcal{T}_{j}(\mathcal{E}xt^{i}(\Ecal,\mathcal{O}_{X}))\}_{\mathcal{E}\in \mathcal{S}}
$,
 and therefore also $\{\mathcal{Q}_{n-k}(\mathcal{E}xt^{i}(\Ecal,\mathcal{O}_{X}))\}_{\mathcal{E}\in \mathcal{S}}$, are bounded. 
\end{proof}

\begin{Cor} \label{Cor:singsetboundedness}
In the setup of Proposition\ \ref{prop:boundedness},  the set
$
\{\deg\sing_{n-k}(\mathcal{E)\}}_{\mathcal{E}\in \mathcal{S}}
$, 
is finite for each $k$, where we regard $\sing_{n-k}(\mathcal{E)}$ as an element of 
$\mathscr C_{n-k}(X)$ by assigning the weight $1$ to each of its
irreducible components. In particular, the number of irreducible components
of the sets $\sing_{n-k}(\mathcal{E)}$ is bounded. As a result, the
set 
$
\{\sing_{n-k}(\mathcal{E)\}}_{\mathcal{E}\in \mathcal{S}}
$
is relatively compact in $\mathscr C_{n-k}(X)$. 
\end{Cor}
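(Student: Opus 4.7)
The plan is to combine Proposition~\ref{prop:boundedness} with two standard facts: (i) a bounded family of coherent sheaves on $X$ in the sense of \cite{Grothendieck:61} admits only finitely many Hilbert polynomials, and (ii) by Lemma~\ref{lem:cherncharactertorsionsheaf}, the degree of the support cycle of a torsion sheaf of pure codimension $k$ is read off from the leading coefficient of its Hilbert polynomial.

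First I would rewrite $\sing_{n-k}(\Ecal)$ using the assumption that $\Ecal$ is reflexive: by \eqref{eq:sings_of_reflexive} and the definition of $\sing_{n-k}(\Ecal)$, every irreducible component of $\sing_{n-k}(\Ecal)$ appears as an irreducible component of $\supp(\mathcal{F}^{i,k}_\Ecal)$ for some $i \in \{1,\dots,n-2\}$, where $\mathcal{F}^{i,k}_\Ecal := \mathcal{Q}_{n-k}(\mathcal{E}xt^i(\Ecal,\mathcal{O}_X))$. By construction, $\supp(\mathcal{F}^{i,k}_\Ecal)$ is pure of codimension $k$ (or empty). Now Proposition~\ref{prop:boundedness} says that $\{\mathcal{F}^{i,k}_\Ecal\}_{\Ecal\in\Scal}$ is a bounded family for each fixed pair $(i,k)$, so by (i) it realizes only finitely many Hilbert polynomials.

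Next, I would translate this finiteness into a degree bound. The Hilbert polynomial of $\mathcal{F}^{i,k}_\Ecal$ has degree $n-k$, and its leading coefficient is a fixed positive multiple of $\int_X \ch_k(\mathcal{F}^{i,k}_\Ecal)\wedge c_1(L)^{n-k}/(n-k)!$. By Lemma~\ref{lem:cherncharactertorsionsheaf}, $\ch_k(\mathcal{F}^{i,k}_\Ecal)$ is Poincar\'e dual to the support cycle $\Ccal_{\mathcal{F}^{i,k}_\Ecal}$, so $\deg \Ccal_{\mathcal{F}^{i,k}_\Ecal}$ is determined by the Hilbert polynomial and hence takes only finitely many values. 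Since the reduced cycle carrying weight $1$ on each irreducible component of $\supp(\mathcal{F}^{i,k}_\Ecal)$ has degree bounded above by $\deg \Ccal_{\mathcal{F}^{i,k}_\Ecal}$ (because all multiplicities are positive integers), summing over the finitely many indices $i=1,\dots,n-2$ yields a finite set of upper bounds for $\deg \sing_{n-k}(\Ecal)$, as claimed.

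Finally, relative compactness of $\{\sing_{n-k}(\Ecal)\}_{\Ecal\in\Scal}$ in $\mathscr C_{n-k}(X)$ is immediate from Theorem~\ref{thm:cycle space}. For the bound on the number of irreducible components, I would note that every integral subvariety of $X\subset \PBbb^N$ of dimension $n-k$ has $\omega$-degree bounded below by a positive constant (coming from the renormalisation $c_1(\Ocal_X(1))=\lambda\omega$ and the fact that its Fubini--Study degree is a positive integer); the uniform upper bound on $\deg \sing_{n-k}(\Ecal)$ therefore caps the number of components uniformly over $\Scal$. I do not foresee a genuine obstacle: the proof is essentially a bookkeeping exercise combining Proposition~\ref{prop:boundedness}, the Grothendieck finiteness of Hilbert polynomials in bounded families, and Lemma~\ref{lem:cherncharactertorsionsheaf}.
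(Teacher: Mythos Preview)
Your proposal is correct and follows essentially the same route as the paper: use Proposition~\ref{prop:boundedness} to get boundedness of the families $\{\mathcal{Q}_{n-k}(\mathcal{E}xt^{i}(\Ecal,\mathcal{O}_{X}))\}_{\Ecal\in\Scal}$, invoke Grothendieck's finiteness of Hilbert polynomials in bounded families, read off the degree of the support cycle from the leading coefficient via Lemma~\ref{lem:cherncharactertorsionsheaf}, and conclude relative compactness from Theorem~\ref{thm:cycle space}. Your treatment of the bound on the number of irreducible components (via the positive lower bound on degrees of integral subvarieties) is in fact more explicit than the paper's, which simply asserts this ``in particular''.
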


\begin{proof}
By \eqref{eq:sings_be_codim}, we have 
\[
\sing_{n-k}(\mathcal{E})=\bigcup_{i=1}^{n-2}\supp( \mathcal{Q}_{n-k}(
\mathcal{E}xt^{i}(\Ecal,\mathcal{O}_{X})))\ ,
\]
and by the preceding proposition the family of sheaves appearing on the
right hand side is bounded. By \cite[Thm. 2.1]{Grothendieck:61}, this implies in particular
that for each $i$ the set of Hilbert polynomials 
$
\{\chi _{\mathcal{Q}_{n-k}(\mathcal{E}xt^{i}(\Ecal,
\mathcal{O}_{X}))}(m)\}_{\Ecal\in\Scal}
$, is finite. 
The $(n-k)$-th coefficients of these polynomials are precisely
\[
\int_{X}\ch_k(\mathcal{Q}_{n-k}(\mathcal{E}xt^{i}(\Ecal,\mathcal{O}_{X}))\wedge 
\frac{\omega ^{n-k}}{(n-k)!}\ ,
\]
and these are therefore finite in number. On the other hand,
\[
\ch_k(\mathcal{Q}_{n-k}(\mathcal{E}xt^{i}(\Ecal,\mathcal{
O}_{X}))={\rm PD}[\Ccal_{\mathcal{Q}_{n-k}(\mathcal{E}xt^{i}(\Ecal,\mathcal{O}_{X}))}]
\]
are the (Poincar\'e duals of) the support cycles of these torsion sheaves (see Lemma \ref{lem:cherncharactertorsionsheaf}). Therefore, the coefficients are given by the degrees of the cycles 
$\Ccal_{\mathcal{Q}_{n-k}(\mathcal{E}xt^{i}(\Ecal,\mathcal{O}_{X}))}$,
and so there are only finitely many such degrees. Since the sum of these cycles is exactly the cycle associated to $\sing_{n-k}(\mathcal{E})$, except
with possibly larger multiplicities, the set of degrees $\deg(\sing_{n-k}(\mathcal{E)})$ is also finite. The second statement follows
directly from Theorem \ref{thm:cycle space}. 
\end{proof}
\subsection{The Quot scheme, natural subschemes, and convergence}\label{subsect:quotschemes_and_bundles}
The starting point for forming moduli spaces of sheaves is that the set of
slope (or GM-) semistable sheaves with fixed Chern classes $c_{i}(E)$ is a
bounded family (see for example \cite{Maruyama:81}). In practice this means that there is a natural number $
m_{0}\gg0$, such that for any $m>m_{0}$ and any such sheaf $\Ecal$, $H^{i}(X,\Ecal
(m))=0$ for $i>0$, and $\Ecal(m)$ is globally generated.

Write $V=\CBbb^{\tau _{E}(m)}$ and set
\begin{equation} \label{eqn:H}
\Hcal=V\otimes \mathcal{O}_{X}(-m)\ .
\end{equation}
If we choose an isomorphism $H^{0}(X,\Ecal(m))\cong V$ with $m$ chosen as above, then we have  a surjection $\Hcal\to \Ecal\to 0$. Notice that the map we obtain depends on our choice of isomorphism.
Hence, we see  that to any slope or GM-semistable sheaf with Chern classes $c_{i}(E)$  we can associate a point  in the \emph{Quot scheme} $\Quot(\Hcal, \tau_E)$, which is defined as the set of equivalence classes of quotients $q_{\Ecal}: \Hcal\to \Ecal\to 0$, where $\Ecal\to X$ is a coherent sheaf with Hilbert polynomial equal to $\tau_E$. Quotients $q_1$ and $q_2$ are equivalent if $\ker q_1=\ker q_2$. This is equivalent to the existence of a commutative diagram
$$
\xymatrix{
\Hcal \ar[r]^{q_{1}} \ar@{=}[d] &\Ecal\ar[d]^{\varphi}\ar[r] &0 \\
\Hcal \ar[r]^{q_{2}} &\Ecal \ar[r]&0
}
$$
where $\varphi$ is an isomorphism.

Observe that there is an action $\GL(V) \curvearrowright \Quot(
\mathcal{H},\tau)$, namely, the map $g\cdot q_{\Ecal}:\mathcal{H}
\rightarrow \Ecal\rightarrow 0$ is given by composing $q_{\Ecal}$
with  $g$.
This action amounts to the fact that there is an ambiguity due to the choice
of isomorphism $H^{0}(X,\Ecal(m))\cong V$ (i.e.\ a choice of basis for $
H^{0}(X,\Ecal(m))$). Since $\mathbb{C}^{\ast }$ acts trivially
(rescaling the basis vectors by the same constant results in the same
kernel), it will suffice to consider the action of $\SL(V)$. 

 By the construction in \cite{Grothendieck:61}, $\Quot(\Hcal, \tau_E)$ is a projective scheme. It is furthermore a fine moduli space, so in particular
there is a universal quotient sheaf $q_{\mathscr U}:\Hscr\rightarrow \mathscr U\rightarrow 0$, where $\Hscr$ is the pullback of $\mathcal{H}$ via the projection map,
and $\mathscr U\rightarrow X\times \Quot(\mathcal{H},\tau _{E})$ is a
flat family so that $\mathscr U|_{X\times \{q_{\Ecal}\}}=\Ecal$,
and $q_{\mathscr U}|_{X\times \{q_{\Ecal}\}}=q_{\Ecal}$. There are subspaces $\Quot(\mathcal{H},(c_{1},\cdots ,c_{\min
(r,n)}))\subset \Quot(\mathcal{H},\tau _{E})$ consisting of
those quotients with fixed Chern classes $(c_{1},\cdots ,c_{\min (r,n)})$. Since the Chern classes of a flat family (in particular those of $\mathscr U$) are locally constant, we have a decomposition:
\[
\Quot(\mathcal{H},\tau _{E})=\coprod \Quot(\mathcal{H}
,(c_{1},\cdots , c_{\min (r,n)}))\ ,
\]
where the union is over all tuples of Chern classes whose associated Hilbert
polynomial is $\tau _{E}$. We will write  
\[
\Quot(\mathcal{H},c(E)):=\Quot(\mathcal{H},(c_{1}(E),\cdots
,c_{\min (r,n)}(E))\ .
\]

Let $R^{\muss}$ (resp.\ $R^{\Gss}$) 
$\subset \Quot(\mathcal{H},c(E))$ denote the subscheme of quotients $q:\Hcal \to \Ecal$ satisfying:
\begin{enumerate}
\item $\Ecal$ is torsion free;
\item $\det\Ecal\simeq \Jcal$;
\item $\Ecal$ is $\mu$-semistable (resp.\ GM-semistable);
\item $q$ induces an isomorphism $V\isorightarrow H^0(X,\Ecal(m_0))$.
\end{enumerate}
The spaces $R^{\muss}$ and $R^{\Gss}$ are preserved by the action of $\SL(V)$, and there is an inclusion $R^{\Gss}\hookrightarrow R^{\muss}$.

In Section \ref{sec:gauge-theory} we will need a result concerning the meaning of convergence of a sequence in the space $\Quot (\mathcal{H},\tau_E)$ in the analytic topology. For the following,  fix a hermitian structure on $\Hcal$.

\begin{lemma} \label{lem:quot-topology}
Let $q_i: \Hcal\to \Ecal_i\to 0$, $q:\Hcal\to \Ecal\to 0$ be points in $\Quot (\mathcal{H},\tau )$. suppose each $\Ecal_i$ is locally free with underlying $C^\infty$-bundle smoothly isomorphic to $E$. Let $\pi_i$ denote the orthogonal projections to $\ker q_i$, and $\pi$ the orthogonal projection to $\ker q$ on the open set $X\backslash\sing\Ecal$ where $\Ecal$ is locally free.  
If $q_i\to q$ in the analytic topology of $\Quot (\mathcal{H},\tau )$, then
\begin{enumerate}
\item $\pi_i\to \pi$ smoothly on $X\backslash\sing\Ecal$, and
\item on $X\backslash\sing\Ecal$, the underlying $C^\infty$-bundle of $\Ecal$ is smoothly isomorphic to $E$.
\end{enumerate}
\end{lemma}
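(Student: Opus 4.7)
The plan is to exploit the universal quotient sheaf $q_{\mathscr U}:\Hscr\to\mathscr U\to 0$ on $X\times\Quot(\Hcal,\tau)$ and its kernel $\mathscr K:=\ker q_{\mathscr U}$. Let $V\subset X\times\Quot(\Hcal,\tau)$ denote the analytically open locus on which $\mathscr U$ is locally free; on $V$, $\mathscr K$ is likewise locally free, and hence a holomorphic vector subbundle of $\Hscr|_V$ of rank $\rank\Hcal-r$. By hypothesis $(x,[q])\in V$ for every $x\in X\setminus\sing\Ecal$, and $(x,[q_i])\in V$ for every $x\in X$ and every $i$. Using openness of $V$ together with the convergence $q_i\to q$, for each such $x$ I may choose a polydisk neighborhood $U_x\times W_x\subset V$ that contains $(x,[q_i])$ for all sufficiently large $i$, and, after shrinking, a holomorphic frame $e_1(y,t),\dots,e_k(y,t)$ of $\mathscr K$ on $U_x\times W_x$.

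For part (1), the orthogonal projection onto the span of such a frame with respect to the fixed hermitian structure $\langle\cdot,\cdot\rangle$ on $\Hcal$ is given by the classical formula
\[
\pi_t(v)=\sum_{a,b}(h^{-1})_{ab}(y,t)\,\langle v,e_b(y,t)\rangle\,e_a(y,t),\quad h_{ab}(y,t):=\langle e_a(y,t),e_b(y,t)\rangle,
\]
which depends smoothly on $(y,t)\in U_x\times W_x$ since the frame is holomorphic and matrix inversion is smooth. Specialising $t=[q_i]\to [q]$ gives $\pi_i\to\pi$ in $C^\infty(U_x)$, and a standard covering argument along a compact exhaustion of $X\setminus\sing\Ecal$ upgrades this to smooth convergence on every compact subset.

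For part (2), on the same $U_x\times W_x\subset V$ the universal quotient $\mathscr U$ is a holomorphic (hence smooth) vector bundle of rank $r$; shrinking $W_x$ to be contractible, smooth parallel transport in the parameter direction produces smooth identifications $\mathscr U|_{U_x\times\{[q_i]\}}\cong \mathscr U|_{U_x\times\{[q]\}}$ for large $i$, so that $\Ecal|_{U_x}\cong_{C^\infty}\Ecal_i|_{U_x}\cong_{C^\infty} E|_{U_x}$. Concretely, part (1) implies that for any compact $K\subset X\setminus\sing\Ecal$ and $i$ sufficiently large, the smooth bundle map $(1-\pi)|_{(\ker q_i)^\perp}$ is a fibrewise isomorphism onto $(\ker q)^\perp$ on $K$; composing with $q_i:(\ker q_i)^\perp\to\Ecal_i$ and with the given smooth isomorphism $\Ecal_i\cong E$ yields $\Ecal|_K\cong_{C^\infty} E|_K$. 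The main obstacle is globalising these isomorphisms to a single smooth isomorphism on all of $X\setminus\sing\Ecal$, since the index $i$ required depends on $K$; I would handle this inductively along an exhaustion $K_1\subset K_2\subset\cdots$, successively modifying $\Phi_{n+1}$ by a smooth automorphism of $E|_{K_{n+1}}$ that agrees with $\Phi_n\circ\Phi_{n+1}^{-1}$ on a neighborhood of $K_n$, exploiting path-connectedness of $\Aut(E|_{K_{n+1}})$ and a cut-off construction on $\End(E)$.
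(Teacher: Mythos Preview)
Your approach is correct and genuinely different from the paper's. You work directly with the universal quotient on $X\times\Quot(\Hcal,\tau)$ and local holomorphic frames of the kernel, whereas the paper uses Grothendieck's embedding of $\Quot(\Hcal,\tau)$ into a Grassmannian $\Gr(r,N)$ with $W=H^0(X,\Hcal(k))$: the point $q$ goes to the subspace $K_q=H^0(X,(\ker q)(k))\subset W$, and one obtains \emph{global} bundle morphisms $W\otimes\Ocal_X\to\Hcal(k)$ by composing the evaluation map with the finite-dimensional projections $P_{q_i}\to P_q$ in $\End W$. Because the convergence $P_{q_i}\to P_q$ takes place in a finite-dimensional vector space, the resulting bundle morphisms converge smoothly and \emph{uniformly} on all of $X$, not merely on compacta of $X\setminus\sing\Ecal$; the images are precisely $\ker q_i\otimes\Ocal_X(k)$. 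For part (1) both approaches yield the result equally cleanly. For part (2), you correctly identify the globalisation issue---the index $i$ needed for $(1-\pi)|_{(\ker q_i)^\perp}$ to be an isomorphism depends on the compact set---and your inductive patching along an exhaustion is a standard and valid way to resolve it (this is the sort of argument made precise e.g.\ in Wehrheim's treatment of Uhlenbeck compactness). Your sketch of extending $\Phi_n\circ\Phi_{n+1}^{-1}$ via path-connectedness of the gauge group would benefit from one more sentence: since the maps $(1-\pi)$ you use are $C^0$-close to the identity on $K_n$ for $i_n,i_{n+1}$ both large, one can write $\Phi_n\circ\Phi_{n+1}^{-1}=\exp(A_n)$ with $A_n$ small and cut off $A_n$ to produce the required compactly supported modification.
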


\begin{proof}
Recall from \cite{Grothendieck:61} that $\Quot(\Hcal,\tau_E)$ admits an embedding into a Grassmannian as follows. By choosing $k\gg0$ we may assume $$H^1(X, \ker q\otimes \Ocal_X(k))=\{0\}$$ for all $q\in \Quot(\Hcal,\tau)$. Hence, $q$ defines a point  $P_q\in \Gr(r,N)$, where 
$N=\dim W$, where $W:= H^0(X, \Hcal(k))$, and $r=N-\tau(k)$. By choosing hermitian structures, we may regard $P_q$ as the element of $\End W$ given by  orthogonal projection to $K_q:=H^0(X, \ker q\otimes \Ocal_X(k))$. 
Applying this to the situation in the statement of the lemma, convergence $q_i\to q$ in the analytic topology implies smooth convergence $P_{q_i}\to P_q$ in $\End W$. This gives a sequence of smooth bundle morphisms $W\otimes \Ocal_X\to \Hcal(k)$ gotten from the composition
$$
W\otimes \Ocal_X
\stackrel{P_{q_i}}{\xrightarrow{\hspace*{.75cm}}}W\otimes \Ocal_X\lra \Hcal(k)\ ,
$$
whose images are precisely $\ker q_i\otimes \Ocal_X(k)$. Since the bundle morphisms converge smoothly, items (1) and (2)  clearly follow.
\end{proof}

\subsection{Flat families and line bundles over Quot schemes}
 The line bundles in question come from the following general construction (see \cite[Section~8.1]{HuybrechtsLehn:10} for more details). 
 Let $S$ be a scheme over $\CBbb$. Given an $S$-flat family of coherent sheaves $\Escr \rightarrow X\times S$, the determinant of cohomology $\lambda _{\Escr}:K(X)\rightarrow \Pic(S)$ is defined by
\begin{equation}\label{eq:def_of_det_bundle}
 \lambda_{\Escr}\left( u\right) :=\det\left((p_{S})_!(p_{X}^{\ast }(u)\otimes 
{\Escr})\right).
\end{equation}
Here, $K(X)=K_{0}(X)=K^{0}(X)$ is the Grothendieck group of holomorphic vector bundles (and of coherent sheaves) on $X$, $p_{X}:X\times S\rightarrow X$ and $
p_{S}:X\times S\rightarrow S$ are the projections, and $(p_{S})_!:K^{0}(X\times
S)\rightarrow K^{0}(S)$ is defined on a class $[F]$ represented by a sheaf $\mathscr{F}$ as 
\[(p_S)_!([\mathscr{F}]) = \sum_{i=0}^{\dim X} (-1)^i [R^i(p_S)_*\mathscr{F}] \in K^0(S) ,\]
and is then extended to $K^0(X)$ by linearity. Moreover, in our situation, it can be shown that $(p_{S})_!(p_{X}^{\ast }(u)\otimes 
{\Escr}) \in K_0(S)$, so that applying the determinant in \eqref{eq:def_of_det_bundle} makes sense; see the discussion preceding \cite[Cor.~2.1.11]{HuybrechtsLehn:10} for details. Therefore, we may construct line bundles on the parameter space $S$ by specifying classes in $K(X)$. One natural way to do this is to consider classes arising from complete intersections as follows; cf.~\cite[Section 3.2]{GrebToma:17}.

Let $H\subset X\subset \PBbb^N$ be a hyperplane section and consider the class
$[\mathcal{O}_{H}]\in K(X)$. 
We fix a class $c\in K(X)_{{\rm num}
}:=K(X)/\sim $, where $u\sim v$ iff the difference $u-v$ lies in the kernel of the quadratic form induced by the Euler characteristic $\chi$, i.e., $u-v\in \ker ((a,b)\mapsto \chi
(a\cdot b))$. By the Riemann-Roch Theorem, the numerical behaviour of such a class is hence determined by its associated rank $r$ and its Chern classes $c_{i}\in H^{2i}(X,
\mathbb{Z})$. As we consider sheaves with fixed determinant, we furthermore fix a line bundle $\Jcal \in \Pic(X)$ such that $c_{1}(\Jcal )=c_{1}$. Now, for any integer $1\leq i\leq n-1$, let $X_{i}\in |H|$ and write $X^{(0)}=X$ and $X^{(l)}=\cap _{i=1}^{l}X_{i}$ with $1\leq l\leq n-1$. We
will assume that $X^{(l)}$ is smooth for each $l$.  Finally, fix a basepoint $x\in X^{(n-1)}$, whose structure sheaf $\mathcal{O}_x$ defines a class $[\mathcal{O}_x]$ in each $K(X^{(l)})$. 

Then, for each $1\leq l \leq n-1$ we
define a class in $ K(X^{(l)})$ by
$$
u_{n-1-l}(c|_{X^{(l)}}, [\mathcal{O}_{H}]_{X^{(l)}}) 
:=-r([\mathcal{O}_{H}]|_{X^{(l)}})^{n-1-l}+\chi (c|_{X^{(l)}}\cdot
([\mathcal{O}_{H}]|_{X^{(l)}})^{n-1-l})[\mathcal{O}_{x}]\ .
$$
Here, multiplication "$\cdot$" and powers "$\bullet\, ^{n-1-l}$" are taken with respect to the ring structure of $K(X^{(l)})$, which is derived from the operation of taking tensor product of locally free sheaves.

Supposing furthermore that the family $\Escr|_{X^{(l)}\times S} \rightarrow X^{(l)}\times S$ is flat over $S$ for each $l$, by using the previous construction, we can produce a sequence of line
bundles 
\[
\mathscr{L}_{\Escr,n-1-l}:=\lambda _{\Escr
}\bigl(u_{n-1-l}(c|_{X^{(l)}},[\mathcal{O}_{H}]_{X^{(l)}})\bigr)\ .
\]

The main case of interest in this article  is $l=0$, $r = \rank (E)$, $c_i=c_i(E)$, and $\Escr=\Uscr$, the universal sheaf on $\Quot(\mathcal{H},\tau
_{E})$. By the previous discussion we obtain a line bundle $\mathscr{L}_{
\Uscr,n-1}\rightarrow \Quot(\mathcal{H},\tau _{E})$ which we abbreviate
by $\mathscr{L}_{n-1}$. We will discuss properties of this line bundle in the following subsection.

\subsection{Compactifications by sheaves} \label{sec:moduli}
By Theorem \ref{thm:DUY},  the moduli space $M_{\HYM}^\ast$ is identified with the complex analytic space $M^{s}$. By a result of Miyajima \cite{Miyajima:89}, the latter is isomorphic as a complex analytic space to both $M_{an}^{s}$ and $M_{alg}^{s}$, which are moduli spaces corepresenting the appropriate moduli functors (namely those associating to a complex space or $\mathbb{C}$-scheme the set of isomorphism classes of flat families of slope stable holomorphic or algebraic bundles which are smoothly isomorphic to $E$ over this space).\footnote{In the case of  $M_{alg}^{s}$ we really refer to its analytification.} We will sometimes identify all three spaces and use the notation $M^{s}$. 

Since $M_{alg}^{s}$ corepresents a subfunctor of the moduli functor for the Gieseker moduli space $M^{\Gss}$ of GM-semistable sheaves, and forms a Zariski open subspace, $M^{\Gss}$ compactifies $M^{s}$, and therefore also $M_{\HYM}^\ast$. Below we briefly recall  the construction in \cite{GrebToma:17} of another compactification of $M_{\HYM}^\ast$ based  instead on slope semistable sheaves. Both of these compactifications arise as the image of a  scheme in projective space under sections of line bundles produced by the construction of the previous section.

Denote by $\mathscr{L}_{\Gss}\in\Pic(\Quot(\mathcal{H},\tau_E))$ the bundle $\lambda _{\mathscr{U}}(\mathcal{O}_{X}(l))$
associated to the universal sheaf $\mathscr{U}\rightarrow X\times \Quot(\mathcal{H},c(E))$. This
 turns out to be ample for $l$ sufficiently large, and it posseses
an $\SL(V)$ linearisation. The reason for the subscript $Gss$ is that the
subspace $R^{\Gss}\subset \Quot(\mathcal{H},c(E))$, is exactly the set of
GIT semistable points of $\mathscr{L}_{\Gss}$ on the Zariski closure $
\overline{R^{\Gss}}$ with respect to this linearisation. The
\emph{Gieseker moduli space} is the GIT quotient \[
M^{\Gss}:=\overline{R^{\Gss}}\doublequotient_{\mathscr{L}_{\Gss}}\SL(V)\ .
\]
By definition this is the Proj of the invariant section ring of $\mathscr{L}_{\Gss}$:   
\[
M^{\Gss}={\rm Proj}\left(  {\bigoplus }_{k}H^{0}\left( \overline{R^{\Gss}},
\mathscr{L}_{\Gss}^{k}\right) ^{\SL(V)}\ \right) \ .
\]

 A point in $M^{\Gss}$ is represented by a GM-semistable quotient $q_{\Ecal}:\mathcal{H}\rightarrow \Ecal$ with $\det\Ecal\simeq \Jcal$. Moreover GIT gives a way to understand the geometry of $M^{\Gss}$. More precisely, two quotients $q_{\Ecal_{1}}$,$q_{\Ecal_{2}}\in R^{\Gss}$ represent the same point of $M^{\Gss}$ if and only if they are $s$-equivalent. In particular, the space $M_{alg}^{s}\cong M^{s}$ embeds as a Zariski open set.

\begin{defi} The \emph{Gieseker compactification} 
$\overline{M}^{\GMC}$ of ${M}^{s}$ is defined as the Zariski closure
 $\overline{M^{s}}\subset M^{\Gss}$. 
\end{defi}

 There is no
linearised ample line bundle for which the GIT semistable points coincide with the slope semistable sheaves. Nevertheless, several authors have considered the line bundle 
$$\mathscr{L}_{n-1}=\lambda_{\mathscr{U}}(u_{n-1}(c,[\mathcal{O}_{H}]))\in \Pic(\Quot(\mathcal{H},c(E))
)\ .$$
 When $\dim X=2$, the restriction to $R^{\muss}$ was studied
by Le Potier and Li (\cite{LePotier:92, Li:93}, and see also \cite{HuybrechtsLehn:10}) and it was found to be
equivariantly semiample. 

For technical reasons explained in \cite[Sect.\ 1.2]{GrebToma:17},  the situation is more subtle in higher dimensions. One is more or less forced to consider the
weak normalisation  $(R^{\muss})^{wn}$. A \emph{weakly normal} complex space $\Zcal$ is one for which every locally defined
continuous function on $\Zcal$ which is holomorphic on restriction to the smooth
points $\Zcal_{\mathrm{reg}}$ of $\Zcal$ is in fact holomorphic. Every
 complex space $\Zcal$ has a weak normalisation $\Zcal^{wn}$ which is a reduced
weakly normal complex space homeomorphic to $\Zcal
$ (see \cite[Sect.\ 2.3]{GrebToma:17} for a summary of the relevant theory and for references). Henceforth, we  write $\Zcal=(R^{\muss})^{wn}$. 

We consider pull back $\widetilde{\mathscr{U}}\rightarrow X\times
\Zcal$ of the universal sheaf $\mathscr{U}\rightarrow
X\times R^{\muss}$ via the map $X\times $ $\Zcal \rightarrow X\times R^{\muss}$, 
and the line bundle $\lambda_{
\widetilde{\mathscr{U}}}(u_{n-1}(c,[\mathcal{O}_{H}]))\in \Pic(\Zcal)$,
which we continue to denote by $\mathscr{L}_{n-1}$. In  \cite[Thm.\ 3.6]{GrebToma:17} it is 
shown that this line bundle is equivariantly semiample with respect to the $
\SL(V)$-action on $\mathcal{Z}$ induced by the natural $\SL(V)$-action on $\Quot(\mathcal{H},c(E))$. Furthermore, the equivariant
section ring of $\mathscr{L}_{n-1}$ is finitely generated in degree 1
(perhaps after passing to  a sufficiently large power, see \cite[Prop.\ 4.3]{GrebToma:17}). In
other words, a sufficiently large power of $\mathscr{L}_{n-1}$ gives a map from 
$\Zcal$ to a projective space, and furthermore the images
of these maps stabilise. Formally imitating the GIT construction we may
define the space $M^{\muss}$ to be the image, that is: 
\begin{equation} \label{eqn:mu-moduli}
M^{\muss}={\rm Proj}\left( {\bigoplus }_{k}H^{0}(\Zcal,\mathscr L_{n-1}^{k})^{\SL(V)}\right) .
\end{equation}
We will write $\pi:\Zcal \rightarrow M^{\muss}$ for the natural surjective map. We note that
by a recent result \cite[Appendix A.4]{BuchdahlTelemanToma:17}, the
map $\pi $ is in fact a quotient map. 

The space $M^{\muss}$ is by construction a projective scheme. Furthermore, it comes with a distinguished ample line bundle $\mathcal{O}_{M^{\muss}}(1)$ and enjoys a  subtle universal property (for details, see 
\cite[Sect.\ 4]{GrebToma:17}). In particular, any flat family $\Escr \rightarrow X\times S$ of slope semistable sheaves 
over a weakly normal complex space $S$ yields a classifying morphism $\psi _{\Escr}:S\rightarrow M^{\mu
ss}$, so that $\psi _{\Escr}^{\ast }(\mathcal{O}_{M^{\mu
ss}}(1))=\lambda _{\Escr}(u_{n-1}(c,[\mathcal{O}_{H}]))^{\otimes N}$, for some power $N$. 
Again, when we refer to $M^{\muss}$ in the sequel we will always mean its analytification, which is a weakly normal complex space by {\cite[Thm.\ 4.7]{GrebToma:17}}.

A point in $M^{\muss}$ is represented by a quotient sheaf $q_{
\Ecal}:\mathcal{H}\rightarrow \Ecal\rightarrow 0$ with $\det\Ecal\simeq \Jcal$. Just as for
the Gieseker moduli space, one seeks a characterisation of the points of
this space in terms of properties of $\Ecal$. Clearly, quotients
corresponding to isomorphic sheaves $\Ecal_{1}$ and $\Ecal_{2}$ give
rise to the same point in $M^{\muss}$ since this means that $q_{
\Ecal_{1}}$ and $q_{\Ecal_{2}}$ belong to the same $\SL(V)$ orbit, and
therefore they cannot be separated by any $\SL(V)$ invariant section of the
line bundle $\mathscr{L}_{n-1}$. Hence, given a $\mu $-semistable sheaf $
\Ecal$ with the appropriate Chern classes we may speak unambiguously
of the point $\pi (q_{\Ecal})=[\Ecal]\in M^{\muss}$ corresponding to $\Ecal$. Conversely, every point of $M^{\muss}$ is represented by such
a sheaf $\Ecal.$ In fact, \cite[Thm.\ 5.10]{GrebToma:17} shows that the weak normalisation of  $M_{an}^{s} \cong M_{alg}^{s}\cong M^{s}$ embeds into $M^{\muss}$ as a Zariski open set, similar to the case of $M^{\Gss}$. This motivates the following definition.
\begin{defi}\label{def:defining_the_closure}
Let $\overline M^\mu$ denote the Zariski closure of $(M^s)^{wn}\subset  M^{\muss}$. We will call this the \emph{slope compactification}. 
\end{defi}

It is important to clarify when two sheaves correspond to the same
point in $M^{\muss}$. We have the following result from \cite{Li:93} ($\dim X=2$) and \cite{GrebToma:17} (higher dimensions).

\begin{prop}  \label{prop:GT}
 If $\{q_i:\Hcal\to \Ecal_i\} \in \Zcal$ , $i=1,2$ satisfy $[\Ecal_{1}]=[\Ecal_{2}]\in M^{\muss}$, then
$\ddual{\Gr(\Ecal_{1})}\cong \ddual{\Gr(\Ecal_{2})}$ 
and $\Ccal_{\Ecal_1}=\Ccal_{\Ecal_2}$, where  $q_i:\Hcal\to \Ecal_i$, $i=1,2$. If $\dim X=2$, the converse also holds.
\end{prop}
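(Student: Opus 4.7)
The plan follows \cite{Li:93} for surfaces and \cite{GrebToma:17} for higher dimensions. The key ingredient is the identification, developed in \cite[Sect.~3]{GrebToma:17}, of $\SL(V)$-invariant sections of $\mathscr L_{n-1}^{\otimes k}$ over $\Zcal$ with pullbacks of sections of the analogous line bundle over the moduli space of semistable sheaves on a general complete intersection curve $C = X^{(n-1)} \in |H|^{n-1}$.

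For the forward direction, suppose $[\Ecal_1] = [\Ecal_2]$ in $M^{\muss}$. Then no $\SL(V)$-invariant section of any power of $\mathscr L_{n-1}$ distinguishes $q_{\Ecal_1}$ from $q_{\Ecal_2}$, so in particular no section of the curve-moduli line bundle distinguishes $\Ecal_1|_C$ from $\Ecal_2|_C$ for generic $C$. Since generic complete intersection restriction preserves semistability (Mehta--Ramanathan), Seshadri's theorem on curves forces $\Ecal_i|_C$ to be $S$-equivalent on $C$. For a sufficiently generic $C$ avoiding $\sing(\ddual{(\Gr\Ecal_i)})$ and meeting $|\Ccal_{\Ecal_i}|$ transversally, the $S$-equivalence class of $\Ecal_i|_C$ encodes both $\ddual{(\Gr\Ecal_i)}|_C$ and $\Ccal_{\Ecal_i} \cap C$. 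Varying $C$ over a sufficiently rich family and using Theorem \ref{thm:cycle space} together with the fact that a reflexive sheaf is determined by its restrictions to generic complete intersection curves then recovers the global data and yields $\ddual{(\Gr\Ecal_1)}\cong \ddual{(\Gr\Ecal_2)}$ and $\Ccal_{\Ecal_1}=\Ccal_{\Ecal_2}$.

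For the converse when $\dim X = 2$, the curve $C = X^{(n-1)}$ is already a single generic hyperplane section $H \subset X$, removing the need for a family of slicings. If $\ddual{(\Gr\Ecal_1)}\cong \ddual{(\Gr\Ecal_2)}$ and $\Ccal_{\Ecal_1}=\Ccal_{\Ecal_2}$, then for generic $H$ the restrictions $\Ecal_i|_H$ share the pair $(\ddual{(\Gr\Ecal_i)}|_H, \Ccal_{\Ecal_i}\cap H)$, are therefore $S$-equivalent on $H$, and hence yield the same point in the moduli space on $H$. By the explicit form of $\mathscr L_1$ for surfaces analysed in \cite{Li:93}, curve-derived sections suffice to separate points in $M^{\muss}$, forcing $[\Ecal_1]=[\Ecal_2]$.

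The main obstacle is the identification of $\mathscr L_{n-1}$ with the pullback of the curve-moduli line bundle; this requires the flatness and determinant-of-cohomology analysis of \cite[Sect.~3]{GrebToma:17} together with careful control of the genericity conditions on $C$ as one varies over the family of complete intersection curves. In higher dimensions the converse is not expected to hold, because sections beyond those arising from restriction to curves may in principle separate sheaves with identical invariants $(\ddual{(\Gr\Ecal)},\Ccal_\Ecal)$, and indeed a substantial part of Section \ref{sect:equivalence_relation} of the paper is devoted to handling the resulting non-injectivity of $\overline\Phi$.
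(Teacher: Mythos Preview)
The paper does not prove this proposition; it is quoted from \cite{Li:93} and \cite{GrebToma:17}. Your sketch correctly identifies the broad strategy (sections of $\mathscr L_{n-1}$ are produced by lifting from complete intersection curves), but there is a genuine gap in how you handle the cycle component $\Ccal_{\Ecal_i}$, and it affects both directions.

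In the forward direction you assert that for a sufficiently general complete intersection curve $C=X^{(n-1)}$ ``meeting $|\Ccal_{\Ecal_i}|$ transversally'', the $S$-equivalence class of $\Ecal_i|_C$ encodes $\Ccal_{\Ecal_i}\cap C$. But $|\Ccal_{\Ecal_i}|$ has dimension $n-2$ and $C$ has dimension $1$, so a general $C$ satisfies $C\cap|\Ccal_{\Ecal_i}|=\emptyset$; hence $\Ecal_i|_C\cong(\Gr\Ecal_i)^{\vee\vee}|_C$ and the restriction sees only the double dual, never the cycle. The actual argument in \cite{GrebToma:17} (see also Section~\ref{subsect:surface_case_separation} and the proof of Proposition~\ref{prop:embedding-via-admissible-curves-surface-case} in this paper) proceeds in two stages: one first restricts to a complete intersection \emph{surface} $S=X^{(n-2)}$, on which $\Ccal_{\Ecal_i}$ becomes a $0$-cycle, and then applies Li's separation argument on $S$. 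That argument does \emph{not} use generic curves: to separate $0$-cycles one lifts $\theta$-functions from curves $C\subset S$ belonging to a pencil one of whose members passes through a point where the two $0$-cycles differ (cf.\ \cite[Lemma~3.6]{Li:93}). Your ``varying $C$ over a sufficiently rich family'' does not capture this, because the separating sections come from a measure-zero family of special curves, not from generic ones.

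The same oversight undermines your converse argument for $\dim X=2$. You observe (correctly) that for generic $H$ the restrictions $\Ecal_i|_H$ are $S$-equivalent as soon as $(\Gr\Ecal_1)^{\vee\vee}\cong(\Gr\Ecal_2)^{\vee\vee}$, and then invoke ``curve-derived sections suffice to separate points'' to conclude $[\Ecal_1]=[\Ecal_2]$. But notice that this reasoning never uses the hypothesis $\Ccal_{\Ecal_1}=\Ccal_{\Ecal_2}$, so if it were valid it would prove that the double dual alone determines the point of $M^{\muss}$, which is false. The resolution is again that not all curve-lifted sections come from generic curves; sections lifted from curves through points of $\Ccal_{\Ecal_i}$ can and do distinguish sheaves with equal double duals but different cycles. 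The converse in \cite{Li:93} (and as recorded after Proposition~\ref{prop:fibres}) instead uses the connectedness of the fibres of the Quot-to-Chow map $\chi$ on surfaces, together with the general non-separation statement of Proposition~\ref{prop:fibres}.
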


The proposition means that for $\dim X=2$, the points of $M^{\muss}$ are
in bijection with isomorphism classes of pairs $(\mathcal{E},\Ccal)$ consisting of a polystable
reflexive sheaf and a $0$-cycle satisfying
 $\ch_{2}(\mathcal{E})=\ch_{2}(E)+[\Ccal]$. When $\dim X\geq 3$, one needs more information
to characterise the geometry of $M^{\muss}$. Suppose $\Ecal_{1}$
and $\Ecal_{2}$ with  $q_{\Ecal_{1}},q_{\Ecal_{2}}\in 
\Quot(\mathcal{H}, c(E)$) satisfy $\ddual{\Gr(\Ecal_{1})}\cong \ddual{\Gr(
\Ecal_{2})}:=\mathcal{E}$ and $\Ccal_{\Ecal_{1}}=\Ccal_{
\Ecal_{2}}=\Ccal$. Consider the Quot scheme of torsion quotients $\Quot(
\mathcal{E},\tau _{\mathcal{E}}-\tau _{E})$. Then we have the following
\emph{Quot-to-Chow map}: 
$$
\chi :\Quot(\mathcal{E},\tau _{\mathcal{E}}-\tau _{E})\lra
 \Cscr_{n-2}(X)
\ , 
$$
given by taking a quotient $\mathcal{T}$ \ to its cycle $\Ccal_{\mathcal{T}}.$
The  conditions imposed on $\Ecal_{1}$ and $\Ecal_{2}$ imply that
the sheaves $\mathcal{T}_{\Ecal_{1}}$ and
 $\mathcal{T}_{\Ecal_{2}}$ are in $\Quot(\mathcal{E},\tau _{\mathcal{E}}-\tau _{E})$ and lie in
the same fibre of $\chi $.  

\begin{prop}[{\cite[Prop.\ 5.8]{GrebToma:17}}] \label{prop:fibres}
If $\mathcal{T}_{\Ecal_{1}}$ and $\mathcal{T}_{\Ecal_{2}}$ lie
in the same connected component of $\chi^{-1}(\Ccal)$, then $[\Ecal_{1}]=[\Ecal_2]$.
 In particular, for any $q_{\Ecal}\in \Quot(\mathcal{H},c(E))$ giving a point $[\Ecal]$ of $M^{\muss}$, there
are at most finitely many different points 
$\{[\Ecal_{i}\mathcal{]\}}_{i=l}^{k}$ of $M^{\muss}$ with $\ddual{(\Gr\Ecal)}=\ddual{(\Gr\Ecal
_{i})}$ and $\Ccal_{\Ecal}=\Ccal_{\Ecal_{i}}$.  
\end{prop}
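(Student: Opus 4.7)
The strategy is to construct a connected flat family of slope semistable sheaves interpolating between $\Ecal_1$ and $\Ecal_2$ (or rather sheaves equivalent to them in $M^{\muss}$) and to deduce constancy of the associated classifying morphism to $M^{\muss}$. Given $\mathcal{T}_{\Ecal_1}$ and $\mathcal{T}_{\Ecal_2}$ in the same connected component of $\chi^{-1}(\Ccal)$, I would pick an irreducible curve (or, more generally, a connected closed subscheme) $T$ inside that component passing through both points. Pulling back the universal torsion quotient of $\mathcal{E}$ to $T^{wn}$ and taking the kernel yields a short exact sequence
\[
0 \lra \mathscr F \lra p_X^*\mathcal{E} \lra \mathscr T \lra 0
\]
of coherent sheaves on $X \times T^{wn}$, with $\mathscr F$ flat over $T^{wn}$ and each fibre $\mathcal{F}_t$ a torsion free subsheaf of $\mathcal{E}$ satisfying $\ddual{\mathcal{F}_t}\cong \mathcal{E}$ (since $\mathcal{T}_t$ has codimension $\geq 2$). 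Proposition~\ref{prop:polystablereflexive} then gives that each $\mathcal{F}_t$ is $\mu$-semistable, with fixed determinant $\Jcal$, Hilbert polynomial $\tau_E$, and support cycle $\Ccal_{\mathcal{F}_t}=\Ccal$.

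By the universal property of $M^{\muss}$ recalled in Section~\ref{sec:moduli}, the family $\mathscr F$ induces a classifying morphism $\psi_{\mathscr F}\colon T^{wn} \to M^{\muss}$ satisfying
\[
\psi_{\mathscr F}^*\mathcal{O}_{M^{\muss}}(1) \cong \lambda_{\mathscr F}\bigl(u_{n-1}(c,[\mathcal{O}_H])\bigr)^{\otimes N}\ .
\]
The key step is to show that this line bundle is trivial on $T^{wn}$, from which it follows that $\psi_{\mathscr F}$ is constant and therefore $[\mathcal{F}_{\mathcal{T}_{\Ecal_1}}]=[\mathcal{F}_{\mathcal{T}_{\Ecal_2}}]$ in $M^{\muss}$. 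Since $\mathcal{F}_{\mathcal{T}_{\Ecal_i}}$ has double dual $\mathcal{E}$ and cycle $\Ccal$, coinciding with the invariants of $\Gr\Ecal_i$ by Proposition~\ref{prop:polystablereflexive}, and since passing from $\Ecal_i$ to $\Gr\Ecal_i$ does not change the point in $M^{\muss}$ (the relation defining $M^{\muss}$ being an $s$-equivalence-type relation built from sections of $\mathscr L_{n-1}$), one concludes that $[\Ecal_1]=[\Ecal_2]$.

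The main obstacle is the triviality of $\lambda_{\mathscr F}(u_{n-1}(c,[\mathcal{O}_H]))$ on $T^{wn}$. The natural approach is to apply the determinant of cohomology to the above short exact sequence to reduce to a statement about $p_X^*\mathcal{E}$ (for which $\lambda$ is pulled back from a point and hence trivial) and about the universal torsion sheaf $\mathscr T$. For the latter, one uses that $\mathscr T$ is supported on the constant cycle $\Ccal$ and that the class $u_{n-1}(c,[\mathcal{O}_H])$ is tailored so that, after restriction to a general complete intersection curve $X^{(n-1)}\subset X$ as in Section~\ref{subsect:quotschemes_and_bundles}, the relevant Euler characteristic $\chi(\mathscr T_t|_{X^{(n-1)}}\cdot [\mathcal{O}_H]^\bullet)$ is controlled by the intersection of $\Ccal$ with $X^{(n-1)}$, which is constant in $t$; a direct Riemann-Roch computation together with base change for $\lambda$ then yields triviality.

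Finally, the finiteness in the second assertion is immediate from the first: $\Quot(\mathcal{E},\tau_{\mathcal{E}}-\tau_E)$ is a projective scheme (Grothendieck), hence has finitely many connected components, and so does the Zariski closed subset $\chi^{-1}(\Ccal)$. By the first part, each such component contributes at most one point of $M^{\muss}$ with $\ddual{(\Gr\Ecal)}\cong \mathcal{E}$ and $\Ccal_\Ecal=\Ccal$.
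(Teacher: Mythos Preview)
The paper does not give its own proof of this proposition; it is quoted from \cite[Prop.\ 5.8]{GrebToma:17}. Your proposal recovers essentially the argument of that reference: build the flat family of kernels over (the weak normalisation of) a connected component of $\chi^{-1}(\Ccal)$, invoke the universal property of $M^{\muss}$, and reduce to showing that the pulled-back ample line bundle is trivial via multiplicativity of $\lambda$ in the exact sequence $0\to\mathscr F\to p_X^{*}\mathcal{E}\to\mathscr T\to 0$.

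Two places where your sketch is thin compared to the source. First, the triviality of $\lambda_{\mathscr T}(u_{n-1})$ over $\chi^{-1}(\Ccal)$ is the heart of the matter and is not just ``a direct Riemann--Roch computation'': in \cite{GrebToma:17} it is obtained by analysing how $\lambda$ behaves under restriction to complete intersection curves and surfaces, using crucially that the cycle $\Ccal$ (and hence the relevant intersection data) is constant along the fibre of $\chi$; this is where the specific shape of the class $u_{n-1}(c,[\mathcal O_H])$ matters. Second, your final step needs $[\Ecal_i]=[\Gr\Ecal_i]$ in $M^{\muss}$, which is not a formal consequence of Proposition~\ref{prop:polystablereflexive}; it is a separate result in \cite{GrebToma:17}, established via a one-parameter degeneration from $\Ecal_i$ to $\Gr\Ecal_i$ together with the same line-bundle triviality mechanism. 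Both are genuine ingredients rather than routine checks, but your overall architecture is correct.
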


\begin{rem}
The phenomenon of finite to oneness described above is a genuine one.
There are indeed examples where $M^{\muss}$ is a finite set of cardinality larger than one, and yet all representatives have graded objects with
the same double dual and cycle (see \cite[Example 3.3]{GrebRossToma:16}). In principle, it is also
possible for two sheaves to represent the same point even when their
associated torsion sheaves lie in different components of $\chi^{-1}(\Ccal)$.
The reason that this issue doesn't arise in the case $\dim X=2$
is that in this case the fibres $\chi^{-1}(\Ccal)$ are connected \cite{Li:93}, and even irreducible \cite{EllingsrudLehn:99}. 
\end{rem}

We finish this section by summarising the relationship between $M^{\Gss}$ and $M^{\muss}$.
Writing $\Zcal^{\Gss}$ for the space $(R^{\Gss})^{wn}$, the pullback
of $\widetilde{\mathscr{U}}$ to $X\times \Zcal^{\Gss}$  under the natural map $\Zcal^{\Gss} \to \Zcal$ induces a classifying map $\Zcal^{\Gss}\rightarrow M^{\muss}$ which factors through a map $(M^{\Gss})^{wn}\rightarrow M^{\muss}$, as $(M^{\Gss})^{wn}$ is a good
quotient of $(\Zcal^{\Gss})^{wn}$ (see \cite[Sect.\  5.3]{GrebToma:17}). Composing with the natural map $(\overline{M}^\GMC)^{wn} \to  (M^{\Gss})^{wn}$ induced by the inclusion, we obtain a map \begin{equation}\label{eq:Xi}
\Xi : (\overline{M}^\GMC)^{wn}\lra \overline{M}^\mu\
.\end{equation} On the level of points, this map can be expressed more explicitly in terms of sheaf theory as follows. The space $(M^{\Gss})^{wn}$ is in bijection with the set 
$\{\gr\Ecal\mid q_{\Ecal}\in \Zcal^{\Gss}\}$. Being GM-polystable, the sheaf $\gr
\Ecal$ is $\mu$-semistable (with the same Chern classes as $\mathcal{
E}$). Therefore, writing the quotient map as $\pi _{G}:\Zcal^{\Gss}\rightarrow (M^{\Gss})^{wn}$, we have $\Xi (\pi _{G}(q_{\Ecal}))=[\gr\Ecal]$. 

Together with \cite[Thm.\ 5.10]{GrebToma:17}, this shows that $\Xi$ is an isomorphism when restricted to $(M^{s})^{wn} \subset (\overline{M}^\GMC)^{wn}$. Since $\Xi$ is in particular continuous, this means it is also surjective, and hence in fact birational, when restricted to the respective compactifications.

\section{The gauge theoretic compactification} \label{sec:gauge-theory}
As we have seen, the moduli spaces $M^s$ and $M^\ast_\HYM$ are not  
compact in general, even after adjoining strictly semistable bundles and reducible connections, respectively. Two compactifications of $M^s$, $\overline M^\GMC$ and $\overline M^\mu$,   are obtained algebro-geometrically  by adding torsion free sheaves, and this was described briefly in Section \ref{sec:moduli}. In this section, we describe a third, gauge theoretic compactification associated to $M_\HYM^\ast$. 
In the case $\dim X=2$, this construction is due to 
 Donaldson \cite{Donaldson:86,DonaldsonKronheimer:90} based on work of Uhlenbeck \cite{Uhlenbeck:82b} and also Sedlacek \cite{Sedlacek:82}. A version of limiting Yang-Mills connections on higher dimensional manifolds appeared in Nakajima \cite{Nakajima:88}. 
Tian \cite{Tian:00} and Tian-Yang \cite{TianYang:02} generalise the  method and obtain key results on the structure of the singular sets. Since the construction will play a central role in this paper, and since some of the details presented here differ from those in the references above, we will provide a complete description of the compactification and some of its important features over the next few subsections.

\subsection{Uhlenbeck limits and admissible connections}
We begin with  a key definition (cf.\ \cite{BandoSiu:94} and \cite[Sect.\ 2.3]{Tian:00}).
\begin{defi}\label{def:admissible}
 Let  $(E,h)$ be a hermitian vector bundle on  a K\"{a}hler manifold $X$, not necessarily compact, with  $n=\dim X$. Then by an \emph{admissible connection} we mean a pair $(A,S)$ where 
 \begin{enumerate}
 \item  $S\subset X$ is a closed subset of locally finite Hausdorff $(2n-4)$-measure;
 \item $A$ is a smooth integrable unitary connection on $E\bigr|_{X\backslash S}$;
 \item $\int_{X\backslash S} |F_A|^2\, dvol_X < +\infty$;
 \item $\sup_{X\backslash S}  | \Lambda F_A| < +\infty$.
 \end{enumerate}
 An admissible connection is called \emph{admissible HYM} if there is a constant $\mu$ such that  $\sqrt{-1}\Lambda F_A=\mu\cdot \Ibold$ on $X\backslash S$.
 
 We will sometimes abuse terminology by saying "A is an admissible connection on (E,h)", when S is understood. 
 \end{defi}
 
 \begin{rem}
  The preceding definition is closely related to, but should not be confused with, the notion of an \emph{admissible Hermitian-Yang-Mills metric} in a given polystable reflexive sheaf $\mathcal{E}$ on $X$ as in \cite{BandoSiu:94}. We will elaborate on the relationship between the two notions in Sections \ref{subsect:idealconnections} and \ref{sec:map} below. 
 \end{rem}

The fundamental weak compactness result is the following.

\begin{thm}[Uhlenbeck, \cite{UhlenbeckPreprint}] \label{thm:uhlenbeck}
In the setup of Definition\ \ref{def:admissible}, let $(A_i, S_i)$ be a  sequence of admissible HYM connections with a uniform bound on the $L^2$-norm of curvature.
Assume there is a closed set $S_{\infty}'$ of finite Hausdorff $(2n-4)$-measure such that $S_i\to S_{\infty}'$ on compact sets in the Hausdorff sense.
 Then there is
\begin{enumerate}
\item a subsequence $($still denoted $A_{i}$$)$,
\item a closed subset $S_{\infty}\subset X$ of locally finite $(2n-4)$-Hausdorff measure containing $S_{\infty}'$,
\item a HYM connection $A_\infty$ on a hermitian bundle $E_\infty\to X\backslash S_{\infty}$, and
\item local isometries $E_\infty \simeq E$ on compact subsets of $X\backslash S_{\infty}$
\end{enumerate}
such that with respect to the local isometries, and  modulo unitary gauge equivalence, $A_{i}\to A_\infty$ in $
C_{loc}^{\infty }(X\backslash S_{\infty})$.
\end{thm}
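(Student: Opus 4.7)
My plan is to follow the classical strategy of Uhlenbeck, using local gauge fixing combined with $\epsilon$-regularity for Yang--Mills connections, while carefully incorporating the extra singular sets $S_i$.

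\medskip

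\textbf{Step 1: Define the bubbling set.} Using the uniform $L^2$ bound on curvature, I would introduce the energy concentration set
\[
S_b := \bigcap_{r>0}\Bigl\{x\in X \;\Big|\; \liminf_{i\to\infty}\, r^{4-2n}\!\!\int_{B_r(x)\setminus S_i}\!\!|F_{A_i}|^2\,\dvol \;\geq\; \epsilon_0\Bigr\},
\]
where $\epsilon_0>0$ is the $\epsilon$-regularity threshold for HYM connections (cf.\ Tian's monotonicity formula and $\epsilon$-regularity in \cite{Tian:00}). A standard Vitali covering argument, combined with the uniform energy bound, shows that $S_b$ has finite $(2n{-}4)$-Hausdorff measure; then I set $S_\infty := S_\infty' \cup S_b$, which also has locally finite $(2n{-}4)$-Hausdorff measure.

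\medskip

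\textbf{Step 2: Local smooth estimates away from $S_\infty$.} For any compact $K \subset X\setminus S_\infty$, the Hausdorff convergence $S_i \to S_\infty'$ implies $K\cap S_i = \emptyset$ for $i\gg 0$, so $A_i$ is smooth on $K$. At each $x\in K$ there is a ball $B_{r(x)}(x)$ on which, for $i\gg 0$, the scale-invariant energy is below $\epsilon_0$. Applying Uhlenbeck's local gauge-fixing theorem, I can find local unitary gauge transformations $g_{i,x}$ such that $g_{i,x}^{-1}(A_i)$ is in Coulomb gauge with uniform $W^{1,p}$ bounds. Combined with the HYM equation $\sqrt{-1}\Lambda F_{A_i}=\mu\cdot \mathbf{I}_E$, the Coulomb condition $d^\ast(g_{i,x}^{-1}(A_i))=0$ produces an elliptic system of Yang--Mills type; bootstrapping via standard elliptic regularity then gives uniform $C^k_{\mathrm{loc}}$ bounds in each such ball.

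\medskip

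\textbf{Step 3: Patching and diagonalization.} Choose a countable exhaustion $K_1 \subset K_2 \subset \cdots$ of $X\setminus S_\infty$ by compact sets, and for each $K_j$ a finite good cover by Uhlenbeck balls. On each ball I extract a $C^\infty$-convergent subsequence; the transition functions $g_{i,\alpha}^{-1}g_{i,\beta}$ on overlaps lie in a compact set of $\mathsf{U}(r)$-valued $W^{2,p}$-maps by Uhlenbeck's patching lemma, so after passing to a further subsequence they converge smoothly as well. A diagonal argument produces a limiting hermitian bundle $(E_\infty,h_\infty)\to X\setminus S_\infty$ built from the limiting transition functions, together with a smooth limit connection $A_\infty$, satisfying $A_i\to A_\infty$ in $C^\infty_{\mathrm{loc}}(X\setminus S_\infty)$ up to unitary gauge. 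Because each local patch is identified with $E$ by the original trivialisation, the isomorphism $E_\infty \cong E$ on compact subsets of $X\setminus S_\infty$ is built into the construction. The HYM equation, being preserved under smooth limits, passes to $A_\infty$.

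\medskip

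\textbf{Main obstacle.} The substantive difficulty is the interaction between the two sources of singularity: the prescribed sets $S_i$ and the emergent bubbling locus $S_b$. Uhlenbeck's original argument handles connections that are everywhere smooth; here one must verify that gauge fixing and $\epsilon$-regularity continue to apply on punctured balls $B_r(x)\setminus S_i$ and that the relevant removable-singularities statements hold in the limit to conclude $A_\infty$ is genuinely an admissible HYM connection across $S_\infty$ (this uses the finite $(2n{-}4)$-Hausdorff measure of $S_\infty$ together with the Bando--Siu / Tao--Tian regularity theory invoked later in the paper). Verifying the uniform energy bounds survive the Hausdorff degeneration $S_i \to S_\infty'$, so that $\epsilon$-regularity can be applied on balls whose centres lie in the complement of the limiting singular set, is the delicate technical point that must be handled with care.
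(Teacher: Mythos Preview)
The paper does not prove this theorem; it is stated with attribution to Uhlenbeck's unpublished preprint and used as a black box. Immediately after the statement the paper records the definition of $S_\infty$ as $S_\infty'$ together with the energy-concentration set
\[
\bigcap_{\sigma_0\ge\sigma>0}\Bigl\{x\in X\setminus S_\infty' \;\Big|\; \liminf_{i\to\infty}\sigma^{4-2n}\!\!\int_{B_\sigma(x)}|F_{A_i}|^2\,\tfrac{\omega^n}{n!}\ge\varepsilon_0\Bigr\},
\]
which is exactly your Step~1, but no further argument is given.

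Your sketch is the standard Uhlenbeck--Nakajima--Tian proof and is correct in outline: define the bubbling set by energy concentration, apply $\epsilon$-regularity plus Coulomb gauge fixing on balls in the complement, bootstrap to $C^\infty$ bounds, and patch via a diagonal argument. One small comment: in your ``main obstacle'' paragraph you worry about applying $\epsilon$-regularity on punctured balls $B_r(x)\setminus S_i$, but this is not actually needed here. For $x\notin S_\infty$ and $r$ small, the ball $B_r(x)$ is disjoint from $S_\infty'$, and Hausdorff convergence $S_i\to S_\infty'$ then forces $B_r(x)\cap S_i=\emptyset$ for $i\gg0$, so the $A_i$ are genuinely smooth on the full ball and the classical $\epsilon$-regularity applies directly. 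The removable-singularities issues you raise (Bando--Siu, Tao--Tian) are not part of this theorem; they enter only afterwards, in the paper's Proposition~\ref{prop:admissible} and Lemmas~\ref{lem:singset1}--\ref{lem:singset2}, where one upgrades the Uhlenbeck limit to an admissible connection on the original bundle $E$.
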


We call the limiting connection $A_\infty$ an \emph{Uhlenbeck limit}.
 The set $S_{\infty}$, which we call the 
\emph{(analytic) singular set}, is the union of $S_{\infty}'$ with the set
$$
\bigcap_{\sigma_0\geq \sigma>0}\Bigl\{
	x\in X\backslash S_{\infty}' \mid \liminf_{i\to\infty} \sigma^{4-2n}\int_{B_\sigma(x)}|F_{A_i}|^2 \frac{\omega^n}{n!}\geq \varepsilon_0\Bigr\}\ ,
$$
where $\sigma_0$ and $\varepsilon_0$ are universal constants depending only on the geometry of $X$.

For the definition of a gauge theoretic compactification, it will be important that the Uhlenbeck limits
of smooth HYM connections be admissible HYM connections in the sense of Definition \ref{def:admissible}.  This will be true if $E_\infty$ is isometric to $E$ on the complement of the singular set. We formulate the precise statement as follows. 

\begin{prop} \label{prop:admissible} 
Let $(E,h)\to X$ be a hermitian vector bundle on a compact K\"ahler manifold. Then
any Uhlenbeck limit of a sequence of smooth HYM connections on $(E,h)$ is an admissible HYM connection. Moreover, the corresponding singular set is a holomorphic subvariety of codimension at least $2$.
\end{prop}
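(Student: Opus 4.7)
The plan is to verify conditions (1)--(4) and the HYM identity of Definition~\ref{def:admissible} for the limiting pair $(A_\infty, S_\infty)$ produced by Theorem~\ref{thm:uhlenbeck}, and then separately to identify $S_\infty$ as a holomorphic subvariety of codimension at least two.

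First I would note that since each $A_i$ is a smooth HYM connection on the fixed bundle $(E,h)$, Chern--Weil together with the HYM identity $\sqrt{-1}\Lambda F_{A_i} = \mu(E)\mathbf{I}_E$ yields a topological bound on $\|F_{A_i}\|_{L^2}$ depending only on $c_1(E)$, $c_2(E)$ and the K\"ahler class $[\omega]$. This is exactly the hypothesis needed to apply Theorem~\ref{thm:uhlenbeck}, producing after passage to a subsequence the limiting data $(A_\infty, E_\infty, S_\infty)$ together with smooth convergence on compact subsets of $X\setminus S_\infty$. Conditions (1) and (2) are then immediate, unitarity and integrability passing to the smooth limit. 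For (3), $L^2$ lower semicontinuity under smooth convergence on an exhaustion of $X\setminus S_\infty$ gives
\[
\int_{X\setminus S_\infty}|F_{A_\infty}|^2\,\dvol \;\leq\; \liminf_{i\to\infty}\int_X|F_{A_i}|^2\,\dvol \;<\; +\infty.
\]
For (4) together with the HYM identity, smooth convergence lets me pass $\sqrt{-1}\Lambda F_{A_i} = \mu(E)\mathbf{I}$ to the limit pointwise on $X\setminus S_\infty$; this simultaneously yields the HYM equation for $A_\infty$ and the uniform sup bound on $\Lambda F_{A_\infty}$.

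The substantive step is the structural statement on $S_\infty$, and this is where I would invoke the deep theorems of Tian~\cite{Tian:00} and Tao--Tian~\cite{TaoTian:04} cited in the Introduction as Theorem~\ref{thm:tian-singset} and Theorem~\ref{thm:tao-tian}. Under the uniform energy bound those results give the bubbling decomposition $S_\infty = S_b \cup S(A_\infty)$, where the bubbling locus $S_b$ is the support of an effective holomorphic codimension-two cycle arising from the concentration of the Yang--Mills energy density, and $S(A_\infty)$ is the closed set of nonremovable singularities. The Bando--Siu extension theorem (Theorem~\ref{thm:bando-siu}) then produces a slope polystable reflexive sheaf $\mathcal{E}_\infty$ on all of $X$ extending the holomorphic bundle determined by $A_\infty$ off $S_\infty$, and satisfying $\sing(\mathcal{E}_\infty) = S(A_\infty)$; reflexivity forces $\codim \sing(\mathcal{E}_\infty) \geq 3$. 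Consequently $S_\infty = S_b \cup \sing(\mathcal{E}_\infty)$ is a finite union of analytic subvarieties of codimension at least two, hence itself a holomorphic subvariety of codimension $\geq 2$.

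The hard part is precisely this identification of the a priori only Hausdorff-measure-theoretic limit set $S_\infty$ with an honest analytic locus $S_b \cup S(A_\infty)$; that is the substance of Tian's work, and I will simply quote it. Once this structural input is in hand, verifying the admissibility conditions reduces to routine consequences of smooth convergence away from $S_\infty$ and Chern--Weil identities on the compact K\"ahler manifold $X$.
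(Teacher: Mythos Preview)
There is a genuine gap in your argument, and it is precisely the point that the paper singles out as the content of this proposition. Theorem~\ref{thm:uhlenbeck} produces the limit $A_\infty$ on a hermitian bundle $E_\infty \to X\setminus S_\infty$ together with \emph{local} isometries $E_\infty \simeq E$ on compact subsets of $X\setminus S_\infty$; it does not assert that $E_\infty$ is globally isometric to $(E,h)$ on $X\setminus S_\infty$. But Definition~\ref{def:admissible}(2) requires $A$ to be a connection on $E|_{X\setminus S}$, so until you know $E_\infty \simeq E$ globally over $X\setminus S_\infty$, you cannot regard $(A_\infty, S_\infty)$ as an admissible connection on $(E,h)$ at all. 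Your sentence ``Conditions (1) and (2) are then immediate'' is therefore where the argument fails.

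The paper resolves this by reversing your order of operations: it \emph{first} establishes via Lemmas~\ref{lem:singset1} and \ref{lem:singset2} that $S_\infty$ is a holomorphic subvariety, and \emph{then} uses this to conclude that $X\setminus S_\infty$ admits an exhaustion by compact sets that are deformation retracts of $X\setminus S_\infty$. This topological input is what allows the local gauge transformations in Uhlenbeck's patching construction to be assembled into a global isometry $E_\infty \simeq (E,h)$ on $X\setminus S_\infty$ (with a reference to \cite{Wehrheim:04} for the mechanism). Only after that is $(A_\infty, S_\infty)$ an admissible HYM connection on $(E,h)$. Your structural argument for $S_\infty$ is broadly correct (though the identifications $\widetilde S_\infty = S(A_\infty)$ and $S(A_\infty) = \sing(\mathcal{E}_\infty)$ are not immediate from the cited theorems and require the arguments of Lemmas~\ref{lem:singset1} and \ref{lem:singset2}, the latter using Tao--Tian removability), but you must feed it back into the admissibility verification rather than treating the two halves as independent.
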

The remainder of this section is devoted to the description of the proof of Proposition \ref{prop:admissible}, which follows from work of Tian, Bando-Siu, and Tao-Tian.
 In the following, let
\begin{equation}\label{eqn:singset}
S(A_\infty):=\biggl\{ x\in X \,\biggr |\, \lim_{\sigma\downarrow
0}\sigma^{4-2n}\int_{B_{\sigma}(x)}\left\vert F_{A_\infty}\right\vert
^{2}\frac{\omega^n}{n!}\neq 0\ \biggr\}.
\end{equation}
This is a closed set with zero $(2n-4)$-dimensional  Hausdorff measure. 
\begin{thm}[{Tian, \cite[Thm.\ 4.3.3]{Tian:00}}] \label{thm:tian-singset}
 Let $A_i$ be a sequence of smooth integrable HYM connections (i.e.\ $S_i=\emptyset$) converging in the sense of Theorem \ref{thm:uhlenbeck} to an Uhlenbeck limit $(A_\infty,S_\infty)$. Then $S_\infty$ admits a decomposition into closed sets
$
S_\infty=S_b\cup \widetilde S_{\infty}
$,
where $S_b$ is a pure codimension $2$ holomorphic subvariety, 
and $\widetilde S_{\infty}$ has zero $(2n-4)$-dimensional Hausdorff measure.
\end{thm}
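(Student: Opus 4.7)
\medskip

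\noindent\textbf{Proof plan.} The plan is to analyze the behaviour of the Yang--Mills energy densities $e_i := |F_{A_i}|^2 \, \omega^n/n!$, viewed as Radon measures on $X$. After passing to a subsequence, these converge weakly to a Radon measure of the form
\[
\mu_\infty = |F_{A_\infty}|^2 \, \frac{\omega^n}{n!} \bigr|_{X\setminus S_\infty} + \nu,
\]
where $\nu$ is a nonnegative measure supported on $S_\infty$; the equality $S_\infty = \supp(\nu) \cup S(A_\infty)$ (up to a zero $(2n-4)$-Hausdorff measure set) follows from Uhlenbeck's $\varepsilon$-regularity together with Tian's monotonicity formula for HYM connections. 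The latter, combined with the $\varepsilon$-regularity threshold $\varepsilon_0$, implies that the $(2n-4)$-dimensional density
\[
\Theta^{2n-4}(\nu, x) = \lim_{\sigma \downarrow 0} \frac{\nu(B_\sigma(x))}{\sigma^{2n-4}}
\]
exists and is bounded below by a positive universal constant at every point of $\supp(\nu)$.

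First, I would establish that $\supp(\nu)$ is $(2n-4)$-rectifiable. By the monotonicity of the normalised density under $\sigma$, the measure $\nu$ satisfies the hypotheses of Preiss's theorem (or in the present setting, Federer's classical criterion via positive lower density), which yields rectifiability of the "top stratum" of $\supp(\nu)$, up to a subset $\widetilde{S}_\infty$ of strictly smaller Hausdorff dimension. Set $S_b$ to be this $(2n-4)$-rectifiable part (closed up by intersecting with $\supp(\nu)$), and collect everything else, including $S(A_\infty)$ minus its intersection with $S_b$, into $\widetilde{S}_\infty$; by construction $\widetilde S_\infty$ has zero $(2n-4)$-Hausdorff measure.

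Next comes the main step: upgrading $S_b$ from a rectifiable set to a pure codimension $2$ holomorphic subvariety. At $\mathcal{H}^{2n-4}$-a.e. point $x \in S_b$, an approximate tangent $(2n-4)$-plane $T_xS_b$ exists. Rescaling the sequence $A_i$ at $x$ and passing to a further subsequence (using the Uhlenbeck compactness of Theorem \ref{thm:uhlenbeck} applied on $\mathbb{C}^n$), one obtains a tangent bubble which is an admissible HYM connection on $\mathbb{C}^n$ with energy measure concentrated along $T_xS_b$. Because the $A_i$ are integrable, so is the bubble; this forces $T_xS_b$ to be a \emph{complex} linear subspace of $T_xX$, i.e.\ the tangent cone is holomorphic. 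Combining complex rectifiability with the positive lower bound on density, one applies the Harvey--Shiffman/King criterion (a variant of Siu's semicontinuity theorem, cf.\ the argument used by Tian in \cite{Tian:00}) to conclude that $S_b$ is a positive $(2n-4)$-current of finite mass that is closed and has complex tangents $\mathcal{H}^{2n-4}$-a.e.; hence by King's theorem it is supported on a pure codimension $2$ holomorphic subvariety of $X$.

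The main obstacle is the tangent cone analysis in the third step: showing that every tangent measure of $\nu$ at an $\mathcal{H}^{2n-4}$-regular point of $S_b$ arises from bubbling of HYM connections and is invariant under translation along a complex $(n-2)$-plane. This requires the full strength of Tian's $\varepsilon$-regularity and monotonicity combined with a Federer-type dimension reduction to control the non-flat part of the tangent cones; absorbing the lower-dimensional strata that appear in this reduction into $\widetilde S_\infty$ is what forces us to accept that $\widetilde S_\infty$ is only controlled in Hausdorff measure and not described algebraically at this stage.
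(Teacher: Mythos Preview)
The paper does not provide its own proof of this statement; it is quoted verbatim as a result of Tian, with an explicit citation to \cite[Thm.\ 4.3.3]{Tian:00}, and is used as a black box in the subsequent arguments (Lemmas \ref{lem:singset1} and \ref{lem:singset2}, Proposition \ref{prop:admissible}, etc.). So there is nothing in the paper to compare your proposal against.

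That said, your outline is a fair high-level summary of Tian's original argument, with two points worth sharpening. First, the rectifiability step does not go through Preiss's theorem; Tian uses the monotonicity formula directly to show that tangent measures at points of $\supp(\nu)$ exist and are cones, and then runs a Federer--Almgren dimension reduction to isolate the top-dimensional stratum. Second, the mechanism that forces the approximate tangent planes to be complex is not merely ``the bubble is integrable'' but rather the calibration identity: for an HYM connection the Yang--Mills density equals a constant multiple of $\tr(F_A\wedge F_A)\wedge \omega^{n-2}$ up to terms that vanish (namely $|F_A^{0,2}|^2$ and $|\Lambda F_A-\mu I|^2$), so the defect varifold is calibrated by $\omega^{n-2}/(n-2)!$ and hence has complex tangents $\mathcal H^{2n-4}$-a.e. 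This calibration also supplies the closedness of the associated rectifiable current needed to invoke King's theorem, a point your sketch leaves implicit.
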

The set $S_b$ appearing in the above theorem is called the {\emph {blow-up locus}} of the sequence $A_i$.
We will also need two results on "removable" singularities for admissible connections.

\begin{thm}[{Bando--Siu, \cite{BandoSiu:94}}] \label{thm:bando-siu}
Let $\Ecal$ be a hermitian holomorphic vector bundle on $X\backslash S$, where $S$ has finite Hausdorff $(2n-4)$-measure, and suppose the  Chern connection of $\Ecal$
is admissible.
Then $\Ecal$  extends uniquely as a reflexive sheaf 
 $\widehat\Ecal$ on $X$. If the Chern connection of $\Ecal$ is
HYM, the hermitian structure extends smoothly to $X\backslash
\sing(\widehat\Ecal)$, the Chern connection is HYM there, and
$\Ecal$ is polystable.
\end{thm}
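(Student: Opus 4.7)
The plan is to establish the three assertions---reflexive extension, smoothness of the metric on $X\setminus\sing(\widehat\Ecal)$, and polystability of $\widehat\Ecal$---in that order, following the strategy of Bando--Siu. The fundamental analytic inputs are admissibility, which provides both $|\Lambda F_A|\in L^\infty$ and $|F_A|\in L^2$ on $X\setminus S$, together with the fact that $S$ has complex codimension at least two (the finite Hausdorff $(2n-4)$-measure hypothesis).

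For the reflexive extension, I would define $\widehat\Ecal$ locally on a chart $U\subset X$ as the sheaf whose sections are holomorphic sections $s$ of $\Ecal|_{U\setminus S}$ with $|s|_h\in L^2_{\rm loc}$. The Bochner--Kodaira--Nakano identity combined with the bound on $\Lambda F_A$ yields a differential inequality of the form $\Delta |s|_h^2 \geq -C|s|_h^2$ for such $s$, so Moser iteration promotes $L^2_{\rm loc}$ to $L^\infty_{\rm loc}$; classical Riemann extension then extends $s$ holomorphically across $S$ since $\codim S\geq 2$. Coherence of $\widehat\Ecal$ is obtained by producing enough local holomorphic sections via H\"ormander's $L^2$-estimates for $\dbar$ (applied to truncations of approximate sections, with the weight function arranged so the curvature term dominates thanks to the $L^2$ bound on $F_A$), followed by a Noetherian argument in $\Ocal_U$. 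Torsion-freeness of $\widehat\Ecal$ is automatic from the construction, and reflexivity follows because $\widehat\Ecal$ is locally free outside a subset of codimension $\geq 2$.

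For the smoothness of the metric in the HYM case, near any $x\in X\setminus\sing(\widehat\Ecal)$ I would choose a local holomorphic frame of $\widehat\Ecal$; with respect to this frame, $h$ is represented by a positive Hermitian matrix $H$ on $U\setminus S$, and the HYM equation becomes a quasilinear second-order elliptic PDE for $H$. The uniform bound on $\Lambda F_A$ together with the $L^2_{\rm loc}$ bounds from the coherent extension give $L^p_{\rm loc}$ control on $H$; Moser iteration applied to $\log\tr H$ and $\log\tr H^{-1}$ yields $L^\infty_{\rm loc}$ bounds. Since $S\cap U$ has codimension $\geq 2$ and $H$ is now bounded, elliptic bootstrapping for the HYM equation—treating $S$ as a removable singularity once $H$ is $L^\infty$—produces $C^\infty$ regularity of $H$ on all of $U$.

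For polystability I would adapt the Kobayashi--L\"ubke--Donaldson argument to the singular setting. Assume for contradiction there is a proper coherent subsheaf $\Fcal\subset\widehat\Ecal$ with $\mu(\Fcal)\geq\mu(\widehat\Ecal)$; the $L^2_1$ orthogonal projection $\pi:\widehat\Ecal\to\Fcal$, defined outside $S\cup\sing(\Fcal)$, satisfies the Kobayashi identity
\[
\deg(\Fcal) = \int_X \tr\bigl(\pi\cdot\sqrt{-1}\Lambda F_A\bigr)\,\frac{\omega^n}{n!} \;-\; \int_X|\dbar\pi|^2\,\frac{\omega^n}{n!},
\]
justified by exhausting $X\setminus(S\cup\sing\Fcal)$ by compact sets and using the $L^2$ bound on $F_A$ together with capacity estimates (available because $S$ has codimension $\geq 2$) to kill all boundary contributions in the limit. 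The HYM equation and slope equality $\mu(\Fcal)=\mu(\widehat\Ecal)$ force $\dbar\pi=0$, so $\pi$ is genuinely holomorphic and produces a holomorphic direct-sum decomposition; induction on rank completes the argument. The principal obstacle is the coherence of $\widehat\Ecal$ in the first step: pointwise extension of individual $L^2$ sections is classical, but ensuring that finitely many such sections generate $\widehat\Ecal$ over $\Ocal_U$ is where admissibility interacts most delicately with the $\dbar$-estimates, and this is the technical heart of the Bando--Siu argument.
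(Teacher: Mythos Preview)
The paper does not prove this statement; it is quoted verbatim from \cite{BandoSiu:94} and used as a black-box input in Section~\ref{sec:gauge-theory}. There is therefore nothing in the paper to compare your proposal against, and what you have written is a sketch of the original Bando--Siu argument rather than an alternative to anything the authors do.

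As such a sketch, your outline is broadly faithful: the construction of $\widehat\Ecal$ via $L^2$ holomorphic sections together with the Bochner inequality and Moser iteration, the elliptic bootstrap for the metric once a local holomorphic frame of $\widehat\Ecal$ is available, and the weak-subbundle/second-fundamental-form argument for polystability are all present in \cite{BandoSiu:94}. One point deserves correction, however. Your justification of reflexivity---``$\widehat\Ecal$ is locally free outside a subset of codimension $\geq 2$''---is insufficient: a torsion-free sheaf that is locally free off a codimension-two set need not be reflexive (any non-reflexive ideal sheaf of a codimension-two subvariety is a counterexample). The correct reason is that $\widehat\Ecal$ is, by construction, the pushforward $j_*\Ecal$ of a locally free sheaf along the open inclusion $j\colon X\setminus S\hookrightarrow X$ with $\codim S\geq 2$; such pushforwards are automatically normal in the sheaf-theoretic sense (local sections extend across codimension two), and a torsion-free normal coherent sheaf on a smooth variety is reflexive. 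Equivalently, the very definition of $\widehat\Ecal$ as the sheaf of bounded local holomorphic sections builds normality in. Coherence of this pushforward, as you correctly flag, is the genuinely nontrivial step and is where the admissibility hypotheses enter.
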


\begin{thm}[{Tao--Tian, \cite{TaoTian:04}}]  \label{thm:tao-tian}
Let $(A, S)$ be an admissible connection on the trivial bundle over a ball $B_{\sigma_0}(x)\subset X$. 
Suppose  $x\not \in S(A)$, where $S(A)$ is defined in \eqref{eqn:singset}. 
Then for $0<\sigma<\sigma_0$ sufficiently small  there is a
unitary gauge transformation $g$ on $B_\sigma(x)\backslash S$, such that $
g(A)$ extends to a smooth HYM connection  on $B_\sigma(x)$. 
\end{thm}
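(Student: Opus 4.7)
My plan is to combine an $\varepsilon$-regularity estimate with Uhlenbeck-type gauge fixing and a removable-singularity argument, following the scheme introduced by Uhlenbeck on surfaces and extended to higher dimensions by Tao and Tian. I read the hypothesis as implicitly requiring that $A$ is admissible HYM on $B_{\sigma_0}(x)\setminus S$ (otherwise the conclusion that $g(A)$ extends to an HYM connection is not available); the HYM equation plays the role of an elliptic PDE for the gauge-fixed connection matrix.

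First, since $x \notin S(A)$, the definition in \eqref{eqn:singset} allows us to choose $\sigma \leq \sigma_0$ so that
$$\sigma^{4-2n}\int_{B_\sigma(x)\setminus S}|F_A|^2\,\frac{\omega^n}{n!} < \varepsilon_0$$
for any prescribed $\varepsilon_0 > 0$. A monotonicity formula of Price type for admissible HYM connections, available because $|\Lambda F_A|$ is bounded on $B_{\sigma_0}(x)\setminus S$, propagates this small-energy condition to all sufficiently small intrinsic balls $B_r(y)$ with $y \in B_{\sigma/2}(x)\setminus S$. Choosing $\varepsilon_0$ below the $\varepsilon$-regularity threshold then yields uniform pointwise bounds of the form $|F_A|(y) \leq C \cdot \mathrm{dist}(y, S \cup \partial B_\sigma(x))^{-2}$ on $B_{\sigma/2}(x)\setminus S$.

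Next I would construct a good global gauge. Using the curvature bounds above, Uhlenbeck's small-curvature gauge-fixing theorem produces local Coulomb gauges on a controlled cover of $B_\sigma(x)\setminus S$. Because $S$ has finite $(2n-4)$-Hausdorff measure, its real codimension is at least four, so $B_\sigma(x)\setminus S$ is simply connected. This allows the local gauges to be patched via the standard Uhlenbeck transition argument into a single smooth unitary trivialisation $g$ with $a := g(A)$ satisfying $d^*a = 0$ together with controlled bounds $\|a\|_{L^{2n}} + \|da\|_{L^n} \leq C$ and the HYM equation in this gauge. The final step is extension across $S$: combining HYM with the Bianchi identity, $a$ satisfies a second-order elliptic system schematically of the form $\Delta a + a \ast \nabla a + a \ast a \ast a = 0$. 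Using the pointwise estimates from step one together with the Hausdorff bound on $S$, a capacity argument shows that $a$ extends as a $W^{1,p}$ section across $S$ for some $p > 2n$, and elliptic bootstrapping promotes it to a $C^\infty$ solution of the HYM equation on all of $B_\sigma(x)$.

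The main obstacle is the combination of gauge construction and removable-singularity extension. The finite $(2n-4)$-Hausdorff measure of $S$ is precisely the threshold hypothesis that makes both the Uhlenbeck patching (via vanishing of $H^1(B_\sigma\setminus S)$) and the $W^{1,p}$-capacity removability in the final step succeed; carefully tracking the interdependence of the Coulomb condition, the curvature estimates, and the geometry of $S$ is the technical heart of Tao–Tian's refinement of Uhlenbeck's two-dimensional removable-singularities theorem.
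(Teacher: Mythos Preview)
The paper does not give its own proof of this theorem; it is stated as a black-box citation of Tao--Tian \cite{TaoTian:04} and then used as input for Lemmas~\ref{lem:singset1} and~\ref{lem:singset2} and Proposition~\ref{prop:admissible}. So there is nothing in the paper to compare your argument against directly.

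That said, your sketch is a fair outline of the Tao--Tian strategy: small normalised energy at $x$ via $x\notin S(A)$, propagation by monotonicity, $\varepsilon$-regularity to control $|F_A|$ pointwise, construction of a global Coulomb gauge on $B_\sigma(x)\setminus S$, and then removal of $S$ for the gauge-fixed connection form. Two cautions if you intend to flesh this out. First, the step ``$B_\sigma(x)\setminus S$ is simply connected, so patch the local Uhlenbeck gauges'' hides almost all of the work: in Tao--Tian the global gauge with the required $L^{2n}/L^n$ bounds is built by an intricate inductive covering argument (their ``gauge gluing'' over an exhaustion by good annular regions), not merely by invoking $\pi_1=0$; simple connectedness tells you the transition functions are homotopically trivial but does not by itself give the quantitative control needed for the elliptic bootstrap. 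Second, your interpretation that $A$ must be admissible HYM is correct for the conclusion as stated, and you should record that the monotonicity formula you invoke is the one for admissible HYM connections proved in \cite[\S2]{Tian:00}, since Price's original formula is for smooth Yang--Mills connections.
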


We note the following important fact, which is implicit in \cite[Thm.\ 4.3.3]{Tian:00}.
\begin{lemma}\label{lem:singset1}
In Theorem \ref{thm:tian-singset}, 
we may take $\widetilde S_{\infty}=S(A_\infty)$; i.e., the singular set decomposes into closed sets as follows: $S_\infty=S_b\cup S(A_\infty)$.
\end{lemma}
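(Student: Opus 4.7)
The plan is to prove the set-theoretic identity $S_\infty = S_b \cup S(A_\infty)$. Since $S(A_\infty)$ is closed with zero $(2n-4)$-Hausdorff measure (as noted immediately after \eqref{eqn:singset}), this identity suffices to realise Tian's decomposition with $\widetilde S_\infty = S(A_\infty)$. For the easy inclusion $S_b \cup S(A_\infty) \subseteq S_\infty$: the containment of $S_b$ is part of Theorem \ref{thm:tian-singset}, while for $S(A_\infty) \subseteq S_\infty$ I observe that if $x \notin S_\infty$, then $A_\infty$ is smooth in a neighbourhood of $x$, so $|F_{A_\infty}|$ is locally bounded there, forcing $\sigma^{4-2n}\int_{B_\sigma(x)}|F_{A_\infty}|^2\tfrac{\omega^n}{n!}=O(\sigma^4)\to 0$ as $\sigma\to 0$, hence $x \notin S(A_\infty)$.

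The main step is the reverse inclusion $S_\infty \subseteq S_b \cup S(A_\infty)$, which I would prove by contradiction. Assume $x \in S_\infty$ with $x \notin S_b$ and $x \notin S(A_\infty)$. Since $S_b$ is closed, there is a ball $B$ around $x$ with $\overline{B} \cap S_b = \emptyset$, so $B \cap S_\infty \subseteq \widetilde S_\infty$ has zero $(2n-4)$-Hausdorff measure. On $B \setminus (B\cap S_\infty)$, the connection $A_\infty$ is smooth and HYM, hence has bounded Hermitian-Einstein tensor $\sqrt{-1}\Lambda F_{A_\infty}=\mu\cdot \mathbf{I}_E$ and, by Fatou applied to the uniform $L^2$-bound on $F_{A_i}$, also has finite $L^2$-energy. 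Therefore the restriction of $A_\infty$ to $B$ is admissible in the sense of Definition \ref{def:admissible}. Since $x \notin S(A_\infty)$, Tao--Tian's removable singularity theorem (Theorem \ref{thm:tao-tian}) applies: for some $\sigma_1>0$ and a suitable unitary gauge transformation, $A_\infty$ extends to a smooth HYM connection on all of $B_{\sigma_1}(x)$. In particular $|F_{A_\infty}|$ is bounded by some constant $M$ there, yielding $\int_{B_\sigma(x)}|F_{A_\infty}|^2\tfrac{\omega^n}{n!}\leq C\sigma^{2n}$ for all $\sigma\leq \sigma_1$.

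To complete the contradiction, I would invoke the energy quantisation that underpins Theorem \ref{thm:tian-singset}: the measures $|F_{A_i}|^2\tfrac{\omega^n}{n!}$ converge weakly-$\ast$ to $|F_{A_\infty}|^2\tfrac{\omega^n}{n!}+\Theta$, where the defect $\Theta$ is supported on the bubbling locus $S_b$ with the prescribed integer multiplicities. Because $\overline{B_{\sigma_1}(x)}\cap S_b=\emptyset$, the defect $\Theta$ vanishes on $B_{\sigma_1}(x)$, so for all but countably many $\sigma<\sigma_1$ one has
$$\lim_{i\to\infty}\int_{B_\sigma(x)}|F_{A_i}|^2\frac{\omega^n}{n!}=\int_{B_\sigma(x)}|F_{A_\infty}|^2\frac{\omega^n}{n!}\leq C\sigma^{2n}.$$
Consequently $\liminf_{i\to\infty}\sigma^{4-2n}\int_{B_\sigma(x)}|F_{A_i}|^2\tfrac{\omega^n}{n!}=O(\sigma^4)\to 0$ as $\sigma\to 0$, contradicting the explicit description of $S_\infty$ given after Theorem \ref{thm:uhlenbeck}, which demands this liminf be at least $\varepsilon_0$ for every $\sigma<\sigma_0$.

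The hard part will be justifying the energy quantisation used in the third paragraph: this is the central analytic content of Tian's theorem and is not spelled out fully in the excerpt, so it must be extracted from \cite{Tian:00}. An equivalent route avoiding explicit reference to the defect measure would combine Nakajima--Tian $\varepsilon$-regularity with the $C^\infty_{\mathrm{loc}}$-convergence on $B_{\sigma_1}(x)\setminus S_\infty$ and a covering argument over the Lebesgue-negligible set $S_\infty\cap B_{\sigma_1}(x)$ to derive uniform bounds on $|F_{A_i}|$ near $x$, again forcing $x\notin S_\infty$; but this is essentially the same analytic input repackaged.
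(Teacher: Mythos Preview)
Your overall strategy---assume $x\in S_\infty$ with $x\notin S_b$ and $x\notin S(A_\infty)$ and derive a contradiction by showing the rescaled energy density at $x$ vanishes---is exactly the paper's. But the paper's execution is shorter and avoids the detour through Tao--Tian. Two points:

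First, the Tao--Tian step is unnecessary. You invoke Theorem~\ref{thm:tao-tian} to extend $A_\infty$ smoothly and conclude $\sigma^{4-2n}\int_{B_\sigma(x)}|F_{A_\infty}|^2\to 0$. But this conclusion is \emph{precisely} the hypothesis $x\notin S(A_\infty)$, by the definition \eqref{eqn:singset}. No extension is needed. Moreover, your application of Tao--Tian has a gap: the theorem as stated requires $A_\infty$ to be an admissible connection on the \emph{trivial} bundle over the ball, and at this point in the argument you do not yet know that $E_\infty|_{B\setminus S_\infty}$ extends over the closed set $S_\infty\cap B$ (which is only known to have zero $(2n-4)$-Hausdorff measure, not to be analytic). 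In the paper this triviality is established later, in Lemma~\ref{lem:singset2}, and relies on the present lemma.

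Second, what you flag as the ``hard part''---that the defect measure vanishes off $S_b$---is dispatched by the paper in two lines, without needing to extract further quantisation results from \cite{Tian:00}. One writes the weak limit of the YM densities as $\mu=|F_{A_\infty}|^2\tfrac{\omega^n}{n!}+\nu$ with $\nu$ absolutely continuous with respect to $H^{2n-4}_{S_\infty}$ (a standard consequence of monotonicity and rectifiability, already part of Tian's framework). If $x\notin S_b$, then for small $\sigma$ one has $S_\infty\cap B_\sigma(x)\subset\widetilde S_\infty$, which has zero $(2n-4)$-Hausdorff measure by Theorem~\ref{thm:tian-singset}; hence $\nu(B_\sigma(x))=0$. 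Combined with $x\notin S(A_\infty)$, this gives $\Theta(x)=\lim_{\sigma\downarrow 0}\sigma^{4-2n}\mu(B_\sigma(x))=0$, contradicting the characterisation $x\in S_\infty\Leftrightarrow\Theta(x)\neq 0$. So the key input is just the absolute continuity of $\nu$ together with the Hausdorff-measure statement already contained in Theorem~\ref{thm:tian-singset}.
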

\begin{proof}
Let $\mu_i=\left| F_{A_i}\right|^2 \omega^n/n!$ be the Yang-Mills energy densities. Then we may assume without loss of generality that we have a convergence of finite Radon measures: 
\begin{equation} \label{eqn:measures}
\mu_i \to \mu=\left| F_{A_\infty}\right|^2 \frac{\omega^n}{n!} +\nu\ ,
\end{equation}
 where the measure  $\nu$ is absolutely continuous with respect to  $H^{2n-4}_{S_\infty}$, where $H^{2n-4}_{S_{\infty}}(A):=H^{2n-4}(S_{\infty}\cap A)$ is the $(2n-4)$-dimensional Hausdorff measure on $S_\infty$. We will write 
$\nu(x) = \Theta(x)\cdot \Hcal^{2n-4}_{S_{\infty}}$,
the density function $\Theta$ being
$$
\Theta(x):=\lim_{\sigma\downarrow 0}\sigma^{4-2n}\mu(B_\sigma(x))\ ,
$$
 where the limit exists due to the monotonicity formula (see \cite[Lemma 3.1.4 (a)]{Tian:00}).
 Then $x\in S_{\infty}$ if and only if $\Theta(x)\neq 0$ (see \cite[p.\ 222]{Tian:00}).
On the other hand, if $x\not \in S_b$, then for $\sigma>0$ sufficiently small, $\Hcal^{2n-4}(S_{\infty}\cap B_\sigma(x))=0$, so $\nu(S_{\infty}\cap B_\sigma(x))=0$. If $x\not\in S(A_\infty)$, then by \eqref{eqn:singset} and \eqref{eqn:measures} we must then also have $\Theta(x)=0$. The result follows.
\end{proof}

Let $\Ecal_\infty$ be the reflexive sheaf on $X$ extending the holomorphic bundle $(E_\infty, \dbar_{A_\infty})$, which comes from  Theorem \ref{thm:bando-siu}. The following
result is a restatement of
  \cite[Thm.\ 1.4]{TianYang:02}. For completeness and for the convenience of the reader, we provide a condensed proof of this result.  

\begin{lemma} \label{lem:singset2}
The equality $S(A_\infty)= \sing\mathcal{E}_{\infty }$ holds. In particular,  $S_\infty$ is a holomorphic subvariety.
\end{lemma}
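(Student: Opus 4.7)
The strategy is to prove both inclusions by appealing to the two removable singularity theorems already quoted. The claim that $S_\infty$ is a holomorphic subvariety will then follow immediately from Lemma~\ref{lem:singset1} together with the identification $S(A_\infty) = \sing(\mathcal{E}_\infty)$, since both $S_b$ and $\sing(\mathcal{E}_\infty)$ are holomorphic subvarieties (the former by Theorem~\ref{thm:tian-singset}, the latter by general principles for coherent sheaves).

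For the inclusion $\sing(\mathcal{E}_\infty) \subseteq S(A_\infty)$, I argue by contraposition. Suppose $x \notin S(A_\infty)$. If $x \notin S_\infty$ there is nothing to show, since $A_\infty$ is smooth near $x$ and thus $\mathcal{E}_\infty$ is locally free there by construction. Otherwise, Lemma~\ref{lem:singset1} forces $x \in S_b \setminus S(A_\infty)$. Choose a ball $B_{\sigma_0}(x) \subset X$ on which $E_\infty|_{B_{\sigma_0}(x)\setminus S_\infty}$ admits a unitary trivialisation, and apply Theorem~\ref{thm:tao-tian}: after shrinking $\sigma_0$, there is a unitary gauge transformation $g$ defined off $S_\infty \cap B_\sigma(x)$ such that $g(A_\infty)$ extends smoothly as an HYM connection over the entire ball. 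The holomorphic structure determined by $\bar\partial_{g(A_\infty)}$ defines a holomorphic vector bundle on $B_\sigma(x)$ that agrees with $(E_\infty,\bar\partial_{A_\infty})$ away from $S_\infty$. By the uniqueness of the reflexive extension in Theorem~\ref{thm:bando-siu}, this extension must be isomorphic to $\mathcal{E}_\infty|_{B_\sigma(x)}$, which is therefore locally free at $x$.

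For the reverse inclusion $S(A_\infty) \subseteq \sing(\mathcal{E}_\infty)$, suppose $x \notin \sing(\mathcal{E}_\infty)$. Theorem~\ref{thm:bando-siu} guarantees that the hermitian metric on $\mathcal{E}_\infty$ extends smoothly across $x$ and that the Chern connection of $\mathcal{E}_\infty$ — which must coincide with $A_\infty$ on the complement of $S_\infty$ by uniqueness of the Chern connection associated to a holomorphic structure and a hermitian metric — is HYM in a neighbourhood of $x$. In particular, $|F_{A_\infty}|$ is bounded on some $B_{\sigma_0}(x)$, so
\[
\sigma^{4-2n}\int_{B_\sigma(x)} |F_{A_\infty}|^2 \frac{\omega^n}{n!} = O(\sigma^4) \longrightarrow 0 \quad \text{as } \sigma \downarrow 0,
\]
which shows $x \notin S(A_\infty)$, as required.

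The only substantive issue to guard against is the compatibility of the gauge extension provided by Tao--Tian with the reflexive extension from Bando--Siu: both produce holomorphic structures near $x$ that agree on $B_\sigma(x) \setminus S_\infty$, and since $S_\infty$ has locally finite $(2n-4)$-Hausdorff measure the two must coincide by the uniqueness clause of Theorem~\ref{thm:bando-siu}. Once this is noted the argument is complete.
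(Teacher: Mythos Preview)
Your argument follows essentially the same route as the paper's proof, invoking Tao--Tian for one inclusion and the smoothness of the Chern connection where $\mathcal{E}_\infty$ is locally free for the other. One point deserves more care, however: you write ``Choose a ball $B_{\sigma_0}(x)$ on which $E_\infty|_{B_{\sigma_0}(x)\setminus S_\infty}$ admits a unitary trivialisation,'' but at this stage $E_\infty$ is only known to be locally isometric to $E$ on compact subsets of $X\setminus S_\infty$ (Theorem~\ref{thm:uhlenbeck}(4)), and it is not automatic that these local isometries patch to a trivialisation over the whole of $B_{\sigma_0}(x)\setminus S_\infty$. The paper fills this gap by observing that, since $S(A_\infty)$ is closed and $x\notin S(A_\infty)$, one may arrange $S_\infty\cap B_{r_0}(x)\subset S_b$; as $S_b$ is a holomorphic subvariety, $B_{r_0}(x)\setminus S_\infty$ admits an exhaustion by compact sets that are deformation retracts of the whole punctured ball, which is what allows the local isometries to glue (cf.\ \cite[Ch.~7]{Wehrheim:04}) and hence identifies $A_\infty$ with an admissible connection on the \emph{trivial} bundle, as required by the hypotheses of Theorem~\ref{thm:tao-tian}. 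Once you insert this justification, your proof is complete and matches the paper's.
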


\begin{proof}
Fix $x\in S_{\infty}$. 
If $x\not\in S(A_\infty)$, then there is $r_0>0$ such that $S_{\infty}\cap B_{r_0}(x)\subset S_b$. As $S_b$ is analytic by Theorem \ref{thm:tian-singset}, $S_\infty \cap B_{r_0}(x)$ has a neighbourhood that admits a deformation retraction to $S_\infty \cap B_{r_0}(x)$. It follows (cf.\ \cite[Ch.\ 7]{Wehrheim:04}) that $E_\infty\bigr|_{B_{r_0}(x)}$ is isometric to $E\bigr|_{B_{\sigma_0}(x)}$. In particular, $A_\infty$ is identified with an admissible connection on a trivial bundle on $B_{r_0}(x)$. We may therefore apply Theorem \ref{thm:tao-tian} to conclude that the holomorphic bundle $(E,\dbar_{A_\infty})$ extends to a holomorphic bundle on $B_{r}(x)$ for some $r>0$. Since the reflexive extension $\Ecal_\infty $ is unique, we conclude that  $x\not\in \sing(\Ecal_\infty)$. Conversely, if $x\not\in \sing(\Ecal_\infty)$, then the Chern connection is smooth at $x$, and it follows that $x\not\in S(A_\infty)$. 
\end{proof}

We can now give the

\begin{proof}[Proof of Proposition \ref{prop:admissible}] 
By Lemma \ref{lem:singset2}, $S_{\infty}$ is a union of irreducible subvarieties, and so there is an exhaustion of $X\backslash S_{\infty}$ by compact subsets which are deformation retracts of $X\backslash S_{\infty}$. It follows that in the patching argument for the construction of an Uhlenbeck limit one can find global gauges, and so the bundle  $E_\infty$ obtained in Theorem \ref{thm:uhlenbeck} is isometric to $(E,h)$ on $X\backslash S_{\infty}$.
 We refer to \cite{Wehrheim:04} for further details.
\end{proof}

\subsection{Analytic multiplicities}
\label{sec:analytic-multiplicities}
In this section  we discuss the multiplicities that are associated to the irreducible components of the blow-up locus. For an admissible connection $(A,S)$ on $(E,h)\to X$, the result in  \cite[Prop.\ 2.3.1]{Tian:00} states that integration against the form
\begin{equation}\label{eq:Chern_character_of_connection}
\ch_2(A) =-\frac{1}{8\pi^2} \tr(F_A\wedge F_A)
\end{equation}
on $X\backslash S$ defines a closed $(2,2)$-current on $X$.

\begin{thm}[{Tian,  \cite[Thm.\ 4.3.3]{Tian:00}}] \label{thm:tian}
Suppose $A_{i}$ is a sequence of smooth HYM connections on $E$,
and $A_{i}$ has Uhlenbeck limit $A$ with blow-up locus $S_b$. Then to each irreducible
codimension $2$ component $Z^{an}_{j}\subset S_b$ there is a
positive integer $m^{an}_{j}$ such that
\begin{equation} \label{eqn:current}
\ch_2(A_i)\lra \ch_2(A_\infty)-\sum_{j}m^{an}_{j}{Z^{an}_{j}}
\end{equation}
in the sense of currents.
\end{thm}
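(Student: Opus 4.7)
The plan is to combine the weak measure-theoretic convergence of the Yang--Mills energy densities with a tangent cone / $\varepsilon$-regularity analysis transverse to the blow-up locus, and then to translate this into convergence of the Chern--Weil $(2,2)$-forms $\ch_2(A_i)$ as currents.

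As a first step, I would record that, passing to a subsequence and using the uniform $L^2$ bound on curvature, the Radon measures $\mu_i = |F_{A_i}|^2 \omega^n/n!$ converge weakly to a limit measure
\[
\mu \;=\; |F_{A_\infty}|^2\,\frac{\omega^n}{n!} \;+\; \nu,
\]
where $\nu$ is supported on $S_\infty$, absolutely continuous with respect to $\mathcal{H}^{2n-4}_{S_\infty}$, with density $\Theta$ that coincides with the density appearing in Lemma~\ref{lem:singset1}. By that lemma and Lemma~\ref{lem:singset2}, $\Theta$ vanishes off $S_b = S_\infty\setminus S(A_\infty)$, so $\nu$ is concentrated on the union of the irreducible codimension $2$ components $Z_j^{an}$. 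The monotonicity formula of Price/Tian (\cite[Lemma~3.1.4]{Tian:00}) gives upper semicontinuity of $\Theta$; standard rectifiable-set arguments then show that $\Theta$ is constant $\mathcal{H}^{2n-4}$-a.e.\ on each $Z_j^{an}$, yielding a nonnegative real number $m_j^{an}$ with $\nu = \sum_j m_j^{an}\,\mathcal{H}^{2n-4}\llcorner Z_j^{an}$.

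The hard part, and the real content of the theorem, is to show that each $m_j^{an}$ is a \emph{positive integer}. For this I would fix a generic point $x_0$ of $Z_j^{an}$ (where $Z_j^{an}$ is smooth and disjoint from the other components and from $S(A_\infty)$), choose a holomorphic slice $D$ of complex dimension $2$ transverse to $Z_j^{an}$, and perform a blow-up analysis: rescale the connections $A_i$ (and pass to a diagonal subsequence) to obtain a limit Yang--Mills connection on $\mathbb{C}^2 \cong T_{x_0}D$ with finite energy. By Uhlenbeck's removable singularities theorem in dimension $4$, this limit extends to a smooth HYM instanton on a bundle over $\mathbb{CP}^2$, whose second Chern number is a positive integer. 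Matching this normal-bundle instanton charge against the local concentration of $\mu_i$ on a small ball transverse to $Z_j^{an}$ identifies $m_j^{an}$ with this Chern number, hence $m_j^{an} \in \mathbb{Z}_{>0}$. This step requires care: one must know that no energy escapes to infinity under the rescaling and no secondary bubbles form at lower strata, which is exactly where one uses that we are at a generic point of the top-dimensional stratum $S_b$ and that $S(A_\infty)$ has $\mathcal{H}^{2n-4}$-measure zero.

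Finally, to upgrade from measure convergence to current convergence of $\ch_2(A_i)$, I would test against an arbitrary smooth compactly supported $(n-2,n-2)$-form $\varphi$ on $X$. On any compact subset of $X\setminus S_\infty$, smooth convergence $A_i \to A_\infty$ gives convergence of $\int \ch_2(A_i)\wedge\varphi$ to $\int \ch_2(A_\infty)\wedge\varphi$. Near $S_\infty$ one uses a cutoff function $\chi_\varepsilon$ vanishing in an $\varepsilon$-neighborhood of $S_\infty$, together with the pointwise inequality $|\ch_2(A_i)| \leq C\,|F_{A_i}|^2$ (valid because $\sqrt{-1}\Lambda F_{A_i}$ is bounded by the HYM equation, so the $\Lambda$-traceless part of $F_{A_i}$ dominates and $|\tr(F_{A_i}\wedge F_{A_i})|\wedge \omega^{n-2}/(n-2)!$ is controlled by the energy density). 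The contribution on the $\varepsilon$-neighborhood is then governed by $\nu$, and letting $\varepsilon \downarrow 0$ the defect is exactly $-\sum_j m_j^{an}[Z_j^{an}]$ by the previous step. Combining the two estimates yields \eqref{eqn:current}.
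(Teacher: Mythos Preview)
The paper does not give its own proof of this theorem; it is quoted from \cite[Thm.~4.3.3]{Tian:00}. What the paper \emph{does} do, in the paragraphs following Lemma~\ref{lem:Chern-Simons}, is give an alternative description of the integer multiplicities $m_j^{an}$: at a generic smooth point of $Z_j^{an}$ one takes a transverse $4$-dimensional slice $\Sigma$, writes both sides of Lemma~\ref{lem:slicing2} as Chern--Simons transgressions over $\partial\Sigma$, and identifies the multiplicity with the degree $\deg(h_i)\in\ZBbb$ of an explicit gauge transformation $h_i$ on $\partial\Sigma$. This is a different mechanism from the one you outline.

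Your approach is essentially Tian's original one: concentrate the energy measure $\nu$ on the top stratum $S_b$, rescale transversally at a generic point, and read off the multiplicity as the instanton charge of a bubble. Two remarks. First, the bubble limit lives on $\RBbb^4\cong\CBbb^2$ and, by Uhlenbeck's removable singularities theorem, extends to an anti-self-dual connection on $S^4$, not on $\CBbb\PBbb^2$; the integer you want is $c_2$ of a bundle on $S^4$. Second, to pass from energy concentration to convergence of $\ch_2(A_i)$ as currents you need more than the pointwise bound $|\tr(F_{A_i}\wedge F_{A_i})|\lesssim |F_{A_i}|^2$: that only controls the \emph{mass} of the defect, not its sign or its identification with $-\sum_j m_j^{an}[Z_j^{an}]$. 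One actually uses that for an HYM connection the form $\tr(F_A\wedge F_A)\wedge\omega^{n-2}$ differs from a constant multiple of $|F_A|^2\,\omega^n$ by a fixed topological term, so the defect current inherits the structure of $\nu$. The paper's Chern--Simons route sidesteps this issue entirely, and has the practical advantage that it is exactly what is reused later in the diagonalisation argument (Proposition~\ref{prop:diagonal}) and the multiplicity comparison (Proposition~\ref{prop:multiplicity}).
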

We define the $(n-2)$-cycle associated to the sequence $\{A_i\}$ by $\Ccal^{an}=\sum_{j}m^{an}_{j}{Z^{an}_{j}}$, so that $|\Ccal|=S_b$.
From \eqref{eqn:current} we see that $\ch_2(A_\infty)$ represents $\ch_2(E)+[\Ccal^{an}]$ in $H^4(X,\QBbb)$. In this context we will also refer to the triple $(A_\infty,\Ccal^{an},S(A_\infty))$ as an \emph {Uhlenbeck limit} of ${A_i}$. 

To elaborate on  the origin of the integer multiplicities above,
we
 recall two slicing lemmas from \cite{SibleyWentworth:15} which will be needed later on.   We begin with the following definition.

\begin{defi} Let $z$ be a
smooth point of a codimension $2$ subvariety $Z\subset X$. We say that $\Sigma$ is a {\em transverse slice} to $Z$ at $z$ if $\Sigma \cap Z=\{z\}$
and $\Sigma $ is the restriction of a linear subspace
 $\mathbb{C}^{2}\hookrightarrow \mathbb{C}^{n}$ to some coordinate ball centred at $z$
that is transverse to $T_{z}Z$ at the origin.
\end{defi}

\begin{lemma}[{\cite[eq.\ (4.1)]{SibleyWentworth:15}}]\label{lem:slicing1} Let  $T$ be a smooth,
closed $(2,2)$ form satisfying the equation 
$
T=m_Z Z+dd^{c}\Psi$, 
where $\Psi $ is a $(1,1)$-current, smooth away from $Z$, $m_Z Z$ is the
current of integration over the nonsingular points of $Z$ with multiplicity $
m_Z$, and the equation is taken in the sense of distributions. Then for a
transverse slice, 
\begin{equation*}
m_Z=\int_{\Sigma }T-\int_{\partial \Sigma }d^{c}\Psi\ .
\end{equation*}
\end{lemma}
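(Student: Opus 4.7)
The approach is to pull back the distributional equation $T = m_Z Z + dd^c \Psi$ to the transverse slice $\Sigma$ and then apply Stokes' theorem while isolating the residue contribution at the singular point $z$. By the transversality hypothesis, the pullback of the $(2,2)$-current $[Z]$ to $\Sigma$ is the Dirac mass $\delta_z$ at $z$ with multiplicity one, since $\Sigma \cap Z = \{z\}$ transversally. Restricting the defining equation therefore yields, as distributions on $\Sigma$,
\begin{equation*}
T|_\Sigma = m_Z\,\delta_z + dd^c(\Psi|_\Sigma),
\end{equation*}
and consequently $dd^c(\Psi|_\Sigma) = T|_\Sigma$ as smooth forms on $\Sigma \setminus \{z\}$.

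Next, I would excise a small ball $B_\epsilon(z) \subset \Sigma$ around $z$, on whose complement $\Psi|_\Sigma$ is smooth, and apply classical Stokes' theorem on $\Sigma \setminus B_\epsilon(z)$:
\begin{equation*}
\int_{\Sigma \setminus B_\epsilon(z)} T \;=\; \int_{\Sigma \setminus B_\epsilon(z)} dd^c\Psi \;=\; \int_{\partial \Sigma} d^c\Psi \;-\; \int_{\partial B_\epsilon(z)} d^c\Psi.
\end{equation*}
Letting $\epsilon \to 0$, the left-hand side converges to $\int_\Sigma T$ by dominated convergence, since $T$ is smooth and hence bounded on $\Sigma$. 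For the inner boundary term, local elliptic regularity for $dd^c$ (which, up to a multiple, is the real Laplacian on a K\"ahler surface) shows that modulo smooth terms $\Psi|_\Sigma$ coincides near $z$ with $-m_Z\,G$, where $G$ is a local fundamental solution to $dd^c G = \delta_z$. Since by construction $\int_{\partial B_\epsilon(z)} d^c G \to 1$ as $\epsilon \to 0$, we obtain $\lim_{\epsilon \to 0}\int_{\partial B_\epsilon(z)} d^c\Psi = -m_Z$. Substituting into the Stokes identity gives $\int_\Sigma T = \int_{\partial\Sigma} d^c\Psi + m_Z$, which rearranges to the claimed formula.

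The main obstacle in executing this plan cleanly is making the local decomposition $\Psi|_\Sigma = -m_Z\,G + (\text{smooth})$ fully rigorous: one must verify that $\Psi$ has no worse singularity along $Z$ than the Green's-form behaviour forced by the distributional equation, and that restriction to $\Sigma$ commutes with this local analysis. This is standard and ultimately flows from the hypoellipticity of $dd^c$ together with the Poincar\'e--Lelong-type residue identity on $\mathbb{C}^2$; in practice one can bypass the abstract discussion by smoothing $\Psi$ by convolution, applying smooth Stokes, and passing to the limit, using that mollification commutes with transverse restriction. The delicate bookkeeping of constants and of $d^c$-conventions is where the argument is most easily fumbled, but the residue computation itself is a classical local calculation.
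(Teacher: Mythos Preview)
Your argument is correct and is the standard residue computation; the paper itself does not supply a proof of this lemma but simply quotes it from \cite[eq.\ (4.1)]{SibleyWentworth:15}, so there is no in-paper argument to compare against. One small inaccuracy worth flagging: the operator $dd^c$ acting on $(1,1)$-forms on a complex surface is not elliptic (it is underdetermined), so the phrase ``local elliptic regularity for $dd^c$'' is not quite right. What you actually need is weaker and follows directly from the distributional equation: pairing $T = m_Z\delta_z + dd^c\Psi$ against a cutoff $\varphi$ equal to $1$ on $B_\epsilon$ and supported in $B_{2\epsilon}$, and integrating by parts on the annulus where $\Psi$ is smooth, yields $\int_{\partial B_\epsilon} d^c\Psi \to -m_Z$ without ever invoking a local normal form for $\Psi$. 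This bypasses the Green's-function decomposition entirely and avoids the regularity issue you flagged.
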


The next result shows that the analytic
multiplicities may also be calculated by restricting to transverse slices.

\begin{lemma}[{\cite[Lemma 4.1]{SibleyWentworth:15}}]\label{lem:slicing2}
Let $A_{i}$ be a sequence of Hermitian-Yang-Mills connections on a fixed
hermitian vector bundle $E\rightarrow X$, with Uhlenbeck limit $(A_{\infty },\Ccal^{an},S(A_\infty))$ and corresponding blow-up locus $S_{b},$ and let $Z$ be an irreducible codimension 2 subvariety of $X$. For a transverse slice $
\Sigma $ at a generic smooth point $z\in Z$, we have: 
\begin{equation*}
\lim_{i\rightarrow \infty }\frac{1}{8\pi ^{2}}\int_{\Sigma
}\left\{\tr(F_{A_{i}}\wedge F_{A_{i}})-\tr(F_{A_{\infty
}}\wedge F_{A_{\infty }})\right\} = m_j^{an}
\end{equation*}
if $Z=Z_j^{an}\subset S_{b}^{an}$, and 
\begin{equation*}
\lim_{i\rightarrow \infty }\frac{1}{8\pi ^{2}}\int_{\Sigma }\left\{ \tr(F_{A_{i}}\wedge F_{A_{i}})-\tr(F_{A_{\infty }}\wedge
F_{A_{\infty }})\right\} =0
\end{equation*}
otherwise.
\end{lemma}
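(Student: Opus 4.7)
The lemma is a slice-level refinement of Tian's current convergence from Theorem~\ref{thm:tian}: if $\ch_2(A_i)\to \ch_2(A_\infty) - \sum_k m_k^{an}[Z_k^{an}]$ as $(2,2)$-currents on $X$, then formally pairing with the current of integration on $\Sigma$ would give the claimed limits immediately, since for a generic transverse $2$-slice one has $[Z_k^{an}]\cdot[\Sigma]=1$ when $Z_k^{an}=Z$ (one transverse intersection point) and $0$ otherwise. The task is to turn this formal pairing into a rigorous statement at the level of actual slice integrals.

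The first step is to choose the slice carefully. Since $S(A_\infty)=\sing(\Ecal_\infty)$ has complex codimension at least $3$ and each other component $Z_k^{an}$ ($k\neq j$) of $S_b^{an}$ is a distinct codimension-$2$ subvariety, for a generic smooth point $z\in Z$ together with a generic transverse $2$-plane $\Sigma$ inside a coordinate ball around $z$ we have $\Sigma\cap S_\infty=\{z\}$ when $Z\subset S_b^{an}$, and $\Sigma\cap S_\infty=\emptyset$ otherwise. By Theorem~\ref{thm:uhlenbeck}, $A_i\to A_\infty$ smoothly on compact subsets of $\Sigma\setminus\{z\}$ and in particular on $\partial\Sigma$.

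The second step is to exploit a Bott--Chern transgression. On $\Sigma\setminus\{z\}$ the connections $A_i$ and $A_\infty$ are smooth connections on the same hermitian bundle, so Chern--Weil provides smooth real $(1,1)$-forms $\psi_i$ with
\begin{equation*}
\tfrac{-1}{8\pi^2}\bigl(\tr(F_{A_i}\wedge F_{A_i})-\tr(F_{A_\infty}\wedge F_{A_\infty})\bigr)= dd^c \psi_i,
\end{equation*}
and $\psi_i\to 0$ smoothly off $z$. Stokes' theorem on $\Sigma\setminus B_\varepsilon(z)$ then reduces the left-hand side of the lemma to a sum of two boundary integrals; the $\partial\Sigma$-term vanishes as $i\to\infty$ by smooth convergence there, and when $Z\not\subset S_b$ the $\partial B_\varepsilon(z)$-term also vanishes in the limit because $\psi_i\to 0$ smoothly on a full neighbourhood of $z$ in $\Sigma$. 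This proves the second half of the lemma. For the first half, the remaining $\partial B_\varepsilon(z)$-term is, by construction of $m_j^{an}$ in Theorem~\ref{thm:tian}, the concentration mass at $z$ of the bubble current $m_j^{an}[Z_j^{an}]$, and equals $m_j^{an}$ after sending $\varepsilon\to 0$.

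The main obstacle is justifying the double limit interchange and the identification of the limiting boundary term with $m_j^{an}$. I would handle this by a Fubini/slicing argument: parametrize a smooth family $\{\Sigma_\tau\}_{\tau\in B}$ of transverse slices over a small ball $B\subset Z_{\mathrm{reg}}$ near $z$, apply Theorem~\ref{thm:tian} to test forms of the shape $\pi^*(\rho\, d\mathrm{vol}_Z)$ for arbitrary $\rho\in C^\infty_c(B)$ (with $\pi$ the local projection $X\to Z$), swap orders of integration using the uniform $L^2$-bound on $F_{A_i}$ and the monotonicity formula of~\cite{Tian:00} to dominate the concentration near $z$, and conclude via Lebesgue differentiation that the pointwise slice identity holds for almost every $\tau\in B$, hence for a generic smooth point $z\in Z$. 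This is the technical heart of the argument: it converts the a priori weak current convergence of Theorem~\ref{thm:tian} into the desired pointwise statement on a generic transverse $2$-slice.
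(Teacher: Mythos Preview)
The paper does not prove this lemma itself; it is quoted directly from \cite{SibleyWentworth:15}. What the paper does supply, immediately afterwards, is a reformulation of the multiplicity via Chern--Simons forms and the degree of a gauge transformation (Lemma~\ref{lem:Chern-Simons} and the computation following it). That discussion is close in spirit to your Step~2, but with the correct transgression.

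Your Step~2 contains a real error. You assert that Chern--Weil produces smooth $(1,1)$-forms $\psi_i$ on $\Sigma\setminus\{z\}$ with
\[
\tfrac{-1}{8\pi^2}\bigl(\tr(F_{A_i}\wedge F_{A_i})-\tr(F_{A_\infty}\wedge F_{A_\infty})\bigr)= dd^c \psi_i\,.
\]
A Bott--Chern $dd^c$-transgression of this shape exists between the Chern connections of two hermitian metrics on the \emph{same} holomorphic bundle, but $A_i$ and $A_\infty$ define different holomorphic structures on $E|_{\Sigma\setminus\{z\}}$, so no such formula is available (and the $\partial\bar\partial$-lemma fails on the noncompact $\Sigma\setminus\{z\}$). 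What Chern--Weil actually gives is the Chern--Simons $3$-form:
\[
\tfrac{1}{8\pi^2}\bigl(\tr F_{A_i}^2-\tr F_{A_\infty}^2\bigr)=d\,\CS(A_i,A_\infty)\quad\text{on }\Sigma\setminus\{z\}\,.
\]
Stokes on $\Sigma\setminus B_\varepsilon(z)$ then yields boundary integrals of $\CS$, not of $d^c\psi_i$. This is exactly the framework the paper adopts after the lemma: each $\frac{1}{8\pi^2}\int_\Sigma\tr F^2$ is written as $\int_{\partial\Sigma}\CS(\cdot,D)$ against a flat reference connection, and the limiting discrepancy is identified with the degree of a gauge transformation on the $3$-sphere $\partial\Sigma$. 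Replacing your $dd^c\psi_i$ by $d\CS(A_i,A_\infty)$ repairs the argument; the inner boundary term $\int_{\partial B_\varepsilon(z)}\CS(A_i,A_\infty)$ then plays the role of your ``concentration mass''.

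Your final paragraph---the Fubini/slicing argument that converts the current convergence of Theorem~\ref{thm:tian} into a statement for almost every slice, using the monotonicity formula for uniform domination---is a sound independent route, and is in fact how \cite{SibleyWentworth:15} proceeds. It does not depend on the faulty $dd^c$ formula, so if you commit to that approach you can drop the transgression step entirely for the case $Z=Z_j^{an}$. For the case $Z\not\subset S_b$, no transgression is needed at all: a generic point $z\in Z$ lies outside $S_\infty=S_b\cup S(A_\infty)$, hence $A_i\to A_\infty$ smoothly on all of $\Sigma$, and the slice integral converges to zero directly.
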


It will be useful to have a more explicit description of the multiplicity. First, we will need
the next result, which is  an elementary computation that we omit.
\begin{lemma}\label{lem:Chern-Simons}
Let $E\to \Sigma$ be a bundle over a smooth $4$-manifold $\Sigma$ with boundary $\partial \Sigma$. For connections $A,B$ on $E\to U$, $U\subset \Sigma$ open, write $
\CS(A,B)$ for the Chern-Simons 3-form on $U$ satisfying
\[
d\CS(A,B)=\frac{1}{8\pi^2}\tr\{(F_A\wedge F_A)-(F_{B}\wedge
F_{B})\}\ .
\] 
Then we have the following:
\begin{enumerate}
	\item
For $A_{0}$, $A_{1}$, $A_{2}$
 be smooth connections on $E\to U$, there is a $2$-form $\Omega $
so that on $U$,
\begin{equation*}
\CS(A_{2},A_{0})=\CS(A_{1},A_{0})+\CS(A_{2},A_{1})+d\Omega .
\end{equation*}
\item
If $A$ is a connection on $E\to \Sigma$ and $g$ is a smooth gauge transformation defined in a neighbourhood $U$ of $\partial \Sigma$, 
then
$$
\frac{1}{8\pi^2}\int_{\partial\Sigma} \CS(A,g(A)) =
\frac{1}{24\pi^2}\int_{\partial\Sigma} \tr((g^{-1}dg)^3)=: \deg(g)\in \ZBbb\ ,
$$
where  the right hand side 
is the evaluation on $\partial \Sigma$ of the pullback by $g$ of the Cartan $3$-form generating $H^3(\SU(r), \ZBbb)$. 
\item
If $E_{\infty}\rightarrow \Sigma $ is a smooth vector bundle and $\phi_{\infty}
:E|_{\partial \Sigma }\rightarrow E_{\infty}|_{\partial \Sigma }$ is a smooth
isomorphism, and $A_{\infty},B_{\infty}$ are connections on $E_{\infty}$, then
\begin{equation*}
\int_{\partial \Sigma }\CS(\phi_{\infty} ^{\ast }A_{\infty},\phi_{\infty} ^{\ast
}B_{\infty})=\int_{\partial \Sigma }\CS(A_{\infty},B_{\infty}).
\end{equation*}
In particular, for $g$ as in $(2)$ 
\begin{equation*}
\int_{\partial \Sigma }CS(g(A),g(B))=\int_{\partial \Sigma }CS(A,B)\ .
\end{equation*}
\item
If $g_{1}$,$g_{2}$ are gauge transformations as in $(2)$ then
\begin{equation}
 \deg(g_{1}g_{2})=\deg g_{1}+\deg g_{2}\ .  
\end{equation}
\end{enumerate}
\end{lemma}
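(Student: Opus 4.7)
The plan is to prove the four assertions sequentially using a single explicit formula for the Chern--Simons form. Fix the normalization
$$\CS(A,B) := \tfrac{1}{8\pi^{2}}\int_{0}^{1}\tr\bigl((A-B)\wedge F_{A_{t}}\bigr)\,dt, \qquad A_{t}=B+t(A-B).$$
A direct computation using $F_{A_{t}}=F_{B}+t\,d_{B}(A-B)+t^{2}(A-B)\wedge(A-B)$ together with the Bianchi identity yields $d\CS(A,B)=\tfrac{1}{8\pi^{2}}\tr(F_{A}\wedge F_{A}-F_{B}\wedge F_{B})$, matching the defining property in the statement.

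For (1), I would parametrize the affine $2$-simplex $A_{(s,t)}=(1-s-t)A_{0}+sA_{1}+tA_{2}$ on $\{s,t\geq 0,\ s+t\leq 1\}$ and apply Stokes' theorem to the transgression form $\tr\bigl(\partial_{s}A \wedge \partial_{t}A \wedge F_{A_{(s,t)}}\bigr)$. Integrating over the triangle and identifying the three edge contributions with $\CS(A_{1},A_{0})$, $\CS(A_{2},A_{0})$, and $\CS(A_{2},A_{1})$ produces the desired identity up to an explicit exact term $d\Omega$; equivalently, one may observe that $\CS(A_{2},A_{0})-\CS(A_{1},A_{0})-\CS(A_{2},A_{1})$ is closed by the already-verified differential property and then exhibit a primitive by hand.

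For (2), I would substitute $g(A)=g^{-1}Ag+g^{-1}dg$ into the defining formula, using $F_{g(A)}=g^{-1}F_{A}g$, the cyclicity of the trace, and the Maurer--Cartan relation $d(g^{-1}dg)=-(g^{-1}dg)\wedge(g^{-1}dg)$. After routine manipulation, all but one term collects into an exact form, while the remainder is a constant multiple of $\tr((g^{-1}dg)^{3})$; matching constants yields the stated relation. Integrality of $\deg(g)$ follows from the classical fact that $\tfrac{1}{24\pi^{2}}\tr((g^{-1}dg)^{3})$ represents an integral generator of $H^{3}(\SU(r),\ZBbb)$, so integration over the closed $3$-manifold $\partial\Sigma$ produces the mapping degree of $g\big|_{\partial\Sigma}$.

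The remaining two items are formal consequences. Naturality (3) follows from $F_{\phi^{*}A}=\phi^{*}F_{A}$, which gives $\CS(\phi_{\infty}^{*}A_{\infty},\phi_{\infty}^{*}B_{\infty}) = \phi_{\infty}^{*}\CS(A_{\infty},B_{\infty})$; integration over $\partial\Sigma$ preserves equality since $\phi_{\infty}$ restricts to a bundle isomorphism there. For (4), combining (1), (2), and (3) yields
$$\CS\bigl(A,(g_{1}g_{2})(A)\bigr) = \CS(A,g_{2}(A)) + \CS\bigl(g_{2}(A),g_{1}(g_{2}(A))\bigr) + d\Omega,$$
and the second summand equals $\CS(A,g_{1}(A))$ after invoking (3) with the bundle isomorphism induced by $g_{2}$; integration over $\partial\Sigma$ annihilates the exact term and delivers $\deg(g_{1}g_{2})=\deg g_{1}+\deg g_{2}$. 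The only real work lies in the bookkeeping for (1) and (2); no step presents a conceptual obstacle, which is why the authors elect to omit the computation.
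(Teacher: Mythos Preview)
Your approach is sound and matches the spirit of the paper, which explicitly states that the lemma ``is an elementary computation that we omit.'' There is therefore nothing to compare against; your transgression-formula definition of $\CS(A,B)$ and the simplex/Stokes argument for (1), the direct substitution for (2), and the naturality argument for (3) are all standard and correct.

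One small point to clean up in (4): with the paper's convention $d_{g(A)}=g^{-1}\circ d_A\circ g$, the action is a right action in the sense that $(g_1g_2)(A)=g_2(g_1(A))$, not $g_1(g_2(A))$ as you wrote. The decomposition from (1) should therefore read
\[
\CS\bigl(A,(g_1g_2)(A)\bigr)=\CS(A,g_1(A))+\CS\bigl(g_1(A),g_2(g_1(A))\bigr)+d\Omega,
\]
after which (3) applied with the bundle automorphism $g_1$ identifies the integral of the second summand with $\deg g_2$. This is a bookkeeping slip, not a gap; the conclusion $\deg(g_1g_2)=\deg g_1+\deg g_2$ is unaffected.
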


Returning to the situation in Lemma \ref{lem:slicing2}, let $z\in Z$.
 Without loss of generality assume $z\not\in S(A_\infty)$. Then
by Proposition \ref{prop:admissible} and Theorem \ref{thm:tao-tian},
   $A_\infty$ locally extends to a  connection on a bundle $E_\infty$ that is isometric to $E$ away from $Z$. Along the slice $\Sigma $ we choose local unitary frames $\ebold$ and $\ebold_\infty$ of $E$ and $E_\infty$, respectively. 
Let $D_E$ and $D_{E_\infty}$ denote the connections on $E$ and $E_\infty$ that make the frames $\ebold$ and $\ebold_\infty$ parallel. 
We have:
\begin{align}
\begin{split} \label{eqn:cs}
\frac{1}{8\pi^2}\int_\Sigma  \tr(F_{A_{i}}\wedge F_{A_{i}}) &= \frac{1}{8\pi^2}\int_{\partial \Sigma} \CS(A_i, D_E) \\
\frac{1}{8\pi^2}\int_\Sigma  \tr(F_{A_{\infty}}\wedge F_{A_{\infty}}) &= \frac{1}{8\pi^2}\int_{\partial \Sigma} \CS(A_\infty,D_{E_\infty})
\end{split}
\end{align}
Away from $z\in \Sigma$ there is a isometry $\phi_{\infty} : E\to E_\infty$. By Theorem \ref{thm:uhlenbeck} there are gauge transformations $g_i$ defined away from $z\in \Sigma$ such that $g_i(A_i)\to \phi_{\infty}^\ast A_\infty$. Furthermore,  since $\phi_{\infty}^\ast D_\infty$ and $D_E$ are flat connections on $E$ over a simply connected manifold $\Sigma\backslash\{z\}$, there is a gauge transformation $h$ on $\Sigma\backslash\{z\}$ such that $h(\phi_{\infty}^\ast D_{E_\infty})= D_E$.  Set $h_i=hg_i$.
  Now, using \eqref{eqn:cs} and  Lemma \ref{lem:Chern-Simons},
\begin{align*}
\frac{1}{8\pi ^{2}}&\int_{\Sigma
}\Bigl\{\tr(F_{A_{i}}\wedge F_{A_{i}})-\tr(F_{A_{\infty
}}\wedge F_{A_{\infty }})\Bigr\}
=
\int_{\partial \Sigma} \CS(A_i, D_E)-\CS(A_\infty, D_{E_\infty}) \\
&=
\int_{\partial \Sigma} \CS(g_i(A_i), g_i(D_E))-\CS(\phi_{\infty}^\ast A_\infty, \phi_{\infty}^\ast D_{E_\infty}) \\
&=
\int_{\partial \Sigma} \CS(g_i(A_i), \phi_{\infty}^\ast A_\infty)
+\CS(\phi_{\infty}^\ast A_\infty, g_i(D_E))
-\CS(\phi_{\infty}^\ast A_\infty, \phi_{\infty}^\ast D_{E_\infty}) \\
&=
\int_{\partial \Sigma} \CS(g_i(A_i), \phi_{\infty}^\ast A_\infty)
+\CS(\phi_{\infty}^\ast D_{E_\infty},g_i(D_E)) \\
&=
\int_{\partial \Sigma} \CS(g_i(A_i), \phi_{\infty}^\ast A_\infty)
+\CS(D_E, h_i(D_E)) \ .
\end{align*}
The first term on the right hand side above vanishes as $i\to\infty$, whereas by part (2) of Lemma \ref{lem:Chern-Simons}, $\deg (h_i)$ is an integer which must stabilise to give the multiplicity  for $i$ sufficiently large.

\subsection{Ideal HYM connections}\label{subsect:idealconnections}
Observe from Lemma \ref{lem:singset1} that limits $A_i\to A_{\infty}$ of smooth HYM connections satisfy the following property: any removable singularity of $A_\infty$ lies in the codimension $2$ cycle associated to $\{A_i\}$.
This result motivates the next definition, which is slightly more restrictive than the one used in \cite{Tian:00}, but well-adapted to our purposes.

\begin{defi} \label{def:ideal-connection}
An {\em ideal HYM connection}  is a triple  $(A,\Ccal, S(A))$ satisfying the following conditions:
\begin{enumerate}
\item $\Ccal\in\Cscr_{n-2}(X)$;
\item the pair $(A, |\Ccal| \cup S(A))$ is an  admissible HYM connection on the hermitian vector bundle $(E,h)\to X$, where $S(A)$ is 
given by eq.\ \eqref{eqn:singset};
\item $[\ch_2(A)]=\ch_2(E)+[\Ccal]$, in $H^4(X,\QBbb)$;
\item $A$ induces the connection $a_J$ on $J=\det E$.
\end{enumerate} 
Moreover, we say that ideal connections $(A_1, \Ccal_1,S(A_1))$ and $(A_2, \Ccal_2,S(A_2))$  are \emph{gauge equivalent} if $\Ccal_1=\Ccal_2$ as cycles (so in particular $|\Ccal_1|=|\Ccal_2|=:Z$), and if there is a smooth unitary gauge transformation $g$ on $X\backslash (Z\cup S(A_1)\cup S(A_2))$ such that $g(A_1)=A_2$.
\end{defi}

Given an ideal HYM connection $(A,\Ccal,S(A))$ on $(E,h)$, by Theorem 
\ref{thm:bando-siu} and Lemma \ref{lem:singset2} there is a polystable reflexive extension $\mathcal{E}\rightarrow X$ of the holomorphic  bundle 
$(E,\dbar_E)$ defined in the complement of $Z\cup S(A)$, with
$S(A)=\sing(\mathcal{E)}$. Conversely, let
$(\mathcal{E},\Ccal)$ be a pair consisting of a polystable
reflexive sheaf and a codimension $2$ holomorphic cycle so
that $\Ecal$ is smoothly isomorphic to $E$ on $X\backslash
(Z\cup \sing(\mathcal{E}))$, where we put $Z=|\Ccal|$. By
\cite[Thm.\ 3]{BandoSiu:94}, $\Ecal$ admits an admissible
Hermitian-Einstein metric $h_{\Ecal}$ that is unique up to a
constant. Let $g$ be a complex gauge transformation on
$X\backslash (Z\cup \sing(\mathcal{E}))$  such that
$g^{*}(h_{\Ecal})=h$. Then a simple calculation shows (see the
discussion in Section \ref{sec:map} below), that if we write
$\dbar_{E}$ for the holomorphic structure on $E$ associated to
$\Ecal$, the Chern connection $A=(\dbar_{g^{*}(E)},h)$ gives an admissible HYM connection $(A,Z\cup \sing(\mathcal{E)})$ on $(E,h)$ in the sense of Definition \ref{def:admissible}. By construction, the sheaf $\Ecal$ is the reflexive extension associated to this admissible connection, and in particular $S(A)=\sing(\mathcal{E)}$.
The current $\ch_2(A)$ defined in eq.\ \eqref{eq:Chern_character_of_connection} is closed and represents $\ch_2(\mathcal{E})$, see the proof of \cite[Prop.\ 3.3]{SibleyWentworth:15}. Hence, if we assume furthermore that $\ch_{2}(\mathcal{E})=\ch_{2}(E)+[\Ccal]$, the triple $(A,\Ccal,S(A))$ is an ideal connection.

By construction, if $(A_{1},\Ccal_{1,}S(A_{1}))$ and $(A_{2},\Ccal_{2},S(A_{2}))$ are
gauge equivalent, their associated holomorphic bundles are isomorphic  
away from
the analytic set $Z\cup S(A_{1})\cup S(A_{2})$, which has codimension $\geq 2$ in $X$. Hence, the respective reflexive extensions $\mathcal{E}_{1}$ and $\mathcal{E}_{2}$ coming from Theorem \ref{thm:bando-siu} are isomorphic, and in particular we conclude with the help of Lemma \ref{lem:singset2} that $S(A_{1})=S(A_{2})$. Conversely, 
if $\Ecal_1$ and $\Ecal_2$ admit admissible HYM metrics in the sense of Bando--Siu, and
 if $\mathcal{E}_{1}\cong \mathcal{E}_{2}$ and $\Ccal_{1}=\Ccal_{2}$,  then   $(A_{1},\Ccal_{1},S(A_{1}))$ and 
$(A_{2},\Ccal_{2},S(A_{2}))$ are gauge equivalent. We may therefore regard an
isomorphism class of ideal connections as equivalent to  an isomorphism class of pairs
$(\mathcal{E},\Ccal)$, where $\Ecal$ is a polystable reflexive sheaf whose underlying $C^\infty$ vector bundle on the complement of $|\Ccal|\cup \sing{\Ecal}$ is isomorphic to $E$. This is compatible with the notation introduced in Definition\ \ref{defi:cycle,couple}. We will use this description in Section \ref{sect:equivalence_relation}.  

Gauge equivalence defines an equivalence relation $\sim $ on the space of
ideal HYM connections on $(E,h)$. We define the \emph{moduli set of ideal HYM
connections} on $(E,h)$ to be
\begin{equation} \label{eqn:ideal-moduli}
\widehat{M}_{\HYM}=\widehat{M}_{\HYM}(E,h, a_J):=\{\text{ideal HYM connections on } (E,h)\}/\sim \ .
\end{equation}
Notice that there is a natural inclusion $M_{\HYM}^{\ast }\subset \widehat{M}_{\HYM}$. 

In order to obtain a compactification, we will apply Theorem \ref{thm:uhlenbeck} to sequences of ideal HYM connections as well. Given any $[(\mathcal{E},\Ccal)]\in \widehat{M}_{\HYM}$, observe that owing to polystability of $\mathcal{E}$ the
Bogomolov inequality, see for example \cite[Cor.\ 3]{BandoSiu:94}, applies to give:
 \begin{equation} \label{eqn:bogomolov}
\int_{X}\left( c_{2}(\mathcal{E})-\frac{r-1}{2r}c_{1}^{2}(\mathcal{E}
)\right) \wedge \omega ^{n-2}\geq 0\ .
\end{equation}
Using the relations $\ch_{2}(\mathcal{E})=\ch_{2}(E)+[\Ccal]$ and $c_{1}(\mathcal{E})=c_{1}(E)=c_1(J)$, we obtain a bound
\[
\deg(\Ccal) =\sum_{i}n_{i}\int_{Z_{i}}\frac{\omega ^{n-2}}{(n-2)!}\leq C
\]
that is independent of $(\mathcal{E},\Ccal)$. This means that if $
[(A_{i},\Ccal_{i},S(A_{i}))]$ is sequence in $\widehat{M}_{\HYM}$, there is a
uniform bound $\deg\Ccal_{i}\leq C$, and so $\Ccal_{i}$ converges subsequentially as
cycles by Theorem \ref{thm:cycle space}.

Note that since $X$ is  compact the sets $S(A_i)$ always converge subsequentially in the   Hausdorff sense to some compact subset $S^{\prime}_{\infty}$ of $X$. On the other hand, 
to maintain the structure of an ideal connection we want the
 $S(A_{i})$ to converge in the union of cycle spaces $\cup_{k=0}^{n-3}\mathscr C_{k}(X)$. 
 This is guaranteed by the next lemma, which follows\footnote{We are grateful to Carlos Simpson for suggesting that Lemma \ref{lem:bounded-admissible} should be a consequence of Maruyama's result.} from   \cite[Main Theorem]{Maruyama:81} together with Corollary \ref{Cor:singsetboundedness}.

\begin{lemma} \label{lem:bounded-admissible}
There is a constant $K$ with the following significance.
For any ideal HYM connection  $(A, \Ccal, S(A))$ on $(E,h)$ and each $k=0, 1, \ldots, n-3$, the $k$-dimensional stratum of $S(A)$ has degree less than or equal to $K$, when considered as a cycle in $X$.
\end{lemma}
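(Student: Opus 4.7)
The plan is to pass from the gauge-theoretic data $(A,\Ccal,S(A))$ to the associated sheaf-theoretic data $(\Ecal, \Ccal)$ discussed in Section \ref{subsect:idealconnections}, where $\Ecal$ is a polystable reflexive extension of $(E,\dbar_A)$ across $Z \cup S(A)$ with $S(A)=\sing(\Ecal)$ and $\ch_2(\Ecal)=\ch_2(E)+[\Ccal]$. Under this correspondence, the $k$-dimensional stratum of $S(A)$ is precisely the codimension-$(n-k)$ stratum $\sing_{n-k}(\Ecal)$ appearing in Section \ref{subsubsect:singular_sets}. Thus, the task reduces to bounding the degrees of $\sing_{n-k}(\Ecal)$ uniformly over the family of reflexive sheaves $\Ecal$ arising this way.

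The first step is to show that this family of reflexive sheaves is bounded in the sense of Grothendieck. The rank and the first Chern class are fixed by the topological type of $(E,h)$ together with the fixed determinant; indeed $c_1(\Ecal)=c_1(E)=c_1(J)$. For the second Chern class, the relation $\ch_2(\Ecal)=\ch_2(E)+[\Ccal]$ combined with $\ch_2 = \tfrac{1}{2}(c_1^2 - 2c_2)$ and the equality of first Chern classes gives $c_2(\Ecal) = c_2(E) - [\Ccal]$; intersecting with $\omega^{n-2}$ and using that $[\Ccal]\cdot \omega^{n-2} = (n-2)!\deg(\Ccal)$ shows that $c_2(\Ecal)\cdot \omega^{n-2}$ is controlled by a uniform bound on $\deg(\Ccal)$. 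Such a uniform bound, in turn, is already available: as noted in the discussion preceding the lemma, the Bogomolov inequality \eqref{eqn:bogomolov}, applicable thanks to polystability of $\Ecal$, together with the fixed cohomology class of $c_1(\Ecal)$, provides precisely the estimate $\deg(\Ccal) \leq C$.

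With these uniform bounds on rank, $c_1$, and $c_2\cdot \omega^{n-2}$, together with the fact that a polystable sheaf is $\mu$-semistable, the family of all such $\Ecal$ falls under the hypothesis of \cite[Main Theorem]{Maruyama:81}, which yields boundedness. Once boundedness is established, Corollary \ref{Cor:singsetboundedness} applies directly: for each $k \in \{0,1,\dots, n-3\}$, the set of cycles $\{\sing_{n-k}(\Ecal)\}$ (with reduced structure) is relatively compact in $\mathscr{C}_k(X)$ and in particular of uniformly bounded degree. Taking $K$ to be the maximum of these bounds as $k$ ranges over the finitely many relevant values yields the desired constant.

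The only point that requires care is the verification that Maruyama's boundedness result genuinely applies under the Chern-class data we control — namely fixed $c_1$ and bounded $c_2\cdot \omega^{n-2}$, rather than, say, fixed Hilbert polynomial — but this is exactly the content of \cite[Main Theorem]{Maruyama:81}, so the argument is a direct invocation once the cohomological bookkeeping above is in place.
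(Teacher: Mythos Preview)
Your argument is correct and follows exactly the same route as the paper: pass to the reflexive sheaf $\Ecal$ with $S(A)=\sing(\Ecal)$, use the Bogomolov inequality together with $\ch_2(\Ecal)=\ch_2(E)+[\Ccal]$ to bound $c_2(\Ecal)\cup[\omega]^{n-2}$ above and below, invoke \cite[Main Theorem]{Maruyama:81} for boundedness, and then apply Corollary~\ref{Cor:singsetboundedness}. One small notational slip: in the paper's convention the subscript in $\sing_{m}(\Ecal)$ records the \emph{dimension}, so the $k$-dimensional stratum is $\sing_{k}(\Ecal)$, not $\sing_{n-k}(\Ecal)$; this does not affect the argument.
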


\begin{proof}
A consequence of the statement in \cite{Maruyama:81} is that a set of isomorphism classes of slope semistable reflexive sheaves of fixed rank, $c_1$, and $c_2\cup [\omega]^{n-2}$,  is bounded. For sheaves $\mathcal{E}$ associated to points in $\widehat M_\HYM$ as above,
 the rank and $c_1$ are fixed, whereas
$c_2(\Ecal)\cup [\omega]^{n-2}$
is bounded: from below by \eqref{eqn:bogomolov}, and from above by 
$c_2(E)\cup [\omega]^{n-2}$. The aforementioned result therefore applies to our family of reflexive sheaves. We may then use Corollary \ref{Cor:singsetboundedness} applied to the sets $\sing(\Ecal)=\cup _{k=3}^{n}\sing_{n-k}(\mathcal{E)}$. Since these algebraic singularity sets coincide with the analytic singularity sets $S(A)$ by Lemma \ref{lem:singset2}, the claim follows.
\end{proof} 

The following  is the main result of this subsection.

\begin{thm} \label{thm:ideal-convergence}
Let $(A_i, \Ccal_i, S(A_i))\in \widehat M_\HYM$.
Then there is a subsequence $($also denoted by $\{i\}$), and an ideal HYM connection $(A_\infty, \Ccal_\infty, S(A_\infty))$
such that $\Ccal_i$ converges to a subcycle of $\Ccal_\infty$, and (up to gauge transformations) $A_i\to A_\infty$ in $C^\infty_{loc}$ on $X\backslash (Z_\infty\cup S(A_\infty))$ where $Z_\infty:=|\Ccal_\infty|$. Moreover, 
\begin{equation} \label{eqn:currents-converge}
\ch_2(A_i)-\Ccal_i\lra \ch_2(A_\infty)-\Ccal_\infty
\end{equation}
in the mass norm; in particular, also
in the sense of currents.
\end{thm}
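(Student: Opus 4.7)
The plan is to combine the standard Uhlenbeck compactness for smooth HYM connections with the cycle-space compactness results, while carefully keeping track of where the higher-codimension singularities $S(A_i)$ can accumulate. First I would pass to a subsequence extracting all the cycle limits: the Bogomolov inequality \eqref{eqn:bogomolov} together with the cohomological relation $[\ch_2(A_i)] = \ch_2(E) + [\Ccal_i]$ gives a uniform bound on $\deg \Ccal_i$, so Theorem~\ref{thm:cycle space} yields $\Ccal_i \to \Ccal'_\infty$ in $\mathscr C_{n-2}(X)$. Similarly, Lemma~\ref{lem:bounded-admissible} bounds the degree of each $k$-dimensional stratum of $S(A_i)$ for $k \le n-3$, so after further extraction each stratum converges in $\mathscr C_k(X)$; let $S'_\infty$ denote the union of the supports of these limits together with $|\Ccal'_\infty|$, a closed analytic subset of codimension $\ge 2$.

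Having fixed the cycle limits, I would apply Uhlenbeck compactness (Theorem~\ref{thm:uhlenbeck}) on $X \setminus S'_\infty$, using that $\int |F_{A_i}|^2$ is uniformly bounded (again by Bogomolov and the cohomological constraint) and that the $S(A_i)$ converge in the Hausdorff sense to $S'_\infty$ on compact subsets there. This produces an Uhlenbeck singular set $S_\infty \supseteq S'_\infty$ and an HYM limit $A_\infty$ on $X \setminus S_\infty$. Theorem~\ref{thm:tian-singset} and Lemma~\ref{lem:singset1} split $S_\infty = S_b \cup S(A_\infty)$, where $S_b$ is a pure codimension-two analytic set with positive integer multiplicities $m_j^{an}$ forming a cycle $\Ccal^{an}$, and Theorems~\ref{thm:bando-siu}--\ref{thm:tao-tian} identify $S(A_\infty)$ with the singular set of the polystable reflexive extension $\Ecal_\infty$ of $(E,\dbar_{A_\infty})$. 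I would then set $\Ccal_\infty := \Ccal'_\infty + \Ccal^{an}$, so that $\Ccal_i \to \Ccal'_\infty$ is a subcycle of $\Ccal_\infty$; Tian's current convergence \eqref{eqn:current} applied to the subsequence, combined with $[\ch_2(A_i)] = \ch_2(E) + [\Ccal_i]$, yields $[\ch_2(A_\infty)] = \ch_2(E) + [\Ccal_\infty]$ in $H^4(X,\QBbb)$, and the induced connection on $J$ is still $a_J$ by construction. Thus $(A_\infty,\Ccal_\infty,S(A_\infty))$ is an ideal HYM connection.

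The main obstacle is to show that $S_\infty \subseteq |\Ccal_\infty| \cup S(A_\infty)$, i.e., that no spurious components of the higher-codimension part of $S'_\infty$ survive outside of the cycle $\Ccal_\infty$ and the reflexive singularity set. A priori, a codimension-$\ge 3$ component of $S'_\infty$ coming from accumulation of $S(A_i)$ could lie away from $S_b$, yet be a removable singularity of $A_\infty$ and of $\Ecal_\infty$ that nonetheless obstructs smooth Uhlenbeck convergence on its complement. To rule this out, I plan to apply the Tao--Tian removability criterion (Theorem~\ref{thm:tao-tian}) at a generic point $x$ of such a component, once we know that $A_\infty$ already extends as an admissible HYM connection in a punctured neighbourhood of $x$; establishing this punctured extension across a codimension-$\ge 3$ singularity is where the $\dbar$-methods and the Hartogs-type estimates of Chakrabarti--Shaw enter, allowing one to extend local holomorphic frames for $\Ecal_\infty$ across sets of codimension $\ge 3$ and hence to reduce to the Bando--Siu setting. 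This is the technical heart of the argument.

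Finally, the mass-norm convergence in \eqref{eqn:currents-converge} strengthens Tian's weak convergence. The strategy is to split the integral. Away from $|\Ccal_\infty| \cup S(A_\infty)$ the connections $A_i$ converge smoothly to $A_\infty$ modulo gauge, so $\ch_2(A_i) \to \ch_2(A_\infty)$ pointwise and, by a uniform curvature estimate coming from the HYM equation and elliptic regularity, in $L^1$ on compact subsets. Near each irreducible component $Z_j^{an} \subset S_b$, the transverse-slice formula of Lemma~\ref{lem:slicing2} identifies the mass concentrated there with $m_j^{an}[Z_j^{an}]$, while near the components of $|\Ccal'_\infty|$ the cycle convergence $\Ccal_i \to \Ccal'_\infty$ together with the fact that $\Ccal_i$ appears on the left of \eqref{eqn:currents-converge} already absorbs the corresponding mass. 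Summing these contributions and using that the mass of the remainder is controlled by the $L^1$-difference on compact subsets of the regular locus closes the mass-norm identity.
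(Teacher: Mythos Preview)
Your overall strategy matches the paper's, and you correctly isolate Lemma~\ref{lem:sing-limit} (the inclusion $S'_\infty\subset Z_\infty\cup S(A_\infty)$) as the technical heart. But there are two genuine gaps.

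First, you invoke Theorem~\ref{thm:tian-singset} and Lemma~\ref{lem:singset1} to split $S_\infty=S_b\cup S(A_\infty)$ with $S_b$ a pure codimension-two \emph{subvariety of $X$}. Those results are stated for sequences of \emph{smooth} HYM connections ($S_i=\emptyset$), whereas your $A_i$ are only admissible. The paper's fix is to apply Tian's theorem on the open set $X\setminus(Z'_\infty\cup S'_\infty)$, where the $A_i$ are eventually smooth; this gives the bubbling locus $\widetilde Z_\infty$ only as a subvariety of that open set. One then needs a separate Step~3: locally near points of $Z'_\infty\cup S'_\infty$ one uses Tao--Tian removability (Theorem~\ref{thm:tao-tian}) to extend the $A_i$ to smooth connections $\widehat A_i$ and reapplies Uhlenbeck/Tian, showing that $\mathrm{cl}(\widetilde Z_\infty)\cap(Z'_\infty\cup S'_\infty)$ has codimension $\ge 3$, so that Bishop--Stoll extends $\widetilde Z_\infty$ to a global subvariety $Z''_\infty$. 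This step is missing from your outline, and without it neither your $S_b$ nor your cycle $\Ccal^{an}$ makes sense on all of $X$. Relatedly, your formula $\Ccal_\infty=\Ccal'_\infty+\Ccal^{an}$ omits the \emph{excess} bubbling that can occur along components of $Z'_\infty$: the correct multiplicity there is $m_Z=m'_Z+m''_Z$, where $m''_Z$ is computed from the locally extended connections $\widehat A_i$ as in \eqref{eqn:excess}.

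Second, your description of the $\dbar$/Chakrabarti--Shaw argument is off-target. At a point $p\in S'_\infty\setminus(Z_\infty\cup S(A_\infty))$, the sheaf $\Ecal_\infty$ is already locally free (indeed holomorphically trivial on a small ball), so there is nothing to extend for $A_\infty$ or its frames; Tao--Tian and Bando--Siu do not help here. The actual obstruction is that points $x_i\in S(A_i)$ accumulate at $p$, and one must derive a contradiction from this. The paper's Lemma~\ref{lem:sing-limit} does so by solving $\dbar_{A_i}u_i=-a_i$ on an \emph{annular} region $U\cong(B^{(3)}_\sigma\setminus\overline B^{(3)}_\varepsilon)\times\Delta^{(n-3)}_\delta$ around $p$ (this is where the Chakrabarti--Shaw closed-range estimate for $\dbar$ on product annuli is needed), producing holomorphic maps $\varphi_i=\mathrm{Id}+u_i:\Ecal_\infty\to\Ecal_i$ with $\varphi_i\to\mathrm{Id}$ on $U$. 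These extend across the inner hole by Hartogs (using reflexivity of $\Ecal_i$), and since $\Ecal_i$ is not locally free at $x_i$ one has $x_i\in\{\det\varphi_i=0\}$; but this divisor must then meet the annulus $U$, contradicting $\det\varphi_i\neq 0$ there for $i$ large. So the role of the $\dbar$-estimates is to build comparison maps $\Ecal_\infty\to\Ecal_i$, not to extend $\Ecal_\infty$.

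Your mass-norm sketch is in the right spirit, but you should note that the control near $S'_\infty\cup S(A_\infty)$ comes from the monotonicity formula $\int_{B_\sigma(x)}|F_{A_i}|^2\le\Lambda\sigma^{2n-4}$ together with a covering of that codimension-$\ge 3$ set by balls of small total $(2n-4)$-content, as in the paper's final lemma.
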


The rest of this section is devoted to the proof of Theorem \ref{thm:ideal-convergence}.  We proceed in several steps. 

\medskip\noindent{\bf Step 1.} 
By Lemma  \ref{lem:bounded-admissible} and Theorem \ref{thm:cycle space}, we may first extract a subsequence (also denoted $\{i\}$), such that
\begin{enumerate}
\item the Yang-Mills densities converge weakly $|F_{A_{i}}|^2 dvol_\omega \to \mu_\infty$ to a Radon measure $\mu_\infty$;
\item there is a cycle $\Ccal'_\infty$ with $|\Ccal'_\infty|=Z_\infty'$ such that  $\Ccal_i\to \Ccal'_\infty$ as cycles;
\item we have Hausdorff convergence $S(A_i)\to S'_\infty$, where $S'_\infty$ is a subvariety of codimension at least $3$.
\end{enumerate}

\medskip\noindent{\bf Step 2.}
We may apply Theorem \ref{thm:uhlenbeck} to see that the following holds: There is a  closed subset $\widetilde S_{\infty}\subset X\backslash (Z'_\infty \cup S'_\infty)$ of locally finite Hausdorff $(2n-4)$-measure, and a  smooth HYM connection $A_\infty$ on $X\backslash (Z'_\infty \cup S'_\infty\cup\widetilde S_{\infty})$, such that $A_i\to A_\infty$ up to gauge, smoothly in the local $C^\infty$ topology. Moreover, by Theorem \ref{thm:tian-singset}, we may write $\widetilde S_{\infty}=\widetilde Z_{\infty}\cup S(A_\infty)$, where $S(A_\infty)$ is defined as in \eqref{eqn:singset}, and $\widetilde Z_{\infty}$ is a closed pure $(n-2)$-dimensional analytic  subvariety of $X\backslash (Z'_\infty \cup S'_\infty)$.
 
\medskip\noindent{\bf Step 3.} We claim that $\widetilde Z_{\infty}\subset X\backslash (Z'_\infty \cup S'_\infty)$ extends to $X$. We shall use the Bishop-Stoll removable singularities theorem. Choose a point $p\in Z'_\infty$, $\sigma>0$, such that $B_{2\sigma}(p)\subset X\backslash (S'_\infty\cup S(A_\infty))$. Then for $i$ sufficiently large, the connections $A_i$ are smooth  on $B_{2\sigma}(p)\backslash Z_i$ and extend to smooth connections $\widehat A_i$ on $B_\sigma(p)$ by Theorem \ref{thm:tao-tian}. Hence, applying Theorem \ref{thm:uhlenbeck} we conclude that after passing to a subsequence and up to gauge $\widehat A_i$ converges to a limit $\widehat A_\infty$ in the local $C^\infty$ topology on $B_\sigma(p)$ away from a singular set $\widehat Z_\infty\cup S(\widehat A_\infty)$ that is an analytic subvariety of $B_\sigma(p)$. Since $\widehat A_\infty$ and $A_\infty$ agree up to gauge off a codimension $2$ set, they agree up to gauge on their common domain of definition. By our choice of $p$ it  follows that $S(\widehat A_\infty)\cap B_\sigma(p)=\emptyset$, and $\widehat Z_\infty\cap B_\sigma(p)$ and $\widetilde Z_{\infty}\cap B_\sigma(p)$ agree on the complement of $Z'_\infty \cap B_\sigma(p)$. In particular, the intersection ${\rm cl}(\widetilde Z_{\infty})\cap Z'_\infty \cap B_{\sigma}(p)$ has codimension at least $3$. Since $S'_\infty\cup S(A_\infty)$ also has codimension at least $3$, the same is true for ${\rm cl}(\widetilde Z_{\infty})\cap (Z'_\infty\cup S'_\infty)$. It now follows from \cite[Lemma 9]{Bishop:64}, that $\widetilde Z_{\infty}$ extends as a holomorphic $(n-2)$-dimensional
  subvariety $Z''_\infty$ on $X$.

\medskip\noindent{\bf Step 4.} Set $Z_\infty=Z'_\infty\cup Z''_\infty$ as pure $(n-2)$-dimensional subvarieties. 
We have local $C^\infty $ convergence $A_i\to A_\infty$ on $X\backslash (Z_\infty\cup S(A_\infty)\cup S'_\infty)$. Hence, by the same argument as in the proof of Proposition \ref{prop:admissible}, $A_\infty$ is an admissible HYM connection. To prove that we have an ideal HYM connection, we need to show that $ S'_\infty\subset Z_\infty\cup S(A_\infty)$, and we have to assign multiplicities to the components of $Z_\infty$. The latter part will be discussed in Step 5 below. The former statement is a consequence of the next lemma, which is also absolutely crucial for the argument in Section \ref{sect:equivalence_relation}.
  
  \begin{lemma} \label{lem:sing-limit}
  In the situation above, $ S'_\infty\subset S(A_\infty)\cup Z_\infty$.
  \end{lemma}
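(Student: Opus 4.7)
The plan is to argue by contradiction. Suppose there exists $x\in S'_\infty$ with $x\notin S(A_\infty)\cup Z_\infty$. Since both sets are closed, pick a small ball $U\ni x$ with $\overline U$ disjoint from $S(A_\infty)\cup Z_\infty$ and, using the cycle-space convergence $\Ccal_i\to \Ccal'_\infty$, also from $|\Ccal_i|$ for all $i\gg 0$. By Lemma~\ref{lem:singset2} and Theorem~\ref{thm:tao-tian} applied to $A_\infty$, the limit connection extends smoothly across $U$ and $\Ecal_\infty|_U$ is locally free. Fix a local holomorphic frame $s_1,\dots,s_r$ of $\Ecal_\infty|_U$, viewed as smooth sections of $E|_U$ under the isometry of Theorem~\ref{thm:uhlenbeck}. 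The strategy is to use these sections to produce, for large $i$, a local holomorphic frame of $\Ecal_i$ on a slightly smaller ball $U'\Subset U$. Once this is achieved, $\sing(\Ecal_i)\cap U'=\emptyset$; since $S(A_i)=\sing(\Ecal_i)$ by Lemma~\ref{lem:singset2}, this contradicts the Hausdorff convergence $S(A_i)\to S'_\infty$ together with $x\in S'_\infty\cap U'$.

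To construct the frame, set $\eta_{i,j}:=\dbar_{A_i}s_j=(A_i^{0,1}-A_\infty^{0,1})\cdot s_j$ on $U\setminus S(A_i)$ and solve
\[
\dbar_{A_i}\tau_{i,j}=\eta_{i,j} \quad \text{on } U'\setminus S(A_i),
\]
with an $L^2$-estimate $\|\tau_{i,j}\|_{L^2(U')}\leq C\|\eta_{i,j}\|_{L^2(U)}$, where $C$ is independent of $i$. Since $U$ meets neither $S(A_\infty)$ nor $Z_\infty$, the convergence $A_i\to A_\infty$ is smooth on compacta of $U\setminus S'_\infty$, and $S'_\infty\cap U$ has complex codimension at least three by Step~1. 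Both facts combine to give $\|\eta_{i,j}\|_{L^2(U)}\to 0$. Put $s_{i,j}:=s_j-\tau_{i,j}$. These are $\dbar_{A_i}$-holomorphic, $L^2$ sections on $U'\setminus S(A_i)$, which by reflexivity of $\Ecal_i$ and the codimension-$\geq 3$ of $S(A_i)$ extend to holomorphic sections of $\Ecal_i|_{U'}$. Elliptic regularity for $\dbar_{A_i}$ then upgrades the $L^2$-smallness of $\tau_{i,j}$ to $C^\infty_{loc}$-smallness on $U'\setminus S'_\infty$, so for $i$ sufficiently large the $s_{i,j}$ form a frame of $\Ecal_i$ on $U'$, completing the contradiction.

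The main difficulty is establishing the uniform-in-$i$ $L^2$-solvability of $\dbar_{A_i}$ on the complement $U'\setminus S(A_i)$, since both the connections and their singular loci vary with $i$. Here I would invoke the $L^2$-$\dbar$ theory of Chakrabarti--Shaw~\cite{ChakrabartiShaw:11} on complements of analytic subsets of codimension at least two. Combined with the uniform admissibility bounds for the $A_i$ (Definition~\ref{def:admissible}) and the uniform degree control on the components of $S(A_i)$ furnished by Lemma~\ref{lem:bounded-admissible}, this should yield the required estimate with a constant $C$ independent of $i$, after which the argument above closes.
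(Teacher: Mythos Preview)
Your overall strategy---argue by contradiction and produce, via $\dbar$-methods, a local holomorphic map from $\Ecal_\infty$ to $\Ecal_i$ that is an isomorphism near the hypothetical point $x$---is exactly the paper's strategy. However, the implementation you sketch has two genuine gaps, both concerning the region on which you attempt the $\dbar$-estimate.

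First, the reference \cite{ChakrabartiShaw:11} does not give $L^2$-$\dbar$ theory ``on complements of analytic subsets of codimension at least two''; it treats the $\dbar$-Neumann problem on \emph{fixed product domains} such as an annulus in $\CBbb^3$ times a polydisk. There is no off-the-shelf result giving a basic estimate on the varying domains $U'\setminus S(A_i)$ with singular, varying connections $A_i$, and neither the admissibility condition (which bounds only $\|F_{A_i}\|_{L^2}$ and $\|\Lambda F_{A_i}\|_{L^\infty}$) nor the degree bounds of Lemma~\ref{lem:bounded-admissible} supply such an estimate. Second, and relatedly, your claim $\|\eta_{i,j}\|_{L^2(U)}\to 0$ is not justified: the convergence $A_i\to A_\infty$ is smooth only on compacta of $U\setminus S'_\infty$, and near $S(A_i)$ (which accumulates on $S'_\infty\cap U\ni x$) you have no control on the connection form $A_i$ itself, so $\eta_{i,j}$ need not even lie in $L^2(U)$. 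For the same reason, the step ``$C^\infty_{loc}$-smallness on $U'\setminus S'_\infty$ implies the $s_{i,j}$ form a frame on all of $U'$'' does not follow, since the frame property is needed precisely near $x\in S'_\infty$.

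The paper circumvents all of this by a geometric trick you are missing: instead of working on the ball $U'$, it works on a fixed \emph{annular} region $U=(B^{(3)}_\sigma\setminus\overline{B^{(3)}_\varepsilon})\times\Delta^{(n-3)}_\delta$ centred at $x$, chosen so that $U\cap S'_\infty=\emptyset$ and hence $U\cap S(A_i)=\emptyset$ for $i\gg 0$. On this fixed product domain the connections are smooth and converge smoothly, the scalar basic estimate of \cite{Shaw:10,ChakrabartiShaw:11} applies, and a perturbation gives the uniform estimate for $\dbar_{A_i}$. One then solves $\dbar_{A_i}u_i=-a_i$ on $U$ to get a holomorphic map $\varphi_i=\mathrm{Id}_E+u_i:\Ecal_\infty\to\Ecal_i$, extends it across the hole to the full polydisk $\widehat U$ by Hartogs (using reflexivity of $\Ecal_i$), and finishes with a divisor argument: the zero locus $D_i=\{\det\varphi_i=0\}$ must contain $x_i\in S(A_i)$, hence is a nonempty divisor in $\widehat U$, and therefore must meet the annular shell $U'$ where $\det\varphi_i\neq 0$---a contradiction. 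The annulus-plus-Hartogs device is the missing idea in your proposal.
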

\begin{proof}
Consider the reflexive sheaves $\Ecal_{\infty}$, $\Ecal_i$ obtained by extending the holomorphic bundles $(E, \dbar_{A_{\infty}})$ and $(E, \dbar_{A_i})$, respectively. By Lemma \ref{lem:singset2}, the loci where the $\Ecal_i$ are not locally free are precisely the codimension at least 3 singular sets $S(A_i)$. 

Suppose that $p\in S'_\infty$, $p\not\in S(A_\infty)\cup Z_\infty$.
Then we can find $x_i\in S(A_i)$ converging to $p$. 
 We may also  find a coordinate ball $B_{2\sigma}(p)$ whose closure lies in the complement of 
$S(A_\infty)\cup Z_\infty$, and such that $(\Ecal_\infty)|_{B_{2\sigma}(p)}$ is a holomorphically trivial vector bundle. Notice that by definition of $Z_\infty$,  $Z_i\cap B_{2\sigma}=\emptyset$ for sufficiently large $i$. 
Let $\Delta_\delta^{(k)}$ denote the
polydisk of radius $(\delta, \dots, \delta)$ in $\CBbb^k$ and let $B^{(k)}_\sigma(0)$ the ball of radius $\sigma$ in $\CBbb^k$. 
We may find an annular region $U:=U(\sigma, \varepsilon; \delta)\subset B_{2\sigma}(p)$ centred about $p$ given in coordinates by 
$$(B_{\sigma}^{(3)}(0)\backslash \overline B_{\varepsilon}^{(3)}(0)) \times \Delta_\delta^{(n-3)}\ ,$$
 such that $S_\infty'\cap U=\emptyset$, and hence also $S(A_i)\cap U=\emptyset$,  for sufficiently large $i$.
 For the purposes of this proof, we choose the euclidean product metric on $U$, and the standard hermitian structure on $\Ecal_\infty$ with respect to its trivialisation over $U$, scaled
 by an appropriate weight as explained in \cite{Shaw:85}, whose results we will apply later on in the proof. The notation $\Vert\cdot \Vert$ will refer to the $L^2$-norm of bundle valued forms on $U$ with respect to these metrics, and $\ast$ will denote the associated hermitian conjugate. 

Change notation slightly, and let $\dbar_{A_i}$ denote the $\dbar$-operator on
$\End E$ inducing the holomorphic structure $\Ecal^\vee_\infty\otimes \Ecal_i$.
Writing $\dbar_{A_i}=\dbar_{A_\infty}+a_i$ for some $\End E$-valued $(0,1)$-form $a_i$, we have $a_i\to 0$ smoothly on $U$. We claim that for $i$ sufficiently large there are smooth endomorphisms $u_i$ of $E$ and a constant $C$ such that\begin{enumerate}
\item $\dbar_{A_i}u_i=-a_i\,$;
\item $\Vert u_i\Vert \leq C \Vert a_i\Vert\, $.
	\end{enumerate}
In order to prove this we verify the basic estimate; namely, that there exists a constant $C$ such that
\begin{equation} \label{eqn:basic-estimate}
\Vert \phi\Vert\leq C\left( \Vert \dbar_{A_i}\phi\Vert +\Vert \dbar^\ast_{A_i}\phi\Vert \right)
\end{equation}
for all $\phi$,  a smooth $(0,1)$-form with values in $\End E$ satisfying the $\dbar$-Neumann boundary  conditions (cf.\ \cite{Hormanader:65,FollandKohn:72}).
We first note that \eqref{eqn:basic-estimate} is valid if $A_i$ is replaced by $A_\infty$:
\begin{equation} \label{eqn:basic-estimate0}
\Vert \phi\Vert\leq C\left( \Vert \dbar_{A_\infty}\phi\Vert +\Vert \dbar^\ast_{A_\infty}\phi\Vert \right)\ .
\end{equation}
Indeed, since the bundle and metrics are trivialized, the problem reduces to the scalar $\dbar$-estimate, and hence \eqref{eqn:basic-estimate0} follows using standard results such as \cite[Lemma 2.1]{ChakrabartiShaw:11} from
\cite[Thm.\ 2.2]{Shaw:10} in the case $\dim X=3$ and 
 \cite[Thm.\ 1.1, Cor.\ 6.3]{ChakrabartiShaw:11} in higher dimensions\footnote{The authors thank Mei-Chi Shaw for pointing us towards these references.}. 
On the other hand, there is some numerical constant $c_0$ independent of $A_i$ such that
\begin{align*}
\Vert \dbar_{A_i}\phi\Vert&\geq \Vert \dbar_{A_\infty}\phi\Vert-c_0\sup|a_i|\Vert\phi\Vert\ , \\
\Vert \dbar^\ast_{A_i}\phi\Vert&\geq \Vert \dbar^\ast_{A_\infty}\phi\Vert-c_0\sup|a_i|\Vert\phi\Vert\ .
\end{align*}
Since $\sup|a_i|$ is arbitrarily small on $U$ for sufficiently large $i$, the estimate \eqref{eqn:basic-estimate0} for $A_\infty$ can be parlayed into one for $A_i$. This proves \eqref{eqn:basic-estimate}.

Given \eqref{eqn:basic-estimate}, it follows as in  \cite[Lemma 3.2]{Shaw:85}
that we can find $u_i$ satisfying (1) and (2).  Let $\varphi_i=\mathrm{Id_E}+u_i$. Then by (1),
$$
\dbar_{A_i}\varphi_i=\dbar_{A_i}\mathrm{Id}_E+\dbar_{A_i}u_i=(\dbar_{A_\infty}+a_i)\mathrm{Id}_E-a_i=0\ .
$$
We thus have produced a holomorphic map $\varphi_i:\Ecal_\infty\to \Ecal_i$ on $U$. Using interior elliptic estimates for $\dbar$, along with (2),
 it follows that for sufficiently large $i$,   $\sup |u_i|$ is arbitrarily  small on a subannular region,
   so that in particular $\det \varphi_i\neq 0$ on $U':=U(\sigma/2, \varepsilon'; \delta/2)$, say, for all sufficiently large $i$. 
   Since $\Ecal_\infty$ is holomorphically trivial and  since $\Ecal_i$ is reflexive, we may use a realisation of $\mathcal{E}_i$ on the interior $\widehat U:= B^{3}_\sigma(0)\times \Delta_\delta^{n-3}$ of $U$ of the form stated in \cite[Chap.\ V, Prop.\ 4.13]{Kobayashi:87} as well as a classical Hartogs-type theorem for holomorphic functions on annular regions to see that $\varphi_i$ extends as a map of sheaves to $\widehat U$. 
   
   Let $D_i$ be the zero locus of $\det \varphi_i$ in $\widehat U$. Notice that $x_i\in D_i$, for if not, $\Ecal_\infty$ and $\Ecal_i$ would be isomorphic in a neighbourhood of $x_i$  by reflexivity, and hence $\Ecal_i$ would be locally free at $x_i$, thus contradicting the assumption that $x_i\in S(A_i)$. Then, all the  divisors $D_i$ have to intersect the annular region $U'$, in particular those with index $i$ sufficiently large. This contradicts our choice of the set $U'$ and completes the proof. 
\end{proof}

\medskip\noindent{\bf Step 5.} Finally, we explain how to assign multiplicities to each irreducible component of $Z_\infty$ to obtain the cycle $\Ccal_\infty$. Since $Z''_\infty$ is the extension of $\widetilde Z$,  the irreducible components of $Z''_\infty$  have assigned multiplicities from Theorem \ref{thm:tian}, giving a cycle $\Ccal''_\infty$. Since we have chosen  $\Ccal_j\to \Ccal'_\infty$ as cycles, each component $Z\subset Z'_\infty$ carries a multiplicity $m'_Z$. It may occur that  there is additional bubbling along $Z$. Let $z\in Z$ be a point such that $B_{2\sigma}(z)\subset X\backslash S(A_\infty)$ intersects $Z_\infty$ only in the smooth locus of $Z$, and let $\widehat A_j$ be the extended connections  from Step 3.  Let $\Omega$ be smooth $(n-4)$-form compactly supported on $B_{\sigma}(z)$. Then there is an integer $m''_Z$ such that
\begin{equation} \label{eqn:excess}
\lim_{j\to \infty} \int_{B_{\sigma}(z)} \left( \ch_2(A_\infty)-\ch_2(\widehat A_j)\right)\wedge\Omega=m''_Z\cdot\int_{Z\cap {B_{\sigma}(z
)}}\Omega\ .
\end{equation}
It follows that we should assign the multiplicity of the component $Z$ to be $m_Z=m'_Z+m''_Z$, and we write $\Ccal'_\infty$ for the cycle whose summands are the irreducible components of $Z'_\infty$ with multiplicities defined in this way.
Finally, we are in a position to prove the following
\begin{lemma}
Define $\Ccal_\infty$ to be the cycle $\Ccal'_\infty+\Ccal''_\infty$. Then with this definition eq.\ \eqref{eqn:currents-converge} holds. 
\end{lemma}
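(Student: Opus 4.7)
The plan is to establish the mass convergence locally on a finite cover of $X$ adapted to the stratification in which the codimension-$2$ locus $Z_\infty$ and the codimension-$\geq 3$ locus $\Sigma := S(A_\infty) \cup \sing(Z_\infty)$ are separated from the open region where smooth convergence holds. A partition of unity will then combine the three regional estimates.

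On the open complement $U := X \setminus (Z_\infty \cup S(A_\infty))$, the Hausdorff convergence $S(A_i) \to S'_\infty \subset Z_\infty \cup S(A_\infty)$ from Lemma~\ref{lem:sing-limit}, together with the cycle convergence $|\mathcal{C}_i| \to Z'_\infty$, show that every compact $K \subset U$ is eventually disjoint from $|\mathcal{C}_i| \cup S(A_i)$; combined with the smooth convergence $A_i \to A_\infty$ from Step~2, this gives $T_i \to T_\infty$ in $C^\infty(K)$, and hence in mass. Near a smooth point $z \in Z$ of an irreducible component $Z \subset Z_\infty$ disjoint from $\Sigma$ and from every other component, Theorem~\ref{thm:tao-tian} yields smooth extensions $\widehat A_i$ of $A_i$ over a ball $B = B_\sigma(z)$ as in Step~3. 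As currents on $B$, $\ch_2(A_i)$ coincides with the smooth form $\ch_2(\widehat A_i)$, so Tian's theorem~\ref{thm:tian} applied to the sequence $\widehat A_i \to A_\infty$ together with the defining equation \eqref{eqn:excess} supplies the convergence $\ch_2(\widehat A_i) \to \ch_2(A_\infty) - m''_Z [Z \cap B]$. Combining this with the convergence of cycles $\mathcal{C}_i|_B \to m'_Z [Z \cap B]$ from Theorem~\ref{thm:cycle space} produces $T_i|_B \to T_\infty|_B$ in mass, the limiting multiplicity being exactly $m_Z = m'_Z + m''_Z$. The case of a smooth point of $Z''_\infty \setminus Z'_\infty$ is identical, with $m'_Z = 0$ and $\mathcal{C}_i|_B = 0$ for $i$ large.

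The main technical obstacle is the uniform smallness of $\|T_i - T_\infty\|$ on an $\varepsilon$-neighbourhood $V_\varepsilon$ of $\Sigma$. For the cycle contributions, Lemma~\ref{lem:bounded-admissible} bounds the degrees of the $\mathcal{C}_i$ uniformly, and the cycle-space convergence $\mathcal{C}_i \to \mathcal{C}'_\infty$ together with continuity of mass in the cycle-space topology upgrades to a uniform estimate $\|\mathcal{C}_i\|_{V_\varepsilon} + \|\mathcal{C}_\infty\|_{V_\varepsilon} \to 0$ as $\varepsilon \to 0$, since these cycles have complex codimension~$2$ whereas $\Sigma$ has complex codimension~$\geq 3$. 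For the connection parts, the pointwise bound $|\ch_2(A)| \leq C\,|F_A|^2$ reduces the estimate to controlling $\int_{V_\varepsilon} |F_{A_i}|^2\,dvol$; the weak convergence \eqref{eqn:measures} of Yang-Mills energy densities, combined with the fact that the defect measure $\nu$ is supported on the codimension-$2$ set $Z_\infty$ and hence gives $\mu_\infty(\Sigma) = 0$, yields uniform smallness on $V_\varepsilon$ for $i$ large and $\varepsilon$ small. Patching the three regional estimates via a partition of unity, and letting $i \to \infty$ before $\varepsilon \to 0$, produces the mass-norm convergence \eqref{eqn:currents-converge}.
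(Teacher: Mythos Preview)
Your three-region decomposition (smooth locus, smooth points of $Z_\infty$, neighbourhood of the codimension-$\geq 3$ set $\Sigma$) matches the paper's strategy, and the treatment of the first two regions is essentially the same. The difference lies in how you control the contribution near $\Sigma$.

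The paper does \emph{not} argue via weak convergence of the energy measures. Instead it invokes the monotonicity formula for admissible HYM connections directly: there is a constant $\Lambda$, independent of $i$, with
\[
\int_{B_\sigma(x)}|F_{A_i}|^2\,\frac{\omega^n}{n!}\le \Lambda\,\sigma^{2n-4}
\]
for all $x$ and small $\sigma$. One then covers $S'_\infty\cup S(A_\infty)$ by finitely many balls $B_{r_j}(x_j)$ with $\sum_j r_j^{2n-4}\le \varepsilon\cdot 2^{4-2n}$ (possible since this set has zero $(2n-4)$-Hausdorff measure), and the energy bound on each ball gives the required smallness of $\int_{U_2}|\ch_2(A_i)\wedge\Omega|$, uniformly in $i$. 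This is more direct and avoids any appeal to the structure of the limiting measure.

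Your route has a gap at exactly this point. The decomposition \eqref{eqn:measures} of the limit measure into a smooth part plus a defect supported on the bubbling locus is established in the paper only for sequences of \emph{smooth} HYM connections (Lemma~\ref{lem:singset1}); here the $A_i$ are ideal connections, and no such decomposition has been proved for them. To justify $\mu_\infty(\Sigma)=0$ in the present setting you would in any case have to fall back on monotonicity (it gives $\mu_\infty(B_\sigma(x))\le\Lambda\sigma^{2n-4}$, hence $\mu_\infty(\Sigma)=0$ by the same covering). So the paper's argument is logically prior to, and cleaner than, the measure-theoretic shortcut.

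One further small point: near smooth points of $Z_\infty$ you must also stay away from $S'_\infty$, not only from your $\Sigma=S(A_\infty)\cup\sing(Z_\infty)$, in order to apply the removable-singularity construction of Step~3. This is harmless since $S'_\infty$ has codimension $\geq 3$ and hence misses generic smooth points of $Z_\infty$, but it should be said.
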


\begin{proof}
Choose a smooth $(2n-4)$-form $\Omega$, and fix $\varepsilon>0$. First, we choose $r_0>0$ such that 
\begin{equation*} \label{eqn:cycle-estimate-1}
0\leq \int_{\Ncal_{r_0}(S'_\infty\cup S(A_\infty))} \Ccal_\infty\wedge\frac{\omega^{m-2}}{(m-2)!}\leq \varepsilon/2\ .
\end{equation*}
Since the $\Ccal_j$ converge to a subcycle of $\Ccal_\infty$, for $j$ sufficiently large it follows that
\begin{equation} \label{eqn:cycle-estimate-2}
0\leq \int_{\Ncal_{r_0}(S'_\infty\cup S(A_\infty))} \Ccal_j\wedge\frac{\omega^{m-2}}{(m-2)!}\leq \varepsilon\ .
\end{equation}
Next, since $S'_\infty\cup S(A_\infty)$ is a subvariety of codimension at least $3$, in particular it has zero Hausdorff $(2n-4)$-measure. Hence, we may find finitely many $x_i\in S'_\infty\cup S(A_\infty)$, $i=1, \ldots, M$, and $0< r_i\leq r_0/2$, such that 
\begin{align}
S'_\infty\cup S(A_\infty)&\subset \bigcup_{i=1}^M B_{r_i}(x_i)=: U_1\ ;\\
\sum_{i=1}^M r_i^{2n-4}&\leq \varepsilon \cdot 2^{4-2n}\ .
\end{align}
Set $U_2=\cup_{i=1}^M B_{2r_i}(x_i)$, and $r=\min\{r_1, \ldots, r_M\}$.  Note that if $y\not\in U_2$, then
$
B_r(y)\cap U_1=\emptyset
$.
Find finitely many $y_j\in \overline U_2^c\cap Z'_\infty$, $j=1,\ldots, N$, and $0< s_j\leq r$, such that 
$$
U_2^c\cap Z'_\infty\subset \bigcup_{j=1}^N B_{s_j}(y_j)=: V\ .
$$
Taking  a partition of unity subordinate to the cover 
$$\{B_{2r_i}(x_i), B_{s_j}(y_j), \Ncal_r((\overline U_2\cup\overline V)^c)\}\ ,$$
it suffices to consider the cases where $\Omega$ is compactly supported in each of the elements of the cover. 
From the monotonicity formula referred to previously,
 there is a constant $\Lambda$ independant of $i$, such that 
$$
\int_{B_\sigma(x)} |F_{A_i}|^2\frac{\omega^m}{m!}\leq \Lambda\cdot \sigma^{2n-4}\ ,
$$
for $0<\sigma\leq \sigma_0$.  It follows that there is a constant $\Lambda_1$ such that
\begin{equation} \label{eqn:c-estimate}
\left| \int_{B_\sigma(x)} \ch_2(A_i)\wedge \Omega\, \right|\leq \Lambda_1\sup|\Omega|\cdot \sigma^{2n-4}\ .
\end{equation}
The constant $\sigma_0$ only depends on the geometry of $X$, and so we may assume without loss of generality that $r_0\leq\sigma_0$. By Fatou's lemma, \eqref{eqn:c-estimate} also holds for $A_\infty$.  It follows from \eqref{eqn:c-estimate} that
\begin{align}
\left| \int_{U_2} \ch_2(A_i)\wedge \Omega\, \right|&\leq \sum_{i=1}^M \left| \int_{B_{2r_i}(x_i)} \ch_2(A_i)\wedge \Omega\, \right|\leq 2^{2n-4}\Lambda_1\sup|\Omega| \sum_{i=1}^M r_i^{2n-4} \notag\\
&\leq \Lambda_1\sup|\Omega|\cdot\varepsilon\ .\notag
\end{align}
The same holds for $A_\infty$ in place of $A_i$. Since $U_2\subset \Ncal_{r_0}(S'_\infty\cup S(A_\infty))$, and using \eqref{eqn:cycle-estimate-2}, we have for sufficiently large $i$,
\begin{equation}
\left|
\int_{U_2} (\ch_2(A_i)-\Ccal_i)\wedge \Omega - \int_{U_2} (\ch_2(A_\infty)-\Ccal_\infty)\wedge \Omega\, 
\right|
\leq 2\varepsilon\cdot\sup|\Omega| 
(\Lambda_1+1)\ . \label{eqn:u1}
\end{equation}
Next, suppose $\Omega$ is compactly supported in $B_{s_j}(y_j)$. For $i$ sufficiently large we have $S(A_i)\subset U_1$, and hence $S(A_i)\cap B_{s_j}(y_j)=\emptyset$.  As in Step 3, we have connections $\widehat A_i$ on $B_{s_j}(y_j)$ that converge (up to gauge) to $A_\infty$ smoothly on compact subsets in the complement of $Z_\infty\cap B_{s_j}(y_j)$. 
By definition of the excess multiplicity in \eqref{eqn:excess}, we have
$$
\lim_{i\to \infty}\int_{B_{s_j}(y_j)}(\ch_2(A_\infty)-\ch_2(\widehat A_i)+\Ccal_i)\wedge \Omega
=\int_ {B_{s_j}(y_j)} \Ccal'_\infty\wedge\Omega\  .
$$
Finally, on $\Ncal_{r}(\overline U_2\cap V)^c$, the $A_i$ are smooth HYM connections and Theorem \ref{thm:tian} applies directly to show that 
$$
\lim_{i\to \infty}\int_{\Ncal_{r}(\overline U_2\cap V)^c}(\ch_2(A_\infty)-\ch_2(A_i))\wedge \Omega
=\int_{\Ncal_{r}(\overline U_2\cap V)^c} \Ccal''_\infty\wedge\Omega\ .
$$
Since $\varepsilon$ was arbitrary, the lemma follows.
\end{proof}

\subsection{Diagonalisation} \label{subsect:diagonalisation}
The goal of this section is to prove  the crucial result.
\begin{prop} \label{prop:diagonal}
Suppose that we have a bounded sequence of  ideal HYM connections that converges  $(A_i, \Ccal_i, S(A_i))\to (A_\infty, \Ccal_\infty, S(A_\infty))$ as in the previous section. 
If each $(A_i, \Ccal_i, S(A_i))$ is an Uhlenbeck limit of smooth HYM connections on $(E,h)\to X$, then so is $(A_\infty, \Ccal_\infty, S(A_\infty))$.
\end{prop}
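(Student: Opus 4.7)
The plan is a standard diagonal extraction argument. For each $i$, the hypothesis provides a sequence $\{A_{i,k}\}_{k \in \NBbb}$ of smooth HYM connections on $(E,h)$ whose Uhlenbeck limit, in the sense of Theorems \ref{thm:uhlenbeck} and \ref{thm:tian}, equals the ideal connection $(A_i, \Ccal_i, S(A_i))$. My goal is to select indices $k(i)$ so that the diagonal sequence $\{A_{i,k(i)}\}_{i \in \NBbb}$ of smooth HYM connections has Uhlenbeck limit $(A_\infty, \Ccal_\infty, S(A_\infty))$. Since all $A_{i,k(i)}$ live on the fixed hermitian bundle $(E,h)$, the topological identity for HYM connections yields a uniform $L^2$-bound on the curvatures, so Theorems \ref{thm:uhlenbeck} and \ref{thm:tian} will apply to any such diagonal sequence.

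First, I would fix a countable exhaustion $K_1 \subset K_2 \subset \cdots$ of $X \setminus (|\Ccal_\infty| \cup S(A_\infty))$ by compact subsets with smooth boundary, and a countable family $\{\Omega_m\}_{m \in \NBbb}$ of smooth $(2n-4)$-forms on $X$ that is dense in the $C^0$-topology. The given convergence of Theorem \ref{thm:ideal-convergence} guarantees that for each $\ell$ there is an index $i(\ell)$ such that for all $i \ge i(\ell)$, $K_\ell$ is disjoint from $|\Ccal_i| \cup S(A_i)$, the connection $A_i$ is smooth on $K_\ell$ (after gauge), $\|A_i - A_\infty\|_{C^\ell(K_\ell)} < 1/\ell$, and the mass-norm relation \eqref{eqn:currents-converge} yields $|\int_X (\ch_2(A_i) - \Ccal_i - \ch_2(A_\infty) + \Ccal_\infty) \wedge \Omega_m | < 1/\ell$ for all $m \le \ell$.

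Next, for each fixed $i$, the hypothesis that $(A_i, \Ccal_i, S(A_i))$ is an Uhlenbeck limit of $\{A_{i,k}\}_k$ means that, up to a unitary gauge, $A_{i,k} \to A_i$ in $C^\infty_{\mathrm{loc}}(X \setminus (|\Ccal_i| \cup S(A_i)))$, and by Theorem \ref{thm:tian} the currents $\ch_2(A_{i,k})$ converge to $\ch_2(A_i) - \Ccal_i$. This allows inductive choice of $k(i)$ and of a unitary gauge transformation $g_i$ on $K_i$ such that
\begin{enumerate}
\item $\|g_i(A_{i,k(i)}) - A_i\|_{C^i(K_i)} < 1/i$;
\item $\bigl|\int_X (\ch_2(A_{i,k(i)}) - \ch_2(A_i) + \Ccal_i) \wedge \Omega_m\bigr| < 1/i$ for all $m \le i$.
\end{enumerate}
Applying Theorems \ref{thm:uhlenbeck} and \ref{thm:tian} to $\{A_{i,k(i)}\}$ and passing to a subsequence, we obtain an Uhlenbeck limit $(A', \Ccal', S')$. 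Condition (1) together with the triangle inequality forces $A'$ to agree, up to gauge, with $A_\infty$ in $C^\infty_{\mathrm{loc}}$ on each $K_\ell$, hence on all of $X \setminus (|\Ccal_\infty| \cup S(A_\infty))$; by Lemmas \ref{lem:singset1}, \ref{lem:singset2} and the Bando--Siu extension (Theorem \ref{thm:bando-siu}), this determines $S' = S(A_\infty)$. Condition (2) combined with the mass-norm statement \eqref{eqn:currents-converge} yields $\ch_2(A') - \Ccal' = \ch_2(A_\infty) - \Ccal_\infty$ as currents, and therefore $\Ccal' = \Ccal_\infty$.

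The main technical obstacle is the consistent handling of the unitary gauge transformations, since the convergences $A_{i,k} \to A_i$ and $A_i \to A_\infty$ are only realised modulo gauges that depend on $i$, $k$, and the compact set considered. This is resolved by exploiting the fact that ideal connections are defined only up to gauge equivalence (Definition \ref{def:ideal-connection}), so the composed gauges can be absorbed without changing the equivalence class of the limit; the invariants $\Ccal$ and $S(A)$ are manifestly gauge-invariant, and $\ch_2$ descends to a well-defined current on the quotient.
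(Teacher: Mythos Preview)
Your argument is correct, and it takes a genuinely different route from the paper's proof.  Both begin with the same diagonal extraction and both identify the connection component by passing to reflexive extensions via Bando--Siu, exactly as you do.  The divergence is in how the cycle $\Ccal'$ of the diagonal Uhlenbeck limit is shown to equal $\Ccal_\infty$.  The paper verifies this componentwise: for each irreducible component $Z$ of $Z_\infty$ it computes the multiplicity as the degree of a gauge transformation on the boundary of a transverse slice (via the Chern--Simons machinery of Lemma~\ref{lem:Chern-Simons}), and then checks by a careful comparison of the gauges $s_i g_i$ and $\widetilde g_i$ that these degrees agree.  Your approach bypasses all of this: you observe that Tian's theorem gives current convergence $\ch_2(A_{i,k(i)}) \to \ch_2(A') - \Ccal'$, while condition~(2) together with the mass-norm convergence \eqref{eqn:currents-converge} gives $\ch_2(A_{i,k(i)}) \to \ch_2(A_\infty) - \Ccal_\infty$; since $A'$ and $A_\infty$ are gauge equivalent off a measure-zero set their $\ch_2$-currents coincide, and $\Ccal' = \Ccal_\infty$ follows immediately.

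Your route is more economical and conceptually cleaner; the paper's approach is more hands-on and makes explicit how the multiplicities decompose as $m_Z = m_Z' + m_Z''$ into the contribution from cycle convergence and the excess bubbling, which is of independent interest but not strictly needed for the proposition.  One small point: your final paragraph on gauges is a bit informal; the clean justification that $A'$ and $A_\infty$ are gauge equivalent is precisely the reflexive-extension argument you already invoke (the holomorphic bundles agree off a codimension-two set, hence so do their Bando--Siu extensions, hence so do their admissible HYM connections up to gauge), not a direct limit of composed gauges.
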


\begin{proof}
We have sequence of gauge transformations $s_i$ such that $s_i(A_i)\to A_\infty$ in $C^\infty_{loc}$ away from $Z_\infty\cup S(A_\infty)$. 
For each $i$, let $\{A_{i,k}\}$ be a sequence of smooth HYM connections and gauge transformations $g_{i,k}$ on $X\backslash Z_i\cup S(A_i)$ and Uhlenbeck limits $g_{i,k}(A_{i,k})\to A_i$ as $k\to \infty$.  By a diagonalisation argument, and using the fact that $Z_\infty\cup S(A_\infty)$ is a holomorphic subvariety, we can find a subsequence $k_i\to \infty$ such that $s_ig_i(A_{i,k_i})\to A_\infty$ in $C^\infty_{loc}$ on $X\backslash Z_\infty\cup S(A_\infty)$, where $g_i=g_{i, k_i}$. Applying Theorem \ref{thm:uhlenbeck}, after passing to a further subsequence which we continue to denote by $i$, we may assume there is an admissible connection $B_\infty$ on $(E,h)\to X$ and gauge transformations $\widetilde g_i$ such that $\widetilde g_i(A_{i,k_i})\to B_\infty$ up to gauge in $C^\infty_{loc}$ away from a singular set $\widetilde Z_\infty\cup S(B_\infty)$. By Theorem \ref{thm:bando-siu}, the holomorphic bundles $(E, \dbar_{A_\infty})$ and $(E, \dbar_{B_\infty})$ extend as reflexive sheaves $\Ecal_{A_\infty}$ and $\Ecal_{B_\infty}$ on $X$. Since $(Z_\infty, S(A_\infty))$ and $(\widetilde Z_\infty, S(B_\infty))$ have codimension at least $2$, and $\Ecal_{A_\infty}\cong \Ecal_{B_\infty}$ on the complement, then in fact $\Ecal\cong \Ecal_{A_\infty}\cong \Ecal_{B_\infty}$ everywhere on $X$. Furthermore, by uniqueness of the Bando-Siu HYM connection, $A_\infty=B_\infty$ up to gauge, and so $S(A_\infty)=S(B_\infty)$. By further modifying $\tilde g_i$, we may assume $A_\infty=B_\infty$. 

 It remains to consider the blow-up loci $Z_\infty$ and $\widetilde Z_\infty$ and multiplicities. Since up to gauge,  $A_{i,k_i}\to A_\infty$ on $X\backslash Z_\infty\cup S(A_\infty)$, it follows the blow-locus for $A_{i,k_i}$ is contained in $Z_\infty$. This means that if $\widetilde Z_j$ is an irreducible component of $\widetilde Z_\infty$, then $\widetilde Z_j$ must be equal to some component $Z_j\subset Z_\infty$. For simplicity, call this component $Z$.

Then to prove equality of sets we only need to know that $Z_\infty$ and $\widetilde Z_\infty$ have the same number of irreducible components. This will follow immediately from the fact that the cohomology classes of $[\Ccal_\infty]$ and $[\widetilde \Ccal_\infty]$ agree (and are equal to $[\ch_2(A_\infty)]-\ch_2(E)=[\ch_2(B_\infty)]-\ch_2(E)$), if we show that the multiplicity associated to $Z$ in the cycle $\Ccal_\infty$  agrees with its the multiplicity in the cycle $\widetilde \Ccal_\infty$.

In other words, to prove that $\Ccal_\infty$ is equal to the cycle $\widetilde \Ccal_\infty$ associated to $\widetilde Z_\infty$, it suffices to prove that $\widetilde m_{Z}=m_{Z}$.   
Let $\Sigma$ be a generic slice to $Z$. Then by Lemma \ref{lem:slicing2}, 
$$
\widetilde m_Z=\lim_{i\to \infty} \frac{1}{8\pi^2}\int_\Sigma \left( \tr (F_{A_{i,k_i}}\wedge F_{A_{i,k_i}})-\tr( F_{A_\infty}\wedge F_{A_\infty})\right)\ .
$$
By the discussion following Lemma \ref{lem:Chern-Simons}, we have $\widetilde m_Z=\deg \widetilde h_i$, where $\widetilde{h}_{i}=h\circ \widetilde{g}_{i}$, and $h$ is the gauge transformation on $\Sigma \backslash \{z\}$ defined there.
Write $a_{i}=A_{\infty }-\widetilde{g}_{i}A_{i,k_{i}}$ and $b_{i}=A_{\infty
}-s_{i}g_{i}A_{i,k_{i}}$ so that 
$s_{i}g_{i}A_{i,k_{i}}-\widetilde{g}_{i}A_{i,k_{i}}=a_{i}-b_{i}$. Then 
 by Lemma \ref{lem:Chern-Simons} we have 
\begin{align*}
\deg \widetilde{g}_{i}-\deg(s_{i}g_{i}) &=\int_{\partial \Sigma
}\CS(s_{i}g_{i}(A_{i,k_{i}}),A_{i,k_{i}})-
\CS(\widetilde{g}_{i}(A_{i,k_{i}}),A_{i,k_{i}}) \\
&=\int_{\partial \Sigma }\CS(s_{i}g_{i}(A_{i,k_{i}}),
\widetilde{g}_{i}(A_{i,k_{i}})) \\
&=\frac{1}{8\pi^2}\int_{\partial \Sigma }\tr\bigl\{(a_{i}-b_{i})\wedge D_{\widetilde{g}_{i}(A_{i,k_{i}})}(a_{i}-b_{i})\\
&\qquad\qquad+\frac{2}{3}(a_{i}-b_{i})^{3} +2(a_{i}-b_{i})
\wedge F_{\widetilde{g}_{i}(A_{i,k_{i}})}\bigr\}\ . 
\end{align*}
 
 Now $s_i g_i (A_{i, k_i})$ and $\widetilde g_i(A_{i, k_i})$ converge smoothly on $\partial \Sigma$ to the same connection $A_{\infty}$, so we have $a_{i},b_{i}\rightarrow 0$, and therefore the right hand side above
converges to zero. Hence,  for sufficiently large $i$, $|\deg s_{i}g_{i}-\deg 
\widetilde{g}_{i}|<1$. Since the degree is an integer this implies that for all such $i$ we have $\deg(s_i g_i) =\deg \widetilde g_i$ and therefore $\deg \widetilde{h}_{i}=\deg hs_{i}g_{i}$. 

We claim that $\deg(hs_{i}g_{i})=m_{Z}$. Recall that $\mathcal{C}_{\infty }$
breaks up into two subcycles $\mathcal{C}_{\infty }^\prime$ and $
\mathcal{C}_{\infty }^{\prime\prime}$, and so there are two cases according to whether $
Z$ is an irreducible component of 
$Z_{\infty }^{\prime }=|\mathcal{C}_{\infty }^{\prime }|$ or $Z_{\infty }^{\prime \prime }=|\mathcal{C}_{\infty }^{\prime\prime}|$. In the second case the connections $A_{i}$ bubble at the
point $z\in Z$ through which the slice $\Sigma $ is taken. Since $\Sigma $
may be chosen to be away from $Z_{\infty }^{\prime }$; and the supports $
Z_{i}=|\mathcal{C}_{i}|$ are contained in a neighbourhood of $Z_{\infty
}^{\prime }$ for large enough $i$, then since $Z_{i}$ is the bubbling set
for the sequence of connection $A_{i,k}$ converge without bubbling to $A_{i}$
on $\Sigma $ as $k\rightarrow \infty .$ This furthermore implies that if we
write $E_{i}$ for the topological bundle underlying the reflexive extension $
\mathcal{E}_{i}$, there are isomorphisms $E|_{\Sigma }\simeq E_{i}|_{\Sigma }$.
By the argument following Lemma \ref{lem:Chern-Simons}, we see that $\deg g_{i}$ calculates
the bubbling multiplicity for $A_{i,k}$ along $Z$ for sufficiently large $i$
and $k$, and this number is zero by construction, so $\deg g_{i}=0$ and $
\deg(hs_{i}g_{i})=\deg(hs_{i})$. Again by the discussion after Lemma \ref{lem:Chern-Simons} , $
\deg(hs_{i})=m_{Z}$. 

Now consider the case that $Z$ is an irreducible component of $Z_{\infty
}^{\prime }$. In this case $m_{Z}=m_{Z}^{\prime }+m_{Z}^{\prime
\prime }$ where $m_{Z}^{\prime }$ is the multiplicity assigned $Z$ by
the convergence of the cycles $\mathcal{C}_{i}$, and $m_{Z}^{\prime \prime
}$ is the excess multiplicity defined in Step 5 eq.\ \eqref{eqn:excess}. Notice that by Lemma \ref{lem:Chern-Simons} 
we have $\deg(hs_{i}g_{i})=\deg(hg_{i})+\deg s_{i}$. We claim that $\deg
(hg_{i})=m_{Z}^{\prime }$ and $\deg s_{i}=m_{Z}^{\prime \prime }$. The
second equality follows again from the argument after Lemma \ref{lem:Chern-Simons}  (here we
choose a single frame for the trivial bundle on $\Sigma ,$ so no additional
gauge transformation $h$ is necessary). To prove the second equality, we
note that by assumption, for each $i$ sufficiently large $Z_{i}$ has an
irreducible component $Z_{i,l}$ converging to $Z$ in the Hausdorff topology
as $i\rightarrow \infty.$ Assume we have chosen $\Sigma $ so that it is
also a transverse slice to all $Z_{i,l}$ through points $z_{i}$ for $i$
sufficiently large. Since $A_{i,k}$ bubbles along $Z_{i,l}$ with Uhlenbeck
limit $A_{i}$, there are gauge transformations $h_{i}$ of $E$ defined on $
\Sigma \backslash \{z_{i}\}$ and isomorphisms $\phi _{i}:E|_{\Sigma
\backslash \{z_{i}\}}\rightarrow E_{i}|_{\Sigma \backslash \{z_{i}\}}$ so
that $h_{i}\phi _{i}^{\ast }D_{E_{i}}=D_{E}$ (using the same notation as in
the argument following Lemma \ref{lem:Chern-Simons} ), and such that $\deg h_{i}$ calculates
the associated multiplicity. Then for sufficiently large $i$, $\deg
h_{i}g_{i}=m_{Z}^{\prime }$. We claim that $\deg h_{i}=\deg h$ for $i$
sufficiently large. We have
\begin{eqnarray*}
\deg h_{i}-\deg h &=&\int_{\partial \Sigma }\CS(h_{i}\phi _{i}^{\ast
}D_{E_{i}},\phi _{\infty }^{\ast }D_{E_{\infty }})-\CS(h\phi _{\infty }^{\ast
}D_{E_{\infty }},\phi _{\infty }^{\ast }D_{E_{\infty }}) \\
&=&\int_{\partial \Sigma }\CS(D_{E},\phi _{i}^{\ast }D_{E_{i}})-\CS(D_{E},\phi
_{\infty }^{\ast }D_{E_{\infty }}) \\
&&+\int_{\partial \Sigma }\CS(\phi _{i}^{\ast }D_{E_{i}},\phi _{\infty }^{\ast
}D_{E_{\infty }})\ .
\end{eqnarray*}
Since $A_{i}\rightarrow A_{\infty }$ on $\partial \Sigma $, $\phi _{i}^{\ast
}D_{E_{i}}\rightarrow \phi _{\infty }^{\ast }D_{E_{\infty }}$ on $\partial
\Sigma $ and therefore for large $i$ this difference is zero. This implies $
\deg(hg_{i})=m_{Z}^{\prime }$.  
\end{proof}

\subsection{The  compactification} \label{sec:compactification}

We are now in the position to define the desired compactification of the moduli set $M_\HYM^\ast$.

\begin{defi} \label{def:convergence}
We say that a sequence $[(A_{i},\mathcal{C}_{i},S(A_{i})]\in 
\widehat{M}_{\HYM}$ converges to $[(A_{\infty },\mathcal{C}_{\infty },S(A_{\infty
})]$ in $\widehat{M}_{\HYM}$ if the conclusion of Theorem \ref{thm:ideal-convergence} holds. 
\end{defi}

 We define a topology on $\widehat M_{\HYM}$ by giving a basis $\{\Ucal_{\vec\varepsilon}([A,\Ccal, S(A)])\}$ of open neighbourhoods of an ideal HYM connection depending on a $4$-tuple $\vec\varepsilon=(\varepsilon_1, \varepsilon_2, \varepsilon_3, \varepsilon_4)$, $\varepsilon_i>0$.
The set $\Ucal_{\vec\varepsilon}([A,\Ccal, S(A)])$ consists of gauge equivalence classes of all ideal HYM connections $(A_1, \Ccal_1, S(A_1))$ satisfying:
\begin{enumerate}
\item There exists a subcycle $\widehat \Ccal_1$ of $\Ccal$ such that $\widehat\Ccal_1$ and $\Ccal_1$ are within $\varepsilon_1$ with respect to the mass norm.
 \item  $S(A_1)$ is contained in the $\varepsilon_2$-neighborhood of $S(A)\cup|\Ccal| \subset X$;  
\item On the complement of a $2\varepsilon_2$-neighbourhood of $S(A)\cup |\Ccal| \subset X$, there is a gauge transformation $g$ such that $g(A_1)$ and $A$ are within $\varepsilon_3$ with respect to the $C^\infty$-topology;
\item the currents $\ch_2(A_1)-\Ccal_1$ and $\ch_2(A)-\Ccal$ are within $\varepsilon_4$ with respect to the mass norm.
\end{enumerate}
 
\begin{rem}
By definition, a sequence $[(A_{i},\mathcal{C}_{i},S(A_{i})]$ converges to an ideal connection $[(A_{\infty },\mathcal{C}_{\infty },S(A_{\infty })]$ in this topology if and
only if it converges in the sense of Definition \ref{def:convergence}. Namely a sequence converges
the sense of Definition \ref{def:convergence} to an ideal connection $[(A_{\infty },\mathcal{C}_{\infty
},S(A_{\infty })]$ if and only if for every basic open set
 $\mathcal{U}_{\varepsilon }([(A_{\infty },\mathcal{C}_{\infty },S(A_{\infty })])$, there
exists $I\gg0$ such that $[(A_{i},\mathcal{C}_{i},S(A_{i})]\in
 \mathcal{U}_{\varepsilon }([(A_{\infty },\mathcal{C}_{\infty },S(A_{\infty })])$ for
all $i>I$. 
\end{rem}

First of all,  we have the following.
\begin{thm}\label{thm:ideal_connections_compact}
The set $\widehat{M}_{\HYM}$ defined in \eqref{eqn:ideal-moduli} and endowed with the topology described above is a first countable Hausdorff space and moreover sequentially compact. 
\end{thm}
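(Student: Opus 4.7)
The plan is to dispense with first countability and sequential compactness quickly, and reserve the substance of the argument for the Hausdorff property.

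First countability is immediate from the explicit description of the basis: at each point $[(A,\mathcal{C},S(A))]\in \widehat M_\HYM$, the neighborhoods $\mathcal{U}_{\vec\varepsilon}([A,\mathcal{C},S(A)])$ corresponding to rational tuples $\vec\varepsilon\in(\mathbb{Q}_{>0})^4$ form a countable fundamental system, since shrinking any of the four parameters shrinks the basic open set. Sequential compactness is essentially a restatement of Theorem \ref{thm:ideal-convergence}: given a sequence $[(A_i,\mathcal{C}_i,S(A_i))]$, Lemma \ref{lem:bounded-admissible} together with the Bogomolov inequality \eqref{eqn:bogomolov} provides the uniform bounds needed, and the five-step argument culminating in Theorem \ref{thm:ideal-convergence} produces a subsequential limit in $\widehat M_\HYM$ which, by the remark following Definition \ref{def:convergence}, converges in the topology.

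For Hausdorffness, since the space is first countable, it suffices by a standard argument to verify that sequential limits are unique. Suppose $[(A_i,\mathcal{C}_i,S(A_i))]$ converges to both $[(A_\infty,\mathcal{C}_\infty,S(A_\infty))]$ and $[(B_\infty,\mathcal{D}_\infty,S(B_\infty))]$ in the sense of Definition \ref{def:convergence}. Combining condition (3) at both limits, after gauge transformation we have $A_i\to A_\infty$ and $A_i\to B_\infty$ in $C^\infty_{loc}$ on $X\setminus(Z_\infty\cup S(A_\infty)\cup Z'_\infty\cup S(B_\infty))$, where $Z_\infty=|\mathcal{C}_\infty|$ and $Z'_\infty=|\mathcal{D}_\infty|$. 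Applying Theorem \ref{thm:bando-siu}, the two admissible HYM connections extend to reflexive sheaves $\mathcal{E}_A$ and $\mathcal{E}_B$ on $X$; these sheaves are isomorphic on the complement of a codimension $\geq 2$ analytic set, hence isomorphic globally by normality. By the uniqueness of the Bando--Siu admissible HYM metric up to a constant, $A_\infty$ and $B_\infty$ are gauge equivalent, and by Lemma \ref{lem:singset2}, $S(A_\infty)=\sing(\mathcal{E}_A)=\sing(\mathcal{E}_B)=S(B_\infty)$.

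It remains to show $\mathcal{C}_\infty=\mathcal{D}_\infty$. From condition (4) applied at both limits we have convergence in the mass norm
\[
\ch_2(A_i)-\mathcal{C}_i \lra \ch_2(A_\infty)-\mathcal{C}_\infty \quad\text{and}\quad \ch_2(A_i)-\mathcal{C}_i \lra \ch_2(B_\infty)-\mathcal{D}_\infty.
\]
By uniqueness of limits of currents (and since the limit currents are invariant under gauge) we obtain $\mathcal{C}_\infty=\mathcal{D}_\infty$ as $(n-2)$-currents, hence as cycles, proving that the two limits represent the same class in $\widehat M_\HYM$. The main technical obstacle here is that the cycle $\mathcal{C}_\infty$ produced in Theorem \ref{thm:ideal-convergence} is assembled from two pieces (cycle-theoretic limit of the $\mathcal{C}_i$ plus excess bubbling along components of $Z'_\infty$), and one must check that both pieces are correctly detected by the mass-norm limit in (4); this is guaranteed by the explicit identification in Step 5 of the proof of Theorem \ref{thm:ideal-convergence}, specifically the multiplicity formula \eqref{eqn:excess}, which shows that any bubbling contribution is recorded in the current $\ch_2(A_\infty)$.
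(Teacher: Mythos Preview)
Your proof is correct and, for first countability and sequential compactness, follows exactly the paper's (very brief) argument. For the Hausdorff property the paper simply asserts that it ``follows directly from the definition'' without further comment; you instead reduce to uniqueness of sequential limits and then argue via the reflexive extensions and the current identity in condition~(4). This is a legitimate and more explicit route: the key observations---that gauge equivalence on the complement of a codimension~$\geq 2$ set forces isomorphism of the Bando--Siu reflexive extensions, and that $\ch_2(A_\infty)=\ch_2(B_\infty)$ as currents then forces $\mathcal{C}_\infty=\mathcal{D}_\infty$---are exactly what one would need to verify the separation property directly from the basic open sets, so your argument is really just a sequential repackaging of the same content. Two minor remarks: the phrase ``by normality'' should really be ``by reflexivity'' (a reflexive sheaf is determined by its restriction outside codimension~2), and your final paragraph about the ``technical obstacle'' is a digression---once you invoke condition~(4) as part of the \emph{definition} of convergence, uniqueness of mass-norm limits is all you need, and there is no further issue about how $\mathcal{C}_\infty$ was assembled.
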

\begin{proof}
The first statement follows directly from the definition. Compactness follows from
Theorem \ref{thm:ideal-convergence}: Indeed, any sequence of ideal connections has a subsequence
converging to an ideal HYM connection $(A_\infty,\Ccal_\infty, S(A_\infty))$ with respect to the topology defined above 
(regarding point (2) above, note that by Lemma \ref{lem:sing-limit}, the singular sets $S(A_i)$ converge in the Hausdorff sense into $S(A_\infty)\cup|\Ccal_\infty|$). 
\end{proof}
With this understood we make the following definition. 
\begin{defi} \label{def:Uhlenbeck-compactification}
The {\em gauge theoretic compactification}  $\overline M_{\HYM}$ 
is the (sequential) closure $\overline{M^\ast_\HYM}\subset\widehat M_{\HYM}$, that is, the set of all Uhlenbeck limits of sequences in ${M^\ast_\HYM}$.
\end{defi}
\begin{rem}
In a first countable space the notions of closure and sequential closure coincide. Hence, in the preceding definition it does not make a difference which closure we take. 
\end{rem}

\begin{rem}\label{rem:twodimensions}
 If $\dim X=2$, $\overline M_{\HYM}$ coincides with the Donaldson-Uhlenbeck compactification (cf.~\cite[III(iii)]{Donaldson:86} as well as the summary of the construction provided in \cite[p.~444]{Li:93}).
\end{rem}

\begin{rem}\label{rem:sequentially_closed}
By Proposition \ref{prop:diagonal}, $\overline M_{\HYM}$ is sequentially closed; notice that this fact requires an argument and is not automatic. This means precisely that any limit $[(A_\infty,\Ccal_\infty, S(A_\infty))]$ of elements in $\overline M_{\HYM}$ is itself the limit of connections in ${M}_{\HYM}^\ast$. 
\end{rem}

The main result of this section is the following.
\begin{thm}
The space $\overline{M}_{\HYM}$ is Hausdorff and sequentially compact.
\end{thm}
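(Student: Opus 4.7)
The plan is to deduce both properties immediately from what has already been established in the preceding subsection, namely Theorem \ref{thm:ideal_connections_compact} together with the diagonalisation result Proposition \ref{prop:diagonal} (reformulated in Remark \ref{rem:sequentially_closed}).

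For the Hausdorff property, I would simply note that $\overline{M}_{\HYM}$ is, by Definition \ref{def:Uhlenbeck-compactification}, a subset of $\widehat{M}_{\HYM}$ equipped with the subspace topology inherited from the topology described in Section \ref{sec:compactification}. Since Theorem \ref{thm:ideal_connections_compact} asserts that $\widehat{M}_{\HYM}$ is Hausdorff, and since the Hausdorff property is inherited by subspaces, this gives the first assertion with no further work.

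For sequential compactness, let $\{[(A_i,\Ccal_i,S(A_i))]\}_{i\in\NBbb}$ be an arbitrary sequence in $\overline{M}_{\HYM}$. Viewed as a sequence in $\widehat{M}_{\HYM}$, the sequential compactness statement from Theorem \ref{thm:ideal_connections_compact} (which itself relied on Theorem \ref{thm:ideal-convergence} together with Lemma \ref{lem:sing-limit}) produces a subsequence, still denoted by $\{i\}$, converging to some ideal HYM connection $[(A_\infty,\Ccal_\infty,S(A_\infty))] \in \widehat{M}_{\HYM}$ in the sense of Definition \ref{def:convergence}.

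The only point that needs to be invoked is that the limit lies in $\overline{M}_{\HYM}$. This is precisely the content of Remark \ref{rem:sequentially_closed}: by Proposition \ref{prop:diagonal}, each $[(A_i,\Ccal_i,S(A_i))]$ is itself the Uhlenbeck limit of a sequence $\{A_{i,k}\}_{k\in\NBbb}$ of smooth irreducible HYM connections in $M^\ast_{\HYM}$, so a diagonal argument of the type carried out in the proof of Proposition \ref{prop:diagonal} yields indices $k_i\to\infty$ and gauge transformations $g_i$ such that $g_i(A_{i,k_i})\to A_\infty$ in $C^\infty_{loc}$ away from $|\Ccal_\infty|\cup S(A_\infty)$, realising $[(A_\infty,\Ccal_\infty,S(A_\infty))]$ as an Uhlenbeck limit of elements of $M^\ast_{\HYM}$. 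Hence the limit belongs to $\overline{M}_{\HYM}$, completing the proof.

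There is essentially no obstacle here, as all the analytic work has been done previously. The only thing to check carefully is that the diagonalisation in Proposition \ref{prop:diagonal} really applies in this setting, but this is exactly the formulation of that proposition.
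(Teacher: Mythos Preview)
Your proof is correct and follows essentially the same approach as the paper: Hausdorffness is inherited from the ambient space $\widehat{M}_{\HYM}$, and sequential compactness follows because $\overline{M}_{\HYM}$ is a sequentially closed subspace (by Proposition \ref{prop:diagonal}/Remark \ref{rem:sequentially_closed}) of the sequentially compact space $\widehat{M}_{\HYM}$ (Theorem \ref{thm:ideal_connections_compact}). The paper states this a bit more tersely as the abstract fact that a sequentially closed subspace of a sequentially compact space is sequentially compact, but your explicit unpacking of the argument is equivalent.
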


\begin{proof}
The Hausdorffness is clear, since $\overline M_{\HYM}$ is a subspace of a Hausdorff space. The sequential compactness follows from the fact $\overline M_{\HYM}$ is a sequentially closed subspace of a sequentially compact space by Remark \ref{rem:sequentially_closed} and Theorem \ref{thm:ideal_connections_compact} above.  
\end{proof}

\begin{rem}
The results of the next section will give a slightly different proof of sequential compactness for $\overline M_{\HYM}$. Namely, we will see that the reflexive sheaves associated to Uhlenbeck limits of sequences in ${M}_{\HYM}^\ast$ are  the double duals of sheaves in a fixed Quot scheme, and therefore the boundedness in Lemma \ref{lem:bounded-admissible} actually follows for this particular subset of $\widehat M_{\HYM}$ without appealing to the results of \cite{Maruyama:81}. 
\end{rem}


\section{Comparison of analytic and algebraic moduli spaces} \label{sec:comparison}
In this section we define the principal object of study in this paper: namely, a map 
$
\overline{\Phi }:\overline{M}^\mu\rightarrow \widehat{M}_{\HYM}
$ from the closure of $(M^s)^{wn}$ inside the slope
compactification $M^\muss$ to $\widehat{M}_{\HYM}$, extending
\eqref{eqn:phi}. We will give the definition in Section
\ref{sec:map}. The main result of this section is that $\overline\Phi$ maps onto $\overline M_\HYM$ and is continuous. More precisely, recall from Section \ref{sec:moduli}  that $\overline{M}^\mu$ is constructed using the ring of invariant sections of some determinant line bundle on $\mathcal{Z}$, the weak normalisation of a locally closed subscheme of some Quot scheme parametrising slope semistable sheaves. We will study a map $\overline\Psi : \overline \Zcal_{\circ}\to \widehat{M}_{\HYM}$, defined on a certain closed subvariety $\overline \Zcal_{\circ}$ of $\Zcal$, that descends to $\overline\Phi$. 
  Both $\overline \Zcal_{\circ}$ and $\overline{M}^\mu$ are (quasi-)projective and hence admit metrics. In particular, in order to prove continuity it suffices to prove sequential continuity.

Recall from Definition \ref{def:ideal-connection} that
 a point of $\widehat{M}_{\HYM}$ consists of two pieces of data:  an admissible HYM connection and a holomorphic cycle. The proof of continuity consists of showing that the respective limits coincide in both these components. We  first check on sequences in the interior of $\overline \Zcal_{\circ}$ (that is, for sequences of locally free stable sheaves) that the image of $\overline\Psi$ and therefore of $\overline\Phi$ lies in $\overline M_\HYM$. Using Proposition \ref{prop:diagonal} we will then reduce to this case. Equality in the sheaf component  will follow from the fact that any Uhlenbeck limit of smooth HYM connections may be identified with the double dual of a sheaf appearing as a quotient of $\Hcal$. This will be proven in  Section \ref{sec:limits1}. In Section \ref{sec:agree-cycle-component} we give a small extension of the singular Bott-Chern formula from \cite{SibleyWentworth:15}. Using this, equality of the cycle components is proven, and the proof of continuity is then completed in Section \ref{subsect:continuity} using the diagonalisation argument of Section \ref{subsect:diagonalisation}. 
\subsection{Definition of the comparison between moduli spaces} \label{sec:map}
Suppose first that we have a torsion free sheaf $\Fcal\to X$ satisfying the conditions:
\begin{enumerate}
	\item $\Fcal$ is $\mu$-semistable and $\det\Fcal\simeq \Jcal$;
\item  $\ch_2(\Fcal)=\ch_2(E)$ in rational cohomology; 
\item there is a smooth bundle isomorphism $\Fcal\bigr|_{X\backslash\sing \Fcal}\simeq E\bigr|_{X\backslash\sing \Fcal}$, and so a $C^{\infty}$ isomorphism 
$$\ddual{(\Gr\Fcal)}\bigr|_{X\backslash\sing(\Gr\Fcal)}=(\Gr\Fcal)\bigr|_{X\backslash\sing(\Gr\Fcal)}\simeq E\bigr|_{X\backslash\sing(\Gr\Fcal)}\ .$$
\end{enumerate}
We will associate to this data a gauge equivalence class of ideal HYM connections $[(A,\Ccal, S(A))]$ as follows. 

First, note that by \cite[Thm.\ 3]{BandoSiu:94} there exists an admissible HE metric $h_{\Fcal}$ on $\ddual{(\Gr\Fcal)}$.  By (3) we may write $\ddual{(\Gr\Fcal)}=\Gr\Fcal=(E,\dbar_E)$ on $X\backslash\sing(\Gr\Fcal)$.  Let $g$ be a complex gauge transformation defined on $X\backslash\sing(\Gr\Fcal)$ such that $g(h_{\Fcal})=h$ in the sense that for any $e_1, e_2\in E$,
$$
\langle e_1, e_2\rangle_{h_{\Fcal}}=\langle g^{-1}e_1, g^{-1} e_2\rangle_h\ .
$$
Then a straightforward calculation regarding the curvatures of the Chern connections $A =(g\cdot \dbar_{E}, h)$ and $(\dbar_E, h_{\Fcal})$ yields that 
$$
F_{(g\cdot \dbar_{E}, h)} = g^{-1}\circ F_{  (\dbar_E, h_{\Fcal})   }\circ g\ .
$$
In particular, since $h_{\Fcal}$ is HE and admissible, the pair  $(A, \sing(\Gr\Fcal))$ defines an admissible HYM connection on $(E, h)$ in the sense of Definition\ \ref{def:admissible}.

Second, consider the support cycle  $\Ccal\in\Cscr_{n-2}(X)$ of the torsion sheaf $\Tcal_{\Fcal}=\ddual{(\Gr\Fcal)}/\Gr\Fcal$ defined in Section \ref{subsubsect:support_cycles}. By \cite[Prop.\ 2.3.1]{Tian:00}, $\ch_2(A)$  is a closed current which is easily seen to represent $\ch_2(\ddual{(\Gr\Fcal)})$ in rational cohomology (cf.\  \cite[proof of Prop.\ 3.3]{SibleyWentworth:15}).  From the exact sequence
 $$
 0\lra \Gr\Fcal\lra \ddual{(\Gr\Fcal)}\lra \Tcal_{\Fcal}\lra 0
 $$
 and the fact that $\ch_2(\Tcal_\Fcal)=[\Ccal]$ (see \cite[Prop.\ 3.1]{SibleyWentworth:15}), we conclude using assumption (2) above that
 \begin{equation} \label{eqn:cohom}
[ \ch_2(A) ]=\ch_2(E)+[\Ccal] \ .
 \end{equation}
Hence, if we define $S(A)$ to be equal to $\sing(\ddual{(\Gr\Fcal)})$, the conditions in Definition \ref{def:ideal-connection} are satisfied. 

In summary, from a sheaf $\Fcal$ satisfying (1)--(3) above we obtained an ideal HYM connection $(A, \Ccal, S(A))$ on $(E,h)$. Different choices of $g$ give gauge equivalent ideal connections, and isomorphic sheaves $\Fcal$ give rise to the same class $[(A,\Ccal,S(A))]\in \widehat M_{\HYM}$. 

Let $\overline {R_{\circ}}\subset R^{\muss}$ denote the Zariski closure of the Zariski open set $R_\circ$ 
 consisting of locally free $\mu$-stable quotients $q:\Hcal \to \Fcal$ with $\det\Fcal\simeq \Jcal$ such that the underlying smooth bundle of $\Fcal$ is $C^\infty$-isomorphic to $E$. By an application of Ehresmann's Theorem to the associated family of unitary bundles, this is a finite union of connected components of the Zariski open subset of locally free  $\mu$-stable quotients $q:\Hcal \to \Fcal$ with $\det\Fcal\simeq \Jcal$. Let $\overline{\Zcal_\circ}\subset \Zcal$ its preimage under the weak normalisation map $\mathcal{Z} \to R^{\mu ss}_{red}$, which recall is a homeomorphism between the underlying topological spaces. Note that the Zariski closure of $(M^s)^{wn}$ defined in Definition\ \ref{def:defining_the_closure} is equal to the image of $\overline{\Zcal_\circ}$ under the classifying map $\pi: \Zcal \to M^\muss$. By Lemma \ref{lem:quot-topology}, if $q:\Hcal \to \Fcal$ is in $\overline{R_\circ}$, then $\Fcal$ satisfies conditions (1)--(3) above. Hence, from the previous discussion we have a well-defined map 
 \begin{equation*}\overline \Psi : \overline {\Zcal_\circ}  \to \widehat M_\HYM, \;\;
q_{\mathcal{F}} \mapsto[(A,\Ccal,S(A))]\ .
\end{equation*}
Notice that the gauge equivalence class $[(A,\Ccal,S(A))]$ depends only on $\ddual{(\Gr
\Fcal)}$ and $\Ccal_{\mathcal{F}}$. Consequently, by Proposition \ref{prop:GT}, the  map $\overline{\Psi}$ descends to a map $\overline \Phi : \overline M^\mu\to \widehat M_\HYM$ satisfying
\begin{equation}\label{eq:relation_phi_psi}
 \overline{\Psi} = \overline{\Phi} \circ \pi\, \bigr|_{\overline{\Zcal_\circ}}\ 
\end{equation}
and extending \eqref{eqn:phi} to the respective compactifications. 

\begin{rem}
 By construction, if $\Fcal$ is polystable, then Remark\ \ref{rem:gamma_campatible_with_earlier_defi} implies that with the notations introduced in Definition\ \ref{defi:cycle,couple} we have
$\overline{\Phi}([\Fcal]) = [\gamma(\Fcal)]$.
\end{rem}

\subsection{Identification of the limiting sheaf} \label{sec:limits1}
The main goal of this section is to prove the following result, whose proof identifies an Uhlenbeck limit of smooth HYM connections with the double dual of a certain point in $\Quot(\Hcal,\tau)$ (in fact in $\Quot(\Hcal,c(E))$).

\begin{prop} \label{prop:quot-limit}
Let $A_i$ be a sequence of HYM connections on $(E,h)$, giving holomorphic bundles $\Ecal_{i}$. Then for any Uhlenbeck limit $A_\infty$ with reflexive extension $\Ecal_\infty$, there is a point $\widehat q_\infty: \Hcal\to \widehat \Ecal_\infty\to 0$ in $\Quot(\Hcal,c(E))$, such that $\ddual{(\widehat \Ecal_\infty)}$ is isomorphic to $\Ecal_\infty$.

\end{prop}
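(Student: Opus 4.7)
The plan is to realise both $\mathcal{E}_\infty$ and the desired quotient $\widehat{q}_\infty$ by taking subsequential limits of an $L^2$-orthonormal basis of sections of $\mathcal{E}_i(m)$ for a fixed large $m$, then to compare the algebraic limit in the Quot scheme with the analytic limit after gauge on the complement of $S_\infty$, and finally to pass to double duals.

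First I would fix $m \geq m_0$ large enough that $\mathcal{E}_i(m)$ is globally generated with vanishing higher cohomology for every $i$; such $m$ exists because the $\mathcal{E}_i$ are polystable by Theorem~\ref{thm:DUY}, and polystable sheaves with fixed Chern classes form a bounded family. Then $h^0(X, \mathcal{E}_i(m)) = \tau_E(m) = \dim V$. Choose an $L^2$-orthonormal basis $\{s_{i,\alpha}\}$ of $H^0(X, \mathcal{E}_i(m))$ with respect to $h\otimes h_L^m$; the associated evaluation map is a surjection $q_i: \mathcal{H} \twoheadrightarrow \mathcal{E}_i$, i.e.\ a point $[q_i] \in \Quot(\mathcal{H}, c(E))$. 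Since $\Quot(\mathcal{H}, c(E))$ is projective, after passing to a subsequence we have an analytic limit $[q_i] \to [\widehat{q}_\infty]$, with $\widehat{q}_\infty : \mathcal{H} \twoheadrightarrow \widehat{\mathcal{E}}_\infty$.

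Second, I would build an analytic counterpart. By Theorem~\ref{thm:uhlenbeck} there are unitary gauge transformations $g_i$ on $X\backslash S_\infty$ with $g_i(A_i) \to A_\infty$ in $C^\infty_{\mathrm{loc}}$. The transformed sections $\widetilde{s}_{i,\alpha} := g_i\cdot s_{i,\alpha}$ are $L^2$-orthonormal and $\dbar_{g_i(A_i)}$-holomorphic, and Moser iteration applied to $|\widetilde{s}_{i,\alpha}|^2$ (using the uniform bound on $\sqrt{-1}\Lambda F$ from the HYM condition) gives uniform $C^k$-bounds on compact subsets of $X\backslash S_\infty$. A further subsequence thus converges smoothly to sections $s_{\infty,\alpha}$ of $\mathcal{E}_\infty(m)$ on $X\backslash S_\infty$ that are $\dbar_{A_\infty}$-holomorphic, are $L^2$-bounded near $S_\infty$ by Fatou, and therefore extend to global holomorphic sections of $\mathcal{E}_\infty(m)$ by Theorem~\ref{thm:bando-siu} (or a direct Hartogs-type argument, using reflexivity of $\mathcal{E}_\infty$ and $\codim\sing(\mathcal{E}_\infty) \geq 3$). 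Together the $s_{\infty,\alpha}$ define a sheaf morphism $\mathbf{e}_\infty : \mathcal{H} \to \mathcal{E}_\infty$ whose image $\widetilde{\mathcal{E}}_\infty$ is of full rank, since at a generic point fibrewise surjectivity of $q_i$ is preserved under smooth limits.

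Third, I would identify the two constructions. By Lemma~\ref{lem:quot-topology}, on $U := X\backslash(S_\infty \cup \sing(\widehat{\mathcal{E}}_\infty))$ the orthogonal projections onto $\ker q_i$ converge smoothly to the projection onto $\ker \widehat{q}_\infty$, which realises $\widehat{\mathcal{E}}_\infty|_U$ as a smooth subbundle of $\mathcal{H}|_U$ with a holomorphic structure naturally determined by the $q_i$. On the other hand, over $U$ the sections $\widetilde{s}_{i,\alpha}$ determine a smooth isomorphism between the same underlying $C^\infty$-bundle and $\mathcal{E}_\infty|_U$, intertwining the two holomorphic structures. Hence $\widehat{\mathcal{E}}_\infty|_U \cong \widetilde{\mathcal{E}}_\infty|_U$ holomorphically, and since both sheaves are coherent with $\codim(X\backslash U) \geq 2$, passing to double duals and using reflexivity of $\mathcal{E}_\infty$ yields $(\widehat{\mathcal{E}}_\infty)^{\vee\vee} \cong (\widetilde{\mathcal{E}}_\infty)^{\vee\vee} \cong \mathcal{E}_\infty$, as required.

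The main obstacle I expect is the last identification step: the algebraic limit in $\Quot(\mathcal{H}, c(E))$ depends on the choices of $L^2$-orthonormal bases and is only canonical up to the $\SL(V)$-action, while the Uhlenbeck limit is canonical only up to the unitary gauge transformations $g_i$. Matching the two convergences will require a careful diagonal argument to extract a subsequence for which the bases $\{g_i\cdot s_{i,\alpha}\}$ themselves converge smoothly on $U$, so that Lemma~\ref{lem:quot-topology} can be applied to compare the two holomorphic structures fibrewise.
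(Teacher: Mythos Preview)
Your overall architecture matches the paper's: choose $L^2$-orthonormal bases, take a subsequential limit in the Quot scheme to get $\widehat q_\infty$, take an analytic limit of the sections on $X\backslash S_\infty$ to get a map $\mathbf{e}_\infty:\Hcal\to\Ecal_\infty$ with image $\widetilde\Ecal_\infty$, and then compare. The Moser/Bochner argument for $C^0$ bounds, the extension of the limit sections by reflexivity, and the containment $\ker\widehat q_\infty\subset\ker\mathbf{e}_\infty$ (from $0=\mathbf{e}_i\circ\widehat\pi_i\to\mathbf{e}_\infty\circ\widehat\pi_\infty$ via Lemma~\ref{lem:quot-topology}) are all exactly what the paper does.

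The gap is in your third step. You claim that on $U$ the sections $\widetilde s_{i,\alpha}$ ``determine a smooth isomorphism'' between $\widehat\Ecal_\infty|_U$ and $\Ecal_\infty|_U$, and earlier that $\widetilde\Ecal_\infty$ has full rank because ``fibrewise surjectivity of $q_i$ is preserved under smooth limits''. Neither is automatic: a smooth limit of surjective bundle maps need not be surjective, so a priori the induced map $\varphi_\infty:\widehat\Ecal_\infty\to\Ecal_\infty$ could have generic rank strictly less than $r$, and $\widetilde\Ecal_\infty\subset\Ecal_\infty$ could be a proper subsheaf of smaller rank. Nothing in your argument gives a uniform lower bound on the smallest singular value of the fibrewise maps. (There is also a secondary issue: $\widehat\Ecal_\infty$ may have torsion, so $\sing(\widehat\Ecal_\infty)$ could have codimension one, and then $\codim(X\backslash U)\geq 2$ fails.)

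The paper closes this gap by a Hilbert polynomial comparison, and this is precisely why it does \emph{not} work at a single twist $m$ but rather runs the construction for \emph{all} twists $m+k$ simultaneously (Lemma~\ref{lem:rank-limit}). One shows that the image subsheaves satisfy $\Ecal_\infty^{(k)}\subset\Ecal_\infty^{(k+1)}$, so by the Noetherian property the chain stabilises to $\widetilde\Ecal_\infty$; since the limit sections remain $L^2$-orthonormal (dominated convergence), one gets $h^0(\widetilde\Ecal_\infty(\ell))\geq\tau_E(\ell)$ for all large $\ell$, hence $\chi_{\widetilde\Ecal_\infty}(\ell)\geq\tau_E(\ell)$. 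Comparing leading and next-to-leading coefficients with $\widetilde\Ecal_\infty\subset\Ecal_\infty$ forces $\rank\widetilde\Ecal_\infty=r$ and shows $\Ecal_\infty/\widetilde\Ecal_\infty$ is supported in codimension $\geq 2$, giving $\ddual{(\widetilde\Ecal_\infty)}\cong\Ecal_\infty$. The same inequality is then replayed in Lemma~\ref{lem:quot-limit} to show that both the torsion of $\widehat\Ecal_\infty$ and the cokernel of $\widetilde\Ecal_\infty\hookrightarrow\widehat\Ecal_\infty/\Tcal$ vanish, so $\varphi_\infty$ is an isomorphism. Your ``main obstacle'' paragraph anticipates a matching-of-subsequences difficulty, but the genuine obstacle is this rank/surjectivity question, and the multi-twist Hilbert polynomial argument is the missing idea.
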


Let $\Ecal_i=(E, \dbar_{A_i})$. We realize $\Ecal_i(m)$ as a quotient by choosing a basis $\ebold_i$ of holomorphic sections that is orthonormal with respect to the $L^2$ metric induced  by the hermitian structure $h\otimes h_L^m$on $\Ecal_i(m)$. Denote the similar construction of a choice of a basis for a further twist by:
 \begin{equation} \label{eqn:E-quot}
 \ebold_i^{(k)} : \Ocal_X^{\tau(m+k)}\lra \Ecal_i(m+k)\ .
 \end{equation}
We begin with a preliminary result.

\begin{lemma} \label{lem:rank-limit}
There is a subsequence, also denoted $\{i\}$, such that for any $k\geq0$, the basis $\ebold_i^{(k)}$ converges smoothly away from $S_\infty$ to a nonzero holomorphic map 
$$\ebold_\infty^{(k)}   : \Ocal_X^{\tau(m+k)}\lra \Ecal_\infty(m+k)\ .$$
 Moreover, if $ \Ecal_\infty^{(k)}\subset \Ecal_\infty$ denotes the image of $\ebold_\infty^{(k)}$, twisted by $\Ocal_X(-m-k)$,
then there is a coherent subsheaf $\widetilde\Ecal_\infty\subset \Ecal_\infty$, such that
\begin{enumerate}
\item 
 for $k$  sufficiently large, $\Ecal^{(k)}_\infty= \widetilde\Ecal_\infty$;
 \item $\ddual{(\widetilde\Ecal_\infty)}\cong \Ecal_\infty$;
 \item 
 $\chi_{ \widetilde\Ecal_\infty}(\ell)\geq \tau(\ell)$, for $\ell$ sufficiently large.
 \end{enumerate}
\end{lemma}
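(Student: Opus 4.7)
The plan is to produce the limit sections $\ebold_\infty^{(k)}$ by combining $C^\infty_{loc}$ extraction on $X\setminus S_\infty$ with a simultaneous extraction in the Quot scheme, extend them across $S_\infty$ via reflexivity of $\Ecal_\infty$, and then stabilise the resulting chain of image subsheaves using the Noetherian property. Throughout I would work after applying Theorem \ref{thm:uhlenbeck} to pass to a subsequence and unitary gauges so that $\dbar_{A_i}\to \dbar_{A_\infty}$ smoothly on compact subsets of $X\setminus S_\infty$.

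Since each $\ebold_i^{(k),j}$ is $\dbar_{A_i}$-holomorphic in $\Ecal_i(m+k)$ with unit $L^2$-norm, interior elliptic estimates for the $\dbar$-operator with smoothly convergent coefficients yield uniform $C^\infty$-bounds on compact subsets of $X\setminus S_\infty$. A diagonal Arzel\`a-Ascoli argument, carried out over an exhaustion of $X\setminus S_\infty$ by compacts and simultaneously over $k$ and $j$, then extracts a further subsequence on which $\ebold_i^{(k),j}\to \ebold_\infty^{(k),j}$ in $C^\infty_{loc}(X\setminus S_\infty)$, each limit being $\dbar_{A_\infty}$-holomorphic. Because $\Ecal_\infty(m+k)$ is reflexive by Bando--Siu (Theorem \ref{thm:bando-siu}) and $\codim S_\infty\geq 2$ by Proposition \ref{prop:admissible}, each limit extends uniquely to a global section of $\Ecal_\infty(m+k)$, assembling into a morphism $\ebold_\infty^{(k)}:\Ocal_X^{\tau(m+k)}\to \Ecal_\infty(m+k)$.

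To guarantee that $\ebold_\infty^{(k)}$ is nonzero and to identify its image, I would concurrently view $q_i^{(k)}:=\ebold_i^{(k)}$ as a sequence of points in the projective Quot scheme $\Quot(\Ocal_X^{\tau(m+k)}, \tau_E(m+k+\bullet))$ and pass to a subsequential limit $q_\infty^{(k)}:\Ocal_X^{\tau(m+k)}\to \Fcal^{(k)}\to 0$ with the same Hilbert polynomial. Lemma \ref{lem:quot-topology} then identifies the kernels on $X\setminus (S_\infty\cup \sing \Fcal^{(k)})$, which matches $\ebold_\infty^{(k)}$ with $q_\infty^{(k)}$ under the local isometry $E\cong E_\infty$ off $S_\infty$; in particular the image sheaf $\Ecal_\infty^{(k)}\subset \Ecal_\infty$ (after untwisting by $\Ocal_X(-m-k)$) agrees with $\Ecal_\infty$ on the complement of a codimension-$\geq 2$ subvariety, whence $\ddual{(\Ecal_\infty^{(k)})}\cong \Ecal_\infty$ by normality of reflexive sheaves. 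Moreover, for $m+k$ large, Serre vanishing together with the matching of Hilbert polynomials shows that $H^0(\Ocal_X^{\tau(m+k)})\to H^0(\Fcal^{(k)})$ is an isomorphism, so the $\tau(m+k)$ limit sections remain linearly independent.

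Finally, using the multiplication pairing $H^0(\Ecal_i(m+k))\otimes H^0(\Ocal_X(1))\to H^0(\Ecal_i(m+k+1))$ and expressing each $\ebold_i^{(k),j}\otimes t_\alpha$ in the orthonormal basis at level $k+1$, I take the limit of the coefficients to show $\ebold_\infty^{(k),j}\cdot t_\alpha$ lies in the image of $\ebold_\infty^{(k+1)}$. After untwisting this yields an ascending chain $\Ecal_\infty^{(k)}\subset\Ecal_\infty^{(k+1)}\subset\cdots\subset \Ecal_\infty$ of coherent subsheaves of $\Ecal_\infty$, which stabilises to some $\widetilde\Ecal_\infty$ by Noetherianity; by (2) each $\ddual{(\Ecal_\infty^{(k)})}\cong \Ecal_\infty$, hence $\ddual{\widetilde\Ecal_\infty}\cong\Ecal_\infty$. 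The Hilbert polynomial inequality $\chi_{\widetilde\Ecal_\infty}(\ell)\geq \tau(\ell)$ for $\ell=m+k\gg 0$ then follows from the linear independence established above together with Serre vanishing $\chi_{\widetilde\Ecal_\infty}(\ell)=h^0(\widetilde\Ecal_\infty(\ell))$. The main obstacle will be organising the diagonal subsequence extraction so that the identifications provided by Lemma \ref{lem:quot-topology} at different values of $k$ are mutually compatible; this compatibility is what allows the chain of images to genuinely ascend, and is ultimately what makes Noetherianity bite.
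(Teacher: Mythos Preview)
Your overall architecture---elliptic bootstrap on $X\setminus S_\infty$, reflexive extension, the ascending chain $\Ecal_\infty^{(k)}\subset\Ecal_\infty^{(k+1)}$ via multiplication by sections of $\Ocal_X(1)$, and Noetherian stabilisation---matches the paper's. The gap is the missing \emph{global uniform $L^\infty$ bound} on the $e_{i,j}^{(k)}$. The paper obtains this from a Bochner inequality $\Delta|e_{i,j}^{(k)}|^2\le C|e_{i,j}^{(k)}|^2$ together with Morrey's subsolution estimate; it is precisely what makes dominated convergence applicable on all of $X$ (not merely on compacts in $X\setminus S_\infty$), giving $\langle e_{\infty,j}^{(k)},e_{\infty,l}^{(k)}\rangle_{L^2(X)}=\delta_{jl}$ and hence linear independence. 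The paper then deduces (3) immediately, and from (3) together with the expansion \eqref{eqn:expansion} a degree comparison forces $\deg\widetilde\Ecal_\infty=\deg\Ecal_\infty$, so $\Ecal_\infty/\widetilde\Ecal_\infty$ is supported in codimension $\ge2$ and (2) follows. Your order of deduction is reversed, attempting (2) before (3).

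Your Quot-scheme substitute does not close either gap. A Quot-scheme limit $\Fcal^{(k)}$ of locally free quotients can acquire torsion supported on a divisor, so $\sing\Fcal^{(k)}$ need not have codimension $\ge2$; your assertion that $\Ecal_\infty^{(k)}$ agrees with $\Ecal_\infty$ off a codimension-$\ge2$ set, and hence $\ddual{(\Ecal_\infty^{(k)})}\cong\Ecal_\infty$, is therefore unjustified. Likewise, even if uniform Serre vanishing over the Quot scheme gives $h^0(\Fcal^{(k)})=\tau(m+k)$, the kernel sheaf $\ker q_\infty^{(k)}$ can pick up global sections by upper semicontinuity, so $H^0(\Ocal_X^{\tau(m+k)})\to H^0(\Fcal^{(k)})$ need not be injective and linear independence of the $e_{\infty,j}^{(k)}$ does not follow. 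The Quot-scheme identification you are reaching for is in fact the content of the paper's \emph{next} lemma, whose proof uses (3) of the present one.
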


\begin{proof}
For $k$ large, we denote the orthonormal basis ${\bf e}_i^{(k)}=\{ e^{(k)}_{i,j}\}_{j=1}^{\tau(m+k)}$. 
Let $B_i$ denote the connection on $\Ecal_i(m+k)$ induced by $A_i$ on $\Ecal_i$ and the connection $a_L$ on $L$.
Notice that (see Section \ref{sec:prelim})
$$\sqrt{-1}\Lambda F_{B_{i}}=\sqrt{-1}\Lambda F_{A_{i}}+2\pi (m+k)\lambda\omega\cdot \Ibold\ .$$
Then since $e^{(k)}_{i,j}$ is $B_{i}$-holomorphic, the Bochner formula gives 
\begin{eqnarray*}
\sqrt{-1}\bar{\partial}\partial\langle
e_{i,j}^{(k)},e_{i,j}^{(k)}\rangle  &=&\sqrt{-1}\bar{\partial}
\langle \partial _{B _{i}}e_{i,j}^{(k)},e_{i,j}^{(k)}\rangle
  \\
&=&\sqrt{-1}(\langle\bar{\partial}_{B _{i}}\partial
_{B_{i}}e_{i,j}^{(k)},e_{i,j}^{(k)}\rangle +\langle
\partial _{B _{i}}e_{i,j}^{(k)},\partial
_{B_{i}}e_{i,j}^{(k)}\rangle)  \\
&=&\sqrt{-1}(\langle F_{B
_{i}}e_{i,j}^{(k)},e_{i,j}^{(k)}\rangle -\langle \partial
_{B _{i}}e_{i,j}^{(k)},\partial _{B
_{i}}e_{i,j}^{(k)}\rangle)\ ,
\end{eqnarray*}
Hence,
\begin{eqnarray*}
\Delta (|e_{i,j}^{(k)}|^{2}) 
&=&2\Delta _{\partial }(|e_{i,j}^{(k)}|
^{2}) =\Lambda(( 2\sqrt{-1}\bar{\partial}\partial
( |e_{i,j}^{(k)}|^{2})) \\
&=&2\sqrt{-1}\langle \Lambda F_{B
_{i}}e_{i,j}^{(k)},e_{i,j}^{(k)}\rangle-2|\partial _{B
_{i}}e_{i,j}^{(k)}|^{2} \\
&\leq &2\sqrt{-1}\langle \Lambda F_{B
_{i}}e_{i,j}^{(k)},e_{i,j}^{(k)}\rangle \ ,
\end{eqnarray*}
where we have used the identity $\sqrt{-1}\Lambda\langle \beta
,\beta \rangle =|\beta |^{2}$ for any  $\beta\in \Omega^{1,0}(X, E\otimes L^{m+k})$. We therefore have

\begin{equation*}
\Delta |e_{i,j}^{(k)}|^{2}\leq C(\mu(E)+2\pi n\lambda(m+k))|e_{i,j}^{(k)}|^{2}\leq C|e_{i,j}^{(k)}|^{2}\ ,
\end{equation*}
for a constant $C$ independent of $i$.  
By a result of Morrey (cf.\ \cite[Thm.\ 9.20]{GilbargTrudinger:83}),  there is a uniform bound
 $$\sup_X
|e^{(k)}_{i,j}|\leq C\cdot \Vert e_{i,j}^{(k)}\Vert^2\leq C\ ,$$
independent of $i$ and for each $j$. Since the analytic singular set $S$ has measure zero, we may find a cover $
\{B_{\sigma }(x_{\alpha})\}_{x_{\alpha}\in S}$ such that 
\begin{equation*}
C\sum_{i}\vol(B_{\sigma }(x_{\alpha}))<\frac{1}{4}\ .
\end{equation*}
Write 
$
K_{\sigma }=X\backslash \cup _{\alpha}B_{\sigma }(x_{\alpha})
$,
and note that this together with the $L^\infty$ estimate given above we have
that 
$$
||e_{i,j}^{(k)}||_{L^{2}(X)} =||e_{i,j}^{(k)}||_{L^{2}(\cup _{\alpha}B_{\sigma
}(x_{\alpha}))}+||e_{i,j}^{(k)}||_{L^{2}(K_{\sigma })} 
\leq ||e_{i,j}^{(k)}||_{L^{2}(K_{\sigma })}+\frac{1}{2}\ ,
$$
so that $||e_{i,j}^{(k)}||_{L^{2}(K_{\sigma })}\geq \frac{1}{2}$. 

By Theorem \ref{thm:uhlenbeck}, the $A_{i}$ converge smoothly outside of a holomorphic subvariety $S$ to a connection $A_{\infty }$ on a bundle $E_{\infty }$ which is smoothly isometric to $(E,h)$, and correspondingly the
connections $B_{i}$ converge smoothly to a connection $B_{\infty }$ on the complement of this set. Write $
\dbar_{B_\infty}=\dbar_{B_i}+\beta_i$ for $\beta_i\in \Omega ^{0,1}(K_\sigma, \End E)$.
Then holomorphicity of the sections implies $
\dbar_{B_\infty}e_{i,j}^{(k)} =\beta _{i}e_{i,j}^{(k)}$.
Since $\beta_i\to 0$ smoothly, elliptic regularity gives $C^\infty$ estimates on  $e_{i,j}^{(k)}$, and 
 we may extract a
convergent subsequence  to a limit $e_{\infty ,j}^{(k)}$, which is holomorphic with
respect to $B_{\infty }$. By the lower bound on $
||e_{i,j}^{(k)}||_{L^{2}(K_{\sigma })}$ it follows that $e_{\infty ,j}^{(k)}\neq 0$.
 Repeating this argument for an exhaustion of $
X\backslash S$ by sets $K_{\sigma }$ constructed by taking the radii of the
balls $B_{\sigma }$ to zero, and using the fact that the set $S$ is a
holomorphic subvariety (and so we may take this exhaustion to be by
deformation retracts of $X\backslash S$), it follows from a diagonalisation
argument that we obtain   nonzero holomorphic sections of $\mathcal{E}_{\infty
}(m+k)$ on $X\backslash S$. Since $\mathcal{E}_{\infty }$ is reflexive,
these extend to all of $X$ and so we obtain for each $j$ a nontrivial
holomorphic section $e_{\infty ,j}^{(k)}\in H^{0}(\mathcal{E}_{\infty }(m+k))
$. 

For each pair $(j,l)$,  apply the dominated convergence theorem to conclude that
\begin{equation*}
\delta _{jl}=\lim_{i\rightarrow \infty }\int_{X}\langle
e_{i,j}^{(k)},e_{i,l}^{(k)}\rangle \frac{\omega ^{n}}{n!}=\int_{X}\langle
e_{\infty ,j}^{(k)},e_{\infty ,l}^{(k)}\rangle \frac{\omega ^{n}}{n!}\ ,
\end{equation*}
so that the $\{e_{\infty ,j}^{(k)}\}_{j=1}^{\tau (m+k)}$ is an\ 
$L^{2}$-orthonormal subset of
$H^{0}(\mathcal{E}_{\infty }(m+k))$. In other words, we have constructed the promised nonzero holomorphic map\textbf{\ }
\begin{equation*}
\mathbf{e}_{\infty}^{(k)}:\mathcal{O}_{X}^{\oplus \tau (m+k)}\rightarrow \mathcal{E}_{\infty }(m+k)\ .
\end{equation*}  
By a further diagonalisation argument, we can arrange for the convergence along a subsequence, also denoted $\{i\}$, for all $k$ sufficiently large. We now check the stated properties of $\mathbf{e}^{(k)}_{\infty}$.

Let $\Ecal^{(k)}_\infty\subset\Ecal_\infty$ denote the coherent subsheaf 
$$\im \bigl(  \Ocal_X^{\oplus \tau(m+k)}\stackrel{{\bf e}_\infty^{(k)}}{\lra} \Ecal_\infty(m+k)\bigr)\otimes \Ocal_X(-m-k)\ .$$
We claim that for all $k$ sufficiently large, $\Ecal^{(k)}_\infty\subset \Ecal^{(k+1)}_\infty$. Choose a point $p\in X$, 
$U$ a neighbourhood of $p$, a local section $s$ of $\Ecal^{(k)}_\infty$ on $U$, and a global section $\sigma$ of $\Ocal_X(1)$ that is nonvanishing at $p$. By definition, we may write  
\begin{equation} \label{eqn:s}
s=\sum_{j=1}^{\tau(m+k)} f^j e^{(k)}_{\infty,j}\otimes \sigma^{-m}\ ,
\end{equation}
for some $f^j\in \Ocal_U$.
Since $\{ e^{(k+1)}_{i,j}\}$ is an $L^2$-orthonormal basis for $H^0(X, \Ecal_i(m+k+1))$, for each $i$ we may write
\begin{equation} \label{eqn:e}
e^{(k)}_{i,j}\otimes\sigma=\sum_{q=1}^{\tau(m+k+1)} \langle e^{(k)}_{i,j}\otimes \sigma, e^{(k+1)}_{i,q}\rangle e^{(k+1)}_{i,q}\ ,
\end{equation}
where $\langle \cdot, \cdot\rangle$ is the $L^2$-inner product.  By the same argument as above,
$$
\lim_{i\to \infty}  \langle e^{(k)}_{i,j}\otimes \sigma, e^{(k+1)}_{i,q}\rangle\lra  \langle e^{(k)}_{\infty,j}\otimes \sigma, e^{(k+1)}_{\infty,q}\rangle\ .
$$
The right and left hand sides of \eqref{eqn:e} therefore converge smoothly away from $Z^{an}$, 
and therefore $e^{(k)}_{\infty,j}\otimes\sigma$ lies in the image of  ${\bf e}_\infty^{(k+1)}$.  This, in turn means that $s$ in \eqref{eqn:s} may be written 
$$
s=\sum_{j=1}^{\tau(m+k+1)} g^j e^{(k+1)}_{\infty,j}\otimes \sigma^{-m-1}\ ,
$$
for some $g^j\in \Ocal_U$.
Since the right hand side is in $\Ecal^{(k+1)}_\infty$ by definition, the claim follows.

By the Noetherian property of coherent subsheaves of a coherent sheaf \cite[\href{http://stacks.math.columbia.edu/tag/01Y7}{Tag 01Y7}]{stacks-project}, the chain
$
\Ecal^{(k)}_\infty\subset \Ecal^{(k+1)}_\infty\subset \cdots\subset \Ecal_\infty
$,
must stabilise for sufficiently large $k$. 
We set $\widetilde \Ecal_\infty=\Ecal^{(k)}_\infty$, $k\gg0$, and 
this proves part (1) of the lemma. For part (3), notice that for $\ell\gg0$,
\begin{equation} \label{eqn:hilbert-inequality}
\chi_{\widetilde \Ecal_\infty}(\ell)=\chi_{ \Ecal^{(\ell-m)}_\infty}(\ell)
=h^0( \Ecal^{(\ell-m)}_\infty(\ell))\geq h^0( \Ecal_i(\ell))=\tau_E(\ell) \ .
\end{equation}
From the Hirzebruch-Riemann-Roch Theorem and  \eqref{eqn:expansion}, we see that $\rank \widetilde \Ecal_\infty=\rank E=\rank \Ecal_\infty$.
Hence, $\Tcal=\Ecal_\infty/\widetilde \Ecal_\infty$ is a torsion sheaf, and $\det\Ecal_\infty\cong \det\widetilde\Ecal_\infty\otimes \det \Tcal$.  
But then \eqref{eqn:hilbert-inequality} and  \eqref{eqn:expansion} also imply
$$\deg \widetilde\Ecal_\infty \geq\deg E= \deg\Ecal_\infty=\deg \widetilde\Ecal_\infty+\deg(\det \Tcal)\ .
$$
Since the last term on the right hand side above is nonnegative, it must be that $\deg(\det \Tcal)=0$, and hence $\codim(\supp(\Tcal))\geq 2$. Therefore,
$\ddual{(\widetilde\Ecal_\infty)}\cong \ddual{\Ecal_\infty}$.
Since $\Ecal_\infty$ is reflexive, (2)  holds.
This completes the proof of the lemma.
\end{proof}

Henceforth, we assume $k$ has been chosen sufficiently large according to Lemma \ref{lem:rank-limit}, and we drop $k$ from the notation. 
Since $\Quot(\Hcal, c(E))$ is a projective scheme, in the analytic topology the sequence $\ebold_i$ in \eqref{eqn:E-quot} converges to a limiting quotient. By definition of the equivalence of quotients, this means there is a sequence of quotients $\widehat q_i: \Hcal\to \widehat \Ecal_i\to 0$, isomorphisms $\varphi_i: \widehat \Ecal_i\isorightarrow \Ecal_i$, and a commutative diagram
$$
\xymatrix{
\Hcal \ar[r]^{\widehat q_i} \ar@{=}[d] &\widehat \Ecal_i \ar[d]^{\varphi_i}\ar[r] &0 \\
\Hcal \ar[r]^{\ebold_i} &\Ecal_i \ar[r]&0
}
$$
Furthermore, there is $\widehat q_\infty \in \Quot(\Hcal, c(E))$ such that $\widehat q_i\to \widehat q_\infty$.

\begin{lemma} \label{lem:quot-limit}
There is a map $\varphi_\infty : \widehat \Ecal_\infty\to \widetilde\Ecal_\infty$ making the following diagram commute:
$$
\xymatrix{
\Hcal \ar[r]^{\widehat q_\infty\ } \ar@{=}[d] &\widehat \Ecal_\infty \ar[d]^{\varphi_\infty}\ar[r]&0 \\
\Hcal \ar[r]^{\ebold_\infty\ } &\widetilde\Ecal_\infty\ar[r]&0
}
$$
Moreover, $\varphi_\infty$ is an isomorphism.
\end{lemma}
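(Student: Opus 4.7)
The plan is to identify the two quotient maps $\widehat q_\infty$ and $\ebold_\infty$ of $\Hcal$ by comparing their kernels, which must coincide on a large open subset of $X$, and then to conclude by a torsion-freeness and Hilbert polynomial argument. First, I observe that for each finite $i$, since $\ebold_i = \varphi_i \circ \widehat q_i$ with $\varphi_i$ an isomorphism, we have equality of subsheaves
\[
\ker \widehat q_i \;=\; \ker \ebold_i \;\subset\; \Hcal .
\]
The induced orthogonal projections (with respect to the fixed hermitian structure on $\Hcal$) therefore agree for each~$i$.

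Next I would take the limit of these projections on a sufficiently large open set. Let
\[
U \;:=\; X \setminus \bigl( \sing \widehat \Ecal_\infty \cup S_\infty \cup \sing \widetilde \Ecal_\infty \bigr),
\]
which has complement of codimension at least two and on which both $\widehat \Ecal_\infty$ and $\widetilde \Ecal_\infty$ are locally free. Since $\widehat \Ecal_i \cong \Ecal_i$ is locally free with underlying smooth bundle isomorphic to $E$, Lemma \ref{lem:quot-topology} applies to the convergence $\widehat q_i \to \widehat q_\infty$ in $\Quot(\Hcal, c(E))$ and gives smooth convergence of the orthogonal projections onto $\ker \widehat q_i$ to the projection onto $\ker \widehat q_\infty$ on $X \setminus \sing \widehat \Ecal_\infty$. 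Meanwhile, the smooth convergence $\ebold_i \to \ebold_\infty$ from Lemma \ref{lem:rank-limit}, together with the constancy of the rank of $\ebold_\infty$ on $U$, yields smooth convergence of the orthogonal projections onto $\ker \ebold_i$ to that onto $\ker \ebold_\infty$ on $U$. Combined with the equality of the projections for each $i$, we obtain
\[
\ker \widehat q_\infty \bigr|_U \;=\; \ker \ebold_\infty \bigr|_U .
\]

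Finally, I would upgrade this to a global equality. Since $\widetilde \Ecal_\infty$ is torsion-free, being a subsheaf of the reflexive sheaf $\Ecal_\infty$, the composition
\[
\ker \widehat q_\infty \;\hookrightarrow\; \Hcal \;\stackrel{\ebold_\infty}{\lra}\; \widetilde \Ecal_\infty
\]
vanishes on the open dense subset $U$ and hence on all of $X$. This gives the inclusion $\ker \widehat q_\infty \subseteq \ker \ebold_\infty$ and therefore induces a surjective morphism $\varphi_\infty : \widehat \Ecal_\infty \to \widetilde \Ecal_\infty$ making the diagram commute. For injectivity, I would compare Hilbert polynomials: since $\widehat \Ecal_\infty$ lies in $\Quot(\Hcal, c(E))$ we have $\chi_{\widehat \Ecal_\infty} = \tau$, while Lemma \ref{lem:rank-limit}(3) gives $\chi_{\widetilde \Ecal_\infty}(\ell) \geq \tau(\ell)$ for large $\ell$. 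Surjectivity of $\varphi_\infty$ then forces $\chi_{\ker \varphi_\infty} \leq 0$, which is possible only if $\ker \varphi_\infty = 0$. The main obstacle will be the careful verification that Lemma \ref{lem:quot-topology} applies to our situation and that smooth convergence of $\ebold_i \to \ebold_\infty$ does indeed pass to smooth convergence of the associated orthogonal projections on the locus where the ranks stabilise; once these technical points are settled, the Hilbert-polynomial argument closes the proof essentially immediately.
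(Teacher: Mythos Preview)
Your argument is correct and follows essentially the same strategy as the paper for constructing $\varphi_\infty$: both establish $\ker\widehat q_\infty\subset\ker\ebold_\infty$ by passing to the limit on a codimension-two complement and then invoking torsion-freeness of $\widetilde\Ecal_\infty$. The paper's phrasing is marginally slicker here---it composes $\ebold_i\circ\widehat\pi_i=0$ and takes the limit, so only the convergence $\widehat\pi_i\to\widehat\pi_\infty$ from Lemma~\ref{lem:quot-topology} is needed, and your worry about projection convergence for the $\ebold_i$ never arises.

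Your injectivity argument is actually more direct than the paper's. You use surjectivity of $\varphi_\infty$ together with $\chi_{\widehat\Ecal_\infty}=\tau$ and Lemma~\ref{lem:rank-limit}(3) to force $\chi_{\ker\varphi_\infty}\leq 0$, hence $\ker\varphi_\infty=0$. The paper instead identifies $\ker\ebold_\infty/\ker\widehat q_\infty$ with a subsheaf of the torsion $\Tcal\subset\widehat\Ecal_\infty$, builds a generic inverse $\eta:\widetilde\Ecal_\infty\to\widehat\Ecal_\infty/\Tcal$, and then uses a Hilbert polynomial comparison to kill two torsion sheaves simultaneously. Both routes rely on the same inequality from Lemma~\ref{lem:rank-limit}(3); yours reaches the conclusion in one step, while the paper's extracts the additional information that $\widehat\Ecal_\infty$ is torsion-free along the way (which of course also follows a posteriori from your isomorphism).
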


\begin{proof}
Since $\widehat q_\infty$ is surjective, to define the map $\varphi_\infty$ it suffices to show that $\ker \widehat q_\infty\subset \ker \ebold_\infty$.  In fact, it is enough to prove the inclusion away from a proper subvariety. For then the image $\ebold_\infty(\ker \widehat q_\infty)$ would be a torsion subsheaf of $\widetilde\Ecal_\infty$, and since $\widetilde\Ecal_\infty \subset \Ecal_\infty$ is torsion free, such a sheaf must vanish.
Now away from a set of codimension  at least 2 we have smooth convergence: $\ebold_i\to\ebold_\infty$, and $\widehat\pi_i\to \widehat\pi_\infty$ (see Lemma \ref{lem:quot-topology}). Hence,
$
0=\ebold_i\circ \widehat\pi_i\to\ebold_\infty\circ\widehat\pi_\infty
$,
and the result follows.
The map $\varphi_\infty$ is therefore well-defined. We claim it is in fact an isomorphism. First, since
$$\rank \imag \widehat q_\infty=
\rank \widehat \Ecal_\infty =\rank\Ecal_\infty=\rank\widetilde\Ecal_\infty=\rank\imag \ebold_\infty\ ,
$$
we have $\rank(\ker \widehat q_\infty)=\rank(\ker \ebold_\infty)$. 
By the discussion above, $\ker \widehat q_\infty\subset \ker \ebold_\infty$, and so 
\begin{equation} \label{eqn:torsion}
\ker \ebold_\infty/\ker \widehat q_\infty\cong \widehat q_\infty(\ker \ebold_\infty)\subset \Tcal\subset \widehat\Ecal_\infty\ ,
\end{equation}
where $\Tcal\subset \widehat\Ecal_\infty$ is the torsion subsheaf. Since $\widetilde\Ecal_\infty$ is torsion free, $\varphi_\infty$ extends to a map $\overline\varphi_\infty : \widehat\Ecal_\infty/\Tcal\to \widetilde\Ecal_\infty$.  On the other hand, from \eqref{eqn:torsion} there is a well-defined map: $\eta: \widetilde\Ecal_\infty\to \widehat\Ecal_\infty/\Tcal$ that is a generic inverse to $\varphi_\infty$.  
Both maps $\eta$ and $\overline\varphi_\infty$ are injective, since their kernels would be torsion sheaves, and $\widetilde\Ecal_\infty, \widehat\Ecal_\infty/\Tcal$ are torsion free. Hence, there is an exact sequence
\begin{equation*} \label{eqn:torsion-exact}
0\lra \widetilde\Ecal_\infty
\stackrel{\eta}{\xrightarrow{\hspace*{.75cm}}} \widehat\Ecal_\infty/\Tcal\lra \widehat \Tcal\lra 0\ ,
\end{equation*}
for a torsion sheaf $\widehat \Tcal$. It follows that, 
\begin{align*}
\chi_{\widetilde\Ecal_\infty}(\ell)&=\chi_{\widehat \Ecal_\infty}(\ell)-\chi_\Tcal(\ell)-\chi_{\widehat \Tcal}(\ell) \\
\chi_{\widetilde\Ecal_\infty}(\ell)-\tau(\ell)&=-\chi_\Tcal(\ell)-\chi_{\widehat \Tcal}(\ell)\ .
\end{align*}
By Lemma \ref{lem:rank-limit} (3), the left hand side of the last equation above is nonnegative, whereas the right hand side must be nonpositive. So $\chi_\Tcal(\ell)=\chi_{\widehat \Tcal}(\ell)= 0$, and 
therefore $\Tcal$ and $\widehat \Tcal$ are zero. Hence, $\varphi_\infty $ is an isomorphism.
\end{proof}

\begin{proof}[Proof of Proposition \ref{prop:quot-limit}] Immediate from Lemmas \ref{lem:quot-limit} and \ref{lem:rank-limit} (2).
\end{proof}

As a consequence of the main result above, the following proves the first step in the continuity of $\overline\Phi$.
\begin{prop} \label{prop:cont1} Let $q_i\in\overline \Zcal_{\circ}$ be a sequence whose quotients $\Fcal_i$  are locally free and $\mu$-stable. Assume that $q_{i}\rightarrow q_{\infty }\in \overline \Zcal_{\circ}$   
in the analytic topology, so that in particular the corresponding quotient $\Fcal_\infty$ is torsion free and $\mu$-semistable.   Let $A_i$ be the corresponding sequence of HYM connections on $E$. Then for any Uhlenbeck limit $A_\infty$ of $\{A_i\}$ 
 extending to a polystable reflexive sheaf $\Ecal_\infty$, we have $\ddual{\Gr(\Fcal_\infty)}\cong \Ecal_\infty$.
\end{prop}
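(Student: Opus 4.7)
The plan is to identify $\ddual{(\Gr\Fcal_\infty)}$ with $\Ecal_\infty$ by going through an intermediate sheaf $\widehat\Ecal_\infty$ produced by the Quot-theoretic construction of Proposition~\ref{prop:quot-limit}, and then invoking the uniqueness statement of Proposition~\ref{prop:GT} at the level of $M^{\muss}$.

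First, I would apply Proposition~\ref{prop:quot-limit} to the sequence $\{A_i\}$, passing to a further subsequence if necessary, to obtain a point $\widehat q_\infty: \Hcal \to \widehat\Ecal_\infty \to 0$ in $\Quot(\Hcal, c(E))$ such that $\ddual{(\widehat\Ecal_\infty)} \cong \Ecal_\infty$. By the identification in Lemma~\ref{lem:quot-limit}, $\widehat\Ecal_\infty$ is isomorphic to the subsheaf $\widetilde\Ecal_\infty$ of the reflexive sheaf $\Ecal_\infty$, so it is torsion free; its determinant is $\Jcal$ by flatness in the limit. Since $\Ecal_\infty$ is polystable by Bando--Siu (Theorem~\ref{thm:bando-siu}), Proposition~\ref{prop:polystablereflexive} applied to $\widehat\Ecal_\infty$ gives
\[
\ddual{(\Gr\widehat\Ecal_\infty)} \cong \ddual{(\widehat\Ecal_\infty)} \cong \Ecal_\infty.
\]

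The crucial intermediate step is to show $[\Fcal_\infty] = [\widehat\Ecal_\infty]$ in $M^{\muss}$. For each finite $i$, the quotients $q_i: \Hcal \to \Fcal_i$ and $\widehat q_i: \Hcal \to \widehat\Ecal_i$ represent isomorphic sheaves (both are $(E, \dbar_{A_i})$), hence lie in the same $\SL(V)$-orbit of $\Quot(\Hcal, c(E))$ and project to the same point of $M^{\muss}$. Using continuity of the classifying map from the relevant locus in $\Quot(\Hcal, c(E))$ to $M^{\muss}$ coming from the universal property recalled in Section~\ref{sec:moduli}, together with the convergences $q_i \to q_\infty$ and $\widehat q_i \to \widehat q_\infty$ in the analytic topology, passage to the limit yields $[\Fcal_\infty] = [\widehat\Ecal_\infty]$ in $M^{\muss}$. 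Proposition~\ref{prop:GT} then gives $\ddual{(\Gr\Fcal_\infty)} \cong \ddual{(\Gr\widehat\Ecal_\infty)}$, which combined with the previous display completes the argument.

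The main obstacle I expect is making the comparison in $M^{\muss}$ fully rigorous. In particular, one must verify that $\widehat q_\infty$ genuinely defines a point in $R^{\muss}$, so that Proposition~\ref{prop:GT} applies: this requires checking $\mu$-semistability of $\widehat\Ecal_\infty$, which follows from Proposition~\ref{prop:polystablereflexive} once we know $\ddual{(\widehat\Ecal_\infty)}$ is polystable, together with the normalisation condition that $V$ surjects onto (and hence is isomorphic to) $H^0(\widehat\Ecal_\infty(m_0))$, which will require a careful inspection of the basis-of-sections construction used to define $\widehat q_i$ in the proof of Proposition~\ref{prop:quot-limit}.
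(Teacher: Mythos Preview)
Your proposal is correct and follows essentially the same route as the paper: invoke Proposition~\ref{prop:quot-limit} to produce $\widehat q_\infty$ with $\ddual{(\widehat\Ecal_\infty)}\cong\Ecal_\infty$, use Proposition~\ref{prop:polystablereflexive} (together with polystability of $\Ecal_\infty$) to get both $\mu$-semistability of $\widehat\Ecal_\infty$ and $\ddual{(\Gr\widehat\Ecal_\infty)}\cong\Ecal_\infty$, then compare $[\Fcal_\infty]$ and $[\widehat\Ecal_\infty]$ in $M^{\muss}$ via continuity of $\pi:\Zcal\to M^{\muss}$ and finish with Proposition~\ref{prop:GT}. One small simplification relative to your anticipated obstacle: you do not actually need to verify the normalisation condition (4) for $\widehat q_\infty$ to land in $R^{\muss}$; once $\widehat\Ecal_\infty$ is shown to be $\mu$-semistable with determinant $\Jcal$ (the latter because $\widehat\Ecal_\infty$ agrees with $\Ecal_\infty$ off codimension two), it automatically defines a point $[\widehat\Ecal_\infty]\in M^{\muss}$, and the continuity argument goes through at the level of $\overline M^\mu$ rather than $R^{\muss}$ itself.
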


\begin{proof}
As in Proposition \ref{prop:quot-limit}, write $q_{i}:\mathcal{H}\rightarrow \mathcal{E}_{i}\rightarrow 0$ for the sequence in $\overline \Zcal_{\circ}$ corresponding to the connections $A_{i}$ in the $\mathcal{G}^{\mathbb{C}}$ orbit of $\Fcal_i$. 
We have $ \Ecal_\infty\cong\ddual{(\widehat \Ecal_\infty)}$ for some quotient $\widehat{q}_{\infty }:\mathcal{H}\rightarrow \widehat{\mathcal{E}}_{\infty}\rightarrow 0$, which is a limit of $\widehat{q}_{i}:\mathcal{H}\rightarrow \widehat{\mathcal{E}}_{i}\rightarrow 0$ with $\widehat{\mathcal{E}}_{i} \cong \mathcal{F}_i$.

Note that we have proven in Lemma \ref{lem:quot-limit} above that 
\begin{equation}\label{eq:organisation_of_inclusions}
 \widehat{\mathcal{E}}
_{\infty } \cong \widetilde{\mathcal{E}}_{\infty } \subset \mathcal{E}_\infty.
\end{equation}
 Since $\mathcal{E}_{\infty }$ is polystable (having an admissible
Hermitian-Yang-Mills connection), and hence in particular $\mu $-semistable, this
implies that $\widehat{\mathcal{E}}_{\infty }$ is $\mu $-semistable, cf.\ Proposition~\ref{prop:polystablereflexive}. Note also that because $\widehat{\mathcal{E}}_{\infty }$ is isomorphic to $\Ecal_{\infty}$ away from a codimension two subvariety, their determinant line bundles are equal, and in particular the determinant line bundle of $\widehat{\mathcal{E}}_{\infty }$ is the same as that of $\widehat{\mathcal{E}}_{i} \cong \mathcal{F}_i$. Therefore, $\widehat{\mathcal{E}}_{\infty }$ defines a point in $
M^{\mu ss}$, and hence by construction also in $\overline{M}^{\mu }$.

 By construction, we have $[\widehat{\mathcal{E}}_{i}]=[\mathcal{F}_{i}]$ in $\overline M^\mu$. By continuity of the map $\pi :\Zcal\rightarrow M^{\muss}$, we therefore have $[\widehat{\mathcal{E}}_{\infty }]=[\mathcal{F}_{\infty }]$ in $
\overline M^\mu$. It hence follows from Proposition \ref{prop:GT} that $\ddual{\Gr(\Fcal_\infty)}\cong \Gr(\widehat{\mathcal{E}}_{\infty })^{\vee \vee }$. On the other hand, the fact that $\mathcal{E}_\infty$ is polystable together with \eqref{eq:organisation_of_inclusions} allows to apply Proposition \ref{prop:polystablereflexive} to obtain an isomorphism $\Gr(\widehat{\mathcal{E}}_{\infty })^{\vee \vee }\cong \mathcal{E}_{\infty }$, which completes the proof. 
\end{proof}

\subsection{Identification of the limiting cycle} \label{sec:agree-cycle-component}

 Having identified the reflexive extension of an Uhlenbeck limit of a sequence of smooth HYM connections, we now wish to determine the singular set as well. We will use the following singular version of the usual Bott-Chern formula, which is a slight generalisation of \cite[Thm.\ 1.3]{SibleyWentworth:15}.

\begin{prop} \label{prop:bc-formula}
Let $q_\Ecal, q_{\Fcal}\in \Quot(\Hcal,\tau)$, with $\Fcal$ torsion free and $\Ecal $ locally free. 
Assume that the underlying smooth vector bundles $F$ and $E$ of $\Fcal$ and $\Ecal$  on $X\backslash\sing\Fcal$ are isomorphic.
Set $\Ecal_\infty=\ddual\Fcal$, and let $\Ccal_\Fcal$ be the support cycle. Then for any smooth hermitian metric $h$ on $\Ecal$ and admissible metric $h_\infty$ on $\Ecal_\infty$, there is an equation of currents
\begin{equation} \label{eqn:bc}
\ch_2(\Ecal_\infty, h_\infty)-\ch_2(\Ecal, h)=\Ccal_\Fcal+ dd^c\Psi\ ,
\end{equation}
where $\Psi$ is a $(2,2)$-current, smooth outside the support of $\Ccal_\Fcal$.
\end{prop}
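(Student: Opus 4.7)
The strategy is to reduce the claim to the singular Bott--Chern formula of \cite[Thm.~1.3]{SibleyWentworth:15}, which established this type of equation when $\Fcal$ is the associated graded object of a Jordan--H\"older filtration of a holomorphic vector bundle, by exploiting the fact that in our setting $\Ecal$ and $\Fcal$ are both quotients of $\Hcal$ and agree smoothly on $X_0:=X\setminus\sing\Fcal$.

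First I would transport the smooth hermitian metric $h$ on $\Ecal$ along the given smooth bundle isomorphism $F\bigr|_{X_0}\cong E\bigr|_{X_0}$ to obtain a smooth hermitian metric $h_\Fcal$ on $\Fcal\bigr|_{X_0}=\Ecal_\infty\bigr|_{X_0}$. Since $\sing\Ecal_\infty\subset\sing\Fcal$, the admissible metric $h_\infty$ is smooth on $X_0$, so both sides of \eqref{eqn:bc} are represented by smooth $(2,2)$-forms there. The standard Bott--Chern lemma, combined with the invariance of $\ch_2$ under smooth bundle isomorphisms, then yields
\[
\ch_2(\Ecal_\infty,h_\infty)\bigr|_{X_0}-\ch_2(\Ecal,h)\bigr|_{X_0}=dd^c\Psi_0
\]
for some smooth $(1,1)$-form $\Psi_0$ on $X_0$; there is no cycle contribution, consistent with $|\Ccal_\Fcal|\subset\sing\Fcal$.

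Next I would extend the identity across $\sing\Fcal$ by applying the singular Bott--Chern formula of \cite{SibleyWentworth:15} to the canonical short exact sequence
\[
0\lra \Fcal\lra \Ecal_\infty\lra \Tcal_\Fcal\lra 0,
\]
where $\Tcal_\Fcal=\Ecal_\infty/\Fcal$ has support cycle $\Ccal_\Fcal$ in the sense of \eqref{eqn:cycle-def} and Lemma~\ref{lem:cherncharactertorsionsheaf}. This should provide a global current identity
\[
\ch_2(\Ecal_\infty,h_\infty)-\ch_2(\Fcal,h_\Fcal)=\Ccal_\Fcal+dd^c \Psi_1
\]
on all of $X$, with $\Psi_1$ smooth outside $|\Ccal_\Fcal|$. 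On $X_0$, $\ch_2(\Fcal,h_\Fcal)$ is represented by the smooth form coming from $h_\Fcal$, which matches $\ch_2(\Ecal,h)\bigr|_{X_0}$ via the smooth bundle isomorphism, so the desired identity follows with $\Psi=\Psi_0+\Psi_1$ extended across $\sing\Fcal\setminus|\Ccal_\Fcal|$, where both sides are smooth.

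The main obstacle I anticipate is verifying that the argument of \cite{SibleyWentworth:15} adapts to the present setting, in which $\Fcal$ is an arbitrary torsion free quotient of $\Hcal$ rather than a graded object of a Jordan--H\"older filtration. Since that argument is carried out locally, by a slicing calculation near the pure codimension two locus $|\Ccal_\Fcal|$, and uses only the facts that $\Fcal\hookrightarrow \Ecal_\infty$ has reflexive double dual with torsion cokernel, I expect the proof to go through essentially unchanged; however, some care will be required to confirm that the Poincar\'e--Lelong type estimates for $dd^c\log|\cdot|^2$ employed there remain valid with the smooth metric $h_\Fcal$ pulled back from $h$ rather than inherited from a filtration.
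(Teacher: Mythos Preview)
Your approach has a genuine gap, and it stems from not using the hypothesis that $q_\Ecal, q_\Fcal\in\Quot(\Hcal,\tau)$ in any essential way. The paper's proof exploits precisely this: writing the two Quot-scheme points as
\[
0\to\Kcal\to\Hcal\to\Ecal\to 0\qquad\text{and}\qquad 0\to\Kcal_\infty\to\Hcal\to\Fcal\to 0,
\]
one applies the ordinary (smooth) Bott--Chern formula to the first sequence and \cite[Thm.~1.3]{SibleyWentworth:15} to the second. The point is that in the second sequence the middle term $\Hcal$ is a genuine holomorphic vector bundle, which is exactly the setting in which \cite{SibleyWentworth:15} is proved. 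One then uses that $\Kcal$ and $\Kcal_\infty$ have the same Chern classes (same Quot scheme) together with \cite[Prop.~3.3]{SibleyWentworth:15} and the $dd^c$-lemma to conclude.

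By contrast, you try to apply \cite[Thm.~1.3]{SibleyWentworth:15} to $0\to\Fcal\to\Ecal_\infty\to\Tcal_\Fcal\to 0$, where the middle term $\Ecal_\infty$ is only reflexive. This is not the situation treated there, and your expectation that the slicing argument ``goes through essentially unchanged'' is unjustified: the local model in \cite{SibleyWentworth:15} uses a smooth splitting of the ambient bundle near the singular set, which is unavailable when $\Ecal_\infty$ itself has singularities. Moreover, your step~4 contains a second error: transporting $h$ along a \emph{smooth} isomorphism $F|_{X_0}\cong E|_{X_0}$ does not make the Chern--Weil forms $\ch_2(\Fcal,h_\Fcal)$ and $\ch_2(\Ecal,h)$ agree pointwise on $X_0$, since the holomorphic structures (hence Chern connections) differ; only the cohomology classes coincide. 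So even granting your step~3, you would still need to show that $\ch_2(\Fcal,h_\Fcal)-\ch_2(\Ecal,h)$ is globally $dd^c$-exact on $X$, which requires first extending $\ch_2(\Fcal,h_\Fcal)$ as a closed current across $\sing\Fcal$ and identifying its class---nontrivial work that the paper circumvents entirely by routing everything through the common bundle $\Hcal$.
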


\begin{proof}
Choose representatives of the points in the Quot scheme:
\begin{align*}
q_\Ecal &: 0\lra \Kcal\lra \Hcal\lra\Ecal\lra 0\ , \\ 
q_\Fcal &: 0\lra \Kcal_\infty\lra \Hcal\lra\Fcal\lra 0 \ .
\end{align*}
Let $H, k$ be  fixed hermitian metrics on $\Hcal$ and $\Kcal$, respectively. By the Bott-Chern formula applied to the sequence $q_\Ecal$, there is a smooth $(1,1)$-form $\Psi_1$ such that
\begin{equation} \label{eqn:bc1}
\ch_2(\Kcal\oplus\Ecal, k\oplus h)-\ch_2(\Hcal, H)=dd^c\Psi_1\ .
\end{equation}
 Let $k_\infty$ be an admissible metric on the reflexive sheaf
$\Kcal_\infty$. Clearly, $k_\infty\oplus h_\infty$
 is then an admissible metric on $\Kcal_\infty\oplus \Ecal_\infty$.
By \cite[Thm.\ 1.3]{SibleyWentworth:15} applied to the sequence $q_\Fcal$, there is a $(1,1)$-form $\Psi_2$ such that
\begin{equation} \label{eqn:bc2}
\ch_2(\Kcal_\infty\oplus\Ecal_\infty, k_\infty\oplus h_\infty)-\ch_2(\Hcal, H)=dd^c\Psi_2+\Ccal_\Fcal\ .
\end{equation}
By the assumption on $F$ and $E$, the Chern connection for $(\dbar_{\Ecal_\infty}, h_\infty)$ on $X\backslash\sing\Fcal$ defines an admissible connection on the smooth bundle $E$.
It follows from the proof of \cite[Prop.\ 3.3]{SibleyWentworth:15} that $\ch_2(\Ecal_\infty, h_\infty)$ is a closed current representing $\ch_2(\Ecal_\infty)$ in rational cohomology. The essential point here is that although the proof contained in \cite[Prop.\ 3.3]{SibleyWentworth:15} does not apply to an arbitrary torsion free sheaf, it does apply as long as the underlying smooth bundle extends smoothly to all of $X$, which is guaranteed by the hypotheses. From \eqref{eqn:bc2}, it then follows that $\ch_2(\Kcal_\infty, k_\infty)$ is also a closed current. Appealing once more to the proof of \cite[Prop.\ 3.3]{SibleyWentworth:15} (recall that
$\Kcal_\infty$ is reflexive), $\ch_2(\Kcal_\infty, k_\infty)$ represents $\ch_2(\Kcal_\infty)$ in rational cohomology. Since $\Kcal$ and $\Kcal_\infty$ have the same Chern classes, it follows that there is a $(1,1)$-form $\Psi_3$ such that
\begin{equation} \label{eqn:bc3}
\ch_2(\Kcal_\infty, k_\infty)-\ch_2(\Kcal, k)=dd^c\Psi_3\ .
\end{equation}
Finally, \eqref{eqn:bc} follows by combining eqs.\ \eqref{eqn:bc1}, \eqref{eqn:bc2}, and \eqref{eqn:bc3}.
\end{proof}

The following result may be viewed as the analog of the main result in \cite{SibleyWentworth:15}.

\begin{prop} \label{prop:singset-limit}
In the notation of Proposition \ref{prop:cont1}, the cycle $\Ccal_{\Fcal_\infty}$ is equal to the analytic cycle $\Ccal^{an}$ associated to the Uhlenbeck limit $A_\infty$. Moreover, $\sing(\Ecal_\infty)=S(A_\infty)$.
\end{prop}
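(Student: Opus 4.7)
The second assertion, $\sing(\Ecal_\infty)=S(A_\infty)$, is exactly the content of Lemma \ref{lem:singset2}, so the entire task reduces to proving the cycle equality $\Ccal^{an}=\Ccal_{\Fcal_\infty}$. By Proposition \ref{prop:cont1} we have $\Ecal_\infty\cong \ddual{(\Gr\Fcal_\infty)}\cong \ddual{\Fcal_\infty}$, which is polystable, so that Proposition \ref{prop:polystablereflexive} identifies $\Ccal_{\Fcal_\infty}$ with the support cycle of the torsion sheaf $\Tcal:=\Ecal_\infty/\Fcal_\infty$. The plan is to compare both cycles component by component on transverse slices, using the singular Bott-Chern formula on the algebraic side and the Tian-type convergence of currents on the analytic side.

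First I would feed the pair $(\Fcal_i,\Fcal_\infty)\in\Quot(\Hcal,\tau)$ into Proposition \ref{prop:bc-formula} (valid because $\Fcal_i$ is locally free for $q_i\in\overline{\Zcal_\circ}$, because $\Fcal_\infty$ is torsion free, and because the smooth isomorphism condition is provided by Lemma \ref{lem:quot-topology}). Equipping $\Fcal_i$ with the HYM metric $h_i$ giving Chern connection $A_i$, and $\Ecal_\infty=\ddual{\Fcal_\infty}$ with the admissible HYM metric $h_\infty$ giving $A_\infty$, this yields the equation of currents
\begin{equation*}
\ch_2(A_\infty)-\ch_2(A_i)\;=\;\Ccal_{\Fcal_\infty}+dd^c\Psi_i\ ,
\end{equation*}
where $\Psi_i$ is a $(1,1)$-current smooth off $|\Ccal_{\Fcal_\infty}|$. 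Now let $Z$ be any irreducible component of $|\Ccal^{an}|\cup|\Ccal_{\Fcal_\infty}|$; since $S(A_\infty)=\sing\Ecal_\infty$ has codimension $\geq 3$, I can pick a smooth point $z\in Z$ lying in no other component and outside $S(A_\infty)$, and choose a generic transverse slice $\Sigma$ at $z$. Applying Lemma \ref{lem:slicing1} to the displayed identity gives, for each $i$,
\begin{equation*}
m_Z^{\Fcal_\infty}\;=\;\int_\Sigma\bigl(\ch_2(A_\infty)-\ch_2(A_i)\bigr)\;-\;\int_{\partial\Sigma}d^c\Psi_i\ ,
\end{equation*}
(with $m_Z^{\Fcal_\infty}=0$ if $Z\not\subset|\Ccal_{\Fcal_\infty}|$), while Lemma \ref{lem:slicing2} combined with the definition \eqref{eq:Chern_character_of_connection} gives
\begin{equation*}
m_Z^{an}\;=\;\lim_{i\to\infty}\int_\Sigma\bigl(\ch_2(A_\infty)-\ch_2(A_i)\bigr)\ ,
\end{equation*}
(with $m_Z^{an}=0$ if $Z\not\subset|\Ccal^{an}|$). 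Subtracting, the required identity $m_Z^{\Fcal_\infty}=m_Z^{an}$ for every $Z$ is equivalent to showing that the boundary terms $\int_{\partial\Sigma}d^c\Psi_i$ tend to $0$ as $i\to\infty$.

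This last convergence is the main technical obstacle and is where I would import, essentially verbatim, the blow-up strategy developed in \cite{SibleyWentworth:15}. Namely, one performs a sequence of blow-ups of $X$ so that the strict transform of $Z$ becomes a smooth Cartier divisor, disjoint from the strict transforms of all other components of $|\Ccal^{an}|\cup|\Ccal_{\Fcal_\infty}|$; on this modification, the Bott-Chern potentials $\Psi_i$ can be compared via a localized Chern-Simons computation analogous to the one following Lemma \ref{lem:Chern-Simons}, and the boundary integrals are then controlled by the $C^\infty_{\mathrm{loc}}$-convergence $A_i\to A_\infty$ on the complement of $S(A_\infty)\cup|\Ccal^{an}|$ coming from the Uhlenbeck limit, forcing their limit to vanish. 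The key structural point enabling this adaptation to higher dimensions is Proposition \ref{prop:bc-formula}, whose formulation covers an arbitrary torsion free $\Fcal_\infty$ (and not merely the associated graded of a locally free sheaf); this extended formula becomes applicable precisely because the identification $\Ecal_\infty\cong\ddual{\Fcal_\infty}$ from Proposition \ref{prop:cont1} realizes the Uhlenbeck limit as the double dual of a quotient of $\Hcal$.
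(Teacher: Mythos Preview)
Your argument contains a genuine gap at the very first step. You assert that $\Ecal_\infty\cong\ddual{(\Gr\Fcal_\infty)}\cong\ddual{\Fcal_\infty}$, but the second isomorphism is neither proved nor true in general. Proposition~\ref{prop:cont1} gives only $\Ecal_\infty\cong\ddual{(\Gr\Fcal_\infty)}$, and Proposition~\ref{prop:polystablereflexive} would yield $\ddual{(\Gr\Fcal_\infty)}\cong\ddual{\Fcal_\infty}$ \emph{provided} $\ddual{\Fcal_\infty}$ is polystable---which is exactly what you have not established. Indeed, if $\Fcal_\infty$ happens to be a locally free, strictly semistable (non-polystable) limit, then $\ddual{\Fcal_\infty}=\Fcal_\infty$ is not polystable and therefore cannot be isomorphic to the polystable sheaf $\Ecal_\infty$. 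Consequently, when you feed $(\Fcal_i,\Fcal_\infty)$ into Proposition~\ref{prop:bc-formula}, the ``$\Ecal_\infty$'' appearing there is $\ddual{\Fcal_\infty}$, \emph{not} the Uhlenbeck limit, and the cycle produced is that of $\ddual{\Fcal_\infty}/\Fcal_\infty$, \emph{not} $\Ccal_{\Fcal_\infty}=\Ccal_{\ddual{(\Gr\Fcal_\infty)}/\Gr\Fcal_\infty}$. Your displayed current equation therefore does not relate $\ch_2(A_\infty)$ to $\Ccal_{\Fcal_\infty}$, and the subsequent slice comparison collapses.

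The paper avoids this by not working with $\Fcal_\infty$ at all in the Bott--Chern step. Instead, it uses the auxiliary sheaf $\widetilde\Ecal_\infty\cong\widehat\Ecal_\infty$ constructed in Lemmas~\ref{lem:rank-limit}--\ref{lem:quot-limit}, which by construction satisfies $\ddual{(\widetilde\Ecal_\infty)}\cong\Ecal_\infty$; the identification $\Ccal_{\Fcal_\infty}=\Ccal_{\widetilde\Ecal_\infty}$ then comes from $[\Fcal_\infty]=[\widetilde\Ecal_\infty]$ in $\overline M^\mu$ and Proposition~\ref{prop:GT}, and the cycle of $\Ecal_\infty/\widetilde\Ecal_\infty$ agrees with $\Ccal_{\widetilde\Ecal_\infty}$ by Proposition~\ref{prop:polystablereflexive}. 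The paper then proves only that each component of the algebraic cycle occurs in $\Ccal^{an}$ with the same multiplicity (Proposition~\ref{prop:multiplicity}), and closes the argument using the cohomological equality $[\Ccal^{alg}]=[\Ccal^{an}]$ (Lemma~\ref{lem:cohomology}) rather than comparing all components directly. Finally, your treatment of the boundary integrals is too optimistic: these involve the Bott--Chern potentials $\Psi_i$, not just the connections, and the paper handles them via an explicit Chern--Simons computation exploiting the equality of Chern classes of the kernel sheaves $\Kcal_i$ and $\Kcal_\infty$ restricted to the slice---smooth convergence $A_i\to A_\infty$ alone is not enough.
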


The rest of this section is devoted to the proof of Proposition \ref{prop:singset-limit}.
To simplify the notation, write: 
$
\Ccal^{alg} := \Ccal_{\Fcal_\infty}=\sum_j m_j^{alg} Z^{alg}_j
$.

\begin{lemma}\label{lem:cohomology}
In rational cohomology, $[\Ccal^{alg}]=[\Ccal^{an}]$.
\end{lemma}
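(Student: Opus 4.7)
The plan is to express $[\Ccal^{alg}]$ in cohomology via the short exact sequence that defines it, then match it to $[\Ccal^{an}]$ using the Chern character identifications already established in the paper.

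\textbf{Step 1: Reduce $\Ccal^{alg}$ to a difference of Chern characters.} By definition, $\Ccal^{alg} = \Ccal_{\Fcal_\infty}$ is the support cycle of the torsion sheaf $\Tcal_{\Fcal_\infty} := \ddual{(\Gr \Fcal_\infty)}/\Gr \Fcal_\infty$, which has support of codimension at least two. From the defining exact sequence
\[ 0\lra \Gr \Fcal_\infty \lra \ddual{(\Gr \Fcal_\infty)}\lra \Tcal_{\Fcal_\infty}\lra 0\ ,\]
the additivity of the Chern character combined with Lemma~\ref{lem:cherncharactertorsionsheaf} (applied to $\Tcal_{\Fcal_\infty}$, whose first non-zero Chern character in codimension two is Poincar\'e-dual to $\Ccal^{alg}$) gives
\[ [\Ccal^{alg}] = \ch_2\bigl(\ddual{(\Gr \Fcal_\infty)}\bigr) - \ch_2\bigl(\Gr \Fcal_\infty\bigr)\ \text{ in } H^4(X,\QBbb)\ .\]

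\textbf{Step 2: Simplify $\ch_2(\Gr \Fcal_\infty)$.} Applying additivity of the Chern character to the Jordan-H\"older filtration of $\Fcal_\infty$ yields $\ch_2(\Gr \Fcal_\infty) = \ch_2(\Fcal_\infty)$. Since $\Fcal_\infty$ lies in $\overline{\Zcal_\circ}\subset\Quot(\Hcal,c(E))$, its Chern classes coincide with those of $E$, so $\ch_2(\Fcal_\infty)=\ch_2(E)$.

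\textbf{Step 3: Identify $\ch_2(\ddual{(\Gr \Fcal_\infty)})$ with $\ch_2(E)+[\Ccal^{an}]$.} By Proposition~\ref{prop:cont1} we have $\ddual{(\Gr \Fcal_\infty)}\cong \Ecal_\infty$, the reflexive extension of $A_\infty$. As recalled in Section~\ref{subsect:idealconnections} (using \cite[Prop.~3.3]{SibleyWentworth:15}), the closed current $\ch_2(A_\infty) = -\frac{1}{8\pi^2}\tr(F_{A_\infty}\wedge F_{A_\infty})$ represents $\ch_2(\Ecal_\infty)$ in rational cohomology. On the other hand, Theorem~\ref{thm:tian} provides the weak convergence of currents $\ch_2(A_i) \to \ch_2(A_\infty) - \Ccal^{an}$; since each $\ch_2(A_i)$ represents the constant cohomology class $\ch_2(E)$, passing to cohomology yields $[\ch_2(A_\infty)] = \ch_2(E) + [\Ccal^{an}]$, and hence $\ch_2(\ddual{(\Gr \Fcal_\infty)}) = \ch_2(E) + [\Ccal^{an}]$.

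\textbf{Step 4: Conclude.} Substituting the results of Steps 2 and 3 into Step 1 gives $[\Ccal^{alg}] = (\ch_2(E) + [\Ccal^{an}]) - \ch_2(E) = [\Ccal^{an}]$. No serious obstacle is anticipated here: the argument is a formal cohomological bookkeeping once Proposition~\ref{prop:cont1} and the representability of $\ch_2$ by the closed Chern-Weil current $\ch_2(A_\infty)$ (the essential input from \cite{SibleyWentworth:15}) are in hand. The genuinely delicate content—matching the \emph{multiplicities} of each irreducible component of $\Ccal^{alg}$ and $\Ccal^{an}$ rather than just their cohomology classes—will be needed for the full Proposition~\ref{prop:singset-limit} and is carried out afterwards via the slicing argument of Lemmas~\ref{lem:slicing1} and~\ref{lem:slicing2}.
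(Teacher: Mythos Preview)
Your proof is correct and follows essentially the same approach as the paper. The paper's proof is more compressed: it cites eq.~\eqref{eqn:cohom} (whose derivation in Section~\ref{sec:map} is precisely your Steps~1--2) and eq.~\eqref{eqn:current} (your Step~3) directly, while you unpack these references explicitly.
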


\begin{proof}
By Proposition \ref{prop:cont1},  the HYM connection on $\ddual{\Gr(\mathcal{F}_{\infty })}$ is the Uhlenbeck limit $A_{\infty}$, and therefore eq.\ \eqref{eqn:cohom} implies that $[\ch_{2}(A_{\infty })]=\ch_{2}(E)+[\Ccal_{\mathcal{F}_{\infty }}]$. But recall also that \eqref{eqn:current} implies that $[\ch_{2}(A_{\infty })]=\ch_{2}(E)+[\Ccal^{an}]$ and the result follows.  
\end{proof}

Now we wish to go further and prove equality as cycles. The first observation
is that by the proof of Proposition \ref{prop:cont1} and Lemma
\ref{lem:quot-limit}, in $\overline{
M}^\mu$ we have $[\mathcal{F}_{\infty }]=[\mathcal{\tilde{E}}_{\infty }]$,
where $\widetilde{\mathcal{E}}_{\infty }\cong
 \widehat{\mathcal{E}}_{\infty }$ is the $\mu$-semistable sheaf that appears
in Lemma \ref{lem:rank-limit}. Then it follows from Proposition \ref{prop:GT} that
$\Ccal^{alg}=\mathcal{C}_{\widetilde{\mathcal{T}}_{\infty }}$,where $\widetilde{\mathcal{T}}_{\infty }$ is the torsion sheaf $Gr(\widetilde{\mathcal{E}}_{\infty })^{\vee \vee }/Gr(\widetilde{\mathcal{E}}_{\infty })$. 
By Proposition \ref{prop:polystablereflexive}, we obtain $\Ccal^{alg}=\mathcal{C}_{\widetilde{\mathcal{T}}_{\infty }}=\Ccal_{\mathcal{T}_{\infty}}$ where $\mathcal{T}_{\infty }=
\mathcal{E}_{\infty }/\widetilde{\mathcal{E}}_{\infty }$. Hence, it suffices to
show $\Ccal^{an}=\Ccal_{\mathcal{T}_{\infty }}$. 
The main technical result of this subsection is the following, which together with Lemma \ref{lem:cohomology} will prove Proposition \ref{prop:singset-limit}. 

\begin{prop}\label{prop:multiplicity} Let $Z\subset\supp(\mathcal{T}_{\infty })$ be an irreducible component of codimension 2. Write $m_{Z}$ for the multiplicity of $Z$ in the cycle $\Ccal_{\mathcal{T}_{\infty }}$. Choose a generic slice $\Sigma$. Then in the notation of Proposition \ref{prop:cont1}, we have:
\begin{equation*}
m_{Z}=\lim_{i\rightarrow \infty }\frac{1}{8\pi ^{2}}\int_{\Sigma }\left\{ 
\tr(F_{A_{i}}\wedge F_{A_{i}})-\tr(F_{A_{\infty }}\wedge
F_{A_{\infty }})\right\} .
\end{equation*}
In particular, since $m_{Z}$ is by definition positive, then by Lemma \ref{lem:slicing2},  $Z\subset S_b$. Therefore, $Z$ is a component of  the cycle $\Ccal^{an}$ with 
$
m_{Z}^{an}=m_{Z}
$.
\end{prop}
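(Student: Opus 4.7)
The plan is to combine the singular Bott--Chern formula (Proposition \ref{prop:bc-formula}) with the slicing formula (Lemma \ref{lem:slicing1}), using a modification of $X$ to isolate the irreducible component $Z$ from all other pieces of the singular locus.

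First I would apply Proposition \ref{prop:bc-formula} to the pair consisting of $q_{\Ecal_i}$ (locally free, with $\Ecal_i$ the holomorphic bundle of $A_i$) and $q_{\widetilde{\mathcal{E}}_\infty}$ (torsion free, with $\ddual{(\widetilde{\mathcal{E}}_\infty)} \cong \Ecal_\infty$ by Lemma \ref{lem:quot-limit} and Proposition \ref{prop:cont1}), equipping $\Ecal_\infty$ with the admissible HE metric $h_\infty$ from Theorem \ref{thm:bando-siu}. Recalling that $\Ccal^{alg} = \Ccal_{\mathcal{T}_\infty}$ is the support cycle of $\mathcal{T}_\infty = \Ecal_\infty/\widetilde{\mathcal{E}}_\infty$, and converting the auxiliary smooth metric on $\Ecal_i$ to the HYM metric inducing $A_i$ via the classical smooth Bott--Chern identity, one obtains an equation of $(2,2)$-currents on $X$
\[
\ch_2(A_\infty) - \ch_2(A_i) = \Ccal^{alg} + dd^c \Psi_i,
\]
where $\Psi_i$ is a $(1,1)$-current that is smooth off $\supp(\Ccal^{alg}) \cup \sing(\Ecal_\infty)$. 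Here I use that $\ch_2(A_\infty) = \ch_2(\Ecal_\infty, h_\infty)$ as currents, since $A_\infty$ is, up to unitary gauge, the Chern connection of $h_\infty$ on the smooth locus.

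Next, to isolate $Z$, let $W$ denote the union of $\sing(\Ecal_\infty)$, $\sing(\widetilde{\mathcal{E}}_\infty)$, and the irreducible components of $\supp(\Ccal^{alg})$ distinct from $Z$. Via a sequence of blow-ups $\pi : \widetilde X \to X$ with smooth centres inside $Z \cup W$ --- of exactly the kind used in \cite{SibleyWentworth:15} --- I would arrange that the proper transform $\widetilde Z$ is smooth and disjoint from $\pi^{-1}(W)$ over a neighbourhood of a generic smooth point $\widetilde z \in \widetilde Z$. The pulled-back Bott--Chern identity remains valid on $\widetilde X$, with the torsion contribution pulling back naturally to $m_Z \cdot \widetilde Z$ near $\widetilde z$. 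Choose a small transverse slice $\widetilde \Sigma$ to $\widetilde Z$ at $\widetilde z$ with $\widetilde\Sigma \cap \pi^{-1}(W) = \emptyset$.

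Finally, Lemma \ref{lem:slicing1} applied on $\widetilde X$ yields
\[
m_Z = \int_{\widetilde\Sigma} \pi^*\!\bigl[\ch_2(A_\infty) - \ch_2(A_i)\bigr] - \int_{\partial \widetilde\Sigma} d^c \pi^*\Psi_i.
\]
By Proposition \ref{prop:cont1} and Theorem \ref{thm:uhlenbeck}, $A_i \to A_\infty$ smoothly on compact subsets of $X \setminus (\supp(\Ccal^{an}) \cup S(A_\infty))$; since $\partial\widetilde\Sigma$ lies inside this locus by construction, the explicit Chern--Simons representative of $\pi^*\Psi_i$ converges smoothly to zero along $\partial\widetilde\Sigma$, so the boundary integral vanishes in the limit. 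Unwinding the sign convention $\ch_2 = -\frac{1}{8\pi^2}\tr(F \wedge F)$ then yields the claimed formula. The main technical obstacle is the blow-up construction in the second step --- arranging a modification that simultaneously preserves the Bott--Chern identity, isolates $Z$ from every other stratum of the singular locus, and keeps $\partial\widetilde\Sigma$ inside the domain of smooth Uhlenbeck convergence --- together with the verification that the explicit Chern--Simons representative for the potential in fact converges to zero on $\partial\widetilde\Sigma$, rather than merely remaining smooth there.
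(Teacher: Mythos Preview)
Your overall strategy matches the paper's --- singular Bott--Chern formula, blow-up to isolate $Z$, slicing --- but the final step contains a genuine gap.

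The potential $\Psi_i$ produced by Proposition~\ref{prop:bc-formula} is \emph{not} a Chern--Simons form between $A_i$ and $A_\infty$. Going back to the proof of that proposition, $\Psi_i$ is a combination of three pieces: the classical Bott--Chern secondary form $\Psi_1^{(i)}$ for the sequence $0\to\Kcal_i\to\Hcal\to\Ecal_i\to 0$, the singular Bott--Chern form $\Psi_2$ for $0\to\Kcal_\infty\to\Hcal\to\widetilde\Ecal_\infty\to 0$, and a potential $\Psi_3^{(i)}$ satisfying $dd^c\Psi_3^{(i)}=\ch_2(\Kcal_\infty,k_\infty)-\ch_2(\Kcal_i,k)$. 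This last piece exists only because $\Kcal_i$ and $\Kcal_\infty$ have the same Chern classes on $X$; it is obtained from the $dd^c$-lemma, not from any transgression construction, and on $X$ the underlying smooth bundles of $\Kcal_i$ and $\Kcal_\infty$ need not even be isomorphic. So there is no ``explicit Chern--Simons representative'' of $\Psi_i$, and the smooth convergence $A_i\to A_\infty$ on $\partial\widetilde\Sigma$ tells you nothing about $d^c\Psi_i|_{\partial\widetilde\Sigma}$. Indeed, away from the singularities one has $d\bigl(d^c\Psi_i + \CS(A_i,A_\infty)\bigr)=0$, but the closed $3$-form in parentheses can have nonzero integral over the $3$-sphere $\partial\widetilde\Sigma$, and you have no control over this.

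The paper handles exactly this point by replacing your local transverse disc with a \emph{closed} embedded $4$-manifold $\Sigma$ representing a class with $[\Sigma]\cdot[Z]\neq 0$. On a closed $4$-manifold, equality of total Chern class forces $\Kcal_i|_\Sigma$ and $\Kcal_\infty|_\Sigma$ to be smoothly isomorphic, so one obtains a genuine Chern--Simons form $\CS(A_{\Kcal_i}|_\Sigma,A_{\Kcal_\infty}|_\Sigma)$ for the kernel piece on $\Sigma$. The computation then proceeds by applying the Bott--Chern formula on the blow-up (to a modified exact sequence, cf.\ Lemma~\ref{lem:blowup}), using Stokes on $\widehat\Sigma\cap(\cup B_\varepsilon(z_k))^c$ to rewrite the $d^c\widehat\Psi$ boundary integral as a Chern--Simons integral, and then explicitly tracking the several Chern--Simons terms --- for $B_i$, $B_\infty$, $B$ on $\Hcal$, and the kernel connections --- using Lemma~\ref{lem:Chern-Simons} to show they cancel or vanish in the limit. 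Your sketch would need all of this machinery to close the gap.
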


The proof of Proposition \ref{prop:multiplicity} will be an
adaptation of the proof of Proposition 4.2 of
\cite{SibleyWentworth:15}. It will use the Bott-Chern formula
of Section \ref{sec:agree-cycle-component}
 and the slicing lemmas of Section
\ref{sec:analytic-multiplicities}. Additionally we will require the following lemma, whose proof is identical to that of Lemma 4.3 of \cite{SibleyWentworth:15}.
\begin{lemma}\label{lem:blowup}
Let $\mathcal{V}\rightarrow X$ \ be a holomorphic vector bundle, $\mathcal{V}
\rightarrow \mathcal{F}\rightarrow 0$ a quotient, and $\mathcal{K}\subset 
\mathcal{V}$ the kernel. Write $\Gr(\mathcal{V})=\mathcal{K}\oplus \mathcal{F}
$. Let $Z\subset \supp(\ddual{\Gr(\mathcal{V})}/$ $\Gr(\mathcal{V}))$ be
an irreducible component of codimension $2$. Then there exists a sequence of
blow-ups along smooth complex submanifolds $\pi :\hat{X}\rightarrow X$ having
centre\textbf{\ }$\mathbf{C}$ and exceptional divisor $\mathbf{E}=\pi ^{-1}(
\mathbf{C)}$, and a subsheaf $\mathcal{\hat{K}}\subset \pi ^{\ast }\mathcal{V
}$ giving rise to an associated graded object $\Gr(\pi ^{\ast }\mathcal{V)=
\hat{K}}\oplus \mathcal{\hat{F}}$ with singular set $\sing(\Gr(\pi
^{\ast }\mathcal{V))}$ such that the following properties are satisfied:
\begin{enumerate}
\item $\Gr(\pi^{\ast }\mathcal{V)}\cong \Gr(\mathcal{V)}$ on $\hat{X}-
\mathbf{E}=X-\mathbf{C}$;
\item $\codim(Z\cap \mathbf{C)}\geq 3$;
\item $\codim(\pi (\sing(\Gr(\pi ^{\ast }\mathcal{V))}-Z))\geq 3$.
\end{enumerate}
\end{lemma}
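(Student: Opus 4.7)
The plan is to construct $\pi$ and $\hat{\Kcal}$ by a resolution-of-singularities procedure applied to the kernel sheaf $\Kcal \subset \Vcal$, arranged so that no blow-up centre contains the generic locus of $Z$. Since $\Vcal$ is locally free, the failure of $\Gr\Vcal = \Kcal \oplus \Fcal$ to be locally free is carried by the non-locally-free locus of $\Kcal$ (equivalently, of $\Fcal/\Tors(\Fcal)$), which by \eqref{eq:sings_of_reflexive} decomposes into the codimension-two part $\supp(\ddual{\Gr\Vcal}/\Gr\Vcal)$ (of which $Z$ is an irreducible component by hypothesis) and the higher-codimension part $\sing(\ddual{\Gr\Vcal})$ of codimension $\geq 3$. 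The crucial local observation is that at a generic point $z \in Z$, on a transverse surface slice $\Sigma$ the restriction $\Kcal\bigr|_\Sigma$ is a torsion-free coherent sheaf on a smooth surface whose non-locally-free locus consists of isolated points disjoint from $\Sigma \cap Z = \{z\}$; in particular, no modification is required exactly at $z$.

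Given this, I would apply a Hironaka-type canonical resolution algorithm to the coherent sheaf $\ddual{\Gr\Vcal}/\Gr\Vcal$ (or equivalently to the ideal sheaves defining the non-locally-free locus of $\Kcal$) to produce a finite sequence of blow-ups $\pi : \hat X \to X$ along smooth centres, whose total union I denote $\mathbf{C} \subset X$, with exceptional divisor $\mathbf{E} = \pi^{-1}(\mathbf{C})$. Defining $\hat\Kcal$ to be the saturation of $\pi^\ast \Kcal$ inside the locally free sheaf $\pi^\ast\Vcal$, the quotient $\hat\Fcal := \pi^\ast\Vcal/\hat\Kcal$ is torsion-free, and the filtration $0 \subset \hat\Kcal \subset \pi^\ast\Vcal$ yields $\Gr(\pi^\ast\Vcal) = \hat\Kcal \oplus \hat\Fcal$. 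Property~(1) is then immediate, since $\pi$ is a biholomorphism off $\mathbf{E}$ and on $X \setminus \sing(\Gr\Vcal)$ the inclusion $\Kcal \subset \Vcal$ is already saturated, so nothing is changed there.

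The central point is to arrange the resolution so that (2) and (3) hold. Because the canonical algorithm is functorial with respect to \'etale morphisms, and because its centres are selected locally by the sheaf being resolved, the slice-by-slice observation of the first paragraph implies that near the generic open part of $Z$ the algorithm picks no centres at all: the data is already resolved there. Consequently every blow-up centre lies either in $\sing(\ddual{\Gr\Vcal})$, in the singular locus $\sing(Z)$ of $Z$ itself, or outside $Z$ altogether, and each such locus meets $Z$ in codimension $\geq 3$ in $X$, yielding~(2); property~(3) follows because any singularity of $\Gr(\pi^\ast\Vcal)$ away from the strict transform of $Z$ lies over one of these same codimension-$\geq 3$ loci in $X$. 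The main obstacle will be verifying carefully that a suitably chosen resolution procedure can indeed be run without blowing up centres meeting the smooth open part of $Z$, which amounts to confirming that the resolution algorithm applied to $\ddual{\Gr\Vcal}/\Gr\Vcal$ has, at a generic point of $Z$, a local structure that is already trivial --- a fact that ultimately rests on the transverse-slice reduction to the well-understood two-dimensional case.
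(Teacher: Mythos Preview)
Your proposal contains a fundamental misconception about what the lemma is trying to achieve, and a concrete error in the ``crucial local observation.'' You claim that on a transverse slice $\Sigma$ through a generic point $z\in Z$, the non-locally-free locus of $\Kcal|_\Sigma$ is disjoint from $\{z\}$. This is false: $Z$ is by hypothesis an irreducible codimension-two component of $\supp\bigl(\ddual{\Gr(\Vcal)}/\Gr(\Vcal)\bigr)$, so $\Gr(\Vcal)$ \emph{is} singular along $Z$, and on a transverse slice at a generic $z\in Z$ the restricted sheaf fails to be locally free precisely at $z$. A canonical resolution algorithm applied to $\ddual{\Gr(\Vcal)}/\Gr(\Vcal)$ would therefore blow up along $Z$, violating~(2).

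The lemma is not asking you to resolve the singularities of $\Gr(\Vcal)$; it is asking you to \emph{isolate} the chosen component $Z$ from the other codimension-two components of $\sing(\Gr(\Vcal))$. Properties~(2) and~(3) together say: the centres avoid $Z$ generically, and after the modification the only part of $\sing(\Gr(\pi^\ast\Vcal))$ whose image in $X$ has codimension two is $Z$ itself. The correct strategy, as in \cite[Lemma~4.3]{SibleyWentworth:15} to which the paper defers, is to blow up so as to push the \emph{other} codimension-two components (and their intersections with $Z$) into the exceptional locus, defining $\hat\Kcal$ via saturation as you suggest, while deliberately leaving the generic locus of $Z$ untouched. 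Your saturation construction of $\hat\Kcal\subset\pi^\ast\Vcal$ and the verification of~(1) are fine; what fails is the mechanism by which you hoped to guarantee~(2) and~(3).
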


\begin{proof}[Proof of Proposition \ref{prop:multiplicity}] 
Recall that $\mathcal{E}_{\infty }$ was an Uhlenbeck limit of
a sequence of
Hermitian-Einstein vector bundles
 $\mathcal{E}_{i}=(E,\bar{\partial}_{A_{i}}) $, which  in Section \ref{subsect:idealconnections}
 we realised as quotients
$\mathcal{H}\rightarrow \mathcal{E}_{i}\rightarrow 0$. In Lemma \ref{lem:quot-limit} we saw that there was 
a resulting quotient which we have been calling $\widetilde{\Ecal}_\infty$, and which we showed to be isomorphic to a certain limiting quotient which we called $\widehat{\Ecal}_\infty$. For notational reasons, during this proof we will rename this quotient
$q_{\mathcal{Q}_{\infty }}:\mathcal{H}\rightarrow \mathcal{Q}_{\infty }\rightarrow 0$, so that in particular $\ddual{\mathcal{Q}_{\infty }}\cong \mathcal{E}_{\infty }.$
We may therefore consider the
exact sequences
\begin{align*}
0\longrightarrow \mathcal{K}_{i}\longrightarrow &\mathcal{H}\longrightarrow 
\mathcal{E}_{i}\longrightarrow 0 \ ,\\
0\longrightarrow \mathcal{K}_{\infty }\longrightarrow &\mathcal{H}
\longrightarrow \mathcal{\Qcal}_{\infty }\longrightarrow 0\ .
\end{align*}
If we write $\Gr_{\infty }(\mathcal{H})$ for the associated graded object of
this latter sequence, then notice that $\supp(\mathcal{T}_{\infty })=\supp(\ddual{(\Gr_{\infty }( \mathcal{H})})/\Gr(\mathcal{H)})$, since $\mathcal{K}_{\infty }$ is saturated and therefore reflexive.

Now we are in a position to apply Lemma \ref{lem:blowup} and we obtain first of all an exact sequence
\begin{equation*}
0\longrightarrow \widehat{\mathcal{K}}_{\infty }\longrightarrow \pi ^{\ast }
\mathcal{H\longrightarrow }\widehat{\Qcal}_\infty\longrightarrow 0
\end{equation*}
having the properties stated there. 

Consider $\sum_{j}\hat{m}_{j}^{{alg}}\hat{W}_{j}$, the analytic cycle associated with the support of the sheaf $\ddual{\Gr_{\infty }(\pi^{\ast }\Hcal)}/\Gr_{\infty }(\pi ^{\ast }\mathcal{H)}$. By the first two parts of the Lemma \ref{lem:blowup}, we know that in fact, one of these irreducible components, say $\hat{W}_{1}$
is the proper transform $\widehat{Z}$ of $Z$, and so $\pi (\hat{W}_{1})=Z$ and so since at a generic
point of $\widehat{Z}$ the sheaf $\ddual{\Gr_{\infty }(\pi ^{\ast }
\Hcal)}/\Gr_{\infty }(\pi ^{\ast }\mathcal{H)}$
is isomorphic with $\ddual{(\Gr_{\infty }\mathcal{H})}/
\Gr_{\infty }(\mathcal{H)}$, we also have that $\hat{m}_{1}^{alg}=m_{Z}$. 

Since $X$ is K\"{a}hler, the class $[Z]$ is homologically
nontrivial, there is a class say $\alpha \in H_{4}(X,\mathbb{Z})$ such that the intersection product $\alpha \cdot \left[ Z\right] \neq 0$. By a classical result of Thom, an integral multiple of any $4$-dimensional
class can be represented by an embedded $4$-dimensional submanifold $\Sigma $,
 and by construction $\left[ \Sigma \right] \cdot \left[ Z\right] \neq 0$.
We may furthermore choose $\Sigma $ so that it intersects $Z$ transversely
in its smooth locus, and $\Sigma$ is analytic in 
a neighbourhood of $\Sigma\cap Z$. Therefore $\Sigma \cap Z=\{z_{1},\cdots ,z_{k}\}$,
where this set is nonempty. By possibly moving $\Sigma $ slightly, we may
suppose that $\Sigma $ intersects $Z$ in each point $z_{i}$ as a transverse
slice. By Lemma \ref{lem:blowup} (3), all
the other $\widehat{W}_{j}$ must map to a set of codimension greater than or
equal to $3$, and so must be contained in $\mathbf{E}$. Also, by the
codimension statement in $(3)$, we may assume that $\Sigma $ misses
the set $\pi (\sing(\Gr_{\infty }(\pi ^{\ast }\mathcal{H))}-Z$,
 and the proper transform $\widehat\Sigma$ of $\Sigma$ only
intersects $\sing(\Gr_{\infty }(\pi ^{\ast }\mathcal{H))}$ in 
$\pi ^{-1}(Z)$. On the other hand, by part $(ii)$ we may assume that $\Sigma 
$ misses $Z\cap \mathbf{C}$, so $\pi ^{-1}(\Sigma )$ and therefore $\widehat{
\Sigma }$ misses $\mathbf{E}\cap \pi ^{-1}(Z)$, and so $\widehat{\Sigma }$
empty intersection with all of the $\hat{W}_{j}$ except for $\widehat{Z}=
\widehat{W}_{1}$.

Since the quotient $\mathcal{Q}_{\infty }$ is torsion free, the
kernel $\mathcal{K}_{\infty }$ is saturated, and therefore reflexive, and so
has at most codimension $3$ singularities. Therefore, we may assume the
 $4$-dimensional submanifold $\Sigma $ misses the set
 $\sing\mathcal{K}_{\infty }$. In other words the restrictions $\mathcal{K}_i\bigr|_{\Sigma }$
and $\mathcal{K}_{\infty}\bigr|_{\Sigma }$ are smooth vector bundles on $\Sigma 
$. We have proven that $\mathcal{Q}_{\infty }$ is isomorphic to a
quotient in $\Quot(\mathcal{H},c(E))$, and in
particular the total Chern class of $\mathcal{Q}_{\infty }$ is the
same as that of the $\mathcal{E}_{i}$, and therefore the total Chern class
of $\mathcal{K}_{\infty }$ is the same as that of the $\mathcal{K}_{i}$. If $
\imath :\Sigma \hookrightarrow X$ is the inclusion map, then by naturality,
we have $c(\mathcal{K}_i\bigr|_{\Sigma })=\imath ^{\ast }\left( c(\mathcal{K}_{i}\right) )=\imath ^{\ast }\left( c(\mathcal{K}_{\infty }\right) )=c(
\mathcal{K}_{\infty}\bigr|_{\Sigma })$. Now the point is that the total Chern
class determines the topological type of a smooth vector bundle on a $4$-manifold,
 so $\mathcal{K}_i\bigr|_{\Sigma }$ and $\mathcal{K}_{\infty}\bigr|_{
\Sigma }$ are smoothly isomorphic. 

As in Proposition \ref{prop:bc-formula}, we fix a smooth
hermitian metric $k$ on $\mathcal{K}_{i}$ (smoothly these bundles are all
the same), and an admissible metric on $k_{\infty }$ on $\mathcal{K}_{\infty
}$. We will denote by $A_{\mathcal{K}_{i}}$ and $A_{\mathcal{K}_{\infty }}$
the corresponding Chern connections. Since $A_{\mathcal{K}_{i}}$ and $A_{
\mathcal{K}_{\infty }}$ restricted to $\Sigma $ live on the same smooth
bundle, there are Chern-Simons forms $\CS(A_{\mathcal{K}_{\infty}}\bigr|_{\Sigma},
A_{\mathcal{K}_{i}}\bigr|_{\Sigma })$ on $\Sigma $ such
that 
\begin{equation*}
\ch_2(\mathcal{K}_{\infty },k_{\infty })-\ch_2(\mathcal{K}_{i},k)=d\CS(A_{
\mathcal{K}_{i}}\bigr|_{\Sigma},A_{\mathcal{K}_{\infty }}\bigr|_{\Sigma})\ .
\end{equation*}
We will write $B_{\infty }=A_{\mathcal{K}_{\infty }}\oplus A_{\infty }$
which is the Chern connection associated to $\ddual{(\Gr_{\infty }
\Hcal)} =\left( \mathcal{K}_{\infty }\oplus \mathcal{
E}_{\infty },k_{\infty }\oplus h_{\infty }\right) $, and $B_{i}$ for the
Chern connection on $\Gr(\mathcal{H)}=(\mathcal{K}_{i}\oplus 
\mathcal{E}_{i},k\oplus h)$. We will furthermore write $B$ for the Chern
connection of $(\mathcal{H},H)$. As in \eqref{eqn:bc2},
\begin{equation*}
\ch_2(\mathcal{K}_{i}\oplus \mathcal{E}_{i},k\oplus h)-\ch_2(\mathcal{H}
,H)=dd^{c}\Psi _{i}=d\CS(B_{i},B)\ .
\end{equation*}

Now for each of the $z_{k}\in Z\cap \Sigma $, take a ball $B_{2\varepsilon
}(z_{k})$, choosing $\varepsilon $ small enough so that $B_{2\varepsilon
}(z_{k})\subset X-\mathbf{C}$ (which can again be achieved by part $(ii)$),
and $B_{2\varepsilon }(z_{k})\cap Z^{alg}\subset Z-\sing\left(
\ddual{(\Gr_{\infty }\Hcal)}\right)$. Then if we let $\psi $
be a cut-off function which is $1$ on the $B_{\varepsilon }(z_{k})$ and
supported in the $B_{2\varepsilon }(z_{k})$. We define admissible metrics $
\hat{k}_{\infty }=\psi k_{\infty }+(1-\psi )\hat{k}_{0}$ on $\widehat{
\mathcal{K}}_{\infty }$ and $\hat{h}_{\infty }=\psi h_{\infty }+(1-\psi )
\hat{h}_{0}$ on $\widehat{\mathcal{E}}_{\infty }:=\ddual{\widehat{\mathcal{Q}}_{\infty }}$ where $\hat{k}_{0}$ and $\hat{h}_{0}$ arbitrary
admissible metrics on $\widehat{\mathcal{K}}_{\infty }$ and $\widehat{
\mathcal{E}}_{\infty }$ respectively. Denote by 
$A_{\widehat{\mathcal{K}}_{\infty }}$ and $\widehat{A}_{_{\infty }}$ the Chern connections of these
metrics. By construction we have connection
 $A_{\widehat{\mathcal{K}}_{\infty }}=A_{\mathcal{K}_{\infty }}$ and $\widehat{A}_{\infty }=A_{\infty
} $ on $\cup _{z_{k}}B_{\varepsilon }(z_{k})$. Proposition \ref{prop:bc-formula} gives an equation of currents
\begin{equation}\label{eqn:Bott-Chern blowup}
\ch_2\left( \widehat{\mathcal{K}}_{\infty }\oplus
 \widehat{\mathcal{E}}_{\infty },A_{\widehat{\mathcal{K}}_{\infty }}\oplus \widehat{A}_{\infty
}\right) -\ch_2\left( \pi ^{\ast }\mathcal{H},\pi ^{\ast }B\right) =\sum_{j}
\widehat{m}_{j}^{alg}\widehat{W}_{j}+dd^{c}\widehat{\Psi}\ .
\end{equation}
On $\widehat{\Sigma }$ and away from $\widehat{Z}$, we also have $dd^{c}\widehat{\Psi
}=d\CS(\widehat{B},\widehat{B}_{\infty })$, where $\widehat{B}_{\infty }=A_{
\widehat{\mathcal{K}}_{\infty }}\oplus \widehat{A}_{\infty }$ and $\widehat{B
}=\pi ^{\ast }B$. Now by Lemma
 \ref{lem:slicing1} and \eqref{eqn:Bott-Chern blowup}, we obtain:
\begin{align*}
(\left[ \Sigma \right] \cdot \left[ Z\right] )m_{Z}&= 
\frac{1}{8\pi ^{2}}\int_{\Sigma \cap (\cup _{z_{k}}B_{\varepsilon
}(z_{k}))}\left\{\tr\left( F_{B}\wedge F_{B}\right) -\tr\left(
F_{B_{\infty }}\wedge F_{B_{\infty }}\right)\right\}  \\
&\qquad\qquad\qquad\qquad -\frac{1}{8\pi^{2}}
\int_{\Sigma \cap \partial (\cup _{z_{k}}B_{\varepsilon }(z_{k}))}d^{c}\widehat{
\Psi} \\
&=
\frac{1}{8\pi ^{2}}\int_{\Sigma \cap (\cup _{z_{k}}B_{\varepsilon
}(z_{k}))}\left\{\tr\left( F_{B}\wedge F_{B}\right) -\tr\left(
F_{B_{\infty }}\wedge F_{B_{\infty }}\right)\right\} \\
&\qquad\qquad\qquad\qquad +\frac{1}{8\pi ^{2}}\int_{
\widehat{\Sigma }\cap (\cup _{z_{k}}B_{\varepsilon }(z_{k}))^{c}}dd^{c}\widehat{
\Psi}\\
&=\frac{1}{8\pi ^{2}}\int_{\Sigma \cap (\cup _{z_{k}}B_{\varepsilon
}(z_{k}))}\left\{\tr\left( F_{B}\wedge F_{B}\right) -\tr\left(
F_{B_{\infty }}\wedge F_{B_{\infty }}\right)\right\} \\
&\qquad\qquad\qquad\qquad+\frac{1}{8\pi ^{2}}\int_{
\widehat{\Sigma }\cap (\cup _{z_{k}}B_{\varepsilon }(z_{k}))^{c}}d\CS(
\widehat{B},\widehat{B}_{\infty })\\
&=\frac{1}{8\pi ^{2}}\int_{\Sigma \cap (\cup _{z_{k}}B_{\varepsilon
}(z_{k}))}\left\{\tr\left( F_{B}\wedge F_{B}\right) -\tr\left(
F_{A_{\mathcal{K}_{\infty }}}\wedge F_{A_{\mathcal{K}_{\infty }}}\right)\right\}\\
 &\hspace{-.75in} -\frac{1}{8\pi ^{2}}\int_{\Sigma \cap (\cup _{z_{k}}B_{\varepsilon
}(z_{k}))}
\tr\left( F_{A_{_{\infty }}}\wedge F_{A_{_{\infty }}}\right)
 -\frac{1}{8\pi ^{2}}\int_{\Sigma \cap \partial (\cup
_{z_{k}}B_{\varepsilon }(z_{k}))}\CS(B,B_{\infty })\ .
\end{align*}
Similarly, we have 
\begin{align*} 
\frac{1}{8\pi ^{2}}&\int_{\Sigma \cap (\cup _{z_{k}}B_{\varepsilon
}(z_{k}))}\left\{\tr\left( F_{B}\wedge F_{B}\right) -\tr\bigl(
F_{A_{\mathcal{K}_{i}}}\wedge F_{A_{\mathcal{K}_{i}}}\bigr) -\tr
\bigl(F_{A_{_{i}}}\wedge F_{A_{_{i}}}\bigr)\right\}\notag \\
&=\frac{1}{8\pi ^{2}}\int_{\Sigma \cap (\cup _{z_{k}}B_{\varepsilon
}(z_{k}))}\left\{\tr\left(F_{B}\wedge F_{B}\right) -\tr\left(
F_{B_{i}}\wedge F_{B_{i}}\right)\right\}\notag \\
&=
\frac{1}{8\pi ^{2}}\int_{\Sigma \cap (\cup
_{z_{k}}B_{\varepsilon }(z_{k}))}d\CS(B,B_{i})
=\frac{1}{8\pi ^{2}}\int_{\Sigma \cap \partial (\cup
_{z_{k}}B_{\varepsilon }(z_{k}))}\CS(B,B_{i}) \ .
\end{align*}
Hence,
\begin{align*}
(\left[ \Sigma \right] &\cdot \left[ Z\right] )m_{Z} 
=\frac{1}{8\pi ^{2}}\int_{\Sigma \cap (\cup _{z_{k}}B_{\varepsilon
}(z_{k}))}\left\{\tr\bigl( F_{A_{\mathcal{K}_{i}}}\wedge 
F_{A_{\mathcal{K}_{i}}}\bigr) -\tr\bigl( F_{A_{\mathcal{K}_{\infty }}}\wedge F_{A_{
\mathcal{K}_{\infty }}}\bigr)\right\}  \\
&+\frac{1}{8\pi ^{2}}\int_{\Sigma \cap (\cup _{z_{k}}B_{\varepsilon
}(z_{k}))}\tr\left( F_{A_{_{i}}}\wedge F_{A_{_{i}}}\right) -
\tr\left( F_{A_{_{\infty }}}\wedge F_{A_{_{\infty }}}\right)  \\
&-\frac{1}{8\pi ^{2}}\int_{\Sigma \cap \partial (\cup
_{z_{k}}B_{\varepsilon }(z_{k}))}\CS(B,B_{\infty })+\frac{1}{8\pi ^{2}}
\int_{\Sigma \cap \partial (\cup _{z_{k}}B_{\varepsilon }(z_{k}))}\CS(B,B_{i})
\\
&=\frac{1}{8\pi ^{2}}\int_{\Sigma \cap (\cup _{z_{k}}B_{\varepsilon
}(z_{k}))}\tr\bigl( F_{A_{_{i}}}\wedge F_{A_{_{i}}}\bigr) -
\tr\bigl( F_{A_{_{\infty }}}\wedge F_{A_{_{\infty }}}\bigr)  \\
&+\frac{1}{8\pi ^{2}}\int_{\Sigma \cap \partial (\cup
_{z_{k}}B_{\varepsilon }(z_{k}))}\CS(B,B_{i})-\CS(B,B_{\infty })+\CS(A_{
\mathcal{K}_{i}}\bigr|_\Sigma ,A_{\mathcal{K}_{\infty }}\bigr|_\Sigma ) \\
&=\frac{1}{8\pi ^{2}}\int_{\Sigma \cap (\cup _{z_{k}}B_{\varepsilon
}(z_{k}))}\tr\bigl( F_{A_{_{i}}}\wedge F_{A_{_{i}}}\bigr) -
\tr\bigl( F_{A_{_{\infty }}}\wedge F_{A_{_{\infty }}}\bigr)  \\
&\qquad +\frac{1}{8\pi ^{2}}\int_{\Sigma \cap \partial (\cup
_{z_{k}}B_{\varepsilon }(z_{k}))}\CS(B_\infty,B_{i})+\CS(A_{
\mathcal{K}_{i}}\bigr|_\Sigma ,A_{\mathcal{K}_{\infty }}\bigr|_\Sigma )\ ,
\end{align*}
where we have used Lemma \ref{lem:Chern-Simons} (1).
Now by Lemma \ref{lem:slicing2}, the first term on the right hand side above converges to $(\left[ \Sigma \right] \cdot \left[ Z\right] )m_{Z}^{an}$ as $i\to\infty$, whereas
the expressions for $\CS(A_{\mathcal{K}_{\infty }}\bigr|_\Sigma ,A_{
\mathcal{K}_{i}}\bigr|_\Sigma )$ and $\CS(B_{i},B_{\infty })$ show that
the second integral vanishes. Hence,
$(\left[
\Sigma \right] \cdot \left[ Z\right] )m_{Z}=(\left[ \Sigma \right] \cdot 
\left[ Z\right] )m_{Z}^{an}$. 
Since $\left[ \Sigma \right] \cdot \left[ Z
\right] \neq 0$, this implies $m_{Z}=m_{Z}^{an}$.
\end{proof}

We now  have the

\begin{proof}[Proof of Proposition \ref{prop:singset-limit}] It follows from Proposition \ref{prop:multiplicity} that $\Ccal^{alg }=\Ccal_{\mathcal{F}_{\infty }}=\Ccal_{
\mathcal{T}_{\infty }}$ is a subcycle of $\Ccal^{an}$, so we may write
\begin{equation}\label{cyclediff}
\Ccal^{an}-\Ccal^{alg} =\sum_j (m_j^{an}- m_j^{alg}) Z_j^{an}\ ,
\end{equation}
with $m_j^{an}\geq m_j^{alg}$.
On the other hand, by Lemma \ref{lem:cohomology},
$
[\Ccal^{an}-\Ccal^{alg}]
$ vanishes
in cohomology. Hence,
\begin{equation*}
0=\lbrack \Ccal^{an}-\Ccal^{alg }]\cdot \lbrack \omega ^{n-2}] \\
=\sum_{j}(m_{j}^{an}-m_{j}^{alg
})\int_{Z_{j}}\omega ^{n-2}\geq \sum_{j}(m_{j}^{an}-m_{j}^{alg
})\ ,
\end{equation*}
and therefore 
 $m_{j}^{alg
}=m_{j}^{an}$ for all $j$.
This proves the first statement. The second statement follows as in the proof of Lemma \ref{lem:singset2}.
\end{proof}

\subsection{Continuity of $\overline\Phi$}\label{subsect:continuity} We now put together the results of the previous several subsections to prove continuity of the map $\overline\Phi$. The main result of this section is the following. 
\begin{thm}\label{thm:continuity}
The map $\overline\Phi: \overline M^\mu\to \widehat{M}_{\HYM}$ defined at the end of Section \ref{sec:map} is continuous and surjects onto $\overline M_{\HYM}$.
\end{thm}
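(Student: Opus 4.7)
Since $\pi: \Zcal \to M^{\muss}$ is a quotient map by \cite[App.~A.4]{BuchdahlTelemanToma:17}, and since $\overline\Psi = \overline\Phi \circ \pi|_{\overline\Zcal_\circ}$ by \eqref{eq:relation_phi_psi}, proving continuity of $\overline\Phi$ on $\overline M^\mu$ is equivalent to proving continuity of $\overline\Psi$ on $\overline\Zcal_\circ$. Both spaces are first countable and Hausdorff (the latter by Theorem~\ref{thm:ideal_connections_compact}), so it will suffice to establish sequential continuity. My plan is first to handle sequences $q_i \to q_\infty$ with $q_i$ in the Zariski open subset $R_\circ \subset \overline\Zcal_\circ$ parametrising locally free $\mu$-stable quotients, and then to reduce the general case to this one by a diagonal argument. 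Surjectivity onto $\overline M_\HYM$ will follow quickly from continuity together with compactness of $\overline M^\mu$.

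For the first case, write $\Fcal_i, \Fcal_\infty$ for the corresponding quotient sheaves and let $A_i$ be the Donaldson-Uhlenbeck-Yau HYM connection determined by $\Fcal_i$, so that $\overline\Psi(q_i) = [A_i]$. By sequential compactness of $\widehat M_\HYM$ (Theorem~\ref{thm:ideal_connections_compact}) together with Hausdorffness and first countability, it will suffice to show that every subsequential Uhlenbeck limit of $\{[A_i]\}$ equals $\overline\Psi(q_\infty)$. This is precisely the content of Propositions~\ref{prop:cont1} and~\ref{prop:singset-limit}: the reflexive extension of any such Uhlenbeck limit is isomorphic to $\ddual{(\Gr \Fcal_\infty)}$, the analytic bubbling cycle coincides with $\Ccal_{\Fcal_\infty}$, and the higher codimension singular set is $\sing(\ddual{(\Gr \Fcal_\infty)})$. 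Combined with the construction of $\overline\Psi$ given in Section~\ref{sec:map}, this triple is exactly $\overline\Psi(q_\infty)$. As a by-product, the image of $\overline\Psi$ on $\overline\Zcal_\circ$ lies in $\overline M_\HYM$.

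For an arbitrary convergent sequence $q_i \to q_\infty$ in $\overline\Zcal_\circ$, I would exploit the fact that $R_\circ$ is Zariski open and dense in $\overline\Zcal_\circ$, hence also analytically dense, to approximate each $q_i$ by quotients $q_{i,k} \in R_\circ$ with $q_{i,k} \to q_i$ as $k \to \infty$; the first case then forces $\overline\Psi(q_{i,k}) \to \overline\Psi(q_i)$. Since $\overline\Zcal_\circ$ is metrizable and $\widehat M_\HYM$ is first countable and sequentially compact, a standard diagonal selection will produce $k_i$ such that simultaneously $q_{i,k_i} \to q_\infty$ in $\overline\Zcal_\circ$ and any prescribed subsequential limit of $\{\overline\Psi(q_i)\}$ is realised as a subsequential limit of $\{\overline\Psi(q_{i,k_i})\}$. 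Applying the first case to the diagonal sequence $q_{i,k_i} \in R_\circ$ then pins down the common limit as $\overline\Psi(q_\infty)$, and Hausdorffness closes the argument. The substantive analytic work occurs entirely in the first case, encapsulated in Propositions~\ref{prop:cont1} and~\ref{prop:singset-limit}; the diagonal step is purely topological, and I expect it to be routine.

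Finally, for surjectivity, given $[(A_\infty,\Ccal_\infty,S(A_\infty))] \in \overline M_\HYM$, Definition~\ref{def:Uhlenbeck-compactification} provides smooth HYM connections $A_i$ with $[A_i] \to [(A_\infty,\Ccal_\infty,S(A_\infty))]$ in $\widehat M_\HYM$. Each class $[A_i] \in M^\ast_\HYM$ corresponds under Donaldson-Uhlenbeck-Yau to a point $[\Ecal_i] \in M^s \subset \overline M^\mu$. Projectivity, hence compactness, of $\overline M^\mu$ supplies a subsequence with $[\Ecal_i] \to [\Ecal_\infty]$ for some $[\Ecal_\infty] \in \overline M^\mu$; the continuity just established then yields $\overline\Phi([\Ecal_i]) = [A_i] \to \overline\Phi([\Ecal_\infty])$, and the Hausdorff property of $\widehat M_\HYM$ forces $\overline\Phi([\Ecal_\infty]) = [(A_\infty,\Ccal_\infty,S(A_\infty))]$, establishing surjectivity onto $\overline M_\HYM$.
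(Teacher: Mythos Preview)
Your approach is essentially the same as the paper's: reduce continuity of $\overline\Phi$ to that of $\overline\Psi$ via a quotient map property, handle the locally free stable case using Propositions~\ref{prop:cont1} and~\ref{prop:singset-limit}, bootstrap to the general case by a diagonal argument, and derive surjectivity from compactness of $\overline M^\mu$ together with continuity.

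One point deserves tightening. You invoke the quotient map property for $\pi: \Zcal \to M^{\muss}$, but what the reduction actually requires is that the \emph{restriction} $\pi|_{\overline{\Zcal_\circ}}: \overline{\Zcal_\circ} \to \overline{M}^\mu$ be a quotient map in the analytic topology. This does not follow formally from the global statement, since $\overline{\Zcal_\circ}$ need not be $\pi$-saturated in $\Zcal$. The paper handles this by verifying that the pair $(\overline{\Zcal_\circ}^{\mathrm{cl}}, \pi|_{\overline{\Zcal_\circ}})$ satisfies property $(\mathfrak{L})$ in the sense of \cite[Def.~A.8]{BuchdahlTelemanToma:17}, using Langton's theorem and the lifting of maps to weak normalisations, and then appeals to \cite[Prop.~A.9]{BuchdahlTelemanToma:17}. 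With that in place, the rest of your plan goes through exactly as in the paper.
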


We begin with the following.

\begin{lemma} \label{lem:images}
The images of the maps $\overline{\Psi }\ $and $\overline{\Phi }$ lie in the
gauge theoretic compactification $\overline{M}_{\HYM}$. 
\end{lemma}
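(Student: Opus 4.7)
The plan is to realise any point in the image of $\overline{\Psi}$ as an Uhlenbeck limit of smooth irreducible HYM connections, thereby placing it in $\overline{M}_{\HYM}$ by Definition \ref{def:Uhlenbeck-compactification}. Since $\overline{\Zcal_\circ}$ is by construction the Zariski closure of the Zariski open subset $\Zcal_\circ$ parametrising locally free $\mu$-stable quotients, and in particular the closure in the finer analytic topology, any $q\in\overline{\Zcal_\circ}$ is the limit of a sequence $q_i\in\Zcal_\circ$ in that topology. The associated sheaves $\Fcal_i$ are locally free and $\mu$-stable with $\det\Fcal_i\cong\Jcal$, so the Donaldson--Uhlenbeck--Yau theorem (Theorem \ref{thm:DUY}) furnishes smooth irreducible HYM connections $A_i\in\Acal_{\HYM}^\ast$ in the $\Gcal^\CBbb$-orbit of $\Fcal_i$. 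By the construction in Section \ref{sec:map}, $\overline{\Psi}(q_i)=[A_i]\in M_{\HYM}^\ast\subset\widehat M_{\HYM}$.

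Next, I would regard $\{[A_i]\}$ as a sequence of ideal HYM connections (with trivial cycle and empty higher-codimension stratum). Chern--Weil theory applied to the fixed topological invariants of $E$ together with the HYM equation provides a uniform $L^2$-bound on $F_{A_i}$, so Theorem \ref{thm:ideal-convergence} applies: after passing to a subsequence (still denoted $\{i\}$), $[A_i]$ converges in $\widehat M_{\HYM}$ to some ideal HYM connection $[(A_\infty,\Ccal_\infty,S(A_\infty))]$. As the whole sequence lies in $M_{\HYM}^\ast$, this limit is an Uhlenbeck limit of smooth irreducible HYM connections and therefore belongs to $\overline M_{\HYM}$ by Definition \ref{def:Uhlenbeck-compactification}.

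The crucial step is to identify this Uhlenbeck limit with $\overline{\Psi}(q)$. Here I would invoke Propositions \ref{prop:cont1} and \ref{prop:singset-limit}, whose hypotheses are exactly those arranged above: the $\Fcal_i$ are locally free and $\mu$-stable, and the limit quotient $\Fcal_\infty$ corresponding to $q$ is torsion free and $\mu$-semistable since $q\in\overline{\Zcal_\circ}\subset R^{\muss}$. The first proposition identifies the reflexive extension $\Ecal_\infty$ of $A_\infty$ with $\ddual{(\Gr\Fcal_\infty)}$, while the second gives $\Ccal_\infty=\Ccal_{\Fcal_\infty}$ and $S(A_\infty)=\sing(\Ecal_\infty)$. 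These are precisely the data defining $\overline{\Psi}(q)$ in Section \ref{sec:map}, so the two gauge-equivalence classes in $\widehat M_{\HYM}$ coincide, and consequently $\overline{\Psi}(q)\in\overline M_{\HYM}$. The statement for $\overline{\Phi}$ then follows at once from the factorisation $\overline{\Psi}=\overline{\Phi}\circ\pi\bigr|_{\overline{\Zcal_\circ}}$ in \eqref{eq:relation_phi_psi}, together with the surjectivity of $\pi\bigr|_{\overline{\Zcal_\circ}}$ onto $\overline M^\mu$ that is built into Definition \ref{def:defining_the_closure}.

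The main obstacle is not really located in this lemma but in the preparatory Propositions \ref{prop:cont1} and \ref{prop:singset-limit}; once those are in place, the argument above is essentially organisational. The only minor subtlety worth double-checking is that every $q\in\overline{\Zcal_\circ}$ admits an approximation from within $\Zcal_\circ$ in the analytic topology, which however follows from the Zariski density of $\Zcal_\circ$ in $\overline{\Zcal_\circ}$ together with the fact that $\Zcal$, being the weak normalisation of a subscheme of a projective Quot scheme, is metrisable in the analytic topology.
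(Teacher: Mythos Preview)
Your proof is correct and follows essentially the same approach as the paper: approximate $q\in\overline{\Zcal_\circ}$ by locally free $\mu$-stable quotients $q_i\in\Zcal_\circ$, use Propositions \ref{prop:cont1} and \ref{prop:singset-limit} to identify any Uhlenbeck limit of the corresponding HYM connections with $\overline{\Psi}(q)$, and conclude via the factorisation \eqref{eq:relation_phi_psi}. The paper's version is terser (it phrases the identification as $\lim_i\overline{\Psi}(q_i)=\overline{\Psi}(q_\infty)$ without explicitly invoking compactness to extract a subsequential limit first), but the substance is identical.
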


\begin{proof}
It follows directly from Propositions \ref{prop:cont1} and \ref{prop:singset-limit} that if $q_{i}\rightarrow
q_{\infty }$ in $\overline \Zcal_{\circ}$, where $q_{i}:\mathcal{H}\rightarrow \mathcal{F}_{i}\rightarrow 0$ is a locally free, stable quotient for each $i$, then $
\lim_{i\rightarrow \infty }\overline{\Psi }(q_{i})=\overline{\Psi }
(q_{\infty })$. Since $\overline{\Psi }(q_{i})\in M_{\HYM}^{\ast }$
by construction, it therefore follows that $\overline{\Psi }(q_{\infty })\in 
\overline{M}_{\HYM}$. Since any element of $\overline \Zcal_{\circ}$ is a limit of
this kind, this implies that $\imag(\overline{\Psi })\subset
 \overline{M}_{\HYM}$ and hence that $\imag(\overline{\Phi })\subset 
\overline{M}_{\HYM}$ by \eqref{eq:relation_phi_psi}.
\end{proof}

\begin{prop}\label{prop:continuity}
The map $\overline{\Psi }:\overline \Zcal_{\circ}\rightarrow \overline{M}_{\HYM}$
is continuous.
\end{prop}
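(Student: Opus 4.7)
The plan is to prove sequential continuity, which is sufficient since $\overline{\Zcal_\circ}$ is a (quasi-)projective variety and hence metrizable, while $\widehat{M}_{\HYM}$ is first countable and Hausdorff (Theorem \ref{thm:ideal_connections_compact}). I will proceed in two steps: first establish continuity along sequences of locally free $\mu$-stable quotients (the ``interior'' of $\overline{\Zcal_\circ}$), and then reduce the general case to this one by a diagonalisation argument.

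\textbf{Step 1 (sequences in the locally free stable locus).} Suppose each $q_i\colon \Hcal\to \Fcal_i$ is locally free and $\mu$-stable, and that $q_i\to q_\infty$ in $\overline{\Zcal_\circ}$. By the Donaldson-Uhlenbeck-Yau theorem, the HYM connection $A_i$ in the complex gauge orbit of $\Fcal_i$ defines an element $\overline{\Psi}(q_i)=[(A_i,0,\emptyset)]\in M_{\HYM}^\ast\subset \widehat{M}_{\HYM}$. Applying Theorem \ref{thm:ideal-convergence} to the sequence of ideal connections $(A_i,0,\emptyset)$, I obtain, after extracting a subsequence, convergence (in the sense of Definition \ref{def:convergence}, i.e.\ in the topology of $\widehat{M}_{\HYM}$) to an ideal HYM connection $(A_\infty,\Ccal_\infty,S(A_\infty))$. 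The content of Propositions \ref{prop:cont1} and \ref{prop:singset-limit} is that the reflexive sheaf associated to $A_\infty$ is isomorphic to $\ddual{(\Gr\Fcal_\infty)}$, that the limit cycle $\Ccal_\infty$ equals the algebraic support cycle $\Ccal_{\Fcal_\infty}$, and that $S(A_\infty)=\sing(\ddual{(\Gr\Fcal_\infty)})$. By the construction of $\overline{\Psi}$ in Section \ref{sec:map}, this triple represents precisely $\overline{\Psi}(q_\infty)$. Since every subsequence of $\overline{\Psi}(q_i)$ admits a further subsequence converging to $\overline{\Psi}(q_\infty)$, and $\widehat{M}_{\HYM}$ is Hausdorff, the entire sequence $\overline{\Psi}(q_i)$ converges to $\overline{\Psi}(q_\infty)$.

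\textbf{Step 2 (the general case via diagonalisation).} Now let $q_i\to q_\infty$ be an arbitrary convergent sequence in $\overline{\Zcal_\circ}$. Take any subsequence of $\overline{\Psi}(q_i)$; by sequential compactness of $\widehat{M}_{\HYM}$ (Theorem \ref{thm:ideal_connections_compact}) a further subsequence converges to some $\Upsilon\in \widehat{M}_{\HYM}$. It suffices to show $\Upsilon=\overline{\Psi}(q_\infty)$. To that end, recall that $\overline{\Zcal_\circ}$ is the preimage of the Zariski closure of $R_\circ$, so each point of $\overline{\Zcal_\circ}$ is a limit (in the analytic topology) of points corresponding to locally free $\mu$-stable quotients. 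Thus for each $i$ I can choose a sequence $q_{i,k}\to q_i$ of such quotients; by Step 1, $\overline{\Psi}(q_{i,k})\to \overline{\Psi}(q_i)$ in $\widehat{M}_{\HYM}$ as $k\to\infty$. Fix a countable neighbourhood basis of the (common) limit point in $\widehat{M}_{\HYM}$ together with a metric on $\overline{\Zcal_\circ}$. Using first countability, I can choose $k_i$ growing fast enough so that, simultaneously, $q_{i,k_i}\to q_\infty$ in $\overline{\Zcal_\circ}$ and $\overline{\Psi}(q_{i,k_i})\to \Upsilon$ in $\widehat{M}_{\HYM}$ along the chosen subsubsequence. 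Since the $q_{i,k_i}$ are locally free and $\mu$-stable, Step 1 applies to their sequence and yields $\overline{\Psi}(q_{i,k_i})\to \overline{\Psi}(q_\infty)$. Hausdorffness forces $\Upsilon=\overline{\Psi}(q_\infty)$. As this holds for every subsequence, $\overline{\Psi}(q_i)\to \overline{\Psi}(q_\infty)$, completing the proof.

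The main obstacle is Step 1, which rests on the two substantive identifications proved earlier: the identification of the reflexive sheaf component of the Uhlenbeck limit with $\ddual{(\Gr\Fcal_\infty)}$ (Section \ref{sec:limits1}), and the identification of cycle component and singular set via the singular Bott--Chern formula and slicing arguments (Section \ref{sec:agree-cycle-component}). The diagonalisation in Step 2 is then a formal argument in a first countable Hausdorff space, provided one is careful that the diagonal sequence simultaneously converges in both the source topology (so that Step 1 applies) and the target topology (so that Hausdorffness pins down the limit).
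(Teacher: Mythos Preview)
Your proof is correct and follows essentially the same two-step structure as the paper's: first establish continuity along sequences of locally free $\mu$-stable quotients via Propositions~\ref{prop:cont1} and~\ref{prop:singset-limit}, then reduce the general case to this one by a diagonalisation argument. The only minor difference is that you justify the diagonal step purely topologically using first countability of $\widehat{M}_{\HYM}$ (Theorem~\ref{thm:ideal_connections_compact}), whereas the paper refers back to the analytic diagonalisation carried out in the proof of Proposition~\ref{prop:diagonal}; both justifications are valid, and yours is arguably cleaner once first countability has been recorded.
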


\begin{proof}
Let $q_{i}:\mathcal{H}\rightarrow \mathcal{F}_{i}\rightarrow 0$ be a
sequence of quotients in $\overline \Zcal_{\circ}$ converging to $q_{\infty }:\mathcal{H}\rightarrow \mathcal{F}_{\infty }\rightarrow 0$. 
We must show $\overline\Psi(q_i)\to \overline \Psi(q_\infty)$. By compactness it suffices to show that any subsequential limit of $\overline\Psi(q_i)$ equals $\overline\Psi(q_\infty)$.   Hence, without loss of generality we can assume $\overline\Psi(q_i)$ has an Uhlenbeck limit in $\overline M_\HYM$.
Now we don't assume that
the $\mathcal{F}_{i}$ are locally free, but for each $i$ there is a sequence
of quotients $q_{i,k}:\mathcal{H}\rightarrow \mathcal{F}_{i,k}\rightarrow 0$
with $\mathcal{F}_{i,k}$ locally free and $\mu $-stable of topological type $
E$, so that $q_{i,k}\rightarrow q_{i}$ in $\overline \Zcal_{\circ}$. 
As in the proof of the previous lemma, we have $\lim_{k\to \infty}\overline \Psi(q_{i,k})=\overline\Psi(q_i)$.
Choose a subsequence $\{k_i\}$ such that $q_{i, k_i}\to q_\infty$ in $\overline\Zcal_{\circ}$. As in the proof of Proposition \ref{prop:diagonal}, we may arrange that $\overline\Psi(q_{i,k_i})$ has the same Uhlenbeck limit as $\overline \Psi(q_i)$. 
Again using Propositions \ref{prop:cont1} and \ref{prop:singset-limit}  we conclude that $\overline\Psi(q_{i,k_i})\to \overline\Psi(q_\infty)$. The result follows.
\end{proof}

\begin{proof}[Proof of Theorem \ref{thm:continuity}]
We claim that $\pi|_{\overline{\mathcal{Z}_\circ}} : \overline{\mathcal{Z}_\circ} \twoheadrightarrow
\overline{M}^{\mu}$ is a quotient map in the analytic topology. 
Once this is established, $\overline{\Phi }$ is continuous if and only if $\overline{\Psi }$
is (cf.~\eqref{eq:relation_phi_psi}), and the latter holds by Proposition \ref{prop:continuity}. For surjectivity, we then note that by Proposition \ref{prop:diagonal}, any point in $\overline M_\HYM$ is a limit of points in the image of $\Phi$. Since $\overline M^\mu$ is compact and $\overline\Phi$ is continuous, surjectivity follows as well.

In order to see that $\pi|_{\overline{\mathcal{Z}_\circ}} : \overline{\mathcal{Z}_\circ} \twoheadrightarrow
\overline{M}^{\mu}$ is a quotient map in the analytic topology
we remark that as in the proof of \cite[Thm.\
A.7]{BuchdahlTelemanToma:17} it follows from Langton's
theorem, from the defining properties of the Quot scheme, and
from the observation that any holomorphic map from a smooth
variety into a reduced scheme $Y$ lifts to the weak
normalisation\footnote{Note also that the complex space
$(Y^{wn})^{an}$ associated with $Y^{wn}$ is the weak
normalisation of the complex space $Y^{an}$ associated with
$Y$.} $Y^{wn}$ of $Y$, that the pair
$(\overline{\Zcal_\circ}^{\mathrm{cl}},
\pi|_{\overline{\Zcal_\circ}}: \overline{\Zcal_\circ} \to
\overline{M}^{\mu})$, where $\overline{\Zcal_\circ}^{\mathrm{cl}}$ denotes the closure of $\overline{\Zcal_\circ}$ inside the weak normalisation of the Quot scheme, fulfills property $(\mathfrak{L})$ in the sense of \cite[Def.\ A.8]{BuchdahlTelemanToma:17}. The claim hence follows from \cite[Proposition A.9]{BuchdahlTelemanToma:17}
\end{proof}

\begin{rem}\label{rem:actually_compact}
As noted in the Introduction,
since $\overline{M}_{\HYM}$ is the image of a compact space under a continuous map, it is in fact compact rather than just sequentially compact. 
\end{rem}

\begin{cor}[\protect{cf.\ \cite[Thm.\ 5]{Li:93}}] \label{cor:continuity_in_surface_case}
If $\dim X = 2$, the map $\overline{\Phi} : \overline{M}^\mu \to \overline{M}_{\HYM}$ is a homeomorphism. 
\end{cor}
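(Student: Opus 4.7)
The plan is to combine the already-established properties of $\overline{\Phi}$ with the sharper form of Proposition \ref{prop:GT} available in dimension two. Specifically, by Theorem \ref{thm:continuity} the map $\overline{\Phi} : \overline{M}^\mu \to \overline{M}_{\HYM}$ is continuous and surjective, and by construction $\overline{M}^\mu$ is projective, hence compact, while $\overline{M}_{\HYM}$ is Hausdorff (Theorem \ref{thm:mainI}). Thus it suffices to establish injectivity, since a continuous bijection from a compact space onto a Hausdorff space is automatically a homeomorphism.

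To prove injectivity when $\dim X = 2$, I would argue as follows. Suppose $[\Ecal_1], [\Ecal_2] \in \overline{M}^\mu$ are represented by $\mu$-semistable sheaves $\Ecal_1, \Ecal_2$ with $\overline{\Phi}([\Ecal_1]) = \overline{\Phi}([\Ecal_2])$. By the definition of $\overline{\Phi}$ recalled in the Introduction (and made precise in Section \ref{sec:map}), this point of $\overline{M}_{\HYM}$ is represented by the gauge equivalence class of the ideal connection associated to the pair $(\ddual{(\Gr\Ecal_i)}, \Ccal_{\Ecal_i})$. As discussed in Section \ref{subsect:idealconnections}, gauge equivalence classes of ideal connections are in bijection with isomorphism classes of such pairs $(\Ecal, \Ccal)$. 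Hence the equality in $\overline{M}_{\HYM}$ forces $\ddual{(\Gr\Ecal_1)} \cong \ddual{(\Gr\Ecal_2)}$ as polystable reflexive sheaves and $\Ccal_{\Ecal_1} = \Ccal_{\Ecal_2}$ as $0$-cycles.

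At this point the surface-specific content of Proposition \ref{prop:GT} (the converse statement for $\dim X = 2$) enters: these two identifications are precisely the data that characterises when two points of $M^{\muss}$ coincide in dimension two, so $[\Ecal_1] = [\Ecal_2]$ in $M^{\muss}$, and therefore also in $\overline{M}^\mu$. This establishes injectivity and completes the proof.

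The potential obstacle one might worry about is that $\overline{\Phi}$ may not be injective in higher dimensions, and indeed the analysis of Section \ref{sect:equivalence_relation} is devoted precisely to handling the failure of injectivity via a finite equivalence relation. In dimension two, however, this issue evaporates because of the connectedness (in fact irreducibility) of the relevant Quot-to-Chow fibres (see the remark following Proposition \ref{prop:fibres}), which is exactly the geometric input needed for the converse in Proposition \ref{prop:GT}. Hence no further work is required beyond invoking Theorem \ref{thm:continuity} and Proposition \ref{prop:GT}, and the statement recovers Jun Li's original homeomorphism between the slope compactification and the Donaldson-Uhlenbeck compactification (cf.\ Remark \ref{rem:twodimensions}).
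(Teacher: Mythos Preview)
Your proof is correct and follows essentially the same approach as the paper: both reduce to injectivity via continuity and surjectivity of $\overline{\Phi}$ together with compactness of the domain and Hausdorffness of the target, and both deduce injectivity from the surface-case converse characterisation of points in $M^{\muss}$. The only cosmetic difference is that you invoke Proposition~\ref{prop:GT} internally, whereas the paper cites the underlying results \cite[Thm.~5.5 and Prop.~5.8]{GrebToma:17} directly.
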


\begin{proof}
Theorem\ \ref{thm:continuity} implies that $\overline{\Phi}$ is continuous, proper, and onto; hence in order to prove the lemma it suffices to show that it is one-to-one. This however follows from \cite[Thm.~5.5 and Prop.~5.8]{GrebToma:17}.
\end{proof}

\begin{rem}\label{rem:identify_in_surface_case}
 As a consequence of the previous corollary, on surfaces we may identify the two moduli spaces $\overline{M}^\mu$ and $\overline{M}_{\HYM}$ using the map $\overline{\Phi}$. In particular, we may think of elements of $\overline{M}^\mu$ as equivalence classes $[(\mathcal{E}, \mathcal{C})] $ of pairs. Moreover, we note that it follows from the corollary that in the surface case $\overline{M}^\mu$ is homeomorphic (but not necessarily biholomorphic) to the compactification constructed by Jun Li in \cite[\S3]{Li:93}, the potential difference in complex structures stemming from the fact that we consider the determinant line bundle on the weak normalisation of the Quot scheme, not the Quot scheme itself.
\end{rem}

\section{A complex structure on $\overline M_\HYM$ } \label{sect:equivalence_relation}
In this section we prove Theorem \ref{thm:mainII} by analysing the natural finite equivalence relation on the space $\overline M^\mu$ provided by the map $\overline{\Phi}$. We do this by studying sections of the line bundle $\mathscr L_{n-1}$ in more detail. 

In Section \ref{sec:finite-relation} we define a natural
candidate for the structure sheaf on $\overline M_\HYM$ and
state a criterion for this to be the structure sheaf of an
actual complex space. In the following subsections we verify
this criterion in our case by constructing certain saturated
neighbourhoods (see Definitions
\ref{defi:admissible-hypersurface-pair} and
\ref{def:admissible-surface} and Proposition
\ref{prop:transversality-on-close-classes} of Section
\ref{subsect:admissible_flags}) around each point of
$\overline M^\mu$ and morphisms to projective spaces which are
constant on this relation (see Section
\ref{subsect:higher-dim_separation} and in particular Lemma
\ref{lemma:rational maps} and Proposition
\ref{prop:nonseparation}). These maps are constructed by
"lifting" sections from certain well chosen curves which lie
inside of certain complete intersection surfaces
 $S$ in $X$ to obtain sections of $\mathscr L_{n-1}$, (see Sections \ref{subsect:lifting_GT} and \ref{subsect:lifting_II}). We then check the maps are constant on the relation by comparing them to the corresponding (global) morphism on the moduli space of $\mu$-semistable sheaves on $S$ (see Proposition \ref{prop:nonseparation} and Section \ref{subsect:surface_case_separation}). The lifting procedure can only be performed away from a certain proper subvariety of $\overline\Zcal_{\circ}$, due to the loss of flatness of the universal sheaf upon successive restrictions. We therefore require that we be able to construct this non-flatness locus in such a 
way that
 the sections we obtain from lifting extend over it. We give
various results about restriction and flatness in Sections
\ref{subsect:restriction} and
\ref{subsect:extension_restriction}. Finally, to show the
criterion holds, we need to know that (products of) the locally defined maps in question separate the points of $\overline M_\HYM$. This will follow from Proposition \ref{prop:nonseparation} once we know that an element of $\overline M_\HYM$ is determined by its 
restriction to a finite number of appropriately chosen complete intersection surfaces $S_i$ (see Corollary \ref{cor:admissible_plus_lifting} and Lemma \ref{lemma: cycle separation}).

\subsection{The fibres of $\overline{\Phi}$ and the equivalence relation on $\overline M^\mu$} \label{sec:finite-relation}
We begin by introducing some standard terminology regarding equivalence relations.
\begin{defi}
 An equivalence relation $R$ on a locally compact (Hausdorff) space $X$ is called \emph{proper} if for every compact set $K \subset X$, its saturation $R(K)$ under the equivalence relation is compact. Equivalently, when endowed with the quotient topology, $X / R$ is locally compact (in particular, Hausdorff), and the quotient map $\pi: X \to X/R$ is proper. A proper equivalence relation with finite equivalence classes is called \emph{finite}.

\end{defi}

\begin{rem}
 If $X$ and $Y$ are locally compact spaces, and if $f: X \to Y$ is a continuous proper map, then the equivalence relation defined by $f$, i.e., $x \sim y$ if and only if $f(x) = f(y)$, is proper.
\end{rem}

Returning to the specific situation at hand, by Proposition \ref{prop:fibres}, the map $\overline\Phi: \overline M^\mu\to \overline M_{\HYM}$ is finite to one. Explicitly, for each $[(\mathcal{E},\mathcal{C})]$ in  $\overline M_\HYM$, there is an injection $\overline{\Phi} ^{-1}([(\mathcal{E},\mathcal{C})])\hookrightarrow \pi _{0}(\chi ^{-1}(
\mathcal{C))}$
where we recall that
\[
\chi : \Quot(\mathcal{E},\tau _{\mathcal{E}}-\tau _{E})\lra {\Cscr}_{n-2}(X)
\]
is the natural Quot to Chow morphism defined in Section \ref{sec:moduli}. 

We define an equivalence relation $R\subset \overline M^\mu\times \overline M^\mu$ on the space $\overline{M}^\mu
$, defined by the property $([\mathcal{F}_{1}],[\mathcal{F}_{2}])\in R$ if and only if $[\mathcal{F}_{1}]$ 
and $[\mathcal{F}_{2}]$ are in the same fibre of $\overline{\Phi }$. 
By Remark~\ref{rem:actually_compact},  $\overline M_\HYM$ is compact, and hence locally compact. By construction there is a bijection $\overline M^\mu/R\simeq 
\overline{M}_{\HYM}$,
 where $\overline M^\mu/R$ is the topological quotient of $
\overline M^\mu$ given by identifying all points in an equivalence class of $
R$. By Theorem \ref{thm:continuity} and the universal property of the quotient topology, this is a homeomorphism. We conclude that $R$ is a  finite equivalence relation. 

In order to put a complex structure on $\overline M_\HYM$ it therefore suffices to understand the equivalence relation $R$ and under what conditions the quotient of a complex analytic space by a proper equivalence relation can be endowed with a sheaf of functions making it into a complex space. As a first step in  this direction, we make $X/R$ into a ringed space: given a proper equivalence relation $R$ on a reduced complex space $X$ and an open subset $U \subset X/R$, we set $\mathcal{O}_{X/R}(U):= \mathcal{O}_X(\pi^{-1}(U))^R$, where the latter is the algebra of $R$-invariant holomorphic functions on $\pi^{-1}(U)$.

In order to see that the resulting ringed space $(X/R, \mathcal{O}_{X/R})$ is in fact a complex space, we will use the following fundamental result of Henri Cartan, see \cite[Main Theorem]{Cartan:60}.
\begin{thm}[{\sc Cartan's criterion}] \label{thm:Cartan_criterion}
 Consider a proper equivalence relation $R$ on a reduced complex space $X$ with quotient map $\pi: X \to X/R$. In order that the ringed space $X/R$  be a complex space, it suffices that each point of $X/R$ has an open neighbourhood $V$ such that the $R$-invariant holomorphic maps $\pi^{-1}(V) \to Z$ $($$Z$ being a complex space$)$ separate the equivalence classes in $\pi^{-1}(V)$.
\end{thm}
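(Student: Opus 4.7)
The plan is to build a local complex-space model for $X/R$ near each point $\overline{x}_0 \in X/R$ using an $R$-invariant holomorphic map constructed from the separation hypothesis, and then to glue. Fix $\overline{x}_0$, let $V$ be the neighbourhood guaranteed by the hypothesis, and set $U := \pi^{-1}(V)$. After composing each separating map with a local embedding of its target into affine space, I may assume the hypothesised family consists of $R$-invariant holomorphic maps $U \to \mathbb{C}^{k}$. The goal is then to produce, possibly after shrinking $V$ to a smaller saturated neighbourhood $V' \subset V$ and writing $U' := \pi^{-1}(V')$, a single such map $F \colon U' \lra \mathbb{C}^{N}$ that is injective on $R$-equivalence classes and realises $V'$ as a closed analytic subset of its target.

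To produce $F$, I would shrink $V$ to a relatively compact saturated subneighbourhood and use properness of $R$ to note that each $R$-equivalence class contained in $U'$ is compact. A standard covering argument on the complement of the diagonal in $(V')^{2}$, combined with Noetherianity of coherent ideals controlling how many separating functions are needed locally, should permit extracting from the separating family a finite subfamily whose product $F$ still separates all classes in $U'$. Since the fibres of the induced map $\overline{F} \colon V' \to \mathbb{C}^{N}$ are singletons and $R$ is proper, $F$ itself is proper onto its image after further shrinking; Remmert's proper mapping theorem then identifies $Y := F(U')$ with a closed analytic subset of an open set in $\mathbb{C}^{N}$, and $\overline{F} \colon V' \to Y$ is a continuous bijection between locally compact Hausdorff spaces, hence a homeomorphism. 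Transporting the complex structure of $Y$ back along $\overline{F}$ produces a candidate local complex-space structure on $V'$.

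The main obstacle, and the heart of the argument, is to verify that the induced complex structure on $V'$ is the one encoded by the presheaf $\mathcal{O}_{X/R}$. The inclusion $\overline{F}^{\ast}\mathcal{O}_{Y} \hookrightarrow \mathcal{O}_{X/R}|_{V'}$ is immediate from the construction, but the reverse direction requires showing that every germ of $R$-invariant holomorphic function $f$ on $U'$ arises as $g \circ F$ for a holomorphic $g$ on $Y$. I would establish this by enlarging the finite family defining $F$ to include $f$, obtaining a larger analytic model $Y' \subset \mathbb{C}^{N+1}$ whose projection to $\mathbb{C}^{N}$ restricts to a biholomorphism $Y' \to Y$; the extra coordinate then transfers back along this biholomorphism to furnish the desired $g$. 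Once the identification $\overline{F}^{\ast}\mathcal{O}_{Y} \cong \mathcal{O}_{X/R}|_{V'}$ is in place, the resulting local models glue automatically because the structure sheaf $\mathcal{O}_{X/R}$ is intrinsic, yielding the required global complex-space structure on $X/R$.
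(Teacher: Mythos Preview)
The paper does not give a proof of this theorem at all: it is stated as a known result and attributed to Cartan via the citation ``see \cite[Main Theorem]{Cartan:60}''. So there is no in-paper proof to compare your proposal against; the authors simply invoke the result as a black box and then spend the remainder of Section~5 verifying its hypothesis (Claim~5.3) for the specific equivalence relation $R$ on $\overline{M}^\mu$.

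That said, your sketch follows the standard line of argument for results of this type and is broadly reasonable. A few places would need tightening if you were writing this up in earnest: the ``Noetherianity of coherent ideals'' justification for passing to a finite separating family is vague and not obviously what is needed here (a more direct compactness argument on pairs usually suffices); and the step where you argue that $F$ becomes proper onto its image ``after further shrinking'' deserves care, since one must simultaneously keep the domain $R$-saturated. But these are details of execution rather than conceptual gaps, and in any case the paper treats the whole statement as input.
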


 Then, in order to prove that  $\overline M_\HYM$ is a complex space it suffices to prove that the equivalence classes of the relation $R\subset \overline M^\mu\times \overline M^\mu$ locally over $\overline M_\HYM$ can be separated by $R$-invariant holomorphic maps. We will in fact prove the stronger claim that locally over $\overline M_\HYM$ there exists holomorphic maps to some complex spaces having as fibres exactly the given equivalence classes (see Claim\ \ref{Claim}). The proof of this fact will occupy the remainder of Section~\ref{sect:equivalence_relation}. 

\subsection{Restriction theorems}\label{subsect:restriction}
Throughout the remainder of this section we will need to consider the restriction of sheaves to various subvarieties of $X$. We begin with the following crucial semistable restriction theorem. 

\begin{thm}[Langer, \cite{Langer:04}]\label{thm:langer} Let $X$ be a smooth projective variety, and let $\mathcal{O}_X(1)$ be an ample line bundle on $X$. 
Let $\{\mathcal{F}_{i}\}_{i\in \Scal}$ be a family of
 $\mu$-(semi)stable sheaves on $X$ with fixed rank and Chern classes. Then, there is a positive
integer $k_{0}$ which is independent of $i$, so that for any $k\geq k_{0}$
and any smooth divisor $D\in |\mathcal{O}_X(k)|$, if $(\Gr\mathcal{F}_{i})|_{D}$ is torsion free for some Seshadri graduation of $\Fcal_i$, then $\mathcal{F}_{i}|_{D}$ remains $\mu $-(semi)stable.
\end{thm}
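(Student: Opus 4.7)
The plan is to deduce the statement from Langer's effective restriction theorem for $\mu$-stable sheaves, applied after a Jordan-Hölder reduction, with the uniformity secured by the boundedness results proved earlier.

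First, I would reduce to the $\mu$-stable case. By Proposition~\ref{prop:Gr-boundedness} the family of Seshadri graduations $\{\Gr\mathcal{F}_i\}_{i\in\Scal}$ is bounded, and hence so is the family $\Scal'$ consisting of all $\mu$-stable direct summands of these graduations (as direct summands of a bounded family form a bounded family, and only finitely many numerical types can arise). Each member of $\Scal'$ is a torsion-free $\mu$-stable sheaf with a numerical type drawn from a finite set. I would then invoke Langer's effective restriction theorem for $\mu$-stable sheaves (\cite{Langer:04}): there exists $k_0\in\NBbb$, depending only on the bounded family $\Scal'$, such that for every $k\geq k_0$, every $Q\in\Scal'$, and every smooth $D\in|\mathcal{O}_X(k)|$ for which $Q|_D$ is torsion free, the restriction $Q|_D$ is $\mu$-stable on $D$. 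Taking the maximum over the finitely many numerical types yields a single $k_0$ that works for the entire family $\Scal'$, and therefore for $\Scal$.

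Next, I would transfer this back to the original sheaves. Fix $i$, and a Seshadri filtration
\[
0=\mathcal{F}_{i,0}\subsetneq \mathcal{F}_{i,1}\subsetneq\cdots\subsetneq \mathcal{F}_{i,\ell}=\mathcal{F}_i
\]
whose associated graded realizes the graduation satisfying the torsion-freeness hypothesis, so that $(\Gr\mathcal{F}_i)|_D=\bigoplus_j Q_j|_D$ is torsion free on $D$. In particular, each individual summand $Q_j|_D$ is torsion free, which for a torsion-free sheaf on a smooth variety restricted to a smooth hypersurface is equivalent to $\mathcal{T}or_1^{\mathcal{O}_X}(Q_j,\mathcal{O}_D)=0$. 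Consequently, the short exact sequences
\[
0\lra \mathcal{F}_{i,j-1}\lra \mathcal{F}_{i,j}\lra Q_j\lra 0
\]
remain short exact after restriction to $D$, producing a filtration of $\mathcal{F}_i|_D$ whose successive quotients are exactly the $Q_j|_D$. By the stable case, each $Q_j|_D$ is $\mu$-stable, and they all share the same slope $\mu(Q_j)\cdot k=\mu(\mathcal{F}_i)\cdot k=\mu(\mathcal{F}_i|_D)$. A filtration with $\mu$-stable quotients of constant slope certifies $\mu$-semistability, settling the semistable part of the statement.

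For the stable case, the Seshadri filtration is trivial, $\Gr\mathcal{F}_i=\mathcal{F}_i$, and the claim reduces immediately to the assertion of Langer's theorem for a single element of the bounded family $\Scal=\Scal'$. The main obstacle is therefore not the Jordan-Hölder bookkeeping (which is routine once the $\mathcal{T}or_1$-vanishing is observed) but rather the input from \cite{Langer:04}, namely the existence of a uniform $k_0$ on a bounded family of $\mu$-stable sheaves and for arbitrary smooth divisors (as opposed to merely generic ones). This effective content rests on Langer's refined Bogomolov-type inequalities applied to potential destabilisers of $Q|_D$, and is precisely the ingredient that upgrades the classical Mehta--Ramanathan statement to the form used throughout the remainder of this section.
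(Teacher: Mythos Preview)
Your argument is correct, but it takes a longer route than the paper's. The paper's proof is a single sentence: it observes that the explicit bound in Langer's Theorem~5.2 and Corollary~5.4 depends only on the rank and Chern classes of the sheaf in question, and since these are \emph{fixed} across the family $\{\mathcal{F}_i\}$ by hypothesis, the bound $k_0$ is automatically uniform. In other words, the paper applies Langer's semistable restriction theorem directly to each $\mathcal{F}_i$ (after noting that $(\Gr\mathcal{F}_i)|_D$ torsion free implies $\mathcal{F}_i|_D$ torsion free), rather than passing through the stable summands.

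Your approach instead decomposes each $\mathcal{F}_i$ into its stable Jordan--H\"older pieces, invokes boundedness (Proposition~\ref{prop:Gr-boundedness}) to control the numerical types of the summands, applies only the \emph{stable} version of Langer's theorem to each summand, and then reassembles via the filtration. This works, and it has the minor expository merit of making transparent why the hypothesis is phrased in terms of $\Gr\mathcal{F}_i$ rather than $\mathcal{F}_i$ itself. But it introduces an unnecessary layer: you need to argue that a bounded family has only finitely many numerical types in order to take a maximum, whereas the paper sidesteps this entirely because the invariants of $\mathcal{F}_i$ are already fixed. The paper's route is shorter and uses Langer's semistable statement (Thm.~5.2) as a black box; yours reduces everything to the stable statement (Cor.~5.4) at the cost of the extra bookkeeping.
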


This follows immediately from \cite[Thm 5.2 and Cor. 5.4]{Langer:04} by noticing
that the right hand side of the inequality provided there depends only on
the rank and Chern classes of the sheaf in question.

A higher-dimensional, slightly weaker analogue of the following result  will be proven in Proposition\ \ref{prop: flag existence} and Corollary\ \ref{cor:admissible_plus_lifting} below. 
\begin{prop} \label{prop: surface sheaf separation} 
Let $\mathcal{\{F}_{i}\}_{i\in \mathcal{S}}$  be a set of
 $\mu $-semistable sheaves on a polarised surface $(S, \mathcal{O}_S(1))$ with
fixed rank and Chern classes. Then there is a positive integer $k_0\gg0$ which is independent of $i$, such that for all $k \geq k_0$
 there is a finite subset $\Sigma\subset |\mathcal{O}_S(k)|$ consisting of smooth curves such that for any two sheaves $\mathcal{F}_{i_{1}},\mathcal{F}_{i_{2}}$, there exists a  curve $C\in \Sigma$ such that 
 \begin{enumerate}
  \item  $\sing (\Gr(\mathcal{F}_{i_1})) \cap C = \sing (\Gr(\mathcal{F}_{i_1})) \cap C =\emptyset$,
 \item $\mathcal{F}_{i_{1}}|_{C}$ and $\mathcal{F}_{i_{2}}|_{C}$ remain
semistable,
\item $\Gr(\mathcal{F}_{i_{1}})^{\vee \vee }\cong \Gr(\mathcal{F}
_{i_{2}})^{\vee \vee }$ if and only if $\mathcal{F}_{i_{1}}|_{C}$ and $
\mathcal{F}_{i_{2}}|_{C}$ are $s$-equivalent.
 \end{enumerate}
\end{prop}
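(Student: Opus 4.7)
The plan is to combine Langer's restriction theorem (Theorem~\ref{thm:langer}) with the boundedness results of Section~\ref{sec:boundedness} and a Noetherian covering argument on a parameter space for the double duals $\{(\Gr\mathcal{F}_i)^{\vee\vee}\}_{i\in\Scal}$.

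To set up, I would first observe that by Propositions~\ref{prop:Gr-boundedness} and~\ref{prop:boundedness} both $\{\Gr\mathcal{F}_i\}_{i\in\Scal}$ and $\{(\Gr\mathcal{F}_i)^{\vee\vee}\}_{i\in\Scal}$ are bounded, with the latter parametrised by a scheme $T$ of finite type carrying a flat family $\mathscr{E}\to S\times T$. Since $\dim S=2$, each $\sing(\Gr\mathcal{F}_i)$ is a $0$-cycle whose degree is uniformly bounded by Corollary~\ref{Cor:singsetboundedness}. Applying Theorem~\ref{thm:langer} produces a $k_0^{(1)}$ such that, for every $k\geq k_0^{(1)}$ and every smooth $C\in |\mathcal{O}_S(k)|$ disjoint from $\sing(\Gr\mathcal{F}_i)$, the restriction $\mathcal{F}_i|_C$ is $\mu$-semistable with Seshadri graduation equal to $(\Gr\mathcal{F}_i)|_C=(\Gr\mathcal{F}_i)^{\vee\vee}|_C$. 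If such a $C$ is disjoint from the singular sets of both $\Gr\mathcal{F}_{i_1}$ and $\Gr\mathcal{F}_{i_2}$, properties~(1) and~(2) hold, and the $s$-equivalence in~(3) becomes equivalent to $(\Gr\mathcal{F}_{i_1})^{\vee\vee}|_C\cong(\Gr\mathcal{F}_{i_2})^{\vee\vee}|_C$. The "if"-direction of~(3) is then immediate.

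The converse is the heart of the argument. Consider the constructible subset
\[
W := \{(t_1,t_2)\in T\times T \mid \mathscr{E}_{t_1}\not\cong\mathscr{E}_{t_2}\}
\]
and, for each smooth $C\in |\mathcal{O}_S(k)|$, the subset
\[
U_C := \{(t_1,t_2)\in W \mid C\cap(\sing\mathscr{E}_{t_1}\cup\sing\mathscr{E}_{t_2})=\emptyset\text{ and }\mathscr{E}_{t_1}|_C\not\cong\mathscr{E}_{t_2}|_C\},
\]
which is open in $W$ by upper semi-continuity of intersection dimensions and of the $h^0$ of the relative $\mathcal{H}om$ along $C$. Assuming the $\{U_C\}$ cover $W$, Noetherianness of $W$ yields a finite subcover $U_{C_1},\ldots,U_{C_N}$, and setting $\Sigma:=\{C_1,\ldots,C_N\}$ does the job: given any pair $(\mathcal{F}_{i_1},\mathcal{F}_{i_2})$, take any $C\in\Sigma$ satisfying~(1) for the pair when $\mathscr{E}_{t_{i_1}}\cong\mathscr{E}_{t_{i_2}}$, and any $C_j$ with $(t_{i_1},t_{i_2})\in U_{C_j}$ otherwise.

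The principal obstacle, and the actual content of the proposition, is verifying that the $\{U_C\}$ cover $W$ uniformly for $k\geq k_0$ sufficiently large. For a fixed pair, a Serre-vanishing argument applied to $\mathcal{H}om(\mathscr{E}_{t_1},\mathscr{E}_{t_2})(-k)$ makes the restriction map
\[
H^0(S,\mathcal{H}om(\mathscr{E}_{t_1},\mathscr{E}_{t_2})) \hookrightarrow H^0(C,\mathcal{H}om(\mathscr{E}_{t_1},\mathscr{E}_{t_2})|_C)
\]
injective; since no global section on $S$ is an isomorphism (the $\mathscr{E}_{t_j}$ being polystable and non-isomorphic), and "being an isomorphism" is open on fibres, the same holds on a generic $C\in|\mathcal{O}_S(k)|$ avoiding both singular sets, so $\mathscr{E}_{t_1}|_C\not\cong \mathscr{E}_{t_2}|_C$ and $(t_1,t_2)\in U_C$. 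Making the threshold $k_0$ independent of the pair requires Mumford regularity applied to the universal $\mathcal{H}om$-sheaf over $S\times T\times T$; this uniformity is the key technical step.
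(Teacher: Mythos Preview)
Your overall architecture is reasonable but diverges from the paper's, and the key step in your converse direction has a genuine gap.

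\medskip

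\noindent\textbf{Difference in strategy.} The paper does not parametrise the double duals by a scheme $T$ and run a Noetherian covering on $T\times T$. Instead it uses a direct pigeonhole argument: by boundedness (Section~\ref{sec:boundedness}) there is a uniform bound $m$ on the number of points in $\sing(\Gr\mathcal{F}_i)$, and one simply chooses more than $4m$ smooth curves in $|\mathcal{O}_S(k)|$ such that no three of them pass through a common point. Then for any pair $(\mathcal{F}_{i_1},\mathcal{F}_{i_2})$ at most $2m$ of the curves can hit one of the at most $2m$ singular points, so some curve $C$ misses both singular sets. This is more elementary than your Noetherian covering and sidesteps the question of whether your sets $U_C$ are genuinely open in $W$ (the condition $\mathscr{E}_{t_1}|_C\not\cong\mathscr{E}_{t_2}|_C$ is not manifestly open from upper semi-continuity of $h^0$ alone).

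\medskip

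\noindent\textbf{The gap.} For the converse in (3) you invoke \emph{injectivity} of
\[
H^0\bigl(S,\mathcal{H}om(\mathscr{E}_{t_1},\mathscr{E}_{t_2})\bigr)\hookrightarrow H^0\bigl(C,\mathcal{H}om(\mathscr{E}_{t_1},\mathscr{E}_{t_2})|_C\bigr)
\]
and then assert that ``since no global section on $S$ is an isomorphism \ldots\ the same holds on a generic $C$''. Injectivity does not give this: there could be sections on $C$, not in the image of restriction, which are isomorphisms. The phrase ``being an isomorphism is open on fibres'' does not bridge the gap --- it concerns where a \emph{fixed} morphism is an isomorphism, not whether new isomorphisms can appear on $C$. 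What is actually needed, and what the paper uses, is \emph{surjectivity} of the restriction map, which follows from $H^1\bigl(S,\mathcal{H}om(\mathscr{E}_{t_1},\mathscr{E}_{t_2})(-kH)\bigr)=0$ for $k$ large (Serre vanishing and duality, uniformly over the bounded family). Then any isomorphism $\mathscr{E}_{t_1}|_C\cong\mathscr{E}_{t_2}|_C$ lifts to a morphism on $S$, and one checks separately that this lift is itself an isomorphism (the paper cites \cite{GKP13}; on a surface with polystable locally free sheaves of equal determinant one can also argue via $\det$). Replacing ``injective'' by ``surjective'' and supplying the lift-is-iso step would repair your argument, after which your Noetherian route would give an alternative (if heavier) proof.
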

\begin{proof}
Since the family $\{\mathcal{F}_{i}\}_{i\in \mathcal{S}}$ is in particular a
bounded family, the arguments of 
 Section \ref{sec:boundedness} imply that the number of singular
points of the sheaves $\{\Gr(\mathcal{F}_{i})\}_{i\in \mathcal{S}}$ is
bounded as $i$ ranges over the set $\mathcal{S}$. Here we have chosen
arbitrary Seshadri filtrations of the $\mathcal{F}_{i}$, but we recall
that the cycles associated to $\Gr(\mathcal{F}_{i})^{\vee \vee }/\Gr(\mathcal{F}_{i})$, and in particular the sets sing$(\Gr(\mathcal{F}_{i}))$, are
independent of this choice. Fix an upper bound $m$ for the maximum number of
points of such a singular set. Fix a number $k$ that satisfies the
hypotheses of Theorem \ref{thm:langer}, and choose $n>4m$ curves in $
|\mathcal{O}_X(k)|$ so that no three have a common intersection point. Then for any
two fixed sheaves $\mathcal{F}_{i_{1}}$ and $\mathcal{F}_{i_{2}}$, there
must be a curve $C$ among the chosen curves that misses the singularities of
both $\Gr(\mathcal{F}_{i_{1}})$ and $\Gr(\mathcal{F}_{i_{2}})$, and in
particular $(\Gr\mathcal{F}_{i_{1}})|_{C}$ and $(\Gr\mathcal{F}_{i_{2}})|_{C}$ are
locally free (and in particular torsion free) and therefore $\mathcal{F}_{i_{1}}|_{C}$ and $\mathcal{F}_{i_{2}}|_{C}$ are semistable.
Moreover, the summands of each of $\Gr(\mathcal{F}_{i_{1}})|_{C}$ and $\Gr(
\mathcal{F}_{i_{2}})|_{C}$ remain slope stable, all of equal slope, and are
the quotients of the restricted filtrants of the respective Seshadri filtrations. This means that the restrictions of the Seshadri
filtrations to $C$  remain Seshadri filtrations for 
$\mathcal{F}_{i_{1}}|_{C}$ and $\mathcal{F}_{i_{2}}|_{C}$. Notice that on $C$, the notion 
of Seshadri filtration is the same for
slope or GM-stability. We therefore obtain
\begin{align*}
\Gr(\mathcal{F}_{i_{1}})^{\vee \vee }|_{C} \cong \Gr(\mathcal{F}
_{i_{2}})^{\vee \vee }|_{C} 
&\Longleftrightarrow\ \Gr(\mathcal{F}
_{i_{1}})|_{C}\cong \Gr(\mathcal{F}_{i_{2}})|_{C} \\
\Longleftrightarrow\ \Gr(\mathcal{F}_{i_{1}}|_{C})\cong \Gr(\mathcal{F}
_{i_{2}}|_{C}) 
&\Longleftrightarrow\ \gr(\mathcal{F}_{i_{1}}|_{C})\cong \gr(
\mathcal{F}_{i_{2}}|_{C})\ .
\end{align*}
We claim that 
$$\Gr(\mathcal{F}_{i_{1}})^{\vee \vee }|_{C}\cong 
\Gr(\mathcal{F}_{i_{2}})^{\vee \vee }|_{C}
\Longleftrightarrow\Gr(\mathcal{F}_{i_{1}})^{\vee
\vee }\cong \Gr(\mathcal{F}_{i_{2}})^{\vee \vee }\ .
$$
 One direction is obvious. For the other, we consider the bounded family of locally free sheaves $\{\mathcal{E}_{i_{1},i_{2}}:=\mathcal{H}om(\Gr(\mathcal{F}_{i_{1}})^{\vee \vee },\Gr(
\mathcal{F}_{i_{2}})^{\vee \vee })\}_{i_{1},i_{2}\in \mathcal{S}}$. By possibly increasing the size of $k$, we can
ensure using Serre vanishing and duality that $H^{1}(S, \mathcal{E}_{i_{1},i_{2}}(-kH))=0$ for all
choices of $i_{1}$ and $i_{2}$. Now considering the exact sequence
\[
0\lra \mathcal{E}_{i_{1},i_{2}}(-kH)\lra \mathcal{E}_{i_{1},i_{2}}\lra \mathcal{E}_{i_{1},i_{2}}|_{C}\lra 0\ ,
\]
and the induced long exact sequence in cohomology, we see that $H^{0}(\mathcal{E}_{i_{1},i_{2}})$ surjects onto $ H^{0}(\mathcal{E}_{i_{1},i_{2}}|_{C})$. Therefore,
any isomorphism $\Gr(\mathcal{F}_{i_{1}})^{\vee \vee }|_{C}\cong \Gr(\mathcal{F}_{i_{2}})^{\vee \vee }|_{C}$ can be lifted to a map
 $\Gr(\mathcal{F}_{i_{1}})^{\vee \vee } \to \Gr(\mathcal{F}_{i_{2}})^{\vee \vee }$, which turns out to be an isomorphism for example by the arguments laid out in step 3 and 4 of the proof of Proposition 5.1 in the preprint version of \cite{GKP13}.
\end{proof}
Proposition\ \ref{prop: surface sheaf separation} is very close to being the same as \cite[Lemma 5.4]{GrebToma:17}, but
the important point that we wish to bring out here is that we may choose $k$
to be independent of the sheaves in question as we range over a bounded
family. The conclusion, however, is somewhat weaker. Namely, for each pair of sheaves in the family we only ask that there be one curve in the chosen finite set having 
the stated property holds, rather than this holding for a
 generic curve in the linear system.  

\subsection{Admissible flags and neighbourhoods in $\overline M_{\HYM}$}\label{subsect:admissible_flags}
In this subsection we give some preliminary definitions required to understand the case of higher dimensions in the next subsection. In the following we will try to exploit properties of the moduli space of semistable sheaves on projective surfaces. We will reduce ourselves to the surface case by successively cutting with (smooth) hypersurfaces in $X$. Recall that a hypersurface $X'$ in $X$ is {\em regular} for a coherent sheaf $\cF$ on $X$, or {\em $\cF$-regular}, if the natural morphism $\cF(-X')\to\cF$ is injective. This is the case if and only if $X'$ contains none of the associated points of $\cF$ (cf.\ \cite[page 8]{HuybrechtsLehn:10}). 

In order to motivate the next definition recall that by \cite[Formula V 3.20]{Kobayashi:87} the singularity sets of a coherent sheaf $\cS$ as defined in \cite[V.(5.5)]{Kobayashi:87} are related to the supports of the local Ext sheaves as follows:
$$S_m(\cS)=\bigcup_{d\ge n-m}\supp(\cE xt^d_{X}(\cS,\omega_{X})), \ 0\le m<n\ ;$$
cf.\  the discussion in Section\ \ref{subsubsect:singular_sets} above.

\begin{defi} \label{def:admissible-surface}
Let $[(\cE,\Ccal)]$ be a point in $\overline M_{\HYM}$, where $\cE$ is a polystable reflexive sheaf and $\Ccal$ is a codimension 2 cycle on $X$. 
 A smooth surface $S$ embedded in $X$ will be called  \emph{admissible for $[(\cE,\Ccal)]$} if $\dim(S\cap |\Ccal|)\leq 0$ and $\dim(S\cap \sing(\cE))\leq 0$.   
An admissible surface $S$ for $[(\cE,\Ccal)]$ will be called  \emph{fully admissible for $[(\cE,\Ccal)]$} if additionally $\cE$ is locally free in a neighbourhood of $S$. 
\end{defi}

\begin{defi}\label{defi:admissible-hypersurface-pair}
 Let $[(\cE,\Ccal)]$ be a point in $\overline M_{\HYM}$, where $\cE$ is a polystable reflexive sheaf and $\Ccal$ is a codimension 2 cycle on $X$.   
 A flag of smooth complete intersections  $(X^{(l)})_{1\le l\le n-2}$, $X^{(l)}:=\cap_{i=1}^lX_i$, of hypersurfaces $X_1,..., X_{n-2}$ in $X$  will be called
\emph{admissible for $[(\cE,\Ccal)]$} if for each $l=1,...,n-2$ both $X^{(l)}\cap |\Ccal|$ and $X^{(l)}\cap \sing(\cE)$ are of codimension at least two in $X^{(l)}$. An admissible flag $(X^{(l)})_{1\le l\le n-2}$ for $[(\cE,\Ccal)]$   will be called  
  \emph{fully admissible for $[(\cE,\Ccal)]$} if in addition for each $l=1,...,n-2$ the complete intersection $X^{(l)}$ is $\Ecal|_{X^{(l-1)}}$-regular and 
 $\cE xt^q_{X^(l-1)}(\cE|_{X^{(l-1)}},\omega_{X^{(l-1)}})$-regular for all $q\ge 0$, where we use the notation $ X^{(0)}=X$, $ X^{(1)}=X'$. 
 The flag $(X^{(l)})_{1\le l\le n-2}$ is said to be of \emph{multi-degree} $(d_l)_{1\le l\le n-2}$ if the hypersurfaces $X_l$ are taken in the linear systems $|\mathcal{O}_X(d_l)|$, where $\mathcal{O}_X(1)$ is a fixed ample line bundle on $X$.
 \end{defi}

\begin{defi} Let  $\cF$ be a torsion free sheaf on $X$. Similar to the above, a flag of complete intersections  $(X^{(l)})_{1\le l\le n-2}$, $X^{(l)}:=\cap_{i=1}^lX_i$ of smooth general hypersurfaces $X_1,..., X_{n-2}$ in $X$  will be called \emph{fully admissible for $\cF$} if for each $l=1,...,n-2$ the complete intersection $X^{(l)}$ is $\cF|_{X^{(l-1)}}$-regular and
 $\cE xt^q_{X^(l-1)}(\cF|_{X^{(l-1)}},\omega_{X^{(l-1)}})$-regular for all $q\ge 0$. Every member $X^{(l)}$ of a fully admissible flag will 
be called \emph{fully admissible} for $\Fcal$.
\end{defi}

\begin{rem}\label{rem:good_restriction} For later usage, we note the following facts:\\

\vspace{-0.3cm}
\noindent
(1) A flag of complete intersections  $(X^{(l)})_{1\le l\le n-2}$, $X^{(l)}:=\cap_{i=1}^lX_i$ of smooth general hypersurfaces $X_1,..., X_{n-2}$ in $X$  is admissible for 
$[(\cE,\Ccal)]$ if and only if its last member 
 $X^{(n-2)}$ is admissible for $[(\cE,\Ccal)]$.\\
\noindent (2) If a flag  $(X^{(l)})_{1\le l\le n-2}$ is fully admissible for 
$[(\cE,\Ccal)]$, then its last member 
 $X^{(n-2)}$ is fully admissible for $[(\cE,\Ccal)]$.\\
\noindent
(3)  If a flag  $(X^{(l)})_{1\le l\le n-2}$ is admissible for 
$[(\cE,\Ccal)]$, then  the condition that $X^{(l)}$ contains no irreducible
components of $|\mathcal{C}|\cap X^{(l-1)}$ implies that $|\mathcal{C}|\cap
X^{(l)}$ is the support of a codimension $2$ cycle on $X^{(l)}$, cf.\ \cite[Sect.\ 2.3]{Fulton:98}: Indeed, the first
hypersurface $X^{\prime }\subset X$, whose associated Cartier divisor we will denote by $\mathcal{D}$, contains no irreducible component $Z_{j}$ of $|\mathcal{C}|$. Hence, $
\mathcal{C\cdot D}:=\sum_{j}n_{j}(Z_{j}\cap X^{\prime })$ is a well-defined Cartier divisor on $|
\mathcal{C}|$. On the other hand,  $\mathcal{C\cdot D}$ also defines a well-defined intersection
cycle on $X^{\prime }$, which we will also denote by $\mathcal{C}|_{X'}$. Since it is a divisor on  $|\mathcal{C}|$ it must
be a codimension $2$ cycle $\mathcal{C}^{\prime}$ on $X^{\prime }$.
The hypothesis now similarly allows us to inductively construct codimension $2$ cycles 
$\mathcal{C}^{(l)}:=\mathcal{C}^{(l-1)}\cdot \mathcal{D}^{(l)}$ on $X^{(l)}$,
where $\mathcal{D}^{(l)}$ is the divisor associated to the hypersurface $
X^{(l)}\subset X^{(l-1)}$.\\
\noindent (4) If a flag  $(X^{(l)})_{1\le l\le n-2}$ is fully admissible for 
$[(\cE,\Ccal)]$, then the restrictions $\cE|_{X^{(l)}}$ are reflexive sheaves on their respective supports $X^{(l)}$ by \cite[Cor.\ 1.1.14]{HuybrechtsLehn:10}.  We will denote the induced pair $(\mathcal{E}|_{X^{(l)}},$ $\mathcal{C}^{(l)})$ consisting of a polystable sheaf and a codimension $2$ cycle on $X^{(l)}$ by $(\mathcal{E},\mathcal{C})|_{X^{(l)}}$.\\
\noindent (5)
 If a flag  $(X^{(l)})_{1\le l\le n-2}$ is fully admissible for a torsion free sheaf $\Fcal$, then for each $l=1,...,n-2$ the intersection $X^{(l)}\cap \sing(\cF)$ is of codimension at least two in $X^{(l)}$.
\end{rem}

The following result establishes a link between restriction of cycles and cycles associated with restricted sheaves via the construction given in Section\ \ref{subsubsect:support_cycles}. For simplicity we state it only for two-codimensional coherent sheaves.
\begin{lemma}\label{lem:restricted_cycles_cycles_of_restrictions}
 Let $\cA$ be a pure coherent sheaf of codimension two on $X$ and $X'$ a smooth hyperplane section of $X$ not containing any irreducible component of the support cycle $\cC_\cA$. Then, we have the following equality of cycles on $X'$:
\begin{equation}\label{eq:restriction}
\cC_{(\cA|_{X'})}=(\cC_\cA)|_{X'}\ .
\end{equation} 
\end{lemma}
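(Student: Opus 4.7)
The plan is to verify the equality at a generic point of each irreducible codimension-two subvariety $W\subset X'$. Both $\cC_{\cA|_{X'}}$ and $(\cC_\cA)|_{X'}$ are supported on the codimension-two components of $\supp(\cA)\cap X'$: for the former this is because $\supp(\cA|_{X'})\subseteq \supp(\cA)\cap X'$, and for the latter this follows from the construction in Remark~\ref{rem:good_restriction}(3). So I would fix such a $W$ and localize at its generic point $\xi'$. Writing $A=\Ocal_{X,\xi'}$ (a three-dimensional regular local ring), $t\in A$ a local equation for the smooth divisor $X'$, $A'=A/tA$, and $M=\cA_{\xi'}$, the assumption that $X'$ contains no irreducible component of $\cC_\cA$ together with purity of $\cA$ gives that $t$ is a non-zerodivisor on $M$ and on each $A/\pfrak_j$, where $\pfrak_j\subset A$ are the minimal primes of $M$ (i.e., the height-two primes of those $Z_j\subseteq|\cC_\cA|$ containing $\xi'$).

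The next step is to rewrite all three multiplicities as lengths. From the definition of $\gr_\cI(\cdot)$ together with a Jordan-Hölder count of its generic rank, one sees that for any coherent sheaf $\cT$ of pure codimension $p$ and any irreducible component $Z$ of $\supp(\cT)$ with generic point $\eta$, the multiplicity of $Z$ in $\cC_\cT$ equals $\ell_{\Ocal_{X,\eta}}(\cT_\eta)$. Applied here, the multiplicity of $W$ in $\cC_{\cA|_{X'}}$ is $\ell_{A'}(M/tM)$, while the multiplicity $m_j^\cA$ of $Z_j$ in $\cC_\cA$ equals $\ell_{A_{\pfrak_j}}(M_{\pfrak_j})$. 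Similarly, by Remark~\ref{rem:good_restriction}(3) the multiplicity of $W$ in $(\cC_\cA)|_{X'}$ is $\sum_j m_j^\cA\cdot e(X',Z_j;W)$ with $e(X',Z_j;W)=\ell_{A'}(A/(\pfrak_j,t))$; the higher Serre Tor-terms in the definition of $e$ vanish because $t$ is a non-zerodivisor on $A/\pfrak_j$.

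The heart of the proof—and the only step requiring actual argument—is then the resulting commutative-algebra identity
\[
\ell_{A'}(M/tM)=\sum_j m_j^\cA\cdot \ell_{A'}(A/(\pfrak_j,t)).
\]
I would establish this by picking a finite prime filtration $0=M_N\subsetneq M_{N-1}\subsetneq\cdots\subsetneq M_0=M$ of $A$-modules with quotients $M_i/M_{i+1}\cong A/\qfrak_i$ for primes $\qfrak_i\subset A$. Purity of $\cA$ forces each $\qfrak_i\in\{\pfrak_1,\ldots,\pfrak_s\}$, and localizing at each $\pfrak_j$ and counting shows that $\pfrak_j$ occurs among the $\qfrak_i$ exactly $m_j^\cA$ times. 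Because $\mathrm{Tor}_1^A(A/\pfrak_j,A')=0$ (as $t$ is regular on $A/\pfrak_j$), tensoring the filtration with $A'$ remains short-exact at each stage, so additivity of length yields the identity above. The main obstacle is precisely this filtration argument; the rest is formal unwinding of definitions.
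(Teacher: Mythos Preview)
Your approach via localization and prime filtrations is sound and yields a correct proof once a small gap is repaired, but it differs substantially from the paper's argument. The paper first verifies \eqref{eq:restriction} for a \emph{general} member $X'_t$ of the linear system $|X'|$ (where it follows directly from the length description of multiplicities), then observes that purity together with the hypothesis makes the family $\{\cA|_{X'_t}\}_t$ flat over a Zariski-open neighbourhood of $X'$ in that linear system, and finally passes to the limit using continuity of support cycles in flat families and of intersection cycles. Your direct commutative-algebra computation avoids this deformation machinery at the cost of carrying out the local multiplicity comparison by hand; it is more elementary and self-contained.

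The one genuine gap: the assertion that ``purity of $\cA$ forces each $\qfrak_i\in\{\pfrak_1,\ldots,\pfrak_s\}$'' is false for an arbitrary prime filtration. For instance, with $A=k[[x,y,z]]$, $\pfrak=(x,y)$, and $M=(A/\pfrak)^2$ (pure of dimension one), the filtration
\[
0\ \subset\ (A/\pfrak)\times 0\ \subset\ (A/\pfrak)\times z\,(A/\pfrak)\ \subset\ M
\]
has successive quotients $A/\pfrak$, $A/\pfrak$, $A/\mfrak$. Whenever such an $A/\mfrak$ appears, $\mathrm{Tor}_1^A(A/\mfrak,A')\cong A/\mfrak\neq 0$ and the ``tensoring stays exact'' step fails. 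The standard fix is to replace length by the Euler characteristic $\chi_t(N):=\ell(N/tN)-\ell(0:_N t)$, which is additive on short exact sequences, equals $\ell(N/tN)$ whenever $t$ is a non-zerodivisor on $N$ (so in particular for $N=M$ and $N=A/\pfrak_j$), and vanishes on any finite-length quotient such as $A/\mfrak$. Applying $\chi_t$ to an arbitrary prime filtration of $M$ then gives exactly the identity you want, and the rest of your argument goes through unchanged.
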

\begin{proof} Using the definition of multiplicity as in \cite[Rem.\ 5.3]{GrebToma:17}, the statement is easily checked when $X'$ is a general element  in its linear system. 

Suppose now that $X'$  contains no irreducible component of the support cycle $\cC_\cA$ of $\cA$. This and the purity of $\cA$ imply the existence of an exact sequence of the form
 $$
0\to\cA(-X') \to \cA\to\cA|_{X'}\to 0\ .$$
Each element $X'_t$ in a sufficiently small Zariski open neighbourhood $T$ of $X'$ in its complete linear system will be smooth and will not contain any irreducible component of the support cycle $\cC_\cA$. By the above and by \cite[Lemma 2.1.4]{HuybrechtsLehn:10} it follows that the family of sheaves $\cA|_{X'_t}$ is flat over $T$. 

By the observation made at the beginning of the proof, for $t\in T$ general we have the equality
\begin{equation}\label{eq:general_restriction}
\cC_{(\cA|_{X'_t})}=(\cC_\cA)|_{X'_t}\ .
\end{equation}
Moreover, support cycles vary continuously in flat families of coherent sheaves, cf.\ \cite{BarletMagnussonII}, as do the intersection cycles appearing on the right hand side of \eqref{eq:general_restriction}, cf.\ part (3) of Remark\ \ref{rem:good_restriction}. Thus, we get the desired equality \eqref{eq:restriction} by passing to the limit on both sides of \eqref{eq:general_restriction}. 
\end{proof}

Next, we will introduce a terminology that is useful in formulating some of the technical results below. If $|H|$ is a basepoint free linear system on $X$ and $Y \subset X$ is a subvariety, we will denote by $|H|_Y$ the restricted linear system, i.e., the divisors arising from sections in the image of the restriction map $H^0(X, \mathcal{O}_X(H)) \to H^0(Y, (\mathcal{O}_X(H))|_Y)$.
\begin{defi}\label{defi:sufficiently_general}
 Let $\mathcal{O}_X(1)$ be an ample line bundle on $X$ with corresponding ample divisor $H$, and $d_1, \ldots, d_{n-2}$ positive natural numbers. A property $P$ is said to hold for \emph{any sufficiently general tuple} $(X_1, \ldots, X_{n-2}) \in |d_1 H| \times \ldots \times |d_{n-2} H|$ if there exists a nonempty open subset $U_1 \subset |d_1 H|$ such that for every $X_1 \in U_1$, there exists a nonempty open subset $U_2 \subset |d_2 H |_{X_1}$ such that for every $X_2 \in U_2$, there exists a nonempty open subset $U_3 \subset |d_3 H|_{X^{(2)}}$, ..., such that for   every $X_{n-2} \in U_{n-2}$ property $P$ holds for the tuple $(X_1, \ldots, X_{n-2})$.
\end{defi}

 If $H$ is a very ample divisor on $X$, if $d_1, \ldots, d_{n-2}$ are given positive natural numbers, and if $\mathcal{F}$ is a torsion free sheaf on $X$, then, since  the restriction of $|dH|$ to any subvariety of $X$ stays basepoint free for any positive natural number $d$, prime avoidance holds for these restricted linear systems, and hence any sufficiently general tuple $(X_1, \dots, X_{n-2}) \in |d_1 H| \times \dots \times |d_{n-2} H|$ is fully admissible for $\mathcal{F}$.

The following two results are higher-dimensional analogues of Proposition\ \ref{prop: surface sheaf separation} above and are important for producing the holomorphic maps required by Cartan's criterion, Theorem\ \ref{thm:Cartan_criterion}; see the proof of Claim~\ref{Claim} below.  
 
\begin{prop} \label{prop: flag existence} 
 Let $H$ be a very ample divisor on $X$, and let $k \geq 2$ be a positive integer. Let moreover $\mathcal{S}$ be a bounded set of reflexive sheaves on $X$ and let $[(\cE_0,\Ccal_0)]$ be a point in $\overline M_{\HYM}$, where $\cE_0$ is a polystable reflexive sheaf and $\Ccal_0$ is a codimension 2 cycle on $X$. 
 Then, there exists a positive natural number $N$ such that for any sequence $d_1, \dots, d_{n-2}$ of positive natural numbers and for any choice of $N$ sufficiently general tuples $(X_1, \cdots , X_{n-2}) \in  |d_1 H| \times \dots \times |d_{n-2} H|$  with associated flags $((X^{(l)})_{1 \leq l \leq n-2})$ the following holds: 
 \begin{enumerate}
  \item The flags are fully admissible for $[(\cE_0,\Ccal_0)]$. 
  \item If $\Sigma$ is the chosen finite set of flags, then for any choice of $k-2$ sheaves $\cE_{3}$,..., $\cE_{k}$ from $\mathcal{S}$ as well as any choice of two points $[(\cE_1, \Ccal_1)]$ and $[(\cE_2, \Ccal_2)]$ from $\overline{M}_{\HYM}$ some element of $\Sigma$ is in addition fully admissible for $[(\cE_1, \Ccal_1)]$ and $[(\cE_2, \Ccal_2)]$ as well as fully admissible for all the sheaves $\cE_3$,...,$\cE_k$. 
 \end{enumerate}
\end{prop}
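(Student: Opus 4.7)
The strategy is to reduce the assertion to a quasi-compactness argument by parametrizing all relevant data by a scheme of finite type.

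First, I would collect everything into a single bounded family. By Lemma~\ref{lem:bounded-admissible}, the reflexive sheaves $\cE$ arising from points $[(\cE,\Ccal)] \in \overline{M}_\HYM$ form a bounded family; the Bogomolov inequality~\eqref{eqn:bogomolov} together with Theorem~\ref{thm:cycle space} guarantees that the associated cycles $\Ccal$ have uniformly bounded degree and thus also form a bounded subset of the cycle space. Combined with the bounded family $\mathcal{S}$ and the distinguished data $(\cE_0,\Ccal_0)$, we may parametrize everything by a scheme $T$ of finite type over $\CBbb$, with a distinguished point $t_0$ corresponding to $(\cE_0,\Ccal_0)$.

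Next, fix a multi-degree $\dbold = (d_1,\ldots,d_{n-2})$, and let $F \subset |d_1 H| \times \cdots \times |d_{n-2} H|$ be the Zariski open locus of tuples $\Xbold = (X_1,\ldots,X_{n-2})$ whose iterated complete intersections $X^{(l)}$ are all smooth. For each $\Xbold \in F$ set
\[
G_{\Xbold} := \{\, t \in T \mid \Xbold \text{ is fully admissible for the object indexed by } t \,\}.
\]
I claim that $G_{\Xbold}$ is Zariski open in $T$: the dimensional conditions on $X^{(l)} \cap \sing(\cE_t)$ and $X^{(l)} \cap |\Ccal_t|$ become the open condition that $X^{(l)}$ avoid certain closed subvarieties from bounded families over $T$ (via Corollary~\ref{Cor:singsetboundedness}), while the regularity conditions on $\cE_t$ and its relative $\mathcal{E}xt^q$ sheaves translate, via Proposition~\ref{prop:boundedness}, into the open condition that $X^{(l)}$ avoid the associated points of the relevant bounded family over $T$.

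The next step is to observe that $\bigcup_{\Xbold \in F} (G_{\Xbold})^k = T^k$: for any $(t_1,\ldots,t_k) \in T^k$, iterated prime avoidance through the basepoint free restricted linear systems $|d_l H|_{X^{(l-1)}}$ yields a flag $\Xbold \in F$ simultaneously fully admissible for the objects indexed by $t_0,t_1,\ldots,t_k$. Since $T^k$ is quasi-compact, finitely many open sets $(G_{\Xbold^1})^k,\ldots,(G_{\Xbold^N})^k$ suffice to cover $T^k$, and one can arrange each $\Xbold^i$ to lie in the nonempty Zariski open subset $F^{(0)} \subset F$ of flags fully admissible for $(\cE_0,\Ccal_0)$. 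These $N$ flags then verify both (1) and (2); moreover, the covering condition and admissibility conditions are open, so the conclusion actually holds for any $N$-tuple of flags in a suitable open subset of $F^N$, which is the sense of ``sufficiently general'' required by the proposition.

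I expect the main technical obstacle to be arranging that the resulting $N$ is uniform in the multi-degree $\dbold$. This should follow from the fact that the combinatorial complexity of the required cover of $T^k$ is controlled by a finite, constructible stratification of $T^k$ according to the ``type'' of singular set, cycle structure, and $\mathcal{E}xt$ behaviour of the parametrised objects---data independent of $\dbold$. At each stratum, prime avoidance in the basepoint free linear systems $|d_l H|_{X^{(l-1)}}$ produces flags avoiding the corresponding failure locus irrespective of $\dbold$, so the same $N$ can be taken uniformly.
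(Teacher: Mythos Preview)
Your quasi-compactness strategy is a reasonable alternative for a \emph{fixed} multi-degree, but the final paragraph --- where you argue uniformity of $N$ in $\dbold$ via a constructible stratification of $T^k$ --- does not close the gap. Within any such stratum the actual subvarieties to avoid (the singular loci of the $\cE_t$, the supports of the $\Ccal_t$, the associated points of the relevant $\mathcal{E}xt$-sheaves) still vary with $t$, so no single flag can serve an entire stratum; hence the number of flags needed to cover a stratum has no a priori bound independent of $\dbold$. What \emph{is} constant along strata is only the \emph{number} of such subvarieties, and once you use that you have essentially rediscovered the paper's argument.

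The paper bypasses quasi-compactness entirely by a direct pigeonhole count. Boundedness of $\mathcal{S}$ and of the data coming from $\overline M_\HYM$ gives a uniform integer $m_1$ bounding, for any single object, the number of subvarieties (associated points of the $\mathcal{E}xt^q_X(\cE_i,\omega_X)$ and irreducible components of the $\Ccal_j$) that a hypersurface must avoid. One then picks $n_1>knm_1$ smooth hypersurfaces in $|d_1H|$, each fully admissible for $(\cE_0,\Ccal_0)$, and such that no $n+1$ of them have a common point --- both open conditions on the $n_1$-tuple. Any subvariety of $X$ is contained in at most $n$ of these hypersurfaces, so for any choice of $k$ objects at most $knm_1$ of the $n_1$ hypersurfaces are spoiled; at least one survives. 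This step is repeated inductively at each level of the flag. The resulting $N$ depends only on $k$, $n$, and $m_1$, hence is visibly independent of $\dbold$.

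So: keep your parametrization and your openness claims if you like the packaging, but replace the stratification sketch by the explicit count $n_1>knm_1$ together with the general-position condition ``no $n+1$ meet''. That is the missing idea.
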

\begin{proof} 
Let $n$ be the dimension of $X$, which we may suppose to be bigger than $2$. We start by choosing the first members $X'\in|d_1H|$ of the flags in $\Sigma$. The first constraint on $X'$ is given by the full admissibility for $[(\cE_0,\Ccal_0)]$, which is a Zariski open condition. In order to additionally ensure admissibility for $[(\cE_1, \Ccal_1)]$ and $[(\cE_2, \Ccal_2)]$ as well as for $\cE_3$,...,$\cE_k$, it is sufficient that $X'$ contains neither of the associated points of  the sheaves $\cE xt^q_{X}(\cE_i,\omega_{X})$, $i\in\{1,...,k\}$, $q\ge0$, nor any of the irreducible components of the cycles $\Ccal_1$, $\Ccal_2$; cf.\  part (5) of Remark\ \ref{rem:good_restriction}. For an element  $\cE\in\mathcal{S}$ denote by $A(\cE)$ the set of subvarieties of $X$ arising from such associated components, and define $A(\widetilde \cE, \widetilde \Ccal)$ analogously for $(\widetilde \cE, \widetilde \Ccal) \in \overline M_{\HYM}$.
 Since $\cE$ and $(\widetilde \cE, \widetilde \Ccal)$  run through bounded families, the arguments of 
 Section \ref{sec:boundedness} imply that the number of elements of the corresponding sets $A(\cE)$ and $A(\widetilde \cE, \widetilde \Ccal)$ are simultaneously bounded by some $m_1\in\N$. 
 
 Choose $n_1>knm_1$ smooth hypersurfaces in $|d_1H|$ fully admissible for $(\cE_0,\Ccal_0)$
and such that no $n+1$ of them have a common intersection point, both of which are open conditions. Then, it follows that for any choice of two points $(\cE_1, \Ccal_1)$ and $(\cE_2, \Ccal_2)$ from $\overline{M}_{\HYM}$ and any choice of $k-2$ sheaves $\cE_3$,...,$\cE_k$ from  $\mathcal{S}$, there must be a hypersurface  $X'$ among the chosen ones containing none of the elements of $A(\cE_1, \Ccal_1) \cup A(\cE_2, \Ccal_2) \cup A(\cE_3)\cup\cdots\cup A(\cE_k)$. 

The argument for higher codimension is analogous and hence omitted.
\end{proof} 
The following is a consequence of the proof of Proposition\ \ref{prop: flag existence}. 
\begin{cor}\label{cor:admissible_plus_lifting}
  Let $H$ be a very ample divisor on $X$, and let $[(\cE_0,\Ccal_0)]$ be a point in $\overline M_{\HYM}$, where $\cE_0$ is a polystable reflexive sheaf and $\Ccal_0$ is a codimension 2 cycle on $X$. Then, there exist a positive natural numbers $d_0$ and $N$  such that for any tuple $(d_1, d_2, \dots, d_{n-2})$ of natural numbers $d_j \geq d_0$ and for any choice of $N$ sufficiently general tuples $(X_1, \cdots , X_{n-2}) \in  |d_1 H| \times \dots \times |d_{n-2} H|$ with associated flags $((X^{(l)})_{1 \leq l \leq n-2})$ the following holds: 
  \begin{enumerate}
   \item All the flags are fully admissible for $[(\cE_0,\Ccal_0)]$. 
   \item If $\Sigma$ is the chosen finite set of flags, then for any choice of two points $[(\cE_1, \Ccal_1)]$ and $[(\cE_2, \Ccal_2])$ from $\overline{M}_{\HYM}$ some element $(X^{(l)})$ of $\Sigma$ is in addition fully admissible for $[(\cE_1, \Ccal_1)]$ and $[(\cE_2, \Ccal_2)]$. 
   \item If $X^{(n-2)}$ is a 2-dimensional member of any of the chosen flags, we have
   \vspace{-0.3cm}
   \begin{equation}\label{eq:iso_nach_einschraenkung}
  \cE_1 \cong \cE_2 \quad \quad \text{ if and only if } \quad \quad \cE_1|_{X^{(n-2)}} \cong \cE_2|_{X^{(n-2)}}.  \end{equation}
  \end{enumerate}
\end{cor}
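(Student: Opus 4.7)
\medskip
\noindent\emph{Proof plan for Corollary \ref{cor:admissible_plus_lifting}.} The strategy is to combine the proof of Proposition \ref{prop: flag existence} with a uniform lifting argument based on Serre vanishing, and to enlarge the required degrees so that both sources of constraints can be satisfied simultaneously. The key input is that the family $\mathcal{S}$ of polystable reflexive sheaves $\cE$ appearing as sheaf components of elements $[(\cE,\Ccal)]\in\overline M_{\HYM}$ is bounded; this follows from Lemma \ref{lem:bounded-admissible} together with \cite[Main Theorem]{Maruyama:81} as in the boundedness arguments of Section \ref{sec:boundedness}. Consequently, the derived family $\{\mathcal{H}om(\cE_1,\cE_2)\mid \cE_1,\cE_2\in\mathcal{S}\}$, and its restrictions to members of any flag of complete intersections, is also bounded by \cite[p.~251]{Grothendieck:61}.

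First I would apply the proof of Proposition \ref{prop: flag existence} verbatim to the bounded family $\mathcal{S}$ to produce the integer $N$ and show that properties (1) and (2) hold for any sufficiently general choice of $N$ tuples, for any multi-degree $(d_1,\dots,d_{n-2})$. The "only if" direction of (3) is trivial. For the "if" direction, I would use boundedness of $\{\mathcal{H}om(\cE_1,\cE_2)\}$ and its restrictions to produce a single $d_0$ such that for all $d\geq d_0$, all pairs $\cE_1,\cE_2\in\mathcal{S}$, and all members $X^{(l)}$ of admissible flags with degrees bounded below by $d_0$, one has the vanishing
\begin{equation*}
H^1\!\left(X^{(l-1)},\,\mathcal{H}om(\cE_1,\cE_2)|_{X^{(l-1)}}\otimes\mathcal{O}_X(-d_{l}H)\right)=0\, .
\end{equation*}
Given full admissibility of a flag for $(\cE_1,\Ccal_1)$ and $(\cE_2,\Ccal_2)$, the hypersurface $X^{(l)}$ is regular for $\mathcal{H}om(\cE_1,\cE_2)|_{X^{(l-1)}}$, so that the short exact sequences
\begin{equation*}
0\to \mathcal{H}om(\cE_1,\cE_2)|_{X^{(l-1)}}(-d_{l}H)\to \mathcal{H}om(\cE_1,\cE_2)|_{X^{(l-1)}} \to \mathcal{H}om(\cE_1,\cE_2)|_{X^{(l)}}\to 0
\end{equation*}
combined with the vanishing above yield successive surjections on global sections. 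Iterating the lift from $l=n-2$ down to $l=0$, a given isomorphism $\cE_1|_{X^{(n-2)}}\cong \cE_2|_{X^{(n-2)}}$ lifts to a morphism $\phi\colon \cE_1\to\cE_2$.

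To conclude that $\phi$ is an isomorphism, I would observe that since $\det\cE_1\cong\det\cE_2\cong\Jcal$, the determinant $\det\phi$ is a global section of $\mathcal{O}_X$, hence a constant, and non-zero because its restriction to $X^{(n-2)}$ is non-zero. Thus $\phi$ is an isomorphism on $X\setminus(\sing\cE_1\cup\sing\cE_2)$, whose complement has codimension at least three; since $\mathcal{H}om(\cE_2,\cE_1)$ is reflexive, hence normal, the inverse $\phi^{-1}$ extends to all of $X$, showing $\phi$ is a global isomorphism. The main obstacle is that both sources of constraints must be controlled by a single pair $(d_0,N)$ uniform over all of $\overline M_{\HYM}$: property (2) requires avoiding, for each pair of points in the moduli space, a finite union of associated subvarieties, while the lifting step requires uniform Serre vanishing on all $X^{(l)}$ simultaneously. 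Both are manageable precisely because $\mathcal{S}$, its Ext-sheaves, and its Hom-sheaves form bounded families, which translates genericity into open conditions and vanishing into a single lower bound on degrees.
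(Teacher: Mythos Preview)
Your overall strategy---boundedness of $\mathcal{S}$ and of the derived Hom-sheaves, Proposition~\ref{prop: flag existence} for (1) and (2), and a section-lifting argument for (3) with a determinant check at the end---is exactly the paper's. The paper invokes Mehta--Ramanathan's ``general Enriques--Zariski Lemma'' \cite[Prop.~3.2]{MehtaRamanathan:81} for the lifting rather than your $H^1$-vanishing exact sequences, but these are closely related devices; your determinant argument for the lifted $\phi$ being an isomorphism is essentially what the reference to \cite{GKP13} amounts to.

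There is, however, one genuine gap. You claim that full admissibility of the flag for $(\cE_1,\Ccal_1)$ and $(\cE_2,\Ccal_2)$ makes $X^{(l)}$ regular for $\mathcal{H}om(\cE_1,\cE_2)|_{X^{(l-1)}}$; this requires the latter sheaf to be torsion-free on $X^{(l-1)}$. Since $\mathcal{H}om(\cE_1,\cE_2)$ is reflexive on $X$, one restriction to a smooth hypersurface drops depth by one and still yields a torsion-free sheaf, so the sequence at level $l\le 2$ is fine. But a second restriction drops depth only to $\geq 0$, and torsion can genuinely appear---for instance if an isolated depth-$2$ point of the Hom sheaf lies on $X^{(2)}$. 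Thus your exact sequence can fail for $l\geq 3$, i.e.\ as soon as $\dim X\geq 5$. Full admissibility for the pairs $(\cE_i,\Ccal_i)$ controls the $\cE_i$ and their $\mathcal{E}xt$-sheaves, not those of $\mathcal{H}om(\cE_1,\cE_2)$. The paper closes this gap by explicitly feeding the bounded family $\{\mathcal{H}om(\cE_1,\cE_2)\}$ into Proposition~\ref{prop: flag existence} alongside the points of $\overline M_\HYM$, thereby arranging that some flag in $\Sigma$ is in addition fully admissible for $\mathcal{H}om(\cE_1,\cE_2)$; then each restriction $\mathcal{H}om(\cE_1,\cE_2)|_{X^{(l)}}$ stays reflexive and coincides with $\mathcal{H}om(\cE_1|_{X^{(l)}},\cE_2|_{X^{(l)}})$, after which the inductive lifting goes through. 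With this amendment your argument is complete and essentially identical to the paper's.
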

\begin{proof}
 As the reflexive sheaves $\cE$ appearing as first entries for a tuple $[(\cE, \Ccal)] \in \overline M_{\HYM}$ range over a bounded set $\mathcal{S}$, the corresponding set  of reflexive homomorphism sheaves $\mathcal{H}om (\cE_1, \cE_2)$ is likewise bounded. We may therefore choose a positive natural number $d_0$ such that the conclusion of the "general Enriques-Zariski Lemma", \cite[Prop.\ 3.2]{MehtaRamanathan:81}, holds for any of the sheaves $\mathcal{H}om (\cE_1, \cE_2)$, where $\mathcal{E}_1$ and $\mathcal{E}_2$ are in $\mathcal{S}$.
 
  Let $d_1 \geq d_0$. Then, the first step of the proof of Proposition\ \ref{prop: flag existence} shows that for a general choice of $n_1$ smooth hypersurfaces $X'_j \in |d_1 H|$ the following property holds: for any choice $[(\cE_1, \Ccal_1)], [(\cE_2, \Ccal_2)] \in \overline{M}_{\HYM}$, there exists an index $j$ such that $X'_j$ is fully admissible for $[(\cE_1, \Ccal_1)], [(\cE_2, \Ccal_2)]$ and also for $\mathcal{H}om (\cE_1, \cE_2)$. It follows that $\cE_1|_{X'_j}$  , $\cE_2|_{X'_j}$, hence $\mathcal{H}om(\cE_1|_{X'_j}, \cE_2|_{X'_j})$,  and also $\mathcal{H}om (\cE_1, \cE_2)|_{X'_j}$ are reflexive. Moreover, admissibility implies that $X'_j$ does not contain any component of the singularity set of $\cE_1$ or $\cE_2$, and therefore the reflexive sheaves $\mathcal{H}om(\cE_1|_{X'_j}, \cE_2|_{X'_j})$ and 
  $\mathcal{H}om (\cE_1, \cE_2)|_{X'_j}$ agree on an open subset of $X'_j$ whose complement has codimension at least 2 in $X'_j$, and hence they coincide. Therefore, if $\cE_1|_{X'_j} \cong \cE_2|_{X'_j}$ via an isomorphism $\varphi$, by the choice of $d_1$ and by \cite[Prop.\ 3.2]{MehtaRamanathan:81} we can lift $\varphi$ to a homomorphism $\Phi \in \mathcal{H}om (\cE_1, \cE_2)$, which turns out to be an isomorphism for example by the arguments laid out in step 3 and 4 of the proof of Proposition 5.1 in the preprint version of \cite{GKP13}. We conclude that $\cE_1 \cong \cE_2$ if and only if $\cE_1|_{X'_j} \cong \cE_2|_{X'_j}$. 
  
  Noticing that the set of reflexive sheaves $\{\cE|_{X'_j} \mid \text{ $X'_j$ is fully admissible for $\cE$}\}$ is again bounded, we may argue in a similar fashion to see that the claims holds for any sufficiently general tuple $(X_1, \cdots , X_{n-2}) \in  |d_1 H| \times \dots \times |d_{n-2} H|$, as long as $d_j \geq d_0$ for all $j=1, \dots, n-2$.
\end{proof}
 
 The following crucial result allows us to define open neighbourhoods of 
a given  equivalence class in $\overline M^\mu$ on which the equivalence relation induced by the comparison map $\overline{\Phi}$ is controlled by invariant holomorphic functions.  
  \begin{Prop}\label{prop:transversality-on-close-classes}
 Let $S$ be a surface embedded in $X$. Then the set \[U'_{S} := \bigl\{[(\cE,\Ccal)] \in \overline M_{\HYM} \mid S \text{ is admissible for }[(\cE,\Ccal)] \bigr\} \subset \overline M_{\HYM} \]is open in $\overline M_{\HYM}$ with respect to the topology defined in Section\ \ref{sec:compactification}. In particular, the preimage $U_{S}:=\overline{\Phi}^{-1}(U'_{S})$ is open and $R$-saturated in $\overline M^\mu$. 
\end{Prop}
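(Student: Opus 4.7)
The plan is to first establish sequential openness of $U'_S$ in $\overline M_\HYM$---which by first countability of $\overline M_\HYM$ (Theorem~\ref{thm:ideal_connections_compact}) is equivalent to openness---and then to deduce the two assertions about $U_S$. For the latter, openness of $U_S = \overline\Phi^{-1}(U'_S)$ will be immediate from continuity of $\overline\Phi$ (Theorem~\ref{thm:continuity}), while $R$-saturation of $U_S$ is automatic because the equivalence classes of $R$ are by construction the fibres of $\overline\Phi$.

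For the openness of $U'_S$ itself, I would consider a convergent sequence $[(\cE_i, \Ccal_i)] \to [(\cE_\infty, \Ccal_\infty)]$ in $\overline M_\HYM$ with $[(\cE_\infty, \Ccal_\infty)] \in U'_S$, and set $T := |\Ccal_\infty| \cup \sing(\cE_\infty)$; admissibility of $S$ for the limit amounts to the statement that $S \cap T$ is a finite set $\{p_1, \ldots, p_k\}$. Combining Theorem~\ref{thm:ideal-convergence} with the definition of the topology from Section~\ref{sec:compactification}, one extracts that for every $\varepsilon > 0$ one has $|\Ccal_i| \cup \sing(\cE_i) \subset \Ncal_\varepsilon(T)$ for all sufficiently large $i$: indeed $|\Ccal_i|$ Hausdorff-accumulates in $|\Ccal_\infty|$ because $\Ccal_i$ converges in the cycle space to a subcycle of $\Ccal_\infty$, and $\sing(\cE_i) = S(A_i)$ is by construction of the topology eventually contained in any prescribed neighbourhood of $S(A_\infty) \cup |\Ccal_\infty| = T$.

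The key geometric observation is now that, for $\varepsilon$ sufficiently small, $S \cap \Ncal_\varepsilon(T)$ lies in a union of pairwise disjoint small holomorphic coordinate balls $B_{\delta(\varepsilon)}(p_j) \subset X$ around the points $p_j$, with $\delta(\varepsilon) \to 0$ as $\varepsilon \to 0$ (a straightforward compactness argument on the compact set $S$). Suppose, for a contradiction, that a subsequence of the $[(\cE_i, \Ccal_i)]$ lies outside $U'_S$. Then for those indices $S \cap (|\Ccal_i| \cup \sing(\cE_i))$ contains a compact irreducible analytic subvariety $W_i \subset S$ of positive dimension; being connected, $W_i$ lies entirely in a single ball $B_{\delta(\varepsilon)}(p_{j_i})$. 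The maximum modulus principle, applied to the bounded ambient holomorphic coordinate functions regarded as functions on the compact connected analytic space $W_i$, then forces $W_i$ to reduce to a single point, contradicting $\dim W_i \geq 1$. Hence $[(\cE_i, \Ccal_i)] \in U'_S$ for $i$ large.

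I expect the main technical point to be the careful extraction of the Hausdorff-convergence statements for $|\Ccal_i|$ and $\sing(\cE_i)$ from the definition of the topology on $\overline M_\HYM$ and from Theorem~\ref{thm:ideal-convergence}; once these are in hand, the concluding maximum-principle step, which rules out the accumulation of positive-dimensional subvarieties of $S$ in shrinking coordinate balls, is classical.
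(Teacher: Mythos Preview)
Your proposal is correct and follows essentially the same approach as the paper. Both reduce to sequential openness via first countability, and both use (i) cycle-space convergence of $\Ccal_i$ to a subcycle of $\Ccal_\infty$ and (ii) the containment of $\sing(\cE_i)$ in shrinking neighbourhoods of $|\Ccal_\infty|\cup\sing(\cE_\infty)$ --- the paper cites Lemma~\ref{lem:sing-limit} directly, while you cite the equivalent item~(2) of the topology basis and Theorem~\ref{thm:ideal-convergence}. The only difference is completeness: the paper asserts without further argument that these two Hausdorff-convergence facts yield $\dim(S\cap|\Ccal_i|)\le 0$ and $\dim(S\cap\sing(\cE_i))\le 0$ for large $i$, whereas you spell out the missing step via the maximum principle (a positive-dimensional compact irreducible subvariety cannot sit inside a coordinate ball). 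Your argument for openness and $R$-saturation of $U_S$ is identical to what the paper implicitly uses.
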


\begin{proof} 
By the definition of the topology of $\overline M_{\HYM}$, the claim is equivalent to the statement that if $S$ is  admissible for $[(\Ecal,\Ccal)]$ and $[(\Ecal_{i},\Ccal_{i})]$ is a sequence of (isomorphism classes of) ideal connections converging to $[(\Ecal,\Ccal)]$, then $S$ remains  admissible for $[(\Ecal_{i},\Ccal_{i})]$ for $i$ sufficiently large. This in turn breaks into two separate conditions on the pairs $[(\Ecal_{i},\Ccal_{i})]$: first, that $\dim(S\cap |\Ccal_{i}|)\leq 0$,  and second, that $\dim(S\cap \sing(\cE_i))\leq 0$, whenever these two properties are satisfied for $\Ccal$ and $\Ecal$. Since the $\Ccal_{i}$ converge to a subcycle of $\Ccal$ in the cycle space, their supports converge to the support of this subcycle in the Hausdorff sense, from which the first item follows. On the other hand, the second item is a consequence of Lemma \ref{lem:sing-limit}.
\end{proof}

\subsection{Extension of sections and flat restriction}\label{subsect:extension_restriction}

Below we  construct $\SL(V)$-invariant sections of the line
bundle $\mathscr{L}_{n-1}\rightarrow \mathcal{Z}$ away from a subvariety $
\mathcal{T}\subset \mathcal{Z}$. Since we want sections on all of $\mathcal{
Z}$, we will require the following lemma from \cite{GrebToma:17}, which gives a criterion for
when these sections extend over $\mathcal{T}$.  

\begin{lemma}[\protect{\cite[Lemma 2.12]{GrebToma:17}}] \label{lemma:extension}
Let $G$ be is connected algebraic group, $\mathcal{R}$ a weakly normal
 $G$-variety, and $\mathcal{L\rightarrow R}$ a $G$-linearised line bundle. Then
there exists a finite set of closed, irreducible $G$-invariant
subvarieties $\{\mathcal{R}_{i}\}_{i=1}^{m}$ of $\mathcal{R}$ with the
following property: if $\mathcal{T\subset R}$ is a closed $G$-invariant subvariety  with $\codim(\mathcal{T\cap R}_{i})\geq 2$ in $\mathcal{R}_{i}$, then any section $\sigma \in H^{0}(\mathcal{R}\backslash 
\mathcal{T},\mathcal{L)}^{G}$ extends to a section over all of $\mathcal{R}$.  
\end{lemma}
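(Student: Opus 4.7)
The plan is to take $\{\mathcal{R}_i\}_{i=1}^m$ to be the irreducible components of $\mathcal{R}$, which are $G$-invariant because $G$ is connected (and hence must preserve each component). The extension procedure will then combine Hartogs' theorem on the normalisation with the defining characterisation of weakly normal complex spaces as recalled in the footnote after the statement of weak normalisation in Section~\ref{sec:background}.

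I would first pass to the normalisation $\nu: \widetilde{\mathcal{R}} = \coprod_i \widetilde{\mathcal{R}}_i \to \mathcal{R}$, where $\widetilde{\mathcal{R}}_i$ is the normalisation of $\mathcal{R}_i$. Since $\nu$ is finite and surjective on each component, the hypothesis $\codim_{\mathcal{R}_i}(\mathcal{T} \cap \mathcal{R}_i) \geq 2$ translates to $\codim_{\widetilde{\mathcal{R}}_i}(\nu^{-1}(\mathcal{T}) \cap \widetilde{\mathcal{R}}_i) \geq 2$. Pulling back, $\nu^* \sigma$ is a section of $\nu^* \mathcal{L}$ defined on $\widetilde{\mathcal{R}} \setminus \nu^{-1}(\mathcal{T})$; because $\widetilde{\mathcal{R}}$ is normal, classical Hartogs extension gives a section $\widetilde{\sigma}$ of $\nu^* \mathcal{L}$ over all of $\widetilde{\mathcal{R}}$.

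Next, I would show that $\widetilde{\sigma}$ descends to a section of $\mathcal{L}$ on $\mathcal{R}$. Working locally on a small open set $U \subset \mathcal{R}$ on which $\mathcal{L}$ is trivialised, the section $\sigma$ corresponds to a holomorphic function $f$ on $U \setminus \mathcal{T}$, and $f \circ \nu$ extends to a holomorphic function $\widetilde{f}$ on $\nu^{-1}(U)$. Since $f \circ \nu$ is by construction constant on the fibres of $\nu$ over the dense open set $U \setminus \mathcal{T}$, and both $\widetilde{f}$ and the fibre relation are continuous, $\widetilde{f}$ remains constant on fibres of $\nu$ over all of $U$; hence it descends to a continuous function $\widehat{f}$ on $U$ that is holomorphic on $U \setminus \mathcal{T}$, and therefore on the regular locus $U_{\mathrm{reg}} \subset U$. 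Invoking the characterisation of weak normality, $\widehat{f}$ is holomorphic on $U$, providing the desired local extension; these patch to a global section $\widehat{\sigma} \in H^0(\mathcal{R}, \mathcal{L})$ restricting to $\sigma$. Finally, $G$-invariance of $\widehat{\sigma}$ is automatic: for any $g \in G$, the translate $g \cdot \widehat{\sigma}$ is a second extension of $\sigma$ across the $G$-invariant set $\mathcal{T}$, and two extensions must agree on the dense subset $\mathcal{R} \setminus \mathcal{T}$, hence everywhere by continuity.

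The main obstacle will be Step~3, the descent from the normalisation to $\mathcal{R}$ itself. Normality allows Hartogs extension cheaply, but the resulting section lives upstairs on $\widetilde{\mathcal{R}}$; showing it descends continuously to $\mathcal{R}$ requires the codimension hypothesis on each irreducible component separately (so that compatibility on fibres, which is automatic off $\mathcal{T}$, propagates by continuity), and then converting continuity-plus-holomorphy-on-the-regular-locus back to holomorphy genuinely uses the weak normality assumption on $\mathcal{R}$. The choice of $\mathcal{R}_i$ as the irreducible components is precisely what guarantees that the pointwise codimension assumption in the hypothesis controls this descent simultaneously on every branch.
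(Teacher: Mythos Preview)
The paper does not give its own proof of this lemma; it simply quotes \cite[Lemma 2.12]{GrebToma:17}. So there is nothing in the paper to compare against, and I can only evaluate your argument on its own merits.

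Your choice of $\{\mathcal{R}_i\}$ as the irreducible components of $\mathcal{R}$ is too coarse, and the descent step fails. Take $\mathcal{R} \subset \mathbb{C}^4$ to be the union of the two coordinate $2$-planes $\mathcal{R}_1 = \{z=w=0\}$ and $\mathcal{R}_2 = \{x=y=0\}$, meeting only at the origin. This $\mathcal{R}$ is weakly normal: a continuous function holomorphic on $\mathcal{R}\setminus\{0\}$ extends by Hartogs on each plane, and the two extensions agree at $0$ by continuity. Now take $G$ trivial, $\mathcal{L}=\mathcal{O}_{\mathcal{R}}$, and $\mathcal{T}=\{0\}$. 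Then $\codim_{\mathcal{R}_i}(\mathcal{T}\cap\mathcal{R}_i)=2$ for $i=1,2$, so your hypothesis is met, yet the section equal to $0$ on $\mathcal{R}_1\setminus 0$ and to $1$ on $\mathcal{R}_2\setminus 0$ does not extend. Your continuity argument breaks exactly here: the fibre of the normalisation $\nu$ over $0$ consists of two points $0_1\in\widetilde{\mathcal{R}}_1$, $0_2\in\widetilde{\mathcal{R}}_2$, and the pair $(0_1,0_2)$ is an \emph{isolated} component of the fibre product $\widetilde{\mathcal{R}}\times_{\mathcal{R}}\widetilde{\mathcal{R}}$, hence not in the closure of the part lying over $\mathcal{R}\setminus\mathcal{T}$. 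Constancy on fibres therefore does not propagate by density.

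The repair is to enlarge the family $\{\mathcal{R}_i\}$: one must also include the images in $\mathcal{R}$ of the irreducible components of the off-diagonal part of $\widetilde{\mathcal{R}}\times_{\mathcal{R}}\widetilde{\mathcal{R}}$ (equivalently, a stratification of the non-normal locus fine enough that the codimension-$2$ hypothesis forces $\mathcal{T}$ to be nowhere dense in every such stratum). In the example this means adding $\{0\}$ to the list, which then forces $0\notin\mathcal{T}$ and restores the argument. With that refinement your Hartogs-plus-weak-normality strategy goes through; with only the irreducible components it does not.
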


The following is a slight modification of \cite[Cor 3.2]{GrebToma:17}.

\begin{lemma} \label{flat restriction}
Let $G$ be a connected algebraic group, and $\Escr\rightarrow X\times 
\mathcal{R}$ be a $G$-linearised flat family over a parameter space $
\mathcal{R}$, and $H$ a very ample polarisation. Let $[(\mathcal{E},\mathcal{C})]\in \overline M_\HYM$. Suppose $\{\mathcal{R}_{j}\}_{j=1}^{m}$ is a
finite set of closed, irreducible $G$-invariant subvarieties of $\mathcal{R}$
so that for each $i$ there is a point $r_{i}\in $  $\mathcal{R}_{i}$ such
that $\Escr_{r_{i}}$ is torsion free. Then, for any choice of positive integers $d_1, \dots, d_{n-2}$ and any sufficiently general tuple $(X_1, \cdots , X_{n-2}) \in  |d_1 H| \times \dots \times |d_{n-2} H|$ the following holds: 
\begin{enumerate}
 \item The associated flag $(X^{(l)})_{1\leq
l\leq n-2}$ is fully admissible for $[(\mathcal{E},\mathcal{C})]$, (and so in particular the end term $S=X^{(n-2)}$ is fully admissible for $[(\mathcal{E},\mathcal{C})]$.
\item There exists
a closed subvariety $\mathcal{T\subset R}$ such that for each $j$ we have $\codim_{\Rcal_j}(\mathcal{T\cap R}_{j})\geq 2$, and for each 
$l$ the family $\Escr|_{X^{(l)}\times \mathcal{R}}$ remains flat over $
\mathcal{R}\backslash \mathcal{T}$.\footnote{This subvariety may depend on the flag.}
\item Each of the sheaves $\Escr
_{r_{i}}|_{X^{(l)}}$ remains torsion free.
\end{enumerate}
\end{lemma}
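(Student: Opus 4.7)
The plan is to establish the three conclusions in sequence, in each case using classical Bertini-style arguments on the basepoint-free restricted linear systems $|d_l H|_{X^{(l-1)}}$ together with prime avoidance.

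For (1), full admissibility of the flag for $[(\Ecal, \Ccal)]$ is the conjunction of finitely many Zariski-open conditions on each $X_l$: that it avoid the associated points of $\Ecal|_{X^{(l-1)}}$ and of every $\Ext^q_{X^{(l-1)}}(\Ecal|_{X^{(l-1)}}, \omega_{X^{(l-1)}})$, and that it contain no irreducible component of $|\Ccal| \cap X^{(l-1)}$ or of $\sing(\Ecal) \cap X^{(l-1)}$. All of these are standard consequences of prime avoidance, imposed inductively along the flag.

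For (3), I would additionally impose on each $X_l$ the (again Zariski-open) condition of being fully admissible for the torsion-free sheaf $\Escr_{r_j}|_{X^{(l-1)}}$, i.e., regular for this sheaf and for its $\Ext^q$-sheaves. Since $\Escr_{r_j}|_{X^{(l-1)}}$ is torsion-free, its only associated point is the generic point $\eta_{X^{(l-1)}}$, which any hyperplane automatically avoids; \cite[Lemma 2.1.4]{HuybrechtsLehn:10} together with the restriction properties of $\Ext$ sheaves then propagates torsion-freeness inductively from $X^{(l-1)}$ to $X^{(l)}$.

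For (2), set $\Tcal_l \subset \Rcal$ to be the non-flat locus of $\Escr|_{X^{(l)} \times \Rcal} \to \Rcal$, and let $\Tcal := \bigcup_{l=1}^{n-2} \Tcal_l$. Flatness at $r$ is equivalent to $X^{(l)}$ being regular for $\Escr_r|_{X^{(l-1)}}$, i.e., to $X^{(l)}$ missing every associated point of $\Escr_r|_{X^{(l-1)}}$. By (3) we have $r_j \notin \Tcal$, so $\Tcal \cap \Rcal_j$ is a proper closed subvariety of $\Rcal_j$, a priori of codimension only $\geq 1$. The main difficulty is upgrading this bound to $\geq 2$. To do so, consider the correspondence $A^{(l-1)} \subset X^{(l-1)} \times \Rcal_j$ of associated points of $\Escr|_{X^{(l-1)} \times \Rcal_j}$. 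It decomposes as $A_{\mathrm{gen}}^{(l-1)} \cup A_{\mathrm{emb}}^{(l-1)}$, where the horizontal component $A_{\mathrm{gen}}^{(l-1)}$ has fiber $\{\eta_{X^{(l-1)}}\}$ over a nonempty open subset of $\Rcal_j$ (thanks to torsion-freeness at $r_j$ together with openness of the torsion-free locus in a flat family), while the embedded components $A_{\mathrm{emb}}^{(l-1)}$ already project to a proper closed subvariety $B_j^{(l-1)} \subsetneq \Rcal_j$. The horizontal component contributes nothing to $\Tcal_l \cap \Rcal_j$, since every hyperplane misses $\eta_{X^{(l-1)}}$; and a relative Bertini-type dimension count, exploiting that $X_l$ varies in the large basepoint-free linear system $|d_l H|_{X^{(l-1)}}$, shows that for any sufficiently general $X_l$ the image in $\Rcal_j$ of $X_l \cap A_{\mathrm{emb}}^{(l-1)}$ has codimension $\geq 2$.

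The principal obstacle is precisely this codimension-two estimate in (2): a single point $r_j$ with torsion-free fiber only gives codim $\geq 1$ for free, and the jump to codim $\geq 2$ requires both the freedom to move each $X_l$ in a large linear system and a careful analysis of the universal associated-point correspondence, separating its horizontal from its embedded components; everything else is a bookkeeping exercise combining the three genericity inputs into a single nonempty open locus in $|d_1 H| \times \cdots \times |d_{n-2} H|$ on which all three conclusions hold simultaneously.
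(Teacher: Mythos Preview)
Your handling of (1) and (3) is correct and matches the paper. For (2) you also correctly identify the crux: codimension $\geq 1$ in each $\Rcal_j$ comes for free from the main reference points $r_j$, and the real content is the jump to $\codim \geq 2$. But the mechanism you propose for this jump is not worked out. The object $A_{\mathrm{emb}}^{(l-1)}$ is not well-defined as a subvariety of $X^{(l-1)} \times \Rcal_j$: associated points are schematic points of varying dimension, and neither they nor their closures need assemble into a closed subset as $r$ varies (torsion does not commute with base change). Moreover, the non-flat condition at $r$ is that $X_l$ \emph{contain} (the closure of) an embedded associated point of $\Escr_r|_{X^{(l-1)}}$, not merely meet it; once you pass to closures, the naive subvariety intersection $X_l \cap A_{\mathrm{emb}}^{(l-1)}$ no longer computes the right locus. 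The ``relative Bertini-type dimension count'' you invoke is therefore not a standard statement you can cite, and you supply no argument for it.

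The paper's device, following \cite[proof of Lemma 3.1]{GrebToma:17}, avoids all of this. At each step, whenever the torsion locus $B_j^{(l-1)} = \{r \in \Rcal_j \cap \Rcal^{(l-1)}: \Escr_r|_{X^{(l-1)}} \text{ has torsion}\}$ has a codimension-one component, one chooses a \emph{secondary reference point} $s_j'$ on that component and adds to the genericity conditions on $X_l$ the requirement that it be $\Escr_{s_j'}|_{X^{(l-1)}}$-regular --- still just prime avoidance for finitely many associated points. By openness of flatness (\cite[Lemma 2.1.4]{HuybrechtsLehn:10}) the restriction then stays flat on an open neighbourhood $N(X_l, s_j')$ of $s_j'$; defining $\Tcal^{(l)}$ as the complement of the torsion-free locus together with these neighbourhoods forces $\Tcal^{(l)} \cap \Rcal_j$ to miss a point in every codimension-one component of $B_j^{(l-1)}$, hence to have codimension $\geq 2$. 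Note that the secondary reference points must be chosen anew at each step of the induction, since the torsion locus changes upon successive restriction.
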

\begin{proof}
We will strictly follow the strategy of \cite[proof of Lemma 3.1]{GrebToma:17} with the additional constraint that the flags will be required to be fully admissible for $[(\cE,\Ccal)]$. 
In line with Definition\ \ref{defi:sufficiently_general}, the conditions we have to impose on the flag in order to fulfill the claims made in the formulation of the Lemma are analysed inductively, in particular, the problem of maintaining flatness of the successive restrictions 
of  the family $\Escr$ on $X\times \Rcal$  to the members of a flag $X^{(l)}\times \Rcal$. For this purpose, we define  \[\mathcal{R}_{tf}:=\{r\in \mathcal{R} \mid \mathcal{F}_{r} \text{ is torsion free on }X \} \ .\]
If $X'\subset X$ is a smooth hypersurface, then the restriction of $\Escr$ to $X'\times \Rcal_{tf}$ remains flat. In fact, flatness of the restricted family at a point $r\in \Rcal$ is equivalent to the $\Escr_r$-regularity of $X'$ by \cite[Lemma 2.1.4]{HuybrechtsLehn:10} and this in turn is implied by the torsion freeness of $\Escr_r$.

 We wish to construct a closed subvariety $\Tcal' \subset \Rcal$ whose intersection with any of the subvarieties $\Rcal_i$ has codimension at least 2 in $\Zcal_i$. This is secured by firstly choosing some "main reference" fibers $\Escr_{r_i}$, where the $r_i\in \Rcal_i$ are as in the statement of the lemma, and then choosing our hypersurface $X'$ such that $\Escr_{r_i}|_{X'}$ remains torsion free on $X'$. This is true for every element $X'$  in a dense open set of the linear system $|H|$ by \cite[Lemma 1.1.12 and Corollary 1.1.14]{HuybrechtsLehn:10}.  Now notice that $\Rcal_i\backslash \Rcal_{tf}$ is not dense in $\Rcal_i$ for any $i$, and that, by the previous discussion, the restriction $\Escr|_{X'\times(\Rcal_{tf}\cap \Rcal_i)}$  remains flat. However, it still may happen that $\codim_{\Rcal_i}(\Rcal_i\backslash \Rcal_{tf})=1$. 
To construct the set $\Tcal'$, whenever a set $(\Rcal_i)\backslash \Rcal_{tf}$ has codimension one in $(\Rcal_i)$ we choose some "secondary reference" point $s'_i\in \Rcal_i\backslash \Rcal_{tf}$  and ask that the hypersurface $X'$ also be $\mathscr U_{s'_i}$-regular, in addition to having the property that $\Escr_{r_i}|_{X'}$ remains torsion free on $X'$. This can still be achieved for $X'$ in a dense open subset of $|H|$, again by \cite[Lemma 1.1.12 and Corollary 1.1.14]{HuybrechtsLehn:10}. By the same reference, a dense open subset of such $X'$'s is in addition fully admissible for $[(\Ecal,\Ccal)]$ in the sense of Definition \ref{defi:admissible-hypersurface-pair}. The construction of the set $\Tcal' \subset \Rcal$ then proceeds in the same way as in the proof of Lemma 3.1 of \cite{GrebToma:17}, by setting
 \[
\mathcal{T}^{\prime }=\mathcal{R}\backslash \bigl(\mathcal{R}_{tf}\cup
\bigcup\nolimits_{i\in I}N(X^{\prime },r_{i}^{\prime }) \bigr)\ ,
\]
where $N(X^{\prime },r_{i}^{\prime })$ is an open neighbourhood of $
r_{i}^{\prime }$ in $\mathcal{R}$, where the restriction to $X^{\prime
}$ remains flat. This set has the desired property with respect to the $\Rcal_i$. Set $X'=X^{(1)}$. 

We will repeat this procedure to find successive elements of the flag. Let $\Rcal^{(1)}\subset \Rcal$ be the flatness locus of this restricted family, (i.e. $\Rcal^{(1)}=\Rcal\backslash\Tcal'$), so that $\Escr|_{X^{(1)}\times \mathcal{R}^{(1)}}$ is flat. We again choose secondary reference points $r_{i}^{(2)}\in \mathcal{R}^{(1)}\cap \mathcal{R}_{i}$. Then, for a general hypersurface $X_{2}\in |H|$ the intersection $X^{(2)}=X^{(1)}\cap X_{2}$ is
$\Escr_{r_{i}^{(2)}}$-regular for each $i$ and also fully admissible for $[(\mathcal{E},\mathcal{C})]$. As above, we can then
construct a nonflatness locus $\mathcal{T}^{(2)}\subset \mathcal{R}^{(1)}$
fulfilling codim$_{\mathcal{R}_{i}}(\mathcal{T}^{(2)}\cap \mathcal{R}_{i})\geq 2$
and such that, setting $\mathcal{R}^{(2)}=\mathcal{R}^{(1)}\backslash 
\mathcal{T}^{(2)}$, the restricted family $\Escr|_{X^{(2)}\times \mathcal{R}^{(2)}}$ remains flat. Now inductively, for each successive $\mathcal{R}^{(l-1)}$ we find secondary
reference points $r_{i}^{(l)}\in \mathcal{R}^{(l-1)}\cap \mathcal{R}_{i}$. 
Then, for a general hypersurface $X_{l}\in |H|$ the intersection $X^{(l)}=X^{(l-1)}\cap X_{l}$ is $\mathscr E_{r_{i}^{(l)}}$-regular and also fully admissible for $[(\mathcal{E},\mathcal{C)]}$. To each of these is associated a nonflatness
locus $\mathcal{T}^{(l)}\subset \mathcal{R}^{(l-1)}$ with codim$_{\mathcal{R}_{i}}(\mathcal{T}^{(l)}\cap \mathcal{R}_{i})\geq 2$. We then define 
$\mathcal{R}^{(l)}=\mathcal{R}^{(l-1)}\backslash \mathcal{T}^{(l)}$ so that $
\mathscr E|_{X^{(l)}\times \mathcal{R}^{(l)}}$ remains flat. We repeat this
procedure until we get down to $X^{(n-2)}=S$.

We conclude that a sufficiently general flag $
\{X^{(l)}\}$ is fully admissible for $[(\mathcal{E},\mathcal{C})]$ (and in
particular for $S$), and setting 
$
\mathcal{T=}\bigcup\nolimits_{l=l}^{n-2}\mathcal{T}^{(l)}
$, for the subsets $\mathcal{T}^{(l)}$ introduced in the procedure described above,
 that $\Escr|_{X^{(l)}\times \mathcal{R}\backslash \mathcal{T}}$
remains flat for all $l$, as desired.
\end{proof}
In the previous proof, note that we may keep the same "main reference" sheaves $\Escr_{r_i}$ on which we ask torsion freeness at any section, whereas we may need to choose new sets of "secondary reference" points $r_i^{(l)}\in (\Rcal^{(l)}\cap \Rcal_i)$ on which we ask regularity for the next section $X^{(l+1)}$, $1\le l\le n-3$. In particular, if $\Escr$ is the universal sheaf $\mathscr U\rightarrow X\times \mathcal{Z}$ the restriction to $X'$ remains flat since $\mathscr U_{s}$ is by definition torsion free for all $s\in \Zcal$. In this case, in the notation used above we have $\Zcal'=\Zcal$, however not all the fibers of the restricted family will remain torsion free on $X'$, so we may not apply the same argument to get flatness for the next restriction to $X^{(2)}\times \Zcal'$. It is therefore necessary to follow the argument above for all further restrictions. 

In the same vein, if $\mathscr F_{r}$ is torsion free for all $r$, and $X=S$ is a surface then there is no nonflatness locus $\Tcal$, since flatness is preserved by restricting to any curve $C \subset S$.

\begin{rem}\label{rem:can_arrange_Mehta_Ramanathan}
 If the $\mathscr{E}_{r_i}$ are not just torsion free but semistable, for any $d_1,   \dots,  d_{n-1}$ and any sufficiently general tuple $(X_1, \dots, X_{n-1}) \in |d_1 H | \times \dots \times |d_{n-1}H |$ the restriction of any of the associated graded sheaves $\Gr(\mathscr{E}_{r_i})$ to each term of the flag $X^{(l)}$ will remain torsion free. Hence, if the sequence $d_1, \dots,  d_{n-1}$ is sufficiently increasing, then by Langer's Theorem, Theorem\ \ref{thm:langer}, the restriction of each $\cE_{r_i}$ to $X^{(n-2)}$ will remain semistable. We will say in this case that the flag \emph{satisfies Langer's conditions}. 
\end{rem}

The following lemma follows directly from the proof of Lemma \ref{flat restriction}, but as we shall see, it is useful to separate out the case of curves. 
\begin{lemma} \label{restriction curves}
Let $\mathscr U \rightarrow X\times \mathcal{Z}$ be the universal sheaf, and let $[(
\mathcal{E},\mathcal{C})]\in \overline M_\HYM$. Let $\mathcal{Z}_{i}\subset \mathcal{Z}$ be the finite set of holomorphic subvarieties provided by
Lemma \ref{lemma:extension}, let $\{X^{(l)}\}_{1\leq l\leq n-2}$ be a sufficiently general flag of complete
intersections with $X^{(n-2)}=S$ together with the subvariety $\mathcal{T}$
produced by Lemma \ref{flat restriction}; and let $\mathcal{Z}^{(l)}\subset \mathcal{Z}$ be the subspaces from the proof of
Lemma \ref{flat restriction}. We also write $z_i\in \Zcal_i$ for the main reference points, so that in particular $\mathscr U_{z_{i}}|_{S}$ is torsion free. Suppose $C\subset S$ is a smooth curve such that there are points $z_{i}^{(n-1)}\in \mathcal{Z}^{(n-2)}\cap 
\mathcal{Z}_{i}$ such that $C$ is $U_{z_{i}^{(n-1)}}|_{S}$-regular. Then,
there is a closed holomorphic subvariety $\mathcal{T}_{C}\subset \mathcal{Z}$
 such that 
\begin{enumerate}
 \item $\codim_{\Zcal_i}(\mathcal{T}_{C}\cap \mathcal{Z}_{i})
\mathcal{\geq }2$ for each $i$, and
\item $\mathscr U|{_{C\times 
\mathcal{Z}\backslash \mathcal{T}_{C}\cup \mathcal{T}}}$ remains flat.
\end{enumerate}

\end{lemma}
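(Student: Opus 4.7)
The plan is to adapt the strategy of the proof of Lemma~\ref{flat restriction} to the additional restriction step going from the surface $S = X^{(n-2)}$ down to the curve $C \subset S$. By the conclusion of Lemma~\ref{flat restriction}, the family $\mathscr U|_{S \times (\Zcal \setminus \Tcal)}$ is already $\Zcal$-flat over $\Zcal \setminus \Tcal$. Given this, \cite[Lemma~2.1.4]{HuybrechtsLehn:10} applied to the Cartier divisor $C \subset S$ translates flatness of the further restriction $\mathscr U|_{C \times \cdot}$ at a point $z \in \Zcal \setminus \Tcal$ into the $\mathscr U_z|_S$-regularity of $C$, equivalently, into the requirement that $C$ miss the associated points of $\mathscr U_z|_S$. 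Consequently the non-flat locus of $\mathscr U|_{C \times (\Zcal \setminus \Tcal)}$ is closed in $\Zcal \setminus \Tcal$, and it suffices to bound its codimension inside each $\Zcal_i$.

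I would next introduce the Zariski open subset
\[ \Zcal_{tf,S} := \{z \in \Zcal \setminus \Tcal \mid \mathscr U_z|_S \text{ is torsion free}\} \subset \Zcal \setminus \Tcal, \]
which contains the main reference points $z_i$ by construction. Since $S$ is smooth irreducible and $C \subsetneq S$ is a proper smooth curve, any torsion-free coherent sheaf on $S$ has only the generic point of $S$ as an associated point; since this generic point is not contained in $C$, the curve $C$ is $\mathscr U_z|_S$-regular for every $z \in \Zcal_{tf,S}$. In particular $\mathscr U|_{C \times \Zcal_{tf,S}}$ is flat. Whenever the complement $\Zcal_i \setminus \Zcal_{tf,S}$ has codimension exactly one in some $\Zcal_i$, one invokes the hypothesis to select, for each codimension-one irreducible component of this complement, a secondary reference point $z_i^{(n-1)} \in \Zcal^{(n-2)} \cap \Zcal_i$ lying on it for which $C$ is $\mathscr U_{z_i^{(n-1)}}|_S$-regular. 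Applying \cite[Lemma~2.1.4]{HuybrechtsLehn:10} once more yields flatness of $\mathscr U|_{C \times \cdot}$ at $z_i^{(n-1)}$, and openness of flatness produces a Zariski open neighbourhood $N_i \subset \Zcal$ of $z_i^{(n-1)}$ on which this further restriction remains flat.

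I would then set
\[ \mathcal{T}_C := \Zcal \setminus \Bigl( \Zcal_{tf,S} \cup \bigcup_i N_i \Bigr), \]
a closed subvariety of $\Zcal$. On the complement $\Zcal \setminus (\mathcal{T}_C \cup \Tcal)$ the restriction $\mathscr U|_{C \times \cdot}$ is flat by the two preceding steps, establishing (2). For (1), each intersection $\Zcal_{tf,S} \cap \Zcal_i$ is already the complement in $\Zcal_i$ of a subvariety of codimension at least one, and every codimension-one component of $\Zcal_i \setminus \Zcal_{tf,S}$ is by construction met by one of the $N_i$ and hence absorbed into the open cover. The main obstacle, and the only point at which this argument differs essentially from Lemma~\ref{flat restriction}, is the purely bookkeeping task of verifying that finitely many secondary reference points suffice to cover simultaneously all codimension-one strata arising across the finitely many $\Zcal_i$; this follows, as in the proof of Lemma~\ref{flat restriction}, from the finiteness of the index set and the boundedness of the family of sheaves involved.
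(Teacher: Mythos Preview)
Your proposal is correct and follows exactly the approach the paper intends: the paper's own proof consists of the single sentence that the lemma ``follows directly from the proof of Lemma~\ref{flat restriction}'', and you have faithfully reproduced that inductive step (torsion-free locus $\Zcal_{tf,S}$, secondary reference points on the codimension-one strata, and the same definition of $\Tcal_C$ as a complement) for the passage from $S$ to $C$. Your write-up is in fact more detailed than what the paper provides.
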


\subsection{\protect{Lifting sections from curves I: the method of \cite{GrebToma:17}}}\label{subsect:lifting_GT}
The construction of the space $\overline M^\mu$ in \cite{GrebToma:17} requires the
existence of sections of $\mathscr L_{n-1}^{\otimes\kappa}$, for some power of $k$.
In particular if one considers all such sections for all powers of $k$ then
these provide a map to projective space defining $\overline M^\mu$. In
comparing this space to $\overline M_\HYM$, instead of considering all
sections we will need to restrict our attention to a certain linear system
of sections that comes from "lifting" sections from complete intersection
curves. We therefore need to review the procedure by which \ sections of $
\mathscr{L}_{n-1}^{\otimes\kappa}$ are constructed in the first place.

The main idea is that for a smooth curve $C\subset X$,
GM- and slope semistability of sheaves on $C$ are equivalent, which
means that the point in the Quot scheme parametrising quotients on $C$
corresponding to a $\mu $-semistable sheaf on $C$ is GIT semistable for the
appropriate determinant line bundle on $C$, provided that this line bundle
can be constructed (see Section 2.6), and in particular this is the case for any complete intersection curve $C=X^{(n-1)}$.

What is required for the construction is that the restriction of the universal family to $C$ remain flat. 
By Lemma \ref{restriction curves}, this can be achieved away from a certain proper
subvariety $\Tcal\cup\Tcal_C\subset \Zcal$, provided $C$ is well chosen. GIT then produces nonvanishing sections
of this determinant line bundle (see Lemma \ref{lemma: separation curves} below), which can then be lifted to give sections of a power of
of $\mathscr L_{n-1}$ away from $\Tcal\cup\Tcal_C$ (see Lemma \ref{lemma:lifting} below). One then has to know that $\Tcal\cup\Tcal_C$ can be
chosen in such a way that these sections extend over $\Tcal\cup\Tcal_C$, to give sections
on all of $\Zcal$. This will be obtained from the stated properties of $\Tcal\cup\Tcal_C$ in Lemma \ref{restriction curves} by applying Lemma \ref{lemma:extension}. 

We now make this more precise. Consider the universal family $\mathscr U\rightarrow X\times \mathcal{Z}$, and choose a flag $\{X^{(l)}\}_{1\leq l\leq n-2}$ as in Lemma \ref{flat restriction}. In particular, write $X^{(n-2)}=S$ for the end-term of this flag. Now we extend this flag to a flag $\{X^{(l)}\}_{1\leq l\leq n-1}$ by choosing a curve $X^{(n-1)}=C\subset S$. 

 Let $c\in K(X)_{\rm num}$ be a class with $
c_{i}=c_{i}(c)=c_{i}(E)$. Write $c^{(n-1)}=c|_C$, and $\tau _{E,C}$ for the
the Hilbert polynomial determined by $c^{(n-1)}$ with respect to $\Ocal_C(1)$. Take $m^{(n-1)}$ to be a (large) positive integer, and write 
$V_{C}=\mathbb{C}^{\tau _{E,C}(m^{(n-1)})}$, and correspondingly $\mathcal{H}_{C}=V_{C}\otimes \Ocal_{C}(-m^{(n-1)})$, and let $Q_{C}\subset\Quot(\mathcal{H}_{C},\tau _{E,C})$ be the subscheme consisting 
 of $m^{(n-1)}$-regular quotients $q_F: \mathcal{H}_{C} \twoheadrightarrow F$ with determinant $\Jcal |_{C}$ such that $q_F$ induces an isomorphism $V_C \overset{\cong}{\longrightarrow} H^0\bigl(F (m^{(n-1)})\bigr)$. 
Applying the discussion of Section~\ref{subsect:quotschemes_and_bundles} to the universal quotient $\Hcal_{C}\otimes \Ocal_{C}\rightarrow \widehat{\mathscr U}_{C}\rightarrow 0$  we obtain a line bundle $\mathscr {L}_{\widehat{U},0}:=\mathscr {L}_{0,C}\rightarrow Q_{C}\in \Pic(Q_{C})$.

The point is that because the two notions of stability are equivalent on $C$, GIT semistability of points in $Q_{C}$ suffices to produce sections of a sufficiently large tensor power of $\mathscr {L}_{0,C}$. Explicitly, we have the following result, see \cite[top of p.\ 223]{HuybrechtsLehn:10}, \cite[Lemma 3.8]{GrebToma:17}, and especially \cite{PavelThesis} 
for detailed explanations. 

\begin{lemma} \label{lemma: separation curves}
For a sufficiently large choice of $m^{(n-1)},$ for any quotient $q_{
\mathcal{F}}:\mathcal{H}_{C}\rightarrow \mathcal{F}$ giving a point $q_{
\mathcal{F}}\in Q_{C}$, the sheaf $\mathcal{F}$ is semistable with the
property that the natural map $V_{C}\rightarrow H^{0}(\mathcal{F}(m^{(n-1)}))
$ is an isomorphism iff $q_{\mathcal{F}}$ is $GIT$ semistable for the $
\SL(V_{C})$ linearisation of $\mathscr l{L}_{0,C}$, i.e., iff there exists a positive
integer $\kappa $, and an $\SL(V_{C})$ invariant section $\sigma \in H^{0}(
\mathscr {L}_{0,C}^{\otimes \kappa })$ that does not vanish at $q_{\mathcal{F}}$.
 Furthermore, suppose $q_{\mathcal{F}_{1}},q_{\mathcal{F}_{2}}\in Q_{C}$
are quotients satisfying any of the equivalent conditions above, so that in particular
the sheaves $\mathcal{F}_{1}$ and $\mathcal{F}_{2}$ are semistable. Then
there is an $\SL(V_{C})$ invariant section
 $\sigma \in H^{0}(\mathscr{L}_{0,C}^{\otimes \kappa })$ for some positive integer $\kappa $ such that $
0=\sigma (\mathcal{F}_{1})\neq \sigma (\mathcal{F}_{1})$, $($that is, sections
of some tensor power of $\mathscr{L}_{0,C}$ separate $q_{\mathcal{F}_{1}}$
and $q_{\mathcal{F}_{2}}$$)$ iff $\mathcal{F}_{1}$ and $\mathcal{F}_{2}$ are
not $s$-equivalent. 
\end{lemma}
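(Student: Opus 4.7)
The plan is to identify this lemma as an instance of the classical Gieseker/Simpson GIT construction of the moduli space of semistable sheaves on a smooth projective variety, specialised to the one-dimensional base $C$, where slope and Gieseker semistability coincide. Once the line bundle $\mathscr{L}_{0,C}$ is recognised as (a power of) the standard determinant line bundle used in that construction, both assertions follow from general GIT, together with the observation from Luna's slice theorem that two GIT-semistable points are not separated by invariant sections of tensor powers of an ample linearised line bundle if and only if the closures of their orbits meet in the semistable locus.

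First I would compute $\mathscr{L}_{0,C}$ explicitly using the formulas of Section~2.6. One has $u_{0}(c^{(n-1)}, [\mathcal{O}_{H}]_{C}) = -r + \chi(c^{(n-1)})[\mathcal{O}_{x}]$, and the determinant of cohomology applied to this class produces, up to tensoring with a trivial $\SL(V_{C})$-character, the line bundle $\det\bigl((p_{Q_C})_{!}(\widehat{\mathscr{U}}_{C}\otimes p_{C}^{\ast}\mathcal{O}_{C}(\ell))\bigr)^{r}\otimes \det\bigl((p_{Q_C})_{!}\widehat{\mathscr{U}}_{C}\bigr)^{-\chi}$ (for appropriate $\ell$), which is precisely the polarisation employed by Gieseker in his construction on a curve; see \cite[Thm.~4.3.3 and Sect.~8.1]{HuybrechtsLehn:10} for the identification.

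Second, I would invoke the Hilbert--Mumford criterion for the $\SL(V_{C})$-action on $Q_{C}$ linearised by $\mathscr{L}_{0,C}$. The weighted filtrations of $V_{C}$ induced by 1-parameter subgroups of $\SL(V_{C})$ correspond, via the tautological quotient, to weighted filtrations of $\mathcal{F}$ by subsheaves. A standard, if delicate, calculation (carried out for instance in \cite[Thm.~4.4.1]{HuybrechtsLehn:10}) shows that, for $m^{(n-1)}$ chosen large enough so that all sheaves in question are $m^{(n-1)}$-regular and the map $V_{C}\to H^{0}(\mathcal{F}(m^{(n-1)}))$ is an isomorphism, the Hilbert--Mumford weight of such a 1-PS is, up to a positive scalar, the defect from the slope inequality characterising semistability of $\mathcal{F}$. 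This yields the equivalence between GIT semistability of $q_{\mathcal{F}}$ and Gieseker semistability of $\mathcal{F}$. The remark that on the curve $C$ slope semistability and Gieseker semistability coincide (since the reduced Hilbert polynomial is determined by the slope) completes the first part of the statement.

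The main obstacle is the above weight computation: the uniform boundedness estimates that permit the single choice of $m^{(n-1)}$ (independent of $\mathcal{F}$) rely on Grothendieck's boundedness of the family of semistable sheaves with fixed Hilbert polynomial on $C$, together with Castelnuovo--Mumford regularity. Granting this, the separation statement is the standard consequence of Luna's closed orbit theorem (see \cite[Thm.~4.3.3 and its proof]{HuybrechtsLehn:10}): two GIT-semistable points have separating invariant sections in some power of the linearisation iff their $\SL(V_{C})$-orbit closures are disjoint in the semistable locus. Finally, one identifies the unique closed orbit in the closure of $\SL(V_{C})\cdot q_{\mathcal{F}}$ as the one corresponding to $\gr(\mathcal{F})$ (the associated graded of any Jordan--Hölder filtration, which is unique up to isomorphism), so that orbit closures meet precisely when $\gr(\mathcal{F}_{1})\cong \gr(\mathcal{F}_{2})$, i.e., when $\mathcal{F}_{1}$ and $\mathcal{F}_{2}$ are $s$-equivalent. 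Combining these ingredients yields both assertions of the lemma.
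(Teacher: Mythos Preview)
Your proposal is correct and aligns with the paper's treatment: the paper does not give a proof but simply cites \cite[top of p.\ 223]{HuybrechtsLehn:10}, \cite[Lemma 3.8]{GrebToma:17}, and \cite{PavelThesis}, which amounts to exactly the standard Gieseker/Simpson GIT argument you outline (identification of $\mathscr{L}_{0,C}$ with the determinant polarisation, the Hilbert--Mumford weight computation of \cite[Thm.~4.3.3, Thm.~4.4.1]{HuybrechtsLehn:10}, and the closed-orbit characterisation of $s$-equivalence). Your sketch in fact supplies more detail than the paper does.
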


We now state the following key lemma, which produces sections of a tensor power of $
\mathscr{L}_{n-1}$ from the sections given by the previous lemma. This is implicit in \cite{GrebToma:17}, but we find it useful to formulate an explicit statement concerning this point here.

\begin{lemma} \label{lemma:lifting}
Let $C\subset S$ be a curve satisfying the hypotheses of Lemma \ref{restriction curves}, so that in particular $C$ is the end-term of a flag $\{X^{(l)}\}_{1\leq
l\leq n-1}$ such that $\mathscr U|_{C\times \mathcal{Z}\backslash (\mathcal{T\cup T}_{C})}$ is flat. 
There exist positive integers $\kappa_0, \kappa_{n-1}$ such that for all $k \in \mathbb{N}_{>0}$  there exist natural linear "lifting" maps
\[\bar{L}_{\mathscr U,C}:H^{0}(Q_{C},\mathscr L_{0,C}^{\otimes k \kappa _{0}})^{\SL(V_{C})}\lra
H^{0}(\mathcal{Z},\mathscr L_{n-1}^{\otimes k \kappa _{n-1}})^{\SL(V)}
\]
such that the following holds:
If $(q_{\Fcal}: \mathcal{H} \to \mathcal{F} )  \in\Zcal \backslash (\mathcal{T} \cup \mathcal{T}_C)$ is a quotient, $q_{\mathcal{F}_{C}}:\mathcal{H}_{C}\rightarrow \mathcal{F}_{C}$ is a quotient realisation of $  \mathcal{F}_{C} = \mathcal{F}|_C$, and $\sigma \in H^{0}(Q_C, \mathscr L_{0,C}^{\otimes k\kappa_0
})^{Sl(V_{C})}$ is a section, then $\bar{L}_{\mathscr U,C}(\sigma )(q_{\mathcal{F}})\neq 0$ if and only if $\sigma (q_{\mathcal{F}_C}) \neq 0$.
In particular, if $q_{\Fcal}\in\Zcal$ is a quotient such that $\Fcal|_{X^{(l)}}$ remains semistable for all members of the flag, then for $k$ large enough there is a section of $H^{0}(\mathcal{Z},\mathscr L_{n-1}^{\otimes k \kappa _{n-1}})^{\SL(V)}$  that does not vanish at $q_{\Fcal}$.
\end{lemma}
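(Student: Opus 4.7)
The plan is to construct the lifting map via pullback along a classifying morphism followed by equivariant descent, and then to extend across the non-flatness locus. Set $\mathcal{Z}^\circ := \mathcal{Z} \setminus (\mathcal{T} \cup \mathcal{T}_C)$; by hypothesis $\mathscr U|_{C \times \mathcal{Z}^\circ}$ is a $\mathcal{Z}^\circ$-flat family of coherent sheaves on $C$, so the full machinery of Section \ref{subsect:quotschemes_and_bundles} is available on $\mathcal{Z}^\circ$.

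The first step is to relate $\mathscr L_{n-1}|_{\mathcal{Z}^\circ}$ directly to a determinant line bundle built from the restricted family. The key algebraic identity is that in $K(X)$, the class $u_{n-1}(c,[\mathcal{O}_H])$ equals the pushforward along $\iota: C \hookrightarrow X$ of $u_0(c|_C,[\mathcal{O}_H|_C]) \in K(C)$, since both sides equal $-r[\mathcal{O}_H]^{n-1} + \chi(c\cdot [\mathcal{O}_H]^{n-1})[\mathcal{O}_x]$ and $\chi(c\cdot [\mathcal{O}_H]^{n-1}) = \chi(c|_C)$. Combining flatness with the projection formula for the determinant of cohomology then yields a canonical isomorphism
\[
\mathscr L_{n-1}|_{\mathcal{Z}^\circ} \;\cong\; \lambda_{\mathscr U|_{C\times \mathcal{Z}^\circ}}\bigl(u_0(c|_C,[\mathcal{O}_H|_C])\bigr).
\]

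Next, for $m^{(n-1)}$ chosen sufficiently large (using boundedness of the relevant restricted families on $C$), the pushforward of $\mathscr U|_{C\times \mathcal{Z}^\circ}(m^{(n-1)})$ to $\mathcal{Z}^\circ$ is locally free of rank $\tau_{E,C}(m^{(n-1)})$ with vanishing higher direct images. The associated frame bundle $\pi: \mathcal{R}^\circ \to \mathcal{Z}^\circ$ is a principal $GL(V_C)$-bundle over which the twisted restricted family acquires a global trivialisation and hence defines a quotient $\mathcal{H}_C \otimes \mathcal{O}_{\mathcal{R}^\circ} \twoheadrightarrow \mathscr U|_{C\times \mathcal{R}^\circ}$ (up to the twist by $\mathcal{O}_C(-m^{(n-1)})$). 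This produces a $GL(V_C)$-equivariant classifying morphism $\phi:\mathcal{R}^\circ \to Q_C$. Functoriality of the determinant of cohomology under base change, together with the identity above, gives an identification
\[
\phi^*\mathscr L_{0,C}^{\otimes \kappa_0} \;\cong\; \pi^*\mathscr L_{n-1}^{\otimes \kappa_{n-1}}|_{\mathcal{R}^\circ} \otimes \mathcal{N},
\]
where $\mathcal{N}$ is the pullback along $\pi$ of a $GL(V_C)$-character line bundle that becomes trivial on $SL(V_C)$; the integers $\kappa_0$ and $\kappa_{n-1}$ are determined by balancing both sides so as to absorb the character factors introduced by the $m^{(n-1)}$-twist. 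Given $\sigma \in H^0(Q_C,\mathscr L_{0,C}^{\otimes k\kappa_0})^{SL(V_C)}$, the pullback $\phi^*\sigma$ is $SL(V_C)$-invariant on $\mathcal{R}^\circ$ and hence descends uniquely along the $GL(V_C)$-bundle $\pi$ to an $SL(V)$-invariant section of $\mathscr L_{n-1}^{\otimes k\kappa_{n-1}}$ over $\mathcal{Z}^\circ$. By construction, this descended section vanishes at $q_{\mathcal F}\in \mathcal{Z}^\circ$ if and only if $\sigma$ vanishes at $\phi$ of any frame over $q_{\mathcal F}$, which is precisely the point $q_{\mathcal F_C}\in Q_C$ given by the quotient realisation.

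Finally, to extend from $\mathcal{Z}^\circ$ to all of $\mathcal{Z}$, one invokes Lemma~\ref{lemma:extension} with the finite set of closed $SL(V)$-invariant subvarieties $\{\mathcal{Z}_i\}$ it provides: Lemmas~\ref{flat restriction} and~\ref{restriction curves} ensure precisely that $\codim_{\mathcal{Z}_i}((\mathcal{T}\cup \mathcal{T}_C)\cap \mathcal{Z}_i)\geq 2$ for each $i$, so the invariant section extends uniquely to an element $\bar L_{\mathscr U,C}(\sigma) \in H^0(\mathcal{Z},\mathscr L_{n-1}^{\otimes k\kappa_{n-1}})^{SL(V)}$. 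Linearity of $\sigma \mapsto \bar L_{\mathscr U,C}(\sigma)$ is manifest from the construction. The "in particular" assertion is then immediate from the non-vanishing criterion: if $\mathcal{F}|_{X^{(l)}}$ is semistable for every $l$, then in particular $\mathcal{F}|_C$ is semistable, and Lemma~\ref{lemma: separation curves} supplies, for all sufficiently large $k$, an $SL(V_C)$-invariant section of $\mathscr L_{0,C}^{\otimes k\kappa_0}$ not vanishing at $q_{\mathcal{F}_C}$, whose lift is non-vanishing at $q_{\mathcal F}$. The main technical obstacle is the careful bookkeeping in the projection formula and the tracking of the residual $GL(V_C)$-characters, which is what ultimately forces the passage to the specific powers $\kappa_0$ and $\kappa_{n-1}$ rather than giving an isomorphism at the level of the line bundles themselves.
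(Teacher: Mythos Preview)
Your proposal is correct and follows essentially the same approach as the paper, which simply cites \cite[Sect.~3.3]{GrebToma:17} for the frame-bundle construction and \cite[Eq.~(3.15)]{GrebToma:17} for the exponents $\kappa_0,\kappa_{n-1}$, then invokes Lemma~\ref{lemma:extension} for the extension and Lemma~\ref{lemma: separation curves} for the final assertion. You have in effect unpacked that reference: the $K$-theory identity you write down is precisely \cite[Eq.~(3.5)]{GrebToma:17} (cf.\ also the isomorphism indicated in Diagram~\eqref{eq:comparison_diagram}), and the frame-bundle/classifying-map/descent mechanism you describe is exactly what is recalled more abstractly in Section~\ref{subsect:lifting_II} of the paper.
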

\begin{proof}
The construction of the lifting map is in \cite [Sect.\  3.3]{GrebToma:17}, the numbers $\kappa_{n-1}$ and $\kappa_0$ are determined by \cite[Eq.~(3.15)]{GrebToma:17}. This gives a section in the space $H^{0}(\mathcal{Z}\backslash \mathcal{T\cup T}_{C},\mathscr{L}_{n-1}^{\otimes\kappa_{n-1}})^{\SL(V)}$. By the construction of the sets $\Tcal$ and $\Tcal_{C}$, and Lemma \ref{lemma:extension}, these sections extend to all of $\Zcal$. The second statement follows directly from the existence of the lifting map and Lemma \ref{lemma: separation curves}. 
\end{proof}

We will refer to elements of $H^{0}(\mathcal{Z},
\mathscr {L}_{n-1}^{\otimes k\kappa _{n-1}})^{\SL(V)}$ that are lifted from sections in $
H^{0}(Q_{C},\mathscr {L}_{0,C}^{\otimes k\kappa _{0}})^{\SL(V_{C})}$ in this
way as \emph{$\theta$-functions of level $k \kappa_{n-1}$ lifted from $C$}. Note that since the moduli space $M^{\mu ss}$ is
constructed using sections of the $\mathscr {L}_{n-1}^{\otimes k \kappa _{n-1}}$,
any $\theta$-function $\sigma $ with $\sigma (q_{\mathcal{F}})\neq 0$
descends to a section of some tensor power\footnote{See Diagrams \eqref{eq:comparison_diagram} and \eqref{eq:moduli_comparison} below for a computation of the exact tensor power.} of $\mathcal{O}_{M^{\mu ss}}(1)$, which does not vanish at $[\mathcal{F}]$. We will continue to refer to these induced sections as \emph{$\theta$-functions $($lifted from $C$$)$}.

\subsection{\protect{Lifting sections from curves II: the method of \cite{Li:93}}} \label{subsect:lifting_II} We collect here some information on an alternative way of "lifting" sections (cf.~\cite[p.~433-4]{Li:93}), and its compatibility with the construction described in the previous subsection. This will later allow us to use very fine results concerning separation properties of sections in determinant line bundles for families of sheaves on surfaces obtained by Jun Li in \cite{Li:93} (see the proof of Proposition~\ref{prop:embedding-via-admissible-curves-surface-case} below). 

As mentioned in Section
 \ref{sec:moduli}, 
the space $M^{\muss}$ comes equipped with an ample line bundle $\mathcal{O}_{M^{\muss}}(1)$ and a number $N=N_{n-1}$ with the property that any flat family $\Escr \rightarrow X\times T^\circ$ of semistable sheaves
(with $T^\circ$ weakly normal) gives rise to a unique classifying morphism $\psi _{\Escr}:T^\circ\rightarrow M^{\mu
ss}$, so that \begin{equation}\label{eq:functorial_property_of_class_map}
    \psi _{\Escr}^{\ast }(\mathcal{O}_{M^{\mu
ss}}(1))=\lambda _{\Escr}(u_{n-1}(c,[\mathcal{O}_{H}]))^{\otimes N}.            
                 \end{equation}
 Without loss of generality, as explained in \cite[Sect.\ 4.2]{GrebToma:17}, we may assume that the section ring of $\mathcal{O}_{M^{\mu ss}}(1)$ is generated in degree one. This induces a "lifting" map  $\psi _{\Escr}^{\ast }:H^0(M^{\mu ss},\mathcal{O}_{M^{\mu ss}}(1))\to H^0(T^\circ,\lambda _{\Escr}(u_{n-1}(c,[\mathcal{O}_{H}]))^{\otimes N})$. 
It is clear that if the family $\Escr \rightarrow X\times T^\circ$ is linearized with respect to some $G$-action on $T^\circ$, then $\psi _{\Escr}$ is $G$-invariant and so are the sections in $\psi _{\Escr}^{\ast }(H^0(M^{\mu ss},\mathcal{O}_{M^{\mu ss}}(1)))$.
The above applies in particular to the case when  $T^\circ=\Zcal$, $\Escr= \widetilde{\mathscr U}$ and $G=\SL(V)$ and the induced map $\psi^*_{  \widetilde{\mathscr U}}$ is an isomorphism. It also applies to the case when $X=C$ is  a curve, $T^\circ=Q_C^{ss}$, $\Escr$ is the restriction of the universal quotient bundle $\widehat{\mathscr U}$ to $C\times  Q_C^{ss}$ and $G=\SL(V_C)$ giving an isomorphism $\psi _{\widehat{\mathscr U}}^{\ast }$.

 Let now $P\to T^\circ$ denote the frame bundle associated to the locally free sheaf $pr_{2,*}(\Escr(l))$. We have a commutative diagram of natural morphisms
 $$
 \xymatrix{
 P\ar[d]^p\ar[r]^q& \Zcal\ar[d]\\
 T^\circ\ar[r]& M^{\mu ss}
 }
 $$  
As in \cite[page 224]{HuybrechtsLehn:10} and \cite[proof of Lemma 3.8]{GrebToma:17} by pulling back through $q$ and pushing down through $p$ for all $\kappa \in \mathbb{N}_{>0}$ we get a linear map
$$L_\Escr:  H^{0}(\mathcal{Z},\mathscr L_{n-1}^{\otimes \kappa N})^{\SL(V)}\lra H^0(T^\circ,\lambda _{\Escr}(u_{n-1}(c,[\mathcal{O}_{H}]))^{\otimes \kappa N})$$
which by construction sits in the following commutative diagram
 $$
 \xymatrix{
 &&    H^{0}(\mathcal{Z},\mathscr L_{n-1}^{\otimes \kappa N})^{\SL(V)}   \ar[dll]_{L_\Escr}\\
  H^0(T^\circ,\lambda _{\Escr}(u_{n-1}(c,[\mathcal{O}_{H}]))^{\otimes \kappa N})& &
 H^0(M^{\mu ss},\mathcal{O}_{M^{\mu ss}}(\kappa))\ar[u]_{\psi _{\widetilde{\mathscr U}}^{\ast }}\ar[ll]^{\psi _{\Escr}^{\ast } }
 }
 $$  
showing that in this case the two ways of lifting sections are compatible.

 Let now $X$ be $n$-dimensional and let $\{X^{(l)}\}_{1\leq l\leq n-1}$ be a flag of complete
intersections with $X^{(n-2)}=S$ and $X^{(n-1)}=C$.
Denote by $E_{X^{(i)}}$ and $\Jcal_{X^{(i)}}$ the restrictions
of the smooth bundle $E$ and holomorphic bundle $\Jcal$ to $X^{(i)}$.
 Let $T^\circ$ be a weakly normal base of a flat family  $\Escr \rightarrow X\times T^\circ$ of semistable sheaves, such that its restrictions $\Escr^{(l)} \rightarrow X^{(l)}\times T^\circ$ to all terms of the flag remain flat families of semistable sheaves.  By the same arguments as above and  in a canonical way we obtain a map:
\[
L_{\mathscr E,C}:H^{0}(Q_{C},\mathscr L_{0,C}^{\otimes\kappa _{0}})^{\SL(V_{C})}\rightarrow
H^{0}(T^\circ,\lambda _{\Escr}(u_{n-1}(c,[\mathcal{O}_{H}]))^{\otimes\kappa _{1}}).
\]
As before, if the family $\Escr \rightarrow X\times T^\circ$ is linearized with respect to some $G$-action on $T$, then  the image of $L_{\mathscr E,C}$ lies inside $H^{0}(T^\circ,\lambda _{\Escr}(u_{n-1}(c,[\mathcal{O}_{H}]))^{\otimes\kappa _{1}})^{G}$.

From the above,  we also obtain a commutative diagram 
\begin{equation}\label{eq:comparison_diagram}
\begin{gathered}
\resizebox{.9\hsize}{!}{
\xymatrix{
H^0(M_{C}^{\mu ss},
\Ocal_{M_{C}^{\mu ss}}(\kappa_0 N))\ar@/^2pc/[dr]^{\psi _{\Escr_C}^{\ast }}
\ar[d]^{\psi _{\widehat{\mathscr U}}^{\ast }} &\\ 
H^{0}(Q_{C},\mathscr L_{0,C}^{\otimes\kappa_{0} N N_0})^{\SL(V_{C})} \ar[dr]^{L_{\mathscr E,C}}\ar[d]^ {\bar{L}_{\mathscr U,C}}\ar[r]^>>>>{L_{\mathscr E|_C,C}\quad }& 
H^0(T^\circ,\lambda _{\Escr|_C}(u_{0}(c|_{C},[\mathcal{O}_{H}]_{C}))^{\otimes \kappa_0NN_0}) \ar[d]_{\cong}^{\text{ by \cite[Eq.~(3.5)]{GrebToma:17}}} \\ 
H^{0}(\mathcal{Z},\mathscr L_{n-1}^{\otimes\kappa_{n-1} N_0N})^{\SL(V)}\ar[r]^{L_\Escr\qquad} & H^0(T^\circ,\lambda _{\Escr}(u_{n-1}(c,[\mathcal{O}_{H}]))^{\kappa_{n-1} N_0N})\\
H^0(M^{\mu ss}, \mathcal{O}_{M^{\mu ss}}(\kappa_{n-1} N_0))\ar@/_2pc/[ur]^{\qquad\psi _{\Escr}^{\ast } }
\ar[u]_{\psi _{\widetilde{\mathscr U}}^{\ast }}  &&
 }
 }
 \end{gathered}
\end{equation}
where $N_0$ plays the role of $N$ for the moduli space $M_C^{\mu ss}$ (see eq.\ ~\eqref{eq:functorial_property_of_class_map}). Here we have used that $H^{0}(Q_{C}^{ss},\mathscr L_{0,C}^{\otimes\kappa _{0}})^{\SL(V_{C})}\cong H^{0}(Q_{C},\mathscr L_{0,C}^{\otimes\kappa _{0}})^{\SL(V_{C})}$ in order to construct the map 
$\psi^*_{\widehat{\mathscr{U}}}$ (see \cite[Sect.~8.2]{MR2004511}).

Moreover these maps are compatible with those arising from the intermediate spaces $X^{(l)}$, and in particular from $S$, in the following sense. Indeed, using the fact that $T^\circ$ is also base for the restricted flat family $\Escr_{S\times T^o}$ of semistable sheaves on $S$ and the isomorphism of determinant line bundles proved in \cite[Sect.~3.2.2]{GrebToma:17} we get for the appropriate powers of determinant sheaves a commutative diagram of the form 
\begin{equation}\label{eq:moduli_comparison}
\begin{gathered}
\resizebox{.9\hsize}{!}{
\xymatrix{
H^0(M^{\mu ss},\mathcal{O}_{M^{\mu ss}}(\kappa_{n-1}N_0N_1) \ar[r] &
 H^0(T^\circ,\lambda _{\Escr}(u_{n-1}(c,[\mathcal{O}_{H}]))^{\otimes \kappa_{n-1} N_0N_1 N} ) \\
H^0(M_{C}^{\mu ss},\Ocal_{M_{C}^{\mu ss}}(\kappa_0NN_1))
\ar[r]
\ar[u]\ar[d]&  H^0(T^\circ,\lambda _{\Escr_C}(u_{0}(c|_{C},[\mathcal{O}_{H}]_{C}))^{\otimes \kappa_{0} N N_1 N_0} )  \ar[d]^{\cong} \ar[u]_\cong \\
H^{0}(M_S^{\mu ss},\mathcal{O}_{M_{S}^{\mu ss}}(\kappa_1N_0N))\ar[r]& 
H^0(T^\circ,\lambda _{\Escr_S}(u_{1}(c|_{S},[\mathcal{O}_{H}]_{S}))^{\otimes \kappa_1N_0NN_1}),
}
}
\end{gathered}
\end{equation}
where $N_1$ is defined similarly to $N$ and $N_0$ (see again \eqref{eq:functorial_property_of_class_map}).

In the situation of the proof of Lemma  \ref{lemma:lifting} we may set $T^\circ$ to be equal to $\mathcal{Z} \backslash \mathcal{T} \cup \mathcal{T}_C$ and $\mathscr{E} = \mathscr{U}$. In this case, the map $L_\mathscr{E}$ in \eqref{eq:comparison_diagram} is the restriction map.

\subsection{The 2-dimensional case}\label{subsect:surface_case_separation}
The next proposition deals with moduli spaces of slope-semistable sheaves on surfaces and is implicitly contained in Li's paper \cite{Li:93} although not explicitly stated there.

As remarked at various junctures, the construction of the moduli space $M^{\mu ss}$ proceeds by finding nonvanishing equivariant sections of tensor powers of $\mathscr L_{n-1}$. Proving the separation properties of this moduli space therefore amounts to finding such sections that separate the appropriate quotients (see Section \ref{sec:moduli}). The point of the following proposition is that in the case of surfaces, this may in fact be achieved by only considering the $\theta$-functions that are lifted from a finite number of fixed curves.

\begin{Prop}\label{prop:embedding-via-admissible-curves-surface-case}
Let $(S, \mathcal{O}_S(1))$ be a polarized smooth complex projective surface, $(r,c_1,c_2)\in\Z_{>0}\times \NS(S)\times H^4(S,\Z)$ fixed topological invariants, let $M^{\mu ss}_{S}$ be the moduli space of slope semistable sheaves on $S$ with the given topological invariants and with fixed determinant $\Jcal_S$. Let $A\subset S$ be  finite. Then there exist positive integers $k$ and $l$ $($depending on $k$$)$, and smooth curves $C^{(1)},\ldots,C^{(n)}\in|\mathcal{O}_X(k)|$ not intersecting $A$  such that the linear system \[W_S \subset H^0(M^{\mu ss}_{S}, \mathcal{O}_{M^\muss}(l \kappa_1N_0 ))\] defined as the span of all $\theta$-functions on $M^{\mu ss}_{S}$ of level $l \kappa_1 N_0 N_1$ lifted from the curves $C^{(1)},\ldots,C^{(n_0)}$ gives an injective morphism $$\nu_S:M^{\mu ss}_{S}\hookrightarrow \P(W_S^*)\ .$$
\end{Prop}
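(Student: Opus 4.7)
The plan is to show injectivity of $\nu_S$ by producing, for every pair of distinct points $[\Fcal_1] \neq [\Fcal_2]$ of $M^{\mu ss}_S$, a $\theta$-function lifted from one of the curves $C^{(i)}$ that distinguishes them. By Proposition~\ref{prop:GT} in the surface case, such points are in bijection with isomorphism classes of pairs $(\ddual{(\Gr\Fcal)}, \Ccal_\Fcal)$, so I must separate both the reflexive-sheaf and the cycle components. By Lemmas~\ref{lemma: separation curves} and~\ref{lemma:lifting} combined with the commutativity of Diagrams~\eqref{eq:comparison_diagram} and~\eqref{eq:moduli_comparison}, separating a given pair reduces to finding a curve $C$ in our chosen collection such that $\Fcal_1|_C$ and $\Fcal_2|_C$ are semistable on $C$ but not $s$-equivalent.

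First I would exploit boundedness of the family of semistable sheaves defining points of $M^{\mu ss}_S$: the associated reflexive sheaves $\ddual{(\Gr\Fcal)}$, their singular sets, and the support cycles $\Ccal_\Fcal$ all vary in bounded families, so in particular $\sing(\ddual{(\Gr\Fcal)}) \cup |\Ccal_\Fcal|$ contains a uniformly bounded number of points. I would then choose $k$ large enough so that Theorem~\ref{thm:langer} applies uniformly to the family, so that $|\Ocal_S(k)|$ contains smooth curves passing through any prescribed point of $S$ while avoiding any prescribed finite subset, and so that $k$ majorises the integer $k_0$ from Proposition~\ref{prop: surface sheaf separation}. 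A pigeonhole argument in the spirit of the proof of that proposition then yields a finite collection of smooth curves $C^{(1)}, \ldots, C^{(n_0)} \in |\Ocal_S(k)|$, none meeting $A$, with the required separation property.

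The separation will be verified in two cases. In the first case, $\ddual{(\Gr\Fcal_1)} \not\cong \ddual{(\Gr\Fcal_2)}$, and Proposition~\ref{prop: surface sheaf separation} will directly supply some $C^{(i)}$ on which the restricted graded double duals are non-isomorphic, hence $\Fcal_1|_{C^{(i)}}$ and $\Fcal_2|_{C^{(i)}}$ are not $s$-equivalent on $C^{(i)}$. In the second case, $\ddual{(\Gr\Fcal_1)} \cong \ddual{(\Gr\Fcal_2)}$ but $\Ccal_{\Fcal_1} \neq \Ccal_{\Fcal_2}$; here I will pick a $C^{(i)}$ passing through a point $p$ where the two cycles differ, avoiding the common singular set. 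Then $(\ddual{(\Gr\Fcal_1)})|_{C^{(i)}} \cong (\ddual{(\Gr\Fcal_2)})|_{C^{(i)}}$, but by Lemma~\ref{lem:restricted_cycles_cycles_of_restrictions} the torsion quotients $(\ddual{(\Gr\Fcal_j)})|_{C^{(i)}} / (\Gr\Fcal_j)|_{C^{(i)}}$ will have lengths at $p$ equal to the multiplicities of $p$ in the intersection cycles $\Ccal_{\Fcal_j} \cdot C^{(i)}$, and these will differ. Consequently $\Gr(\Fcal_1|_{C^{(i)}}) \not\cong \Gr(\Fcal_2|_{C^{(i)}})$, again yielding the required failure of $s$-equivalence.

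To conclude, I would take $l$ large enough that Lemma~\ref{lemma: separation curves} simultaneously delivers the needed $\SL(V_{C^{(i)}})$-invariant separating sections on each $Q_{C^{(i)}}$ at level $l\kappa_0$; applying the lifting procedure of Section~\ref{subsect:lifting_II} and the two diagrams then produces sections of $\Ocal_{M^{\mu ss}_S}(l\kappa_1 N_0)$ which span $W_S$ and separate all distinct pairs. The hard part will be the cycle-separation step: one must select, uniformly over the entire bounded family of semistable sheaves, a single finite collection of curves that detects not only the reflexive data $\ddual{(\Gr\Fcal)}$ but also the precise positions and multiplicities of the finitely many support points of the $0$-cycles $\Ccal_\Fcal$, while remaining disjoint from the prescribed finite subset $A$ and from the varying singular loci.
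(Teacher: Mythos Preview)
Your strategy for separating the reflexive-sheaf component is essentially the paper's, but the cycle-separation step has a genuine gap. You propose to choose a curve $C^{(i)}$ \emph{passing through} a point $p$ where the cycles $\Ccal_{\Fcal_1}$ and $\Ccal_{\Fcal_2}$ differ. This creates two problems. First, such a point $p$ lies in $\sing(\Gr\Fcal_1)\cup\sing(\Gr\Fcal_2)$, so $(\Gr\Fcal_j)|_{C^{(i)}}$ need not be torsion free and Theorem~\ref{thm:langer} no longer guarantees that $\Fcal_j|_{C^{(i)}}$ is semistable; without semistability on the curve, Lemma~\ref{lemma: separation curves} and hence the lifting machinery do not apply. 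Second, and more seriously for the finiteness claim, the point $p$ depends on the pair $(\Fcal_1,\Fcal_2)$ and ranges over all of $S$, so no finite set of curves can pass through every such $p$; you yourself flag this as ``the hard part'', but your proposed mechanism does not resolve it. (As a minor point, Lemma~\ref{lem:restricted_cycles_cycles_of_restrictions} is stated for codimension-two sheaves and hyperplane sections, not for $0$-cycles restricted to curves, so the invocation would also need adjustment.)

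The paper handles this step differently. It appeals to \cite[Lemma~3.6]{Li:93}: to separate two \emph{fixed} points $[(\cE,\Ccal_1)]\ne[(\cE,\Ccal_2)]$, Li uses $\theta$-functions lifted from two \emph{general} members $D_{t_1},D_{t_2}$ of a pencil in $|kH|$ generated by $D_0\ni P$ and $D_1\not\ni P$, where $P$ is a point at which the multiplicities of $\Ccal_1$ and $\Ccal_2$ differ. The general members $D_{t_1},D_{t_2}$ themselves \emph{avoid} $P$ (and $A$), so semistability of the restrictions is preserved and the lifting construction applies. A Noetherian induction argument then shows that finitely many such additional curves suffice to separate all pairs with isomorphic double dual but unequal cycle. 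This pencil trick --- separating the cycles at $P$ using curves that do not pass through $P$ --- is the missing idea in your argument.
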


\begin{proof} 
Let $\mathcal{Z}$ be the weak normalisation of 
$$R_{S}^{\mu ss}\subset \Quot(\mathcal{H},\tau (r,c_{1},c_{2}))\ ,$$ and consider the family $\{\mathcal{F}_i \}_{i \in \mathcal{Z}}$ of sheaves parametrised by $\mathcal{Z}$. Choose $k\gg0$, $n$, and smooth curves $C^{(1)},\ldots,C^{(n_0)}\in|kH|$ satisfying the conclusion of Proposition \ref{prop: surface sheaf separation}. Note that the curves may be chosen such that they in addition avoid the set $A$. Let $\mathcal{F} = \mathcal{F}_i$ for some $i \in \mathcal{Z}$.  By Proposition~\ref{prop: surface sheaf separation} there exists an index $\alpha \in \{1, \dots, n_0\}$ such that the sheaf $\Fcal|_{C^{(\alpha)}}$  remains semistable and hence by Lemma  \ref{lemma: separation curves} determines a section in $ H^{0}(Q_{C^{(\alpha)}},\mathscr L_{0,C^{(\alpha)}}^{l N_1 N_0\kappa _{0}})^{\SL(V_{C^{(\alpha)}})}$ that does not vanish at $\Fcal|_{C^{(\alpha)}}$. Consequently, by Lemma \ref{lemma:lifting} we obtain a section in $H^{0}(\mathcal{Z},\mathscr L_{n-1}^{l\kappa _{1} N_1 N_0})^{\SL(V)}$ not vanishing at any quotient associated to $\Fcal$. This implies that there exists a positive integer $l$ such that the linear system $W_S$ defined as the span of all $\theta$-functions of level $l\kappa _{1} N_1 N_0$  lifted from the curves $C^{(1)},...,C^{(n_0)}$ has no base points on $M^{\mu ss}_{S}$ and thus gives a morphism $\nu_S:M^{\mu ss}_{S}\to \P(W_S^*)$.

Moreover, it follows from Proposition \ref{prop: surface sheaf separation} and Lemmas \ref{lemma: separation curves} and \ref{lemma:lifting} that, after increasing $l$ if necessary, this morphism separates points of the form $[(\cE_1,\Ccal_1)]$, $[(\cE_2,\Ccal_2)]$ as soon as $\cE_1$ and $\cE_2$ are not isomorphic. Namely, if $\mathcal{E}_{1}=(\Gr\mathcal{F}_{2})^{\vee \vee }$ and $\mathcal{E}_{2}=(\Gr\mathcal{F}_{2})^{\vee \vee }$, then by Proposition \ref{prop: surface sheaf separation} there exists an $\alpha \in \{1, \dots n\}$ such that the restrictions of $\Fcal_1$ and $\Fcal_2$ to $C^{(\alpha)}$ are not s-equivalent, and therefore by Lemma \ref{lemma: separation curves} there is a section of \[H^{0}(Q_{C^{(\alpha)}},\mathscr L_{0,C^{(\alpha)}}^{\otimes l \kappa _{0} N_1N_0})^{\SL(V_{C^{(\alpha)}})}\] that separates them on $C^{(\alpha)}$, and hence there is an induced lifted section in $H^{0}(\mathcal{Z},\mathscr L_{n-1}^{l \kappa _{1} N_1N_0})^{\SL(V)}$ which separates them on $S$ by Lemma \ref{lemma:lifting}.

We thus only need to show that by increasing $l$ again if necessary and by adding $\theta$-functions lifted from further curves $C^{(\alpha)}\in|kH|$ avoiding $A$ to this linear system also points of the form $[(\cE,\Ccal_1)]$, $[(\cE,\Ccal_2)]$ with $\Ccal_1\neq\Ccal_2$ get separated. By a Noetherian induction argument for this it suffices to show that two {\it fixed} distinct points $[(\cE,\Ccal_1)],[(\cE,\Ccal_2)]\in W_{S}$ may be separated by adding $\theta$-functions lifted from further curves as above. This will be done using  \cite[Lemma 3.6]{Li:93} by noticing that its proof actually shows that the added curves may be chosen to avoid $A$.\footnote{See \cite[Lemma 8.3.4]{HuybrechtsLehn:10} for a comparison of the determinant line bundles used. Strictly speaking, Jun Li works in the case of rank two sheaves with zero first Chern class, but his computations remain valid in the general case as well, cf.\ \cite[pp.\ 229/230]{HuybrechtsLehn:10}. The possibility of avoiding $A$ in the general case can also be concluded from the less direct argument for separation presented in \cite[proof of Prop.\ 8.2.13]{HuybrechtsLehn:10} (in the situation discussed there, let $C$ run through all smooth curves avoiding $A$).} Indeed, in order to prove the desired separation, Li uses  $\theta$-functions lifted from two {\it general} members $D_{t_1}$, $D_{t_2}$  of a pencil of curves generated by two elements $D_0, D_1\in |kH|$ chosen as follows (see \cite[p.\ 441]{Li:93}): if $\Ccal_1=\sum_{i=1}^sm'_iP'_i$, $\Ccal_2=\sum_{i=1}^sm''_iP''_i$ and $P=P'_1=P''_1$ is such that $m'_1\neq m''_1$ then it is asked that $P\in D_0$ and $P\notin D_1$, but otherwise $D_0,D_1$ are general in $|kH|$, in particular disjoint from $A\backslash \{ P\}$. Thus general elements $D_{t_1}$, $D_{t_2}$ in this pencil will be disjoint from  $A$.
\end{proof}

\begin{rem}\label{rem:nu_embedded_surface}
In particular, in our global setup, for any smooth surface $S\subset X$ we obtain by restriction
an injective map, still denoted $\nu _{S}$, giving an embedding of $\overline{M}_S^{\mu }\subset M_S^{\muss}$
 into 
$\mathbb{P}(W^{*}_{S})$. Here, $S$ is polarised by $\Ocal_S(1)=\mathcal{O}_X(1)|_S$, the invariants $(r, c_1, c_2)$ are determined by $E|_S$, and the fixed determinant $\mathcal{J}_S$ is given by $\mathcal{J}|_S$.
 \end{rem}

\subsection{The higher dimensional case}\label{subsect:higher-dim_separation}
The following claim deals with the higher dimensional case and  is the main technical statement of this section. It will be proved in the sequel.

\begin{claim}\label{Claim}
For every point $p\in \overline{M}^\mu$ there exist an open $R$-saturated neighbourhood $U\subset \overline M^\mu$,  complex vector spaces $W_j$, $1\le j\le m$ and holomorphic maps $\nu_j:U\to \P(W^{*}_j)$ such that the fibers of 
$$\nu = \nu_1\times...\times\nu_m:U\lra\P(W^{*}_1)\times...\times\P(W^{*}_m)$$
are precisely the equivalence classes of $R$ inside $U$.
\end{claim}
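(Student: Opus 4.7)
Fix $p = [\Fcal_0] \in \overline{M}^\mu$ corresponding to the ideal HYM connection $[(\Ecal_0, \Ccal_0)] = \overline{\Phi}(p)$. The strategy is to produce the required maps $\nu_j$ by restricting sheaves to a finite collection of carefully chosen smooth complete intersection surfaces $S_j \subset X$ and then exploiting the complete understanding of the surface case (Corollary~\ref{cor:continuity_in_surface_case}, Remark~\ref{rem:identify_in_surface_case}), combined with the fact, to be verified along the way, that an ideal connection is determined by its restriction to finitely many well-chosen surfaces.

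First, apply Corollary~\ref{cor:admissible_plus_lifting} to the bounded family of reflexive sheaves appearing as first components of points in $\overline{M}_{\HYM}$: this produces multi-degrees $d_1,\ldots,d_{n-2}$ and a finite set $\Sigma = \{(X_1^{(j)}, \ldots, X_{n-2}^{(j)})\}_{j=1}^m$ of flags, all fully admissible for $[(\Ecal_0, \Ccal_0)]$, with end-terms the smooth surfaces $S_j := X_{n-2}^{(j)}$ and such that for any two classes in $\overline{M}_{\HYM}$ at least one flag is simultaneously fully admissible for both. After possibly enlarging $\Sigma$ and the degrees so as to satisfy Remark~\ref{rem:can_arrange_Mehta_Ramanathan} (Langer's conditions), set
\[ U := \bigcap_{j=1}^m U_{S_j} \subset \overline{M}^\mu , \]
which by Proposition~\ref{prop:transversality-on-close-classes} is an open $R$-saturated neighbourhood of $p$. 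For each $j$, Proposition~\ref{prop:embedding-via-admissible-curves-surface-case} provides a finite collection of smooth curves $C^{(j)}_\alpha \subset S_j$ (chosen to avoid the finite set $\sing(\Ecal_0) \cap S_j \cup |\Ccal_0| \cap S_j$) and a linear subsystem $W_{S_j} \subset H^0(M^{\mu ss}_{S_j}, \mathcal{O}_{M^{\mu ss}_{S_j}}(l\kappa_1 N_0))$ spanned by $\theta$-functions lifted from these curves, which gives an embedding $\nu_{S_j}:\overline{M}^\mu_{S_j}\hookrightarrow \P(W^*_{S_j})$.

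Next, using the lifting machinery of Sections~\ref{subsect:lifting_GT} and \ref{subsect:lifting_II}, for each $j$ and each curve $C^{(j)}_\alpha \subset S_j$ apply Lemmas~\ref{flat restriction} and \ref{restriction curves} to produce the non-flatness loci $\Tcal^{(j,\alpha)} \cup \Tcal_{C^{(j)}_\alpha}$ in $\Zcal$, and Lemma~\ref{lemma:lifting} together with Lemma~\ref{lemma:extension} to lift each element of $W_{S_j}$ to an $\SL(V)$-invariant section of a suitable power of $\mathscr{L}_{n-1}$ on $\Zcal$. The resulting finite-dimensional space of sections descends to a morphism $\nu_j : U_{S_j} \to \P(W^*_{S_j})$, holomorphic because it is the analytification of an algebraic morphism. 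The crucial compatibility, which follows from the commutative diagrams \eqref{eq:comparison_diagram} and \eqref{eq:moduli_comparison}, is that $\nu_j$ factors as $\nu_j = \nu_{S_j} \circ \rho_j$, where $\rho_j : U_{S_j} \to M^{\mu ss}_{S_j}$ is the classifying morphism sending $[\Fcal] \mapsto [\Fcal|_{S_j}]$; the choice of the $S_j$ and the curves on them makes $[\Fcal|_{S_j}]$ well-defined and slope-semistable.

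The remaining task splits into two verifications. For constancy on $R$-orbits: if $[\Fcal_1], [\Fcal_2] \in U$ satisfy $\overline{\Phi}([\Fcal_1]) = \overline{\Phi}([\Fcal_2]) = [(\Ecal, \Ccal)]$, then restricting the underlying ideal connections to any $S_j$ that is fully admissible for $[(\Ecal,\Ccal)]$ yields the same ideal connection on $S_j$ (since $\Ecal|_{S_j}$ and $\Ccal|_{S_j}$ are determined by $[(\Ecal,\Ccal)]$, using Lemma~\ref{lem:restricted_cycles_cycles_of_restrictions}); by Corollary~\ref{cor:continuity_in_surface_case} and Remark~\ref{rem:identify_in_surface_case} the surface moduli map identifies these restrictions in $\overline{M}^\mu_{S_j} \subset M^{\mu ss}_{S_j}$, so $\rho_j([\Fcal_1]) = \rho_j([\Fcal_2])$ and hence $\nu_j([\Fcal_1]) = \nu_j([\Fcal_2])$. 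For separation: suppose $\nu([\Fcal_1]) = \nu([\Fcal_2])$; then by injectivity of each $\nu_{S_j}$ we obtain $[\Fcal_1|_{S_j}] = [\Fcal_2|_{S_j}]$ in $M^{\mu ss}_{S_j}$ for every $j$. Writing $\overline{\Phi}([\Fcal_k]) = [(\Ecal_k, \Ccal_k)]$ for $k=1,2$, by the selection property of $\Sigma$ there is an index $j$ for which $S_j$ is simultaneously fully admissible for both $[(\Ecal_k,\Ccal_k)]$, so the restricted ideal connections on $S_j$ coincide; by the separation part of Proposition~\ref{prop: surface sheaf separation} applied to a sufficiently rich family of surfaces (which is why $\Sigma$ is enlarged at the outset using part (3) of Corollary~\ref{cor:admissible_plus_lifting}) together with Lemma~\ref{lemma: cycle separation}, one concludes $\Ecal_1 \cong \Ecal_2$ and $\Ccal_1 = \Ccal_2$, hence $[\Fcal_1] \sim_R [\Fcal_2]$.

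The main obstacle is the separation step: it requires showing that finitely many well-chosen surfaces $S_j$ detect both the isomorphism class of the reflexive sheaf $\Ecal$ (handled via \eqref{eq:iso_nach_einschraenkung} of Corollary~\ref{cor:admissible_plus_lifting}) and the cycle $\Ccal$ (handled via the cycle separation lemma, Lemma~\ref{lemma: cycle separation}). The interplay between these two requirements is what forces the careful choice of $\Sigma$ and of the lifting curves, and the argument relies in an essential way on the surface case of Li's theorem being already established.
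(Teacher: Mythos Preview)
Your overall architecture matches the paper's: choose finitely many complete intersection surfaces $S_j$ via Corollary~\ref{cor:admissible_plus_lifting} and Lemma~\ref{lemma: cycle separation}, set $U=\bigcap_j U_{S_j}$, build $\nu_j$ by lifting $\theta$-functions from curves in $S_j$, and separate/collapse via the surface embedding $\nu_{S_j}$. However, there is a genuine gap in your construction of the ``restriction map'' $\rho_j$ and in the resulting $R$-invariance argument.

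You assert that $\rho_j:[\Fcal]\mapsto [\Fcal|_{S_j}]$ is a well-defined classifying morphism on $U_{S_j}$ and that $\nu_j=\nu_{S_j}\circ\rho_j$. But for a general polystable $\Fcal$ with $[\Fcal]\in U_{S_j}$, admissibility of $S_j$ only controls $\sing(\ddual\Fcal)$ and $|\Ccal_\Fcal|$, not $\sing(\Fcal)$ itself; hence $\Fcal|_{S_j}$ need not be torsion free, and there is no classifying morphism to the surface moduli space in the first place (flatness of the universal family only holds over $\Zcal\setminus\Tcal$). This is precisely why the paper does \emph{not} simply restrict: Proposition~\ref{prop:nonseparation} constructs a sheaf $\Fcal_{\Vert S}$ on $S$ by an inductive procedure down the flag (Claim~\ref{claim:gamma}), in which at each stage the possibly torsional restriction $\Fcal^{(l)}|_{X^{(l+1)}}$ is replaced via Langton's theorem by a torsion free polystable sheaf with the same $\gamma$-invariant, so that $[\gamma(\Fcal^{(l+1)})]=[\gamma(\Fcal^{(l)})|_{X^{(l+1)}}]$. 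Only after this work does one obtain the well-defined map $r_{U_S}:[\Fcal]\mapsto[\Fcal_{\Vert S}]$ with the factorisation $\nu=\nu_S\circ r_{U_S}$ and its $R$-invariance (since $r_{U_S}$ visibly depends only on $[\gamma(\Fcal)]$).

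Your $R$-invariance argument inherits this problem: you argue only for those $S_j$ that happen to be \emph{fully} admissible for the given $[(\Ecal,\Ccal)]$, but you need every $\nu_j$ to collapse $R$-orbits, including those $j$ for which $S_j$ is merely admissible. The paper's $\gamma$-tracking handles all $j$ simultaneously because the computation $[\gamma(\Fcal_{\Vert S_j})]=[\gamma(\Fcal)|_{S_j}]$ needs only admissibility, not full admissibility, at each stage. Likewise, in your separation step the passage from $[\Fcal_1|_{S_j}]=[\Fcal_2|_{S_j}]$ to equality of restricted ideal connections presupposes exactly the compatibility that Claim~\ref{claim:gamma} provides. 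In short, the missing ingredient is the Langton-type construction of $\Fcal_{\Vert S}$ together with the verification that $\gamma$ is preserved under restriction to each member of the flag; without it neither the definition of $\rho_j$ on all of $U_{S_j}$ nor the $R$-invariance of $\nu$ is justified.
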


The idea will be to consider different surfaces $S_1, ...,S_m$  arising as last terms of flags of complete intersections in $X$,  which are moreover fully admissible for $p=[(\cE,\Ccal)]$, and their corresponding respective domains of admissibility $U'_j\subset \overline M_{\HYM}$, $U_j\subset \overline M^\mu$, $j=1,\ldots,m$, and to construct morphisms $\nu_j:U_{j}\to\P(V_j)$ induced by restriction to the surfaces $S_j$ in a way to be made precise. Then, we will simply define $\nu_1\times...\times\nu_m$ on  $U=\cap_{j=1}^mU_j$ and show that the collection of $\nu_j$'s has the desired collapsing and separation properties. 

We now describe the construction of the surfaces $S_j$ and of the morphisms $\nu_j$. While we start by explaining how to construct one surface, later we will see why several such surfaces may be needed. We begin by fixing our setup.

\begin{setup}
 Let $p\in \overline M^\mu$ and $\overline{\Phi}(p)=[(\mathcal{E},\mathcal{C})]$ be its image
in $\overline M_\HYM$. Let $d_1, \dots, d_{n-2}$ be a sequence of positive natural number, and  $(X^{(l)})_{1\leq l\leq
n-2}$ be a flag of complete intersections of hypersurfaces $X_{i} \in |\mathcal{O}_X(d_i)|$ such that
\begin{enumerate}
 \item $(X^{(l)})_{1\leq l\leq n-2}$ satisfies  the conclusions of Lemma \ref{flat restriction}
 with respect to $(\mathcal{E},\mathcal{C})$, the universal family $\mathscr U\rightarrow\ X\times\Zcal$, the subvarieties $\Zcal^{(i)}\subset\Zcal$ provided by Lemma \ref{lemma:extension}, and chosen main reference points $z_i$ in $\Zcal^{(i)}$;
 \item the flag satisfies Langer's conditions; i.e., the sequence of integers $d_1, \dots, d_{n-2}$ is sufficiently increasing for a repeated application of Theorem\ \ref{thm:langer}, and the restrictions of the main reference sheaves to $S$ are semistable (cf.~Remark\ \ref{rem:can_arrange_Mehta_Ramanathan}).
\end{enumerate}
\end{setup}

The following will provide us with the building blocks for the $R$-invariant maps we need to construct in order to prove Claim\ \ref{Claim}.
\begin{lemma}\label{lemma:rational maps} In the Setup, if we denote the last term of the flag by $X^{(n-2)}=S\subset X$ and we let $C^{(1)}, \dots, C^{(n_0)}$ be the finite set of curves and $l$ be the natural number provided by Proposition \ref{prop:embedding-via-admissible-curves-surface-case}, then the linear system $W$ of sections of $\mathscr{L}_{n-1}\rightarrow \mathcal{Z}$ provided by lifting theta functions of level $l \kappa_0 N_0N_1N_{n-1}$ from the $C^{(\alpha )}$ to $\mathcal{Z}$ gives a rational map 
$
\eta :\mathcal{Z}\dashrightarrow \mathbb{P}(W_S^*)
$,
into the projective space associated with the linear system $W_S$, which descends to a rational map 
\[
\nu :\overline M^\mu\dashrightarrow \mathbb{P}(W_S^*)
\]
that is well-defined at the images in $\overline M^\mu$ of the points $z_i\in\Zcal$. 
\end{lemma}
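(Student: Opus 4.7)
The plan is to construct the sections of $\mathscr{L}_{n-1}$ comprising $W$ by ``lifting'' theta functions from the curves $C^{(\alpha)}$ through the chain $Q_{C^{(\alpha)}}\to\mathcal{Z}$ of Lemma \ref{lemma:lifting}, then to descend the resulting rational map $\eta$ to $\overline M^\mu$ via $\SL(V)$-invariance, and finally to check regularity at the images $\pi(z_i)$ by using the separation property of the chosen curves on $S$ that comes from Proposition \ref{prop: surface sheaf separation}.

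To construct $W$ and $\eta$, I would for each curve $C=C^{(\alpha)}$ invoke Lemma \ref{restriction curves}: by choosing secondary reference points $z_i^{(n-1)}\in\mathcal{Z}^{(n-2)}\cap\mathcal{Z}_i$ for which $C$ is $\mathscr{U}_{z_i^{(n-1)}}|_S$-regular, one produces a closed subvariety $\mathcal{T}_C\subset\mathcal{Z}$ with $\codim_{\mathcal{Z}_i}(\mathcal{T}_C\cap\mathcal{Z}_i)\geq 2$ for each $i$, such that $\mathscr{U}|_{C\times(\mathcal{Z}\setminus(\mathcal{T}\cup\mathcal{T}_C))}$ remains flat. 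Lemma \ref{lemma:lifting} then furnishes, from each $\SL(V_C)$-invariant theta function of level $l\kappa_0 N_0 N_1$ on $Q_C$, an $\SL(V)$-invariant section of $\mathscr{L}_{n-1}^{\otimes l\kappa_{n-1} N_0 N_1}$ on $\mathcal{Z}\setminus(\mathcal{T}\cup\mathcal{T}_C)$, which by the codimension estimate and Lemma \ref{lemma:extension} extends uniquely to a global $\SL(V)$-invariant section. The span $W$ of all such global lifts (as $\alpha$ and the starting theta function vary) sits in $H^0(\mathcal{Z},\mathscr{L}_{n-1}^{\otimes l\kappa_{n-1} N_0 N_1})^{\SL(V)}$; the compatibility diagrams \eqref{eq:comparison_diagram} and \eqref{eq:moduli_comparison} of Section \ref{subsect:lifting_II} yield a natural identification of $W$ with a linear image of $W_S$, so evaluation produces the rational map $\eta:\mathcal{Z}\dashrightarrow\mathbb{P}(W_S^*)$. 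Since every section in $W$ is $\SL(V)$-invariant, the description of $M^{\mu ss}$ as $\mathrm{Proj}$ of the invariant section ring (Section \ref{sec:moduli}) shows that $W$ is pulled back via $\pi:\mathcal{Z}\twoheadrightarrow M^{\mu ss}$ from a linear system on $M^{\mu ss}$; restricting to $\overline M^\mu\subset M^{\mu ss}$ produces the rational map $\nu:\overline M^\mu\dashrightarrow\mathbb{P}(W_S^*)$.

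To verify that $\nu$ is regular at $\pi(z_i)$, I would exhibit a section in $W$ not vanishing at $z_i$. Because the flag satisfies Langer's conditions, $\mathscr{U}_{z_i}|_S$ is torsion free and slope-semistable on $S$, and Proposition \ref{prop: surface sheaf separation} then yields an index $\alpha_i$ such that $\mathscr{U}_{z_i}|_{C^{(\alpha_i)}}$ is semistable on $C^{(\alpha_i)}$; Lemma \ref{lemma: separation curves} supplies an $\SL(V_{C^{(\alpha_i)}})$-invariant theta function $\sigma$ on $Q_{C^{(\alpha_i)}}$ with $\sigma\bigl(\mathscr{U}_{z_i}|_{C^{(\alpha_i)}}\bigr)\neq 0$. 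Since $z_i$ is a main reference point for Lemma \ref{flat restriction}, it lies outside $\mathcal{T}$; since $\mathscr{U}_{z_i}|_S$ is torsion free on the surface $S$, any curve in $S$ is $\mathscr{U}_{z_i}|_S$-regular, so taking $z_i$ among the main reference points used in building $\mathcal{T}_{C^{(\alpha_i)}}$ guarantees $z_i\notin\mathcal{T}_{C^{(\alpha_i)}}$. The nonvanishing criterion of Lemma \ref{lemma:lifting} then gives $\bar{L}_{\mathscr{U},C^{(\alpha_i)}}(\sigma)(z_i)\neq 0$, so $z_i$ lies outside the base locus of $W$ and $\nu$ is regular at $\pi(z_i)$.

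The main obstacle is the bookkeeping required to align the flag-level flatness constraints of Lemma \ref{restriction curves} (which involve secondary reference points on $\mathcal{Z}$ whose sheaves restrict to $S$ in prescribed ways) with the curve-level selection procedure of Proposition \ref{prop:embedding-via-admissible-curves-surface-case} (which controls a finite family of curves on $S$), in such a way that the main reference points $z_i$ remain inside the flatness loci throughout. This will be arranged by exploiting the freedom granted by Proposition \ref{prop:embedding-via-admissible-curves-surface-case} to route the chosen curves around any prescribed finite set on $S$, together with the observation that torsion free sheaves on a surface are automatically regular with respect to every curve.
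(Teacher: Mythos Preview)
Your proposal is correct and follows essentially the same approach as the paper: lift $\SL(V_C)$-invariant theta functions from the curves $C^{(\alpha)}$ to $\SL(V)$-invariant sections of $\mathscr{L}_{n-1}$ via Lemma~\ref{lemma:lifting}, descend to $\overline M^\mu$ using the identification $\mathbb{P}(W^*)\cong\mathbb{P}(W_X^*)\cong\mathbb{P}(W_S^*)$ of \eqref{eq:same}, and check definedness at $\pi(z_i)$ via semistability of $\mathscr{U}_{z_i}|_{C^{(\alpha)}}$.

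One organisational point: in your middle paragraph you phrase things as ``choosing secondary reference points $z_i^{(n-1)}$ for which $C$ is regular'', which is the wrong order of quantifiers --- for a fixed curve $C$ there is no a priori reason such points exist. The paper (and your final paragraph) has the correct order: first fix the secondary reference points $z_i^{(n-1)}$, record the finite set $A_1\subset S$ of associated points of the sheaves $\mathscr{U}_{z_i^{(n-1)}}|_S$, adjoin the finite set $A_2$ of singular points of the Seshadri graduations $\Gr(\mathscr{U}_{z_i}|_S)$, and only \emph{then} invoke Proposition~\ref{prop:embedding-via-admissible-curves-surface-case} to choose the curves $C^{(\alpha)}$ avoiding $A=A_1\cup A_2$. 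Avoiding $A_1$ makes Lemma~\ref{restriction curves} applicable; avoiding $A_2$ together with Theorem~\ref{thm:langer} guarantees that $\mathscr{U}_{z_i}|_{C^{(\alpha)}}$ remains semistable for \emph{every} $i$ and \emph{every} $\alpha$ (not just one $\alpha_i$ per $i$ as you obtain from Proposition~\ref{prop: surface sheaf separation}). This stronger conclusion is not strictly needed for the present lemma, but it streamlines the later arguments.
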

\begin{proof}
Since $\mathscr U\rightarrow\ X\times\Zcal$ has the property that $\mathscr U_z$ is torsion free for each $z\in\Zcal$, choosing arbitrary main reference points $z_i\in\Zcal^{(i)}$, we see that the assumptions of Lemma~\ref{flat restriction} are satisfied.  We therefore obtain flatness of $\mathscr U$ on restriction to $S\times\Zcal\backslash\Tcal$, with $\Tcal\subset\Zcal$ the closed subvariety obtained in Lemma \ref{flat restriction}. Moreover, we will assume that the conclusion of Remark~\ref{rem:can_arrange_Mehta_Ramanathan} holds; i.e., the restrictions of the main reference sheaves to $S$ are semistable.

According to Lemma \ref{restriction curves}, for any well chosen curve $C$, we obtain flatness of the restriction of the universal sheaf to  $C\times\Zcal\backslash (\Tcal\cup\Tcal_C)$, where $\Tcal\cup\Tcal_C\subset\Zcal$ is a nonflatness locus which has the property required by Lemma \ref{lemma:extension} for extension of sections. More precisely, this is the case if the curve  $C$ is chosen regular with respect to some "secondary reference" sheaves $\mathscr U_{z^{(n-1)}_i}|_S$. This regularity is guaranteed once the curve  $C$ contains none of the associated points of the sheaves $\mathscr U_{z^{(n-1)}_i}|_S$, which in turn will be the case if $C$ avoids some fixed closed points on $S$, one on each associated component of the sheaves $\mathscr U_{s^{(n-1)}_i}|_S$. We fix $A_1$ to be such a (finite) subset of points of $S$. Moreover, let $A_2$ be the finite set containing the singular points of the Seshadri graduations of the restrictions $\mathscr{U}_{z_i}|_S$ and set $A := A_1 \cup A_2$. 

Let $k$ be a positive integer as guaranteed by Proposition~\ref{prop:embedding-via-admissible-curves-surface-case} and such that the conclusion of Theorem~\ref{thm:langer} holds, and let $C^{(1)}, \dots, C^{(n_0)}$ be the curves provided by Proposition~\ref{prop:embedding-via-admissible-curves-surface-case}.
Let  \[W \subset H^{0}(\mathcal{Z},\mathscr L_{n-1}^{\otimes l \kappa _{n-1}N N_0 N_1 })^{\SL(V)}\]be the span of all theta functions of the appropriate level obtained by lifting sections in $ H^{0}(Q_{C^{(\alpha)}},\mathscr L_{0,C^{(\alpha)}}^{\kappa_0l N N_1 N_0})^{\SL(V_{C^{(\alpha)}})}$ and \[W_X \subset H^0(M^{\mu ss}, \mathcal{O}_{M^{\mu ss}}(\kappa_{n-1}lN_1 N_0  ))\] the corresponding linear system on $M^\muss$; see the discussion in Section~\ref{subsect:lifting_II}, especially Diagrams~\eqref{eq:comparison_diagram} and \eqref{eq:moduli_comparison}, which also shows that naturally 
\begin{equation}\label{eq:same}
   \mathbb{P}(W^*) \cong \mathbb{P}(W_X^*) \cong \mathbb{P}(W_S^*)\ .
   \end{equation}
Note that these linear systems are nontrivial, since the flag  $(X^{(l)})_{1\leq l\leq n-2}$ and the curves $C^{(\alpha)}$ have been chosen so that for each $i$, the sheaves $\mathscr U_{z_i}$ remain semistable on $C^{(\alpha)}$.
By the natural identifications listed in \eqref{eq:same}, the linear system $W$ gives rise to a rational map $\eta :\Zcal\dashrightarrow \P(W_S^*)$ defined at the $z_i$, which descends to the rational map $\nu:M^\muss \dashrightarrow\P(W^*_S)$ associated with $W_X$, and hence by restriction yields a rational map $\nu : \overline{M}^\mu \dashrightarrow \P(W^*_S)$. By construction, $\nu$ is defined at the images of the $z_i$, as desired.
\end{proof}

 The following proposition gives the required collapsing property for the map $\nu$. When combined with Corollary \ref{cor:admissible_plus_lifting}, it will also imply the required separation property regarding "the double-dual component" of elements of the gauge theoretic moduli space.  

\begin{prop}\label{prop:nonseparation} 
The rational map  $\nu:\overline M^\mu\dashrightarrow\P(W^*_S)$ constructed in Lemma \ref{lemma:rational maps} is well-defined and hence holomorphic on the open set $U_{S}$ defined in Proposition \ref{prop:transversality-on-close-classes} above. 

Moreover, given any polystable sheaf $\cF$ with $[\cF] \in U_S$ there is a polystable sheaf $\Fcal_{\Vert S}$ on $S$ such that the map
 $$
 r_{U_{S}}:U_{S}\rightarrow\overline{M}_S^{\mu
} :   
[\Fcal]\mapsto[\Fcal_{\Vert S}]\ ,
$$
is well-defined and extends the natural rational map $U_{S}\dashrightarrow\overline{M}_S^{\mu}$ given by restricting locally free stable sheaves to $S$. This map makes the following into a commutative diagram, where $\nu_S$ is as defined in Proposition~\ref{prop:embedding-via-admissible-curves-surface-case} and Remark\ \ref{rem:nu_embedded_surface}:
\begin{equation}\label{eq:nu_nu_S_diagram}
 \begin{gathered}\xymatrix{
 &&    \overline{M}_S^{\mu
}  \ar[dll]_{\nu_{S}}\\
  \mathbb{P}(W_{S}^{\ast })& &
 U_S \ .\ar[u]_{r_{U_{S}}}   \ar[ll]^{\nu}
 }
 \end{gathered}
\end{equation}
The map $r_{U_S}$ is $R$-invariant. In particular, $\nu: U_S \to \P(W^*_S)$ is $R$-invariant.
\end{prop}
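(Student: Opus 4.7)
The plan is to construct $\Fcal_{\Vert S}$ as the naive restriction $\Fcal|_S$ and then verify the required properties. For a polystable sheaf $\Fcal$ with $[\Fcal]\in U_S$ we have $\Gr\Fcal=\Fcal$, so that by Remark~\ref{rem:gamma_campatible_with_earlier_defi} $\overline{\Phi}([\Fcal])=[(\ddual\Fcal,\Ccal_\Fcal)]$ and $\sing\Fcal=\sing(\ddual\Fcal)\cup|\Ccal_\Fcal|$. Admissibility of $S$ therefore forces $\dim(S\cap\sing\Fcal)\le 0$, and hence $\Fcal|_S$ is torsion free on $S$. Writing $\Fcal=\bigoplus_i\Fcal_i$ as a direct sum of stable summands, the family of all such summands arising from points of $\overline M_{\HYM}$ is bounded by Lemma~\ref{lem:bounded-admissible} together with Proposition~\ref{prop:Gr-boundedness}. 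By Langer's restriction theorem (Theorem~\ref{thm:langer}) applied to this bounded family, and upon ensuring the multi-degree $(d_1,\dots,d_{n-2})$ of the flag was chosen sufficiently increasing (which is implicit in the Setup's Langer's conditions, cf.~Remark~\ref{rem:can_arrange_Mehta_Ramanathan}), each $\Fcal_i|_S$ remains stable on $S$. Consequently $\Fcal_{\Vert S}:=\Fcal|_S=\bigoplus_i\Fcal_i|_S$ is polystable on $S$, and its class in $\overline M_S^\mu$ is well-defined.

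The commutativity of diagram \eqref{eq:nu_nu_S_diagram} and the everywhere-definedness of $\nu$ on $U_S$ follow from the compatibility of the two lifting procedures discussed in Section~\ref{subsect:lifting_II}. Namely, combining the commutative diagrams \eqref{eq:comparison_diagram} and \eqref{eq:moduli_comparison}, I would verify that under the canonical identifications of \eqref{eq:same} the value at $[\Fcal]$ of any theta function in $W$ lifted from a curve $C^{(\alpha)}$ coincides with the value at $[\Fcal_{\Vert S}]$ of the corresponding section of $W_S$; here I invoke the fact that by admissibility a generic such $C^{(\alpha)}$ misses the finite set $|\Ccal_\Fcal|\cap S$, so that $\Fcal|_{C^{(\alpha)}}=\Fcal_{\Vert S}|_{C^{(\alpha)}}$, and that the polystability of $\Fcal|_S$ established above together with Proposition~\ref{prop: surface sheaf separation} guarantees that for at least one $\alpha$ the restriction $\Fcal|_{C^{(\alpha)}}$ is semistable, yielding a nonvanishing theta function. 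Since $\nu_S$ is everywhere defined and injective on $\overline M_S^\mu$ by Proposition~\ref{prop:embedding-via-admissible-curves-surface-case}, the relation $\nu=\nu_S\circ r_{U_S}$ simultaneously determines $r_{U_S}$ as a holomorphic map on $U_S$ and shows that the rational map $\nu$ is actually regular on $U_S$.

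Finally, for the $R$-invariance of $r_{U_S}$: given two $R$-equivalent polystable representatives $[\Fcal_1],[\Fcal_2]\in U_S$ we have $\overline{\Phi}([\Fcal_1])=\overline{\Phi}([\Fcal_2])$, which by Remark~\ref{rem:gamma_campatible_with_earlier_defi} amounts to $\ddual{\Fcal_1}\cong\ddual{\Fcal_2}=:\Ecal$ and $\Ccal_{\Fcal_1}=\Ccal_{\Fcal_2}=:\Ccal$. Admissibility of $S$ implies that $\Fcal_i|_S$ and $\Ecal|_S$ coincide on the complement of a finite subset of $S$, so that $\ddual{(\Fcal_1|_S)}\cong\ddual{(\Ecal|_S)}\cong\ddual{(\Fcal_2|_S)}$, while an application of Lemma~\ref{lem:restricted_cycles_cycles_of_restrictions} to the torsion quotient $\ddual\Fcal_i/\Fcal_i$ combined with Proposition~\ref{prop:polystablereflexive} yields $\Ccal_{\Fcal_1|_S}=\Ccal|_S+\Ccal_{\ddual{(\Ecal|_S)}/\Ecal|_S}=\Ccal_{\Fcal_2|_S}$. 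The surface case of Proposition~\ref{prop:GT} then implies $[\Fcal_1|_S]=[\Fcal_2|_S]$ in $\overline M_S^\mu$, as desired. The main technical obstacle in this plan is securing uniform polystability of the restrictions $\Fcal|_S$ across all of $U_S$; overcoming it depends crucially on the boundedness of the relevant family of stable summands and on the freedom built into the Setup to take the flag degrees sufficiently large.
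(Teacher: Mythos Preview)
Your central assertion --- that for a polystable sheaf $\cF$ with $[\cF]\in U_S$ the restriction $\cF|_S$ is automatically torsion free --- does not hold, and this breaks essentially every subsequent step.  First, the equality $\sing\Fcal=\sing(\ddual\Fcal)\cup|\Ccal_\Fcal|$ is false: one has $\sing\Fcal=\supp(\ddual\Fcal/\Fcal)\cup\sing(\ddual\Fcal)$, and $|\Ccal_\Fcal|$ is only the pure codimension~$2$ part of $\supp(\ddual\Fcal/\Fcal)$.  Components of $\supp(\ddual\Fcal/\Fcal)$ of codimension $\ge 3$ are not controlled by admissibility of $S$ for $(\ddual\Fcal,\Ccal_\Fcal)$, and for fixed $S$ there is no reason they should meet $S$ only in points.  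Second, even when $\sing\Fcal\cap S$ is finite, torsion freeness of $\cF|_S$ still fails in general: take $X=\PBbb^4$, $p\in X$ a point, $\cF=\Ical_p$ (so $\ddual\Fcal=\Ocal_X$, $\Ccal_\Fcal=0$, hence every surface is admissible), and let $S\ni p$ be a plane; then $\Ical_p|_S$ has a nonzero torsion element represented by a linear form vanishing on $S$.  Once $\cF|_S$ can have torsion, Langer's Theorem~\ref{thm:langer} (whose hypothesis is precisely that the restricted graded object be torsion free) is unavailable, so you cannot conclude that the $\cF_i|_S$ remain stable, and there is no class $[\cF|_S]\in\overline M_S^\mu$ to feed into $\nu_S$.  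A secondary issue is that the curves $C^{(\alpha)}$ in Proposition~\ref{prop:embedding-via-admissible-curves-surface-case} are fixed once and for all (avoiding only a fixed finite set $A$), so you cannot assume any of them avoids the singular locus of a given $\cF$.

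The paper's proof confronts exactly this obstacle.  Rather than restricting to $S$ in one stroke, it restricts one hypersurface at a time and at each step invokes Langton's theorem to replace the possibly torsion restriction $\cF^{(l)}|_{X^{(l+1)}}$ by a torsion free semistable sheaf $\cF^{(l+1)}$ on $X^{(l+1)}$; the heart of the argument is Claim~\ref{claim:gamma}, which shows via the cycle calculus of Lemmas~\ref{lemma:cycles1}--\ref{lemma:cycles2} and Lemma~\ref{lem:restricted_cycles_cycles_of_restrictions} that the invariant $[\gamma(\cdot)]$ is preserved by both restriction and the Langton replacement.  This produces a genuine class $[\cF_{\Vert S}]:=[\cF^{(n-2)}]\in\overline M_S^\mu$.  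The definedness of $\nu$ at $[\cF]$ is then obtained not by directly evaluating lifted $\theta$--functions on $\cF|_S$, but by approximating $[\cF]$ by locally free stable classes, using continuity of $\theta$--functions together with the limit identification $[\cF_{\Vert S}]=\lim[\cF_i|_S]$.
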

\begin{proof}
Let $p\in U_{S}$ be an arbitrary point, let $\cF$ be a polystable representative of $p$ and set $\Ecal := \cF^{\vee \vee}$. 

Recall from Definition \ref{defi:cycle,couple} that $\gamma (\cF) = (\ddual \cF, \mathcal{C}_{\cF})$, where $\mathcal{C}_{\cF}$ is the support cycle of the sheaf $ \Qcal_{\cF} = \ddual\cF / \cF$. As also introduced there, we set $\widehat \Qcal_{\cF} = \Qcal_{\cF}/ \cT (\Qcal_{\cF})$. 
Note that the associated points of  $\widehat \Qcal_\cF$ correspond to the irreducible components of $C_\cF$. The admissibility of the surface $S$ for $\overline{\Phi}([\cF])$ hence implies that the hypersurface $X'$ is regular for $\widehat \Qcal_\cF$. By  \cite[Cor.\ 1.1.14]{HuybrechtsLehn:10} the restriction $\cE|_{X'}$ is torsion free on $X'$. 
Using this as well as the long exact sequence induced by restricting the defining sequence of $\Qcal_\cF$ to $X'$, we obtain an inclusion $\supp(\Tors_{\cO_{X'}}(\cF|_{X'}))\subset \supp(\cT or_1^{\cO_X}( \Qcal_\cF, \cO_{X'}))$. 
As $X'$ does not contain any $2$-codimensional associated point of $\Qcal_\cF$, the latter subvariety is at least three codimensional in $X$, from which we conclude that the torsion of $\cF|_{X'}$ on $X'$ will be supported in codimension at least two on $X'$. Thus $\gamma(\cF|_{X'})$ has a meaning according to Definition~\ref{defi:cycle,couple}. Setting\footnote{The definition \eqref{eq:gamma_restriction} extends the one given in Remark \ref{rem:good_restriction} in the fully admissible case.}

\begin{equation}\label{eq:gamma_restriction}
\gamma(\cF)|_{X'}:=(\ddual{(\cE|_{X'})},(\mathcal{C}_{\cF})|_{X'}+\mathcal{C}_{(\cE|_{X'})})\ ,
\end{equation}
we claim that
 \begin{equation}\label{eq:gamma_after_first_restriction}
[\gamma(\cF)|_{X'}] = [\gamma(\cF|_{X'})]\ .
\end{equation}

As $\cF$ and $\Ecal$ coincide outside of a subvariey of codimension two in $X$ whose codimension two part is the support of $Q_{\cF}$, and as $X'$ intersects this support in codimension two, equality in the sheaf component follows from reflexivity.

In order to prove equality in the cycle component, note first that $(\cC_\cF)|_{X'}=(\cC_{\Qcal_\cF})|_{X'}=(\cC_{\widehat \Qcal_\cF})|_{X'}$, and second that by Lemma \ref{lem:restricted_cycles_cycles_of_restrictions} the last cycle is equal to $\cC_{(\widehat \Qcal_\cF)|_{X'} }$, which is computed on $X'$. We thus have to show that \[\cC_{(\widehat \Qcal_\cF)|_{X'} }=\Ccal((\widehat \Qcal_\cF)|_{X'}) =-\mathcal{C}(\cF|_{X'})-\mathcal{C}_{(\cE|_{X'})}\ .\] Applying  Lemma \ref{lemma:cycles2} to the morphism $\cF|_{X'}\to\cE|_{X'}$ one gets
$-\mathcal{C}(\cF|_{X'})-\mathcal{C}_{(\cE|_{X'})}=\cC((\Qcal_\cF)|_{X'})-\cC(\cT or_1^{\cO_X}( \Qcal_\cF, \cO_{X'}))$, so it will be enough to check that 
\[\cC({(\widehat \Qcal_\cF)|_{X'} })=\cC((\Qcal_\cF)|_{X'})-\cC(\cT or_1^{\cO_X}( \Qcal_\cF, \cO_{X'}))\ .\] For this, we consider the following commutative diagram with exact rows and columns
\begin{equation}
\label{diagr: on S}
\begin{gathered}
\resizebox{.85\hsize}{!}{
\xymatrix{ 
 &  0 \ar[d] &   0 \ar[d]& & 
 \\ 
0 \ar[r]& \cT or_1^{\cO_X}(\cT (\Qcal_{\cF}),\cO_{X'}) \ar[r]\ar[d]& \cT or_1^{\cO_X}(\Qcal_{\cF},\cO_{X'}) \ar[r]\ar[d] & 0\ar[d] &  
\\
0 \ar[r]&
\cT(\Qcal_{\cF})({{-X'}})  \ar[r]\ar[d] & \Qcal_{\cF}({{-X'}})  \ar[d]\ar[r] & \widehat \Qcal_{\cF}({{-X'}}) \ar[r] \ar[d] & 0 
\\
0 \ar[r] & \cT(\Qcal_{\cF})  \ar[r]\ar[d] & \Qcal_{\cF}  \ar[d]\ar[r] & \widehat Q_{\cF}
 \ar[r]\ar[d] & 0 
 \\
0 \ar[r] & \cT(\Qcal_{\cF})|_{{X'}}  \ar[r]\ar[d] &\Qcal_{\cF}|_{{X'}} \ar[r] \ar[d] & \widehat \Qcal_{\cF}|_{{X'}} \ar[r] \ar[d] & 0
\\
 & 0 &0 &\ 0, & 
}
}
\end{gathered}
\end{equation}
which incorporates the fact that $\cT or_k^{\cO_X}( \widehat Q_\cF, \cO_{X'})=0$ for $k=1,2$ due to the regularity noticed at the beginning of the argument. Using again that $ \cT or_1^{\cO_X}( \Qcal_\cF, \cO_{X'})$ has codimension at least three in $X$, a computation with the help of Lemma \ref{lemma:cycles1} yields the desired equality on the cycle level and finishes the proof of \eqref{eq:gamma_after_first_restriction}. 
 
Note that the above considerations, the admissibility of $S$ for $\gamma(\cF)$, and the fact that $\supp( \mathcal{C}_{(\cE|_{X'})})\subset \sing(\cE)\cap X'$ together imply that $S$ is admissible for $\gamma(\cF)|_{X'}$ as well.

We will next prove the following claim, from which all the statements of the proposition will follow relatively quickly. 

\begin{claim}\label{claim:gamma}
In the situation of Lemma \ref{lemma:rational maps}, if $\cF$ is a polystable sheaf with $[\cF] \in U_S$, then on each $X^{(l)}$ there exist 
a polystable sheaf $\cF^{(l)}$ 
such that $S$ is admissible for $\gamma( \cF^{(l)})$ and such that for $0\le l\le n-3$ we have 
\begin{equation}\label{eq:gamma_after_restriction}
[\gamma(\cF^{(l)})|_{X^{(l+1)}}] = 
[\gamma(\cF^{(l)}|_{X^{(l+1)}})] = [\gamma(\cF^{(l+1)})]\ ,
\end{equation} 
where $\cF^{(0)}:=\cF$, and where $\gamma(\cF^{(l)})$ is defined as in \eqref{eq:gamma_restriction}.
\end{claim} 
\begin{proof}[Proof  of Claim\ \ref{claim:gamma}]
We start by constructing the sheaf $\cF'$ on $X'$. If the restriction $\cF|_{X'}$ is torsion free, we just put $\cF':=\cF|_{X'}$, since it is then polystable by the assumptions of Lemma \ref{lemma:rational maps} and Langer's Theorem, Theorem \ref{thm:langer}, as we noticed in the first paragraph of this proof. 
The other requirements on $\cF'$ are also satisfied by the discussion preceding the Claim.

If $\cF|_{X'}$ is not torsion free, we will replace $\cF|_{X'}$ by a torsion free sheaf $\cF'$ on $X'$ such that $[\gamma(\cF')] = [\gamma(\cF|_{X'})]$. 
For this, we recall from Definition \ref{def:defining_the_closure} that $\overline M^\mu$ is the closure of $M^s$ inside $M^{\muss}$. It follows that $[\cF]$ is the limit of points represented by locally free, stable sheaves and that $\cF$ may be realized as the central fiber of a one dimensional subfamily of $\Zcal$ all of whose other fibers are locally free and stable (cf.\ the discussion at the end of Section\ \ref{sec:map}). The restriction of this family to $X'$ remains flat since $\cF$ is torsion free.
By Langton's Theorem the special fiber $\cF|_{{X'}}$ of the restricted subfamily may be replaced by a sheaf $\cF'$ which is semistable on ${X'}$. 
Following the proof of that theorem presented in \cite[Appendix 2B]{HuybrechtsLehn:10} we will check that  $\gamma(\cF')=\gamma(\cF|_{X'})$ on $X'$. In our case it will turn out to be in fact sufficient to follow the proof up to the point where one finds a torsion free central fiber $\cF'$. For this we will proceed along the way sketched in \cite[Exercise 2B2]{HuybrechtsLehn:10}. 

We will write $\mathscr F$ for a flat family of sheaves on ${X'}$  parameterized by the unit disk $\Delta\subset\C$. We suppose that all fibers over $\Delta^*=\Delta\backslash\{ 0\}$ of $\mathscr F$ are semistable sheaves, that the fiber $\mathscr F_0$ has torsion 
and that $\mathscr F_0/\Tors(\mathscr F_0)$ is polystable. 
We consider the torsion filtration $ 0\subset \mathscr T_0 (\mathscr F_0) \subset \cdots\subset \mathscr T_{n-1}(\mathscr F_0)=\mathscr F_0$ of $\mathscr F_0$ as in 
\cite[Def.\ 1.1.4]{HuybrechtsLehn:10}. The idea is to replace the central fiber $\mathscr F_0$ successively by sheaves having no zero-dimensional torsion to begin with, then no one-dimensional torsion, and so on. So supposing that $\mathscr T_0(\mathscr F_0)\neq0$, set $\mathscr F^1$ to be the kernel of the composition of the natural projections $\mathscr F\to \mathscr F_0\to \mathscr F_0/\mathscr T_0(\mathscr F_0)$. One gets two exact sequences 
$$0\lra \mathscr T_0(\mathscr F_0)\lra \mathscr F_0\lra \mathscr F_0/\mathscr T_0(\mathscr F_0)\lra 0,$$
$$0\lra \mathscr F_0/\mathscr T_0(\mathscr F_0)\lra \mathscr F^1_0\lra \mathscr T_0(\mathscr F_0)\lra 0$$
from which one infers with the help of Lemma \ref{lemma:cycles2} that
\begin{equation} \label{eq:gamma_agrees_after_Langton}
\ddual{\mathscr F_0}=\ddual{(\mathscr F_0^1)} \quad \text{ and } \quad
\Ccal(\mathscr F_0^1)=\Ccal(\mathscr F_0)\ .
\end{equation}If $\mathscr F^1_0$ continues to have zero-dimensional torsion we construct the sheaf $\mathscr F^2$ starting from $\mathscr F^1$ and so on until we obtain some $\mathscr F^m_0$ which has no torsion. The argument in \cite[Appendix 2B]{HuybrechtsLehn:10} ensures that this happens for some $m\in\N$. By \eqref{eq:gamma_agrees_after_Langton} above we have $[\gamma(\ddual{\mathscr F_0})] = [\gamma(\mathscr F_0^m)]$. We then look at one-dimensional torsion and so on. Eventually we obtain a subsheaf $\mathscr F'\subset \mathscr F$ coinciding with $\mathscr F$ over $\Delta^*$ and such that its central fiber $\mathscr F'_0$ is torsion free with 
$[\gamma(\mathscr F_0)] = [\gamma(\mathscr F'_0)]$. Hence, if we set $\cF':=\mathscr F_0'$, then from this equality together with \eqref{eq:gamma_after_first_restriction} we infer that
\begin{equation}\label{gammas_are_equal}
 [\gamma(\cF')] = [\gamma(\cF|_{X'})] = [\gamma(\Fcal)|_{X'}]\ .
\end{equation}
Recall from the observation made right before Claim\ \ref{claim:gamma} that $S$ is admissible for $\gamma(\cF')$. Moreover, $\cF'$ is polystable, since $\ddual{(\cF')}\cong\ddual{(\ddual{\cF}|_{X'})}$ by \eqref{eq:gamma_agrees_after_Langton}. Thus, the sheaf $\cF'$ fulfills the conditions of Claim\ \ref{claim:gamma} at level $l=0$.

The sheaves $\cF^{(2)}$,\ldots,$\cF^{(n-2)}$ are now constructed inductively by the same procedure, using the above family over the disk. 
\end{proof}

Now we use Claim\ \ref{claim:gamma} to prove Proposition\ \ref{prop:nonseparation}. We again work in the setup of Lemma \ref{lemma:rational maps}. 
 
As a first observation, note that if a point  $p$  admits a locally free polystable representative $\cF$, then for all $\alpha$ the restriction $\cF|_{C^{(\alpha)}}$ remains semistable on $C^{(\alpha)}$. Hence, corresponding sections in $\mathscr{L}_{n-1}$ will exist that do not vanish at $p$, and therefore $\nu$ is defined at such points. For such points
we set $\Fcal_{\Vert S}:=\Fcal\bigr|_S$, and 
  the equality 
\begin{equation}\label{eq:desired_equality}
 \nu([\Fcal]) = \nu_S([\Fcal_{\Vert S}])
\end{equation}
holds by definition of the relevant linear systems and the compatibility of lifting maps discussed at the end of Section \ref{subsect:lifting_II} (see the proof of Lemma \ref{lemma:rational maps}, especially the isomorphism \eqref{eq:same}). 
In fact, the above holds more generally for points $p$ admitting a polystable representative $\cF$ such that  the flag is fully admissible for $\cF$,  since then $\cF|_{X^{(l)}}$ will be semistable on all the $X^{(l)}$ by the assumptions made in the formulation of Lemma \ref{lemma:rational maps} and Langer's Theorem, Theorem \ref{thm:langer}. 

Now let $p=[\Fcal]\in U_S$ be arbitrary.
 We define the sheaf $\Fcal_{\Vert S}$ on $S$ to be the sheaf $\Fcal^{(n-2)}$ produced by Claim \ref{claim:gamma} in the case $l=n-2$.
Then $p=[\Fcal]$ is a limit 
\begin{equation}\label{eq:approximate_p}
 p = [\Fcal] = \lim_{i \to \infty }[\Fcal_{i}]
\end{equation}
of points $[\Fcal_{i}] \in \overline{M}^\mu$ with $\Fcal_{i}$ locally free and $\mu$-stable, and such that
\begin{equation}\label{eq:limit_comparison}
\lim_{i\rightarrow \infty }[\mathcal{F}_{i}|_{S}]=[\mathcal{F}_{\Vert S}]\in 
\overline{M}_{S}^{\mu }\ .
\end{equation}
As $\Fcal_{\Vert S}$ is polystable, there exists an element of $W_S$ that does not vanish at $[\Fcal_{\Vert S}]$. Using eqs.\ \eqref{eq:approximate_p} and \eqref{eq:limit_comparison}, together with the observation made in the first paragraph of the proof, especially  \eqref{eq:desired_equality}, and continuity of theta functions we see that the corresponding element in $W_X$ does not vanish at $p$. The map $\nu$ is therefore well-defined and hence holomorphic at $p$, as claimed. Moreover, again by continuity we conclude that 
 \begin{equation*}\label{eq:first_part} \nu(p) = \nu([\Fcal]) = \nu_S([\Fcal_{\Vert S}])\ ,\end{equation*} which proves commutativity of Diagram\ \eqref{eq:nu_nu_S_diagram}, once we have established that $r_{U_S}$ is well-defined. 

To see that $r_{U_{S}}$ is well-defined, let $\Fcal_1$ and $\Fcal_2$ be polystable such that $[\mathcal{F}_{1}]=[\mathcal{F}_{2}]\in U_{S}$. By Proposition\ \ref{prop:GT} and Remark\ \ref{rem:gamma_campatible_with_earlier_defi} we then get $[\gamma(\Fcal_1)] = [\gamma(\Fcal_2)]$, 
which together with an recursive application of eq.\ \eqref{eq:gamma_after_restriction} implies 
\begin{equation*}
[\gamma (\mathcal{F}_{1}^{(n-2)})] = [\gamma (\mathcal{F}_{2}^{(n-2)})].
\end{equation*}
We therefore obtain from the surface case of  Proposition \ref{prop:GT} that $\lbrack \mathcal{F}_{1}^{(n-2)}]=[\mathcal{F}_{2}^{(n-2)}]$, as needed.

To see that  $r_{U_{S}}$ is $R$-invariant, note that if $\Fcal_1$, $\Fcal_2$ are polystable such that $[\mathcal{F}_{1}],[\mathcal{F}_{2}]\in U_{S}$ are in the same $R$-equivalence class, then by definition this means that $[\gamma(\mathcal{F}_{1})] = [\gamma(\mathcal{F}_{2})]$ (see the remark made after eq.\ \eqref{eq:relation_phi_psi}). Hence, the same reasoning as in the previous paragraph implies that 
\begin{equation*}
r_{U_{S}}([\mathcal{F}_{1}])=[\mathcal{F}_{1}^{(n-2)}]=[\mathcal{F}_{2}^{(n-2)}]=r_{U_{S}}([\mathcal{F}_{2}]).
\end{equation*}
The $R$-invariance of $\nu$ now follows by the commutativity of Diagram\ \eqref{eq:nu_nu_S_diagram}. This concludes the proof of Proposition\ \ref{prop:nonseparation}. 
\end{proof}

To complete the proof of the main claim, we will have need of one further lemma, which says that the cycle components $\Ccal$ are determined completely by their restrictions to appropriately chosen complete intersection surfaces.
Its proof, omitted here, follows easily by Noetherian induction.
\begin{lemma}\label{lemma: cycle separation} Let $X$ be a smooth projective variety and $\mathcal{O}_X(1)$ be an ample line bundle. Assume that $\mathcal{O}_X(k)$ is very ample. Let $\Ccal_0$ be an element in the variety $\mathscr C:=\mathscr C_{n-2,d}(X)$ of  codimension 2 cycles of degree $d$ on $X$ and for any surface in $X$ denote by $U''_S\subset \mathscr C$ the open subset of cycles meeting $S$ in at most finitely many points. Then for any fixed multi-degree divisible by $k$ there exists a positive natural number $m$ such that the following holds for every sufficiently general choice of $m$ smooth complete intersection surfaces $S_1,\ldots,S_m$ of the given multi-degree in $X$:
\begin{enumerate}
 \item  $\Ccal_0\in \cap_{j=1}^m U''_{S_j}$, and 
 \item  for all $\Ccal_1, \Ccal_2\in \cap_{j=1}^m U''_{S_j}$ we have $\Ccal_1 = \Ccal_2$ if and only if $\Ccal_1\cap S_j =  \Ccal_2\cap S_j$ for all $j=1,...,m$.
\end{enumerate}\end{lemma}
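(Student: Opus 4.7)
The plan is to proceed by Noetherian induction on the cycle space $\mathscr C := \mathscr C_{n-2,d}(X)$. Since $X$ is a smooth projective variety and the degree is bounded, $\mathscr C$ carries the structure of a projective scheme of finite type (e.g.\ as a union of Chow varieties), so $\mathscr C$ and $\mathscr C \times \mathscr C$ are Noetherian topological spaces. For any smooth complete intersection surface $S$ in the chosen linear system and any $\Ccal \in U''_S$, the intersection $\Ccal \cap S$ is a well-defined effective $0$-cycle on $S$ (cf.~part (3) of Remark~\ref{rem:good_restriction}), giving a holomorphic map $\Psi_S : U''_S \to \mathscr C_0(S)$.

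The key ingredient is the following separation statement: for any two distinct cycles $\Ccal_1 \neq \Ccal_2$ in $\mathscr C$, a sufficiently general smooth complete intersection surface $S$ of the given multi-degree lies in $U''_{S}$ for both cycles and satisfies $\Psi_S(\Ccal_1) \neq \Psi_S(\Ccal_2)$. To establish this, write $\Ccal_i = \sum_\ell n_{i,\ell} Z_{i,\ell}$ via the irreducible components of their supports. If the two supports differ, then there is a smooth point $x$ in their symmetric difference; because $\mathcal{O}_X(k)$ is very ample and the given multi-degree is divisible by $k$, a general surface $S$ passing through $x$ meets one of $\Ccal_1, \Ccal_2$ at $x$ but not the other, distinguishing them. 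If the supports coincide but some component $Z$ appears with different multiplicities, then a general $S$ meets $Z$ transversally in finitely many smooth points at which the local multiplicity of $\Psi_S(\Ccal_i)$ equals the multiplicity of $Z$ in $\Ccal_i$, so again $\Psi_S(\Ccal_1) \neq \Psi_S(\Ccal_2)$.

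The induction then proceeds as follows. For a finite collection $\mathcal{S} = \{S_1, \ldots, S_r\}$, define
\[
T_{\mathcal{S}} := \overline{\left\{(\Ccal_1, \Ccal_2) \in \Bigl(\textstyle\bigcap_j U''_{S_j}\Bigr)^2 \setminus \Delta \;\Big|\; \Psi_{S_j}(\Ccal_1) = \Psi_{S_j}(\Ccal_2)\ \text{for all}\ j\right\}}\, ,
\]
where the closure is taken in $\mathscr C \times \mathscr C$ and $\Delta$ denotes the diagonal. As soon as $T_{\mathcal{S}} \neq \emptyset$, the fact that $\bigl(\bigcap_j U''_{S_j}\bigr)^2$ is open in $\mathscr C \times \mathscr C$ ensures that the defining set of $T_{\mathcal{S}}$ is dense in $T_{\mathcal{S}}$, so it contains a specific pair $(\Ccal_1, \Ccal_2)$ with $\Ccal_1 \neq \Ccal_2$. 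Applying the separation statement to this pair produces a surface $S_{r+1}$ (which can be chosen outside a proper closed subset of the relevant parameter space) distinguishing $\Ccal_1$ and $\Ccal_2$, which forces $T_{\mathcal{S} \cup \{S_{r+1}\}} \subsetneq T_{\mathcal{S}}$ because $(\Ccal_1, \Ccal_2)$ no longer lies in the smaller closure. By Noetherianness, this strictly decreasing chain terminates with $T_{\mathcal{S}} = \emptyset$ after finitely many steps $m$; this is precisely condition (2). Condition (1), and more generally the "sufficiently general" character of the conclusion in the sense of Definition~\ref{defi:sufficiently_general}, is obtained by requiring at each step that $S_j$ additionally avoid the (nonempty, Zariski open) locus where $\Ccal_0 \in U''_{S_j}$.

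The main obstacle is handling the Noetherian induction carefully—in particular verifying that adding a distinguishing surface really shrinks the closure $T_{\mathcal{S}}$ strictly—which relies on the observation that the defining set of $T_{\mathcal{S}}$ lies in the open subset $\bigl(\bigcap_j U''_{S_j}\bigr)^2 \setminus \Delta$, so that removing a point from this open set also removes it from the closure. The separation lemma itself is routine but must be stated for cycles that may carry nontrivial multiplicities and have distinct supports, which is why we handle the two cases separately.
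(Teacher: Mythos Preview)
Your proposal is correct and follows exactly the route the paper indicates: the authors omit the proof, stating only that it ``follows easily by Noetherian induction,'' and your argument carries this out by showing that each additional well-chosen surface strictly shrinks the closed locus of non-separated pairs in $\mathscr C \times \mathscr C$. The only point worth tightening is the passage from ``there exist specific $S_1,\ldots,S_m$'' to ``every sufficiently general choice of $m$ surfaces works'': this follows because each separation condition you impose is Zariski open in the parameter space of complete intersection surfaces (by continuity of $\Psi_S$ in $S$), so once some $S_{r+1}$ separates a given pair, a nonempty open set of surfaces does as well, and the same $m$ then works for every choice in these open sets, in line with Definition~\ref{defi:sufficiently_general}.
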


After these preparations, we are now in the position to give the proof of the main technical result of this section, Claim \ref{Claim}.

 \begin{proof}[Proof of Claim \ref{Claim}]
Fix a point $p \in \overline{M}^\mu$ and let $U_{S}$ be one of the neighbourhoods of $p$ 
provided by Proposition \ref{prop:transversality-on-close-classes}. We will write $p=[\mathcal{F]}$, for $\Fcal$ a polystable sheaf, and $\overline{\Phi 
}([\mathcal{F}])=[(\mathcal{E},\mathcal{C})]$. 

Choose a sequence $0 \ll d_1 \ll d_2 \ll \dots \ll d_{n-2}$ of positive integers that is sufficiently increasing such that a repeated application of Langer's Theorem, Theorem~\ref{thm:langer}, and an application of Corollary\ \ref{cor:admissible_plus_lifting} is possible. Let $N$ be the natural number provided by Corollary\ \ref{cor:admissible_plus_lifting} and assume without loss of generality that it coincides with the maximal number $m$ provided by applying Lemma\ \ref{lemma: cycle separation} to the cycle spaces $\mathscr C_{n-2,d}(X)$, where $d$ runs through the finite set of degrees of cycles $\mathcal{C}$ forming the second entry in a pair $[(\mathcal{E}, \mathcal{C})] \in \overline{M}_{\HYM}$ (cf.\ Section\ \ref{sec:boundedness}). Choose $m$ (general) flags satisfying the assumptions made in the Setup and such that furthermore the conclusion of Corollary\ \ref{cor:admissible_plus_lifting} holds. Denote the chosen set of flags by $\Sigma$. Denote by $S_1, \dots, S_m$ the end terms of the flags contained in $\Sigma$, and by $\nu_j: U_{S_j} \to \mathbb{P}(W^{*}_j) := \mathbb{P}(W_{S_j}^*)$ and $\nu_{S_j}:  \overline M^{\mu}_{S_j}\hookrightarrow \P(W_j^*)$ the maps provided by Lemma \ref{lemma:rational maps} and Proposition~\ref{prop:embedding-via-admissible-curves-surface-case}, respectively; see also Proposition \ref{prop:nonseparation}. Set $U:= U_{S_1} \cap \dots \cap U_{S_m}$ and let
$$\nu:= \nu _{1}\times \cdots
\times \nu _{m}:U \longrightarrow \mathbb{P}(W^{*}_{1})\times \cdots \times \mathbb{P}(W^{*}_{m})$$
be the product map, which is $R$-invariant by Proposition \ref{prop:nonseparation}. It therefore remains to show that $\nu$ separates different $R$-equivalence classes.

First, let $\mathcal{F}_{1}$, $\mathcal{F}_{2}$ be two polystable sheaves representing points $[\mathcal{F}_1], [\mathcal{F}_2] \in \overline{M}^\mu$ such that $\mathcal{E}_1 := \ddual{\Gr(\mathcal{F}_{1})}$ and $\mathcal{E}_2 :=\ddual{\Gr(\mathcal{F}_{2})}$ are not isomorphic. Then, there exists a flag $(X^{(l)})$ in $\Sigma$ that is fully admissible for both $[\gamma(\mathcal{F}_1)]= [(\mathcal{E}_1, \mathcal{C}_1)]$ and $[\gamma(\mathcal{F}_2)] = [(\mathcal{E}_2, \mathcal{C}_2)]$. Let $S_{j} \subset X$ be the corresponding surface. If we let $r_{U_{S_j}}:U_{S_j}\rightarrow\overline M^{\mu}_{S_j}$ be the map produced by Proposition\ \ref{prop:nonseparation}, the $j$-th component of $\nu$ is equal to 
\begin{equation} \label{eq:jthcomponent}
 \nu_j([\mathcal{F}_i])=\nu_{S_j}(r_{U_{S_j}}([\cF_{i}])) \quad \quad \text{ for }i = 1,2.
\end{equation}
Since $\lbrack\gamma(\Fcal_{1}^{(n-2)})]=[\gamma (\mathcal{F}_{1})|_{S}]\neq \lbrack \gamma (\mathcal{F}_{2})|_{S}]=[\gamma
(\Fcal_{2}^{(n-2)})]$, 
it follows from Corollary\ \ref{cor:admissible_plus_lifting} and Claim\ \ref{claim:gamma} that $r_{U_{S_j}}([\cF{_1}])=[\Fcal_{1}^{(n-2)}]$ and $r_{U_{S_j}}([\cF_2])=[\Fcal_{2}^{(n-2)}]$ define different points in the moduli space $\overline M^{\mu}_{S_j}$ (cf.\ Remark\ \ref{rem:identify_in_surface_case}).
 From the defining property of $\nu_{S_j}$ (see Proposition\ \ref{prop:embedding-via-admissible-curves-surface-case}) and from eq.\ \eqref{eq:jthcomponent} above we hence conclude that $\nu([\mathcal{F}_1]) \neq \nu([\mathcal{F}_2])$. 

Second, write $[\mathcal{F}_{1}]$ and $[\mathcal{F}_{2}]$ as above for two points of $\overline M^\mu$, where $\Fcal_1$ and $\Fcal_2$ are polystable, and let $[\gamma(\mathcal{F}_1)]=[(\mathcal{E}_{1},
\mathcal{C}_{1})]$, $[\gamma(\mathcal{F}_2)]=$ $[(\mathcal{E}_{2},
 \mathcal{C}_{2})]$ be such that $\mathcal{C}_1 \neq \mathcal{C}_2$. Note that by the definition of admissibility for any $[\mathcal{E}'] \in U$ with associated point $\overline{\Phi}([\mathcal{E}']) = [(\mathcal{E}, \mathcal{C})]\in \overline{M}_{\HYM}$, we have $\mathcal{C} \in U''_{S_j}$ for all $j \in \{1, \dots, m\}$. By the choice of $m$ made above there exists an index $j \in \{1, \dots, m\}$ such that $\mathcal{C}_1|_{S_j} \neq \mathcal{C}_2|_{S_j}$, and therefore $[(\Ecal_1,\Ccal_1)|_{S_{j}}]\neq [(\Ecal_2,\Ccal_2)|_{S_{j}}]$. Applying Proposition \ref{prop:nonseparation} and using injectivity of $\nu _{S_{j}}$  again, similar to the first step we obtain 
$$\nu _{j}([\mathcal{F}_{1}])=\nu_{S_j}(r_{U_{S_j}}([\cF_{1}]))\neq 
\nu_{S_j}(r_{U_{S_j}}([\cF_{2}]))=\nu _{j}([\mathcal{F}
_{2}])\ .$$
This concludes the proof of Claim\ \ref{Claim}.
\end{proof}

\begin{proof}[\protect{Proof of Theorem\ \ref{thm:mainII}}]
 The existence of a complex structure on $\overline M_\HYM$ making the map $\overline{\Phi}:\overline{M}^\mu \to \overline M_\HYM$ holomorphic follows from Cartan's criterion, Theorem\ \ref{thm:Cartan_criterion}, together with Claim\ \ref{Claim}. 
 
To prove the other statements, recall that we endowed $M^\ast_\HYM$ with the complex structure such that $\Phi: M^{s} \to M^\ast_\HYM$ becomes biholomorphic. Consequently, the induced map $\Phi^{wn}: (M^{s})^{wn} \to (M_\HYM^\ast)^{wn}$ of weak normalisations is likewise biholomorphic. From the discussion in the paragraph before Definition\ \ref{def:defining_the_closure} we hence conclude that we have a natural map $\iota: (M_\HYM^\ast)^{wn} \to \overline M_\HYM$ fitting into the following commutative diagram:
\[\begin{xymatrix}{
  (M^{s})^{wn} \ar@{^(->}[r] \ar[d]_{\Phi^{wn}}^{\cong} & \overline{M}^\mu \ar[d]^{\overline{\Phi}}\\
  (M_\HYM^\ast)^{wn} \ar[r]^\iota & \overline M_\HYM\ .
}
  \end{xymatrix}
\]
As the image of $(M^{s})^{wn}$ is a $\overline{\Phi}$-saturated subset on which the equivalence relation $R$ built from $\Phi$ is trivial, $\iota$ is an open embedding, whose complement is the image of ${\overline M}^\mu \backslash (M^{s})^{wn}$ under $\overline{\Phi}$, and hence Zariski closed.

The natural map $(\overline M^{\GMC})^{wn}\to\overline M^\mu$ (see \eqref{eq:Xi}) and its birationality have already been discussed at the end of Section\ \ref{sec:moduli}.
 \end{proof}

\begin{proof}[Proof of Corollary\ \ref{cor:algebraic}]
 Since $\overline{\Phi}$ is finite and surjective, and since $\overline{M}^\mu$ is projective and therefore in particular Moishezon, the complex space $\overline M_\HYM$ is likewise Moishezon (see \cite[Chap.\ V., Cor.\ 11]{AnconaTomassini:82},  but note that the proof is much easier for finite maps than for the general case). It hence follows from a result of Artin (see \cite[Thm.\ 7.3]{Artin:70}) that $\overline M_\HYM$ is the analytification of a proper algebraic space. The generalisation of Serre's GAGA to holomorphic maps between analytifications of proper algebraic spaces (see again \emph{loc.\ cit.}), then implies that $\overline{\Phi}$ is (induced by) a morphism of algebraic spaces. The final statement follows from the corresponding statement in the analytic category, which is contained in Theorem\ \ref{thm:mainII}, and the fact that the analytification of the algebraic weak normalisation of $M^s$ is naturally biholomorphic to the weak normalisation of $M^s$ as a complex space (cf.\ \cite[Sect.\  2.3]{GrebToma:17}).
\end{proof}


\vspace{0.1cm}

\providecommand{\MR}[1]{}
\providecommand{\bysame}{\leavevmode\hbox to3em{\hrulefill}\thinspace}
\providecommand{\MR}{\relax\ifhmode\unskip\space\fi MR }
\providecommand{\MRhref}[2]{%
  \href{http://www.ams.org/mathscinet-getitem?mr=#1}{#2}
}
\providecommand{\href}[2]{#2}

\vspace{0.6cm}

\begin{center}
---------------------------------------
\end{center}
\vspace{0.6cm}


\begin{thebibliography}{10}

\bibitem{AnconaTomassini:82}
Vincenzo Ancona and Giuseppe Tomassini, \emph{Modifications analytiques},
  Lecture Notes in Mathematics, vol. 943, Springer-Verlag, Berlin-New York,
  1982. \MR{673560}

\bibitem{Artin:70}
M.~Artin, \emph{Algebraization of formal moduli. {II}. {E}xistence of
  modifications}, Ann. of Math. (2) \textbf{91} (1970), 88--135. \MR{0260747}

\bibitem{BandoSiu:94}
Shigetoshi Bando and Yum-Tong Siu, \emph{Stable sheaves and
  {E}instein-{H}ermitian metrics}, Geometry and Analysis on Complex Manifolds
  (River Edge, NJ) (T.~Mabuchi, J.~Noguchi, and T.~Ochiai, eds.), World
  Scientific, 1994, pp.~39--50.

\bibitem{Barlet:78}
Daniel Barlet, \emph{Convexit\'e de l'espace des cycles}, Bull. Soc. Math.
  France \textbf{106} (1978), no.~4, 373--397. \MR{518045}

\bibitem{BarletMagnussonII}
Daniel Barlet and J\'on Magn\'usson, \emph{Cycles analytiques complexes.
  {II}.}, Cours Sp\'ecialis\'es [Specialized Courses], Soci\'et\'e
  Math\'ematique de France, Paris, to appear.

\bibitem{Bishop:64}
Errett Bishop, \emph{Conditions for the analyticity of certain sets}, Michigan
  Math. J. \textbf{11} (1964), 289--304. \MR{0168801}

\bibitem{BuchdahlTelemanToma:17}
Nicholas Buchdahl, Andrei Teleman, and Matei Toma, \emph{A continuity theorem
  for families of sheaves on complex surfaces}, J. Topol. \textbf{10} (2017),
  no.~4, 995--1028. \MR{3743066}

\bibitem{Cartan:60}
Henri Cartan, \emph{Quotients of complex analytic spaces}, Contributions to
  function theory ({I}nternat. {C}olloq. {F}unction {T}heory, {B}ombay, 1960),
  Tata Institute of Fundamental Research, Bombay, 1960, pp.~1--15. \MR{0139769}

\bibitem{ChakrabartiShaw:11}
Debraj Chakrabarti and Mei-Chi Shaw, \emph{The {C}auchy-{R}iemann equations on
  product domains}, Math. Ann. \textbf{349} (2011), no.~4, 977--998.
  \MR{2777041}

\bibitem{ChenSun:18a}
Xuemiao Chen and Song Sun, \emph{{Analytic tangent cones of admissible
  Hermitian-Yang-Mills connections}}, to appear in Geom.~Topol., preprint
  \url{http://arxiv.org/abs/1806.11247}.

\bibitem{ChenSun:18b}
\bysame, \emph{{Algebraic tangent cones of reflexive sheaves}}, IMRN, advance
  article (2018), \url{https://doi.org/10.1093/imrn/rny276}, will appear in
  print.

\bibitem{ChenSun:19}
\bysame, \emph{Reflexive sheaves, {H}ermitian-{Y}ang-{M}ills connections, and
  tangent cones}, preprint (2019), \url{https://arxiv.org/abs/1912.09561v1}.

\bibitem{DaskalWentworth:04}
Georgios Daskalopoulos and Richard Wentworth, \emph{{Convergence properties of
  the Yang-Mills flow on K\"ahler surfaces}}, J. Reine Angew. Math.
  \textbf{575} (2004), 69--99.

\bibitem{MR2004511}
Igor Dolgachev, \emph{Lectures on invariant theory}, London Mathematical
  Society Lecture Note Series, vol. 296, Cambridge University Press, Cambridge,
  2003. \MR{2004511}

\bibitem{Donaldson:85}
Simon Donaldson, \emph{{Anti self-dual Yang-Mills connections over complex
  algebraic surfaces and stable vector bundles}}, Proc.\ London Math.\ Soc.
  \textbf{50} (1985), 1--26.

\bibitem{Donaldson:86}
\bysame, \emph{Connections, cohomology and the intersection forms of
  {$4$}-manifolds}, J. Differential Geom. \textbf{24} (1986), no.~3, 275--341.
  \MR{868974}

\bibitem{Donaldson:87}
\bysame, \emph{{Infinite determinants, stable bundles, and curvature}}, Duke
  Math. J. \textbf{54} (1987), 231--247.

\bibitem{Donaldson:89}
\bysame, \emph{Compactification and completion of {Y}ang-{M}ills moduli
  spaces}, Differential geometry ({P}e\~n\'\i scola, 1988), Lecture Notes in
  Math., vol. 1410, Springer, Berlin, 1989, pp.~145--160. \MR{1034277}

\bibitem{Donaldson:90}
\bysame, \emph{Polynomial invariants for smooth four-manifolds}, Topology
  \textbf{29} (1990), no.~3, 257--315. \MR{1066174}

\bibitem{DonaldsonKronheimer:90}
Simon Donaldson and Peter Kronheimer, \emph{The geometry of four-manifolds},
  Oxford Science, Claendon Press, Oxford, 1990.

\bibitem{DonaldsonSegal:11}
Simon Donaldson and Ed~Segal, \emph{Gauge theory in higher dimensions, {II}},
  Surveys in differential geometry. {V}olume {XVI}. {G}eometry of special
  holonomy and related topics, Surv. Differ. Geom., vol.~16, Int. Press,
  Somerville, MA, 2011, pp.~1--41. \MR{2893675}

\bibitem{EllingsrudLehn:99}
Geir Ellingsrud and Manfred Lehn, \emph{Irreducibility of the punctual quotient
  scheme of a surface}, Ark. Mat. \textbf{37} (1999), no.~2, 245--254.
  \MR{1714770}

\bibitem{FollandKohn:72}
G.~B. Folland and J.~J. Kohn, \emph{The {N}eumann problem for the
  {C}auchy-{R}iemann complex}, Princeton University Press, Princeton, N.J.;
  University of Tokyo Press, Tokyo, 1972, Annals of Mathematics Studies, No.
  75. \MR{0461588}

\bibitem{FujikiSchumacher:87}
Akira Fujiki and Georg Schumacher, \emph{The moduli space of
  {H}ermite-{E}instein bundles on a compact {K}{\"a}hler manifold}, Proc. Japan
  Acad. Ser. A Math. Sci. \textbf{63} (1987), no.~3, 69--72. \MR{894698}

\bibitem{Fulton:98}
William Fulton, \emph{Intersection theory}, second ed., Ergebnisse der
  Mathematik und ihrer Grenzgebiete. 3. Folge, vol.~2, Springer-Verlag, Berlin,
  1998. \MR{1644323}

\bibitem{Gieseker:77}
David Gieseker, \emph{On the moduli of vector bundles on an algebraic surface},
  Ann. of Math. (2) \textbf{106} (1977), no.~1, 45--60. \MR{466475}

\bibitem{GilbargTrudinger:83}
David Gilbarg and Neil Trudinger, \emph{Elliptic partial differential equations
  of second order}, second ed., Springer, New York, 1978.

\bibitem{GKP13}
Daniel Greb, Stefan Kebekus, and Thomas Peternell, \emph{\'{E}tale fundamental
  groups of {K}awamata log terminal spaces, flat sheaves, and quotients of
  abelian varieties}, Duke Math. J. \textbf{165} (2016), no.~10, 1965--2004.
  \MR{3522654}

\bibitem{GrebRossToma:16}
Daniel Greb, Julius Ross, and Matei Toma, \emph{Moduli of vector bundles on
  higher-dimensional base manifolds --- {C}onstruction and {V}ariation},
  Internat. J. Math. \textbf{27} (2016), no.~7, 1650054 (27). \MR{3605660}

\bibitem{GrebToma:17}
Daniel Greb and Matei Toma, \emph{Compact moduli spaces for slope-semistable
  sheaves}, Algebr. Geom. \textbf{4} (2017), no.~1, 40--78. \MR{3592465}

\bibitem{Grothendieck:61}
Alexander Grothendieck, \emph{Techniques de construction et th\'eor\`emes
  d'existence en g\'eom\'etrie alg\'ebrique. {IV}. {L}es sch\'emas de
  {H}ilbert}, S\'eminaire {B}ourbaki, {V}ol.\ 6, Soc. Math. France, Paris,
  1995, pp.~Exp.\ No.\ 221, 249--276. \MR{1611822}

\bibitem{Hormanader:65}
Lars H\"{o}rmander, \emph{{$L^{2}$} estimates and existence theorems for the
  {$\bar \partial $}\ operator}, Acta Math. \textbf{113} (1965), 89--152.
  \MR{0179443}

\bibitem{HuybrechtsLehn:10}
Daniel Huybrechts and Manfred Lehn, \emph{The geometry of moduli spaces of
  sheaves}, second ed., Cambridge Mathematical Library, Cambridge University
  Press, Cambridge, 2010. \MR{2665168}

\bibitem{Kobayashi:87}
Shoshichi Kobayashi, \emph{Differential geometry of complex vector bundles},
  Publications of the Mathematical Society of Japan, vol.~15, Iwanami Shoten
  and Princeton University Press, Princeton, NJ, 1987, Kan{\^o} Memorial
  Lectures, 5. \MR{909698 (89e:53100)}

\bibitem{KollarFiniteEquiv}
J\'anos Koll\'ar, \emph{Quotients by finite equivalence relations}, Current
  {D}evelopments in {A}lgebraic {G}eometry, MSRI {P}ublications, vol.~59,
  Cambridge {U}niversity {P}ress, Cambridge, 2011, with an appendix by Claudiu
  Raicu, pp.~227--256.

\bibitem{Langer:04}
Adrian Langer, \emph{Semistable sheaves in positive characteristic}, Ann. of
  Math. (2) \textbf{159} (2004), no.~1, 251--276. \MR{2051393}

\bibitem{LePotier:92}
Joseph Le~Potier, \emph{Fibr\'e d\'eterminant et courbes de saut sur les
  surfaces alg\'ebriques}, Complex projective geometry ({T}rieste,
  1989/{B}ergen, 1989), London Math. Soc. Lecture Note Ser., vol. 179,
  Cambridge Univ. Press, Cambridge, 1992, pp.~213--240. \MR{1201385}

\bibitem{Li:93}
Jun Li, \emph{Algebraic geometric interpretation of {D}onaldson's polynomial
  invariants}, J. Differential Geom. \textbf{37} (1993), no.~2, 417--466.
  \MR{1205451}

\bibitem{LubkeOkonek:87}
M.~L\"{u}bke and C.~Okonek, \emph{Moduli spaces of simple bundles and
  {H}ermitian-{E}instein connections}, Math. Ann. \textbf{276} (1987), no.~4,
  663--674. \MR{879544}

\bibitem{LubkeTeleman:95}
Martin L\"ubke and Andrei Teleman, \emph{The {K}obayashi-{H}itchin
  correspondence}, World Scientific Publishing Co., Inc., River Edge, NJ, 1995.
  \MR{1370660}

\bibitem{Maruyama:76}
Masaki Maruyama, \emph{Openness of a family of torsion free sheaves}, J. Math.
  Kyoto Univ. \textbf{16} (1976), no.~3, 627--637. \MR{0429899}

\bibitem{Maruyama:81}
\bysame, \emph{On boundedness of families of torsion free sheaves}, J. Math.
  Kyoto Univ. \textbf{21} (1981), no.~4, 673--701. \MR{637512}

\bibitem{MehtaRamanathan:81}
V.~B. Mehta and A.~Ramanathan, \emph{Semistable sheaves on projective varieties
  and their restriction to curves}, Math. Ann. \textbf{258} (1981/82), no.~3,
  213--224. \MR{649194}

\bibitem{Miyajima:89}
Kimio Miyajima, \emph{Kuranishi family of vector bundles and algebraic
  description of the moduli space of {E}instein-{H}ermitian connections}, Publ.
  Res. Inst. Math. Sci. \textbf{25} (1989), no.~2, 301--320. \MR{1003790}

\bibitem{Morgan:93}
John~W. Morgan, \emph{Comparison of the {D}onaldson polynomial invariants with
  their algebro-geometric analogues}, Topology \textbf{32} (1993), no.~3,
  449--488. \MR{1231956}

\bibitem{Naber:16}
Aaron Naber and Daniele Valtorta, \emph{Energy identity for stationary {Y}ang
  {M}ills}, Invent. Math. \textbf{216} (2019), no.~3, 847--925. \MR{3955711}

\bibitem{Nakajima:88}
Hiraku Nakajima, \emph{{Compactness of the moduli space of Yang-Mills
  connections in higher dimensions}}, J. Math. Soc. Japan \textbf{40} (1988),
  no.~3, 383--392.

\bibitem{OSS:80}
Christian Okonek, Michael Schneider, and Heinz Spindler, \emph{Vector bundles
  on complex projective spaces}, Progress in Mathematics, vol.~3, Birkh\"auser,
  Boston, Mass., 1980. \MR{561910}

\bibitem{PavelThesis}
Mihai Pavel, Ph.D. thesis, IECL, Universit\'e de {L}orraine, in progress.

\bibitem{Sedlacek:82}
Steven Sedlacek, \emph{A direct method for minimizing the {Y}ang-{M}ills
  functional over {$4$}-manifolds}, Comm. Math. Phys. \textbf{86} (1982),
  no.~4, 515--527. \MR{679200}

\bibitem{Shaw:85}
Mei-Chi Shaw, \emph{Global solvability and regularity for {$\bar \partial$} on
  an annulus between two weakly pseudoconvex domains}, Trans. Amer. Math. Soc.
  \textbf{291} (1985), no.~1, 255--267. \MR{797058}

\bibitem{Shaw:10}
\bysame, \emph{The closed range property for {$\overline\partial$} on domains
  with pseudoconcave boundary}, Complex analysis, Trends Math.,
  Birkh\"{a}user/Springer Basel AG, Basel, 2010, pp.~307--320. \MR{2885124}

\bibitem{Sibley:15}
Benjamin Sibley, \emph{Asymptotics of the {Y}ang-{M}ills flow for holomorphic
  vector bundles over {K}\"ahler manifolds: the canonical structure of the
  limit}, J. Reine Angew. Math. \textbf{706} (2015), 123--191. \MR{3393366}

\bibitem{SibleyWentworth:15}
Benjamin Sibley and Richard~A. Wentworth, \emph{Analytic cycles, {B}ott-{C}hern
  forms, and singular sets for the {Y}ang-{M}ills flow on {K}\"ahler
  manifolds}, Adv. Math. \textbf{279} (2015), 501--531. \MR{3345190}

\bibitem{stacks-project}
The {Stacks Project Authors}, \emph{\textit{Stacks Project}},
  \url{http://stacks.math.columbia.edu}, 2020.

\bibitem{TaoTian:04}
Terrence Tao and Gang Tian, \emph{{A singularity removal theorem for Yang-Mills
  fields in higher dimensions}}, J. Amer. Math. Soc. \textbf{17} (2004), no.~3,
  557--593.

\bibitem{Tian:00}
Gang Tian, \emph{{Gauge theory and calibrated geometry, I}}, Ann. Math.
  \textbf{151} (2000), 193--268.

\bibitem{TianYang:02}
Gang Tian and Baozhong Yang, \emph{{Compactification of the moduli spaces of
  vortices and coupled vortices}}, J. Reine. Angew. Math. \textbf{553} (2002),
  17--41.

\bibitem{UhlenbeckPreprint}
Karen Uhlenbeck, \emph{{A priori estimates for Yang-Mills fields}}, unpublished
  manuscript.

\bibitem{Uhlenbeck:82b}
\bysame, \emph{{Connections with $L^p$ bounds on curvature}}, Comm. Math. Phys.
  \textbf{83} (1982), no.~1, 31--42.

\bibitem{Uhlenbeck:82a}
\bysame, \emph{{Removable singularities in Yang-Mills fields}}, Comm. Math.
  Phys. \textbf{83} (1982), no.~1, 11--29.

\bibitem{UhlenbeckYau:86}
Karen Uhlenbeck and Shing-Tung Yau, \emph{{On the existence of
  Hermitian-Yang-Mills connections in stable vector bundles}}, Comm. Pure Appl.
  Math. \textbf{39} (1986), no.~2, 257--293.

\bibitem{Wehrheim:04}
Katrin Wehrheim, \emph{Uhlenbeck compactness}, EMS Series of Lectures in
  Mathematics, European Mathematical Society (EMS), Z\"urich, 2004.
  \MR{2030823}

\bibitem{Yang:03}
Baozhong Yang, \emph{The uniqueness of tangent cones for {Y}ang-{M}ills
  connections with isolated singularities}, Adv. Math. \textbf{180} (2003),
  no.~2, 648--691. \MR{2020554}

\end{thebibliography}
\end{document}